\documentclass[11pt,reqno]{amsart}

\usepackage{amsmath,amssymb,amsthm,mathtools}
\usepackage{tikz-cd}
\usepackage{enumitem}
\usepackage{hyperref}
\usepackage{aliascnt}
\usepackage[margin=1.1in]{geometry}
\usepackage{microtype}
\usetikzlibrary{arrows.meta,calc,decorations.pathreplacing}

\hypersetup{
  colorlinks=true,
  linkcolor=blue,
  citecolor=blue,
  urlcolor=blue
}

\newtheorem{theorem}{Theorem}[section]
\newaliascnt{proposition}{theorem}
\newtheorem{proposition}[proposition]{Proposition}
\aliascntresetthe{proposition}
\newaliascnt{lemma}{theorem}
\newtheorem{lemma}[lemma]{Lemma}
\aliascntresetthe{lemma}
\newaliascnt{corollary}{theorem}
\newtheorem{corollary}[corollary]{Corollary}
\aliascntresetthe{corollary}
\newaliascnt{claim}{theorem}

\aliascntresetthe{claim}
\newaliascnt{definition}{theorem}
\newtheorem{definition}[definition]{Definition}
\aliascntresetthe{definition}
\newaliascnt{remark}{theorem}
\newtheorem{remark}[remark]{Remark}
\aliascntresetthe{remark}
\newaliascnt{convention}{theorem}
\newtheorem{convention}[convention]{Convention}
\aliascntresetthe{convention}

\usepackage[nameinlink,capitalize,noabbrev]{cleveref}
\crefname{convention}{Convention}{Conventions}
\Crefname{convention}{Convention}{Conventions}

\newcommand{\Mod}{\operatorname{Mod}}
\newcommand{\Diff}{\operatorname{Diff}}
\newcommand{\Homeo}{\operatorname{Homeo}}
\newcommand{\Stab}{\operatorname{Stab}}
\newcommand{\Aut}{\operatorname{Aut}}

\newcommand{\Fix}{\operatorname{Fix}}
\newcommand{\Ax}{\operatorname{Ax}}
\newcommand{\Trans}{\operatorname{Trans}}
\newcommand{\Comm}{\operatorname{Comm}}
\newcommand{\Isom}{\operatorname{Isom}}

\newcommand{\normal}[1]{\left\langle\!\left\langle #1\right\rangle\!\right\rangle}
\newcommand{\Z}{\mathbb Z}

\newcommand{\cD}{\mathcal D}
\newcommand{\cN}{\mathcal N}

\newcommand{\cA}{\mathcal A}

\newcommand{\into}{\hookrightarrow}
\newcommand{\onto}{\twoheadrightarrow}

\title[Embedded surfaces with trivial extendable mapping class groups]{Embedded surfaces with trivial extendable mapping class groups
in simply connected \(4\)-manifolds}
\date{}
\author{Weizhe Niu}
\address{Yau Mathematical Sciences Center, Tsinghua University}
\email{weizheniu@mail.tsinghua.edu.cn}
\subjclass[2020]{57K40, 57K45, 57R50}
\keywords{knotted surfaces, rim surgery, mapping class groups, extendable homeomorphisms, Alexander modules}
\begin{document}

\begin{abstract}
For every \(g\geq 3\), every closed, connected, oriented, simply connected
smooth \(4\)-manifold \(X\), and every knot \(K\subset S^3\),
we construct infinitely many pairwise topologically inequivalent
smoothly embedded oriented genus-\(g\) surfaces
\(F\subset X\) whose orientation-preserving extendable mapping class
subgroups are trivial in both the topological and smooth categories and
whose first Alexander modules are isomorphic to the Alexander module of
\(K\). In particular, there are infinitely
many such surfaces with vanishing first Alexander module. The
construction is supported in a \(4\)-ball. Although its Alexander data
are prescribed independently, the surfaces are distinguished and their
mapping-class rigidity is detected by the nonabelian centralizer
structure of their exterior groups.
\end{abstract}

\maketitle
\tableofcontents
\section{Introduction}\label{sec:introduction}

A smoothly embedded surface may have intrinsic symmetries that are not
induced by ambient homeomorphisms.  Let \(X\) be a closed, connected,
oriented smooth \(4\)-manifold, and let \(F\subset X\) be a smoothly
embedded, closed, connected, oriented genus-\(g\) surface with a fixed
orientation-preserving identification \(\Sigma_g\cong F\).  Its smooth
and topological orientation-preserving extendable subgroups are
\[
E_X^+(F)=
\operatorname{im}\!\left(
\pi_0\Diff^+(X,F)\longrightarrow\Mod(\Sigma_g)
\right)
\]
and
\[
E_{X,\mathrm{TOP}}^+(F)=
\operatorname{im}\!\left(
\pi_0\Homeo^+(X,F)\longrightarrow\Mod(\Sigma_g)
\right),
\]
respectively.  Here every ambient homeomorphism or diffeomorphism is required to
preserve both the orientation of \(X\) and the orientation of \(F\), and we use the canonical identification of the
homeomorphism and diffeomorphism mapping class groups of \(\Sigma_g\)
\cite[Theorem~B]{HatcherTorusTrick}.  Since every diffeomorphism is a homeomorphism,
\[
E_X^+(F)\leq E_{X,\mathrm{TOP}}^+(F).
\]
For \(X=S^4\), write \(E^+(F)\) and \(E_{\mathrm{TOP}}^+(F)\).
The basic realization problem asks how small these extendable subgroups
can be.  We construct, in every genus at least three, infinitely many
such embeddings in \(S^4\), and more generally in every closed simply
connected smooth \(4\)-manifold, with trivial topological, and hence
smooth, orientation-preserving extendable subgroup and prescribed first
Alexander module.

Two oriented surfaces in \(X\) are \emph{topologically equivalent} if an
orientation-preserving homeomorphism of \(X\) carries one to the other
and preserves their orientations.  They are \emph{smoothly equivalent}
if such a homeomorphism can be chosen to be a diffeomorphism.  Thus
topological inequivalence implies smooth inequivalence and nonisotopy.

We also record the first Alexander module.  Write
\[
E_F=X\setminus\operatorname{int}\nu F.
\]
When \(H_1(E_F;\Z)\cong\Z\) with positive meridian as preferred
generator, let \(\widetilde E_F\to E_F\) be the corresponding infinite
cyclic cover and set
\[
\cA_1(F)=H_1(\widetilde E_F;\Z).
\]
The positive meridian determines the deck transformation \(t\), making
\(\cA_1(F)\) a left \(\Lambda=\Z[t,t^{-1}]\)-module.  For a knot
\(K\subset S^3\), \(\cA(K)\) denotes its classical Alexander module with
the same convention.

\begin{theorem}\label{thm:main}
For every integer \(g\geq3\) and every classical knot \(K\subset S^3\),
there exist infinitely many pairwise topologically inequivalent smoothly
embedded, closed, connected, oriented genus-\(g\) surfaces
\[
F_{g,n}\subset S^4
\]
such that
\[
E_{\mathrm{TOP}}^+(F_{g,n})=E^+(F_{g,n})=1,
\qquad
\cA_1(F_{g,n})\cong\cA(K)
\]
as \(\Lambda\)-modules.
\end{theorem}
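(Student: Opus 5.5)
The plan is to build $F_{g,n}$ in two stages: an unknotted genus-$g$ surface that carries the Alexander data, followed by a local modification in a $4$-ball that kills every symmetry. For the first stage, start from a surface $F^0$ with $\pi_1(E_{F^0})$ equal to the group obtained by spinning $K$. Concretely, take the spun torus (or spun $2$-knot) of $K$ and add $g$, or $g-1$, trivial handles. Trivial handle addition does not change the knot group, so $\pi_1(E_{F^0})\cong\pi_1(S^3\setminus K)$, and the first Alexander module is $\cA(K)$. The later modifications will use only operations that preserve $\cA_1$. Either they are supported on arcs and tubes that are null-homologous in the infinite cyclic cover, or, for rim surgery à la Fintushel–Stern and Kim along curves $c\subset F$ with a knot $J$ whose Alexander polynomial is $1$, they change the group only by an amalgamation along a peripheral $\Z^2$ whose abelianization data vanishes. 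I will verify $\cA_1(F_{g,n})\cong\cA(K)$ by a Mayer–Vietoris argument in the infinite cyclic cover.

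For the second stage, choose a system of simple closed curves $c_1,\dots,c_m$ on $F^0$ that fill $\Sigma_g$ and have rigid combinatorics. I want the curve complex to contain no nontrivial mapping class preserving the labelled configuration, which is possible once $g\ge 3$; this is where the genus bound enters, since low-genus hyperelliptic-type symmetries would otherwise survive. Perform rim surgery (or a twisted variant) along $c_i$ with pairwise distinct knots $J_{i,n}$. I will choose these knots to be hyperbolic with trivial Alexander polynomial and distinct volumes, varying with $n$. The exterior group $G=\pi_1(E_F)$ then contains the knot groups $\pi_1(S^3\setminus J_{i,n})$ as vertex groups of a graph-of-groups decomposition, amalgamated along the peripheral tori $c_i\times\mu$.

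Now let $h\in\Homeo^+(S^4,F)$. It induces an automorphism of $G$ fixing the meridian class up to conjugacy, and it sends $\pi_1(F)$-peripheral structure to itself. The key step is a rigidity lemma. The centralizers in $G$ of the elements $c_i$, taken as boundary-parallel classes pushed off along the preferred framing, are exactly the rank-two peripheral subgroups $\Z^2$ adjacent to the $J_{i,n}$ pieces. Every other simple curve on $F$ has cyclic centralizer times the meridian. I plan to prove this using the JSJ/acylindrical structure of the amalgam together with the fact that hyperbolic knot groups are malnormal relative to their peripheral subgroups. Since the $J_{i,n}$ are pairwise non-isomorphic, $h_*$ must preserve each isotopy class $c_i$ with orientation. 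Because the $c_i$ fill rigidly, the induced mapping class is trivial, so $E^+_{\mathrm{TOP}}=1$, and hence $E^+=1$. Topological inequivalence across different $n$ follows from the same invariant: the set of knot-group vertex groups, or their volumes, is a homeomorphism invariant of $(S^4,F)$.

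I expect the main obstacle to be the centralizer analysis. One must show that the decomposition of $G$ is canonical enough that an arbitrary automorphism preserves the vertex groups. This is delicate because the base group (the knot group of spun $K$) is arbitrary and may itself contain $\Z^2$ subgroups, such as when $K$ is a satellite or torus knot. I would first try to handle this by a Bass–Serre argument. Choosing the $J_{i,n}$ hyperbolic and using volume, or a Gromov-norm-type invariant, should separate their vertex groups from anything coming from $K$.

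The remaining step is to pass from "$h$ preserves each $c_i$ up to isotopy" to "$h|_F$ is isotopic to the identity." This should follow from Ivanov-type rigidity for filling systems with trivial stabilizer. It also uses the orientation data: the meridian and the orientation of $F$ pin down the direction of each $c_i$.
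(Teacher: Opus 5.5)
There is a genuine gap where you pass from the exterior group back to the surface. Knowing that an automorphism of \(G=\pi_1(E_F)\) preserves the knot-group pieces says nothing about \(f\in\Mod(\Sigma_g)\) unless three things hold:
(a) the push-off \(\rho:\pi_1(\Sigma_g,p)\to G\) is injective;
(b) the induced outer automorphism can be normalized to \(\Phi(\mu)=\mu\), \(\Phi\circ\rho=\rho\circ f_*\), removing the basepoint conjugation and the meridian shear;
(c) an ambient conjugacy \(a\langle\rho(d)\rangle a^{-1}\le\rho(\pi_1\Sigma_g)\) can be pulled back to a conjugacy inside the surface group.

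Your seed surface fails (a) badly. Spinning kills the push-off of the spin circle, every curve on a trivial handle has null-homotopic push-off, and for \(K\) the unknot the whole group is \(\Z\). Rim surgery along a curve \(c\) with \(\rho(c)=1\) just gives \(G*_{\langle\mu\rangle}K_J\), independent of \(c\). So there is no rank-two edge group ``\(c_i\times\mu\)'' for your centralizer argument to see, and \(c\) is visible only through the push-off map. The paper spends \cref{sec:disk,sec:seed} manufacturing injectivity. Rim surgeries along the meridian-disk boundaries of \(\partial(P\times I)\) make both copies of \(\pi_1(P)\) appear as the free group on the preferred longitudes, and vertical-boundary surgeries separate them, with amalgam/HNN normal-form proofs. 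It then uses \(H_1(G)=\Z\langle[\mu]\rangle\) and \(\rho(\pi)\subset[G,G]\) for (b) (\cref{cor:rooted-normalization}). It proves (c) by embedding the subdivided surface-splitting tree equivariantly in the Bass--Serre tree.

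Your proposed rigidity lemma is also false as stated. After rim surgery along \(c\), the pre-surgery push-off \(h=\rho(c)\) is identified with the central \(S^1\)-factor of the product vertex group \(K_J\times\langle h\rangle\), and the post-surgery push-off of \(c\) is conjugate to \(h\). Its centralizer therefore contains \(K_J\) and is nonabelian, not a peripheral \(\Z^2\). What is intrinsically recognizable is this product vertex group: a centralizer with infinite cyclic center whose quotient, marked by the meridian, is \((K_J,m_J)\). It is the \emph{center} of that group that records \(c\).

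Moreover, once later curves crossing \(c_i\) are surgered, the push-off of \(c_i\) acquires ordered, noncommuting longitude insertions. So the link between \(Z(B_i)\) and \(c_i\) holds only in the intermediate group \(G_{i-1}\). The paper reaches it by reverse induction: recognize \(B_i\), recover \(d_i\), then pass to \(G_i/\normal{[K_i,K_i]}_{G_i}\). This quotient is invariant under meridian-fixing automorphisms and is identified with \(G_{i-1}\) by the canonical retraction satisfying \(r_i\circ\rho_i=\rho_{i-1}\). The hypothesis \(i(d_i,d_{i+1})=1\) is what makes each new push-off a two-syllable word \(u\ell^{\pm1}\) with a controllable axis.

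Finally, the obstacle you flag for arbitrary \(K\) is sidestepped in the paper rather than overcome. The paper never uses the group of \(K\); it uses a hyperbolic knot \(K^\sharp\) with \(\cA(K^\sharp)\cong\cA(K)\) (Friedl), glued in as a cyclic-amalgam factor, so every piece has a malnormal cusp subgroup.
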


\begin{corollary}\label{cor:simply-connected-ambient}
Let \(X\) be a closed, connected, oriented, simply connected smooth
\(4\)-manifold.  For every integer \(g\geq3\) and every classical knot
\(K\subset S^3\), there exist infinitely many pairwise topologically
inequivalent smoothly embedded, closed, connected, oriented genus-\(g\)
surfaces
\[
F_{g,n}\subset X
\]
such that
\[
E_{X,\mathrm{TOP}}^+(F_{g,n})=E_X^+(F_{g,n})=1,
\qquad
\cA_1(F_{g,n})\cong\cA(K)
\]
as \(\Lambda\)-modules.
\end{corollary}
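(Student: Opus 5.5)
The plan is to deduce \cref{cor:simply-connected-ambient} from \cref{thm:main} by a connected-sum argument, using that the construction is supported in a \(4\)-ball. Fix a smooth ball \(B\subset X\) and write \(X=X\#S^4\), with \(F_{g,n}\subset S^4\) from \cref{thm:main} arranged to lie in the ball complementary to the summing region. Set \(F^X_{g,n}\subset X\) to be the image. Since \(X\) is simply connected, van Kampen gives
\[
\pi_1(X\setminus \nu F^X_{g,n})\cong \pi_1(S^4\setminus\nu F_{g,n})\ast_{\pi_1(S^3)}\pi_1(X\setminus B^4)\cong \pi_1(E_{F_{g,n}}).
\]
More precisely, \(E_{F^X}\) is \(E_F\) with a \(4\)-ball removed and \(X\setminus B^4\) glued along \(S^3\). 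Hence \(H_1(E_{F^X})\cong\Z\) is generated by the meridian. The infinite cyclic cover of \(E_{F^X}\) is the cover of \(E_F\) with \(\Z\)-many copies of the punctured \(X\) attached along \(3\)-spheres. Mayer--Vietoris shows \(H_1\) is unchanged, because \(H_1(X\setminus B^4)=0\) and \(H_0(S^3)\) injects. So \(\cA_1(F^X_{g,n})\cong\cA_1(F_{g,n})\cong\cA(K)\).

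Triviality of the extendable subgroup and pairwise inequivalence should come from whatever invariant the paper uses in the \(S^4\) case. According to the abstract, the input is the exterior group \(G=\pi_1(E_F)\) together with peripheral data, namely the image of \(\pi_1(\partial\nu F)\) and the meridian, detected by nonabelian centralizer structure. The key observation is that these data are the same for \(F^X\): the group is isomorphic, and the inclusion of the boundary \(\partial\nu F\cong \Sigma_g\times S^1\) factors through the common part \(E_F\setminus B^4\). An orientation-preserving homeomorphism \(h\) of \((X,F^X)\) therefore induces an automorphism of \(G\) that preserves the peripheral subgroup up to conjugacy and sends the meridian to a conjugate of itself; orientations are preserved, so the meridian is not inverted. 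It also induces, on \(\pi_1(F)\) via the peripheral structure, the outer class of \(h|_F\). The \(S^4\) argument should show that any such peripheral-structure-preserving automorphism induces the trivial mapping class. That argument is purely group-theoretic, so it applies verbatim here, giving \(E^+_{X,\mathrm{TOP}}(F^X)=1\), and therefore \(E^+_X(F^X)=1\).

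For inequivalence, a homeomorphism \((X,F^X_{g,n})\to(X,F^X_{g,m})\) induces an isomorphism of exterior groups respecting meridians. The \(S^4\) invariant distinguishing the \(F_{g,n}\) is an isomorphism invariant of \(G\), possibly with its meridian, so it distinguishes the \(F^X_{g,n}\) as well.

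The main obstacle is checking that the rigidity proof of \cref{thm:main} really factors through \(\pi_1\) and the peripheral structure alone, with no use of, for example, the fact that homeomorphisms of \(S^4\) are isotopic to the identity or of the ambient intersection form. If it does use such ambient facts, I would first try to restate that step in terms of the induced map on \(\pi_1\)-peripheral data, using that \(\pi_1(\partial \nu F)\to\pi_1(F)\) determines the mapping class by Dehn--Nielsen--Baer. A subsidiary point is that an ambient homeomorphism of \(X\) need not fix \(B\). This is harmless, since the argument only uses the pair \((E_{F^X},\partial)\) up to homeomorphism.
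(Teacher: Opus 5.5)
Your proposal is correct and is essentially the paper's argument. The family lies in a $4$-ball, and van Kampen with $\pi_1(X\setminus\operatorname{int}B)=1$ identifies the exterior group, positive meridian, and push-off with those in $S^4$. The Alexander module, the normalization of \cref{cor:rooted-normalization}, the reverse induction of \cref{thm:peripheral-recognition}, and the recognized collection of meridian-marked knot groups used for inequivalence then all transfer unchanged. Your Mayer--Vietoris computation on the cover is equivalent to the paper's identification $\cA_1(F)\cong\cA(\pi_1(E_F))$.

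The step you flag as the main obstacle works out as you hope. The only geometric input is \cref{prop:peripheral-naturality}, and its proof runs with $X$ in place of $S^4$: it uses only normal-bundle uniqueness in dimension four and triviality of the normal bundle of a surface in a ball. Nothing in the argument uses isotopy properties of homeomorphisms of $S^4$ or the intersection form of $X$.
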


Taking \(K\) to be the unknot gives infinitely many examples with
vanishing first Alexander module.  Thus this abelian invariant can be
prescribed independently of the rigidity, which is detected by
nonabelian information in the exterior group.

The question is part of the realization problem in
\cite[Problem~4.39]{K3Problems}.  For the standard unknotted surface in
\(S^4\), Hirose identified the extendable subgroup with the stabilizer of
the Rokhlin quadratic form, hence a large subgroup of
\(\Mod(\Sigma_g)\) \cite{HiroseUnknotted}.  Related work treats surfaces
for which every mapping class extends, criteria and obstructions for
individual classes, and extensions of prescribed periodic maps
\cite{HiroseYasuhara,LawandeSaha,WangWang}.  Toward small subgroups,
Liu--Ni--Sun--Wang constructed locally flat knotted tori with finite
extendable subgroup \cite[Corollary~5.7]{LiuNiSunWang}, while Q.~Liu
constructed locally flat surfaces in every positive genus whose
extendable subgroup has finite image on first homology
\cite[Theorem~1.1]{QilingLiu}.  The former result does not identify the
finite subgroup, and the latter does not control mapping classes acting
trivially on first homology.

Very recently, Baykur and Sunukjian constructed a genus-\(40\) totally
geodesic surface in a closed hyperbolic \(4\)-manifold whose smooth and
topological extendable mapping class groups are both trivial
\cite{BaykurSunukjian}.  Their ambient manifold is not simply connected.
In the same paper, their iterated rim-surgery constructions in simply
connected \(4\)-manifolds control the projective action of the
extendable subgroup on first homology, rather than the full extendable
subgroup.  To our knowledge, \cref{thm:main} gives the first smoothly
embedded closed oriented surfaces of positive genus in \(S^4\) whose
full orientation-preserving extendable subgroup is trivial even in the
topological category.

The closest direct precursor is the computation in \cite{NiuSingle} for
a single ordinary untwisted rim surgery on the standard unknotted
surface.  There the extendable subgroup is determined by the Rokhlin
quadratic form, the homology class of the rim curve, and a symmetry of
the knot exterior; in particular, the entire Torelli group remains
extendable.  One surgery therefore detects only the homology class of
the rim curve and cannot give a trivial extendable subgroup.  Here an
ordered sequence of rim surgeries retains enough nonabelian information
to recover the individual curves.

Ordinary untwisted rim surgery along \(d\) replaces a product annulus
\(S^1\times I\) by \(S^1\times J^\circ\), where
\(J^\circ\subset B^3\) is a knot with a small open arc removed, without
changing the ambient \(4\)-manifold \cite{FintushelStern}.  We use it
first to make the surface group visible in the exterior and then to
record curves determining every mapping class.

Write the standard unknotted genus-\(g\) surface as
\(\partial(P\times I)\).  Annulus replacements along a complete system
of meridian-disk boundaries make the two horizontal copies of
\(\pi_1(P)\) visible, and further surgeries along the vertical boundary
separate their images.  The resulting \(F_0\subset S^4\) has injective
push-off
\[
\rho_0:\pi_1(\Sigma_g,p)\longrightarrow
\pi_1(S^4\setminus\operatorname{int}\nu F_0,*).
\]
This injectivity converts exterior-group information back into surface-
group information; see \cref{sec:disk,sec:seed}.

Choose an ordered collection of nonseparating curves
\[
\cD=(d_1,\ldots,d_N)
\]
that fills \(\Sigma_g\), has trivial label-preserving stabilizer in
\(\Mod(\Sigma_g)\), and satisfies
\[
i(d_i,d_{i+1})=1
\qquad(1\leq i<N).
\]
The labels are the positions in the tuple.  Such a collection is built
in \cref{thm:filling-sequence}; successive surgeries along its curves
use hyperbolic knots with pairwise nonisomorphic meridian-marked groups.

Let \(F_i\), \(G_i\), and
\[
\rho_i:\pi_1(\Sigma_g,p)\longrightarrow G_i
\]
denote the surface, exterior group, and push-off after stage \(i\).  If
\(\mu\) is the positive meridian, \(h_i=\rho_{i-1}(d_i)\), and \(K_i\)
is the knot group at that stage, then
\[
G_i\cong
G_{i-1}*_{\langle\mu,h_i\rangle}
\bigl(K_i\times\langle h_i\rangle\bigr).
\]
There is a canonical retraction
\[
r_i:G_i\longrightarrow G_{i-1}
\]
with
\[
r_i\circ\rho_i=\rho_{i-1},
\qquad
\ker r_i=\normal{[K_i,K_i]}_{G_i}.
\]
Thus quotienting by the new knot-group commutator recovers the preceding
exterior and push-off.  At each crossing of \(d_i\), the new push-off
also acquires a conjugate of the preferred longitude; its prefix includes
all earlier insertions, so the formula records traversal order rather
than only algebraic intersection.  See \cref{sec:rimlocal}.

The condition \(i(d_i,d_{i+1})=1\) gives the next curve a controlled
normal form, from which Bass--Serre theory propagates rigid centralizer
behavior; see \cref{sec:powerclean,sec:initialcarriers,sec:latercarriers}.
The resulting classification intrinsically recognizes, up to conjugacy,
\[
B_i=K_i\times\langle h_i\rangle
\]
from the exterior group and positive meridian: it is the relevant
nonabelian centralizer with infinite cyclic center satisfying
\[
Z(B_i)=\langle h_i\rangle,
\qquad
(B_i/Z(B_i),\bar\mu)\cong(K_i,m_i),
\]
where \(m_i\) is the chosen knot meridian.  Pairwise
nonisomorphism of the meridian-marked groups \((K_i,m_i)\)
distinguishes the stages; see \cref{sec:centralizers,sec:recognition}.

The relevant invariant is the exterior group together with the positive
meridian and push-off.  A pair homeomorphism initially acts on these
data up to basepoint and positive normal-section choices; the
normalization of \cref{sec:peripheral} removes both ambiguities for the
groups used here.  Hence an extendable mapping class \(f\) yields
\(\Phi_N\in\Aut(G_N)\) with
\[
\Phi_N(\mu)=\mu,
\qquad
\Phi_N\circ\rho_N=\rho_N\circ f_*.
\]

The reverse induction proceeds at stage \(i\) in the order
\emph{recognize \(B_i\), recover \(d_i\), then descend}.  Recognition
preserves the conjugacy class of \(Z(B_i)=\langle h_i\rangle\); injectivity
of \(\rho_{i-1}\) and the surface-subgroup conjugacy results recover the
unoriented curve \(d_i\)
(\cref{sec:initialcarriers,sec:latercarriers,sec:slope-recovery}).  Only
then does the invariant quotient by the normal closure of
\([B_i,B_i]\) recover \((G_{i-1},\rho_{i-1})\), so the same mapping class
descends.
Repeating from \(N\) to \(1\) fixes every labelled curve of \(\cD\), and
its trivial labelled stabilizer forces \(f=1\); the quotient step is
proved in \cref{sec:collapse}.

Alexander control is independent of rigidity.  Mayer--Vietoris for the
meridional infinite cyclic cover makes each local knot module a direct
summand (\cref{cor:total-alexander}).  Friedl supplies a hyperbolic knot
with the prescribed module, and Kalfagianni supplies infinitely many
Alexander-trivial hyperbolic knots with pairwise nonisomorphic groups
\cite{FriedlSeifert,KalfagianniVolume}.  Use the prescribed-module knot at one preliminary stage and the
Alexander-trivial knots at all remaining stages; varying one rank-two factor
changes an intrinsically recognized knot group, hence the surface, while
leaving the module fixed.  All modifications lie in a \(4\)-ball, which
also gives \cref{cor:simply-connected-ambient}.

\medskip
\noindent\textbf{Organization.}
Sections~\ref{sec:peripheral}--\ref{sec:seed} establish normalized
push-offs, the one-step surgery formulas, and the initial injective
surface subgroup.  Sections~\ref{sec:powerclean}--\ref{sec:collapse}
recognize and remove surgeries by Bass--Serre and centralizer methods.
Sections~\ref{sec:filling}--\ref{sec:mainproof} construct the ordered
filling system and prove rigidity.  Section~\ref{sec:homology} computes
the Alexander module, chooses the knots, proves the main results, and
records group and exterior homology.

\medskip
\par\noindent\textbf{Acknowledgement of AI use.}
The author used Prism, Overleaf AI, and Grammarly to assist with LaTeX formatting and English-language editing. DeepSeek was used to assist with the algebraic computations in Propositions~\ref{prop:surface-subtree}, \ref{prop:complete-centralizers}, and~\ref{prop:alexander-one-step}, as well as with proofreading and the preparation of the TikZ figures.

\section{The push-off homomorphism and extendability}
\label{sec:peripheral}

The rigidity argument compares a mapping class on the surface group with
an ambient homeomorphism on the exterior group.  As based maps they have
two natural ambiguities: changing the exterior basepoint gives a common
inner conjugation, and changing the positive normal-circle section gives
a meridian shear.  Under the injectivity and homological hypotheses
proved later, both disappear, yielding the exact intertwining relation
used in the reverse induction.

Fix \(g\geq3\), a closed oriented surface \(\Sigma_g\), and
\(p\in\Sigma_g\), and write
\[
\pi=\pi_1(\Sigma_g,p).
\]
For \(f\in\Mod(\Sigma_g)\), choose a representative
\(\bar f:\Sigma_g\to\Sigma_g\) and a path
\[
\lambda_f:p\longrightarrow\bar f(p).
\]
They determine
\[
f_*(\gamma)
=
\lambda_f\,\bar f(\gamma)\,\lambda_f^{-1}.
\]
Changing \(\lambda_f\) changes this representative of the induced outer
automorphism by an inner automorphism; whenever an equation uses
\(f_*\), one representative is fixed.

A \emph{parametrized surface} is an oriented embedding
\[
\Sigma_g\longrightarrow S^4
\]
with a fixed identification of its image with \(\Sigma_g\).  For such an
\(F\), the class \([F]=0\in H_2(S^4;\mathbb Z)\), so its self-intersection
and the Euler class of its oriented normal \(2\)-plane bundle vanish;
the normal bundle is trivial.

Choose a section
\[
\sigma:\Sigma_g\longrightarrow\partial\nu F
\]
of the positively oriented normal circle bundle, and take the exterior
basepoint to be
\(
*=\sigma(p).
\)
Write
\[
E_F=S^4\setminus\operatorname{int}\nu F,
\qquad
G_F=\pi_1(E_F,*),
\]
and let
\[
\iota:\partial\nu F\hookrightarrow E_F
\]
be the inclusion.  The \emph{push-off homomorphism} is
\[
\rho_F
=
(\iota\circ\sigma)_*
:
\pi\longrightarrow G_F.
\]
Let
\(
\mu\in G_F
\)
be the positive meridian, namely the positively oriented boundary of a
normal disk to \(F\).  The chosen section identifies the image of the
boundary group with
\[
\langle\rho_F(\pi),\mu\rangle,
\]
and \(\mu\) commutes with every element of \(\rho_F(\pi)\).

With \(\sigma\) and \(*=\sigma(p)\) fixed, \(\rho_F\) is a based
homomorphism.  For another positive section \(\sigma'\), with
\(*'=\sigma'(p)\), choose a boundary path from \(*\) to \(*'\).
Relative to
\[
\partial\nu F\cong\Sigma_g\times S^1,
\]
the sections differ by a map
\[
a:\Sigma_g\longrightarrow S^1.
\]
The induced homomorphism
\[
\eta=a_*:\pi\longrightarrow\mathbb Z
\]
records the additional normal-circle winding.  After identifying the
basepoints along the chosen path, the new push-off is
\[
\rho_F'(\gamma)
=
q\,\rho_F(\gamma)\,\mu^{\eta(\gamma)}q^{-1}
\qquad(\gamma\in\pi),
\]
where
\[
q\in
\operatorname{im}\!\left(
\pi_1(\partial\nu F,*)\longrightarrow G_F
\right)
=
\langle\rho_F(\pi),\mu\rangle.
\]
Thus \(\eta\) is the meridian shear and the single common conjugator
\(q\) records the boundary change of basepoint.  Another boundary path
changes \(q\) within the same boundary subgroup.  The following
definition quotients by exactly these choices.

\begin{definition}[Marked peripheral data]
\label{def:marked-peripheral}
With the domain of the push-off homomorphism identified with
\(\pi\) by the parametrization of \(F\), let
\([\rho_F]_\partial\) denote the collection of homomorphisms
\(\rho':\pi\to G_F\) of the form
\[
\rho'(\gamma)
=
q\,\rho_F(\gamma)\,\mu^{\eta(\gamma)}q^{-1},
\qquad
q\in\langle\rho_F(\pi),\mu\rangle,
\quad
\eta\in\operatorname{Hom}(\pi,\mathbb Z).
\]
The triple
\[
(G_F,\mu,[\rho_F]_\partial)
\]
is the \emph{marked peripheral data} of the parametrized surface.
The fixed domain \(\pi\) records the parametrization, the element
\(\mu\) records the positive meridian, and
\([\rho_F]_\partial\) records the push-off homomorphism independently
of the choices of section and boundary basepoint path.
\end{definition}

An exterior homeomorphism determines only an outer isomorphism of
exterior groups.  A path in the target boundary from its basepoint to
the image of the source basepoint selects a representative; the next
proposition records its action on the push-off.

\begin{proposition}[Naturality of the push-off homomorphism]
\label{prop:peripheral-naturality}
Let \(F,F'\subset S^4\) be smoothly embedded, closed, oriented
surfaces, and let
\[
\Psi:(S^4,F)\longrightarrow(S^4,F')
\]
be an orientation-preserving homeomorphism whose restriction to the
oriented surface induces \(f\in\Mod(\Sigma_g)\).  Choose the positive
meridians \(\mu\) and \(\mu'\) compatibly.  The induced exterior homeomorphism determines an
outer isomorphism
\[
G_F\longrightarrow G_{F'}.
\]
After choosing a change-of-basepoint path in
\(\partial\nu F'\), this outer isomorphism has a representative
\[
\Phi:G_F\longrightarrow G_{F'}
\]
such that
\(
\Phi(\mu)=\mu'
\)
and
\begin{equation}
\label{eq:full-peripheral-naturality}
\Phi(\rho_F(\gamma))
=
q\,\rho_{F'}(f_*(\gamma))\,
(\mu')^{\theta(\gamma)}q^{-1}
\end{equation}
for every \(\gamma\in\pi\), where
\[
q\in\langle\rho_{F'}(\pi),\mu'\rangle
\quad\text{and}\quad
\theta\in\operatorname{Hom}(\pi,\mathbb Z).
\]
In particular, the common conjugating element may be chosen in the
image of the target boundary group.
\end{proposition}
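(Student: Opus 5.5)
First, put \(\Psi\) in a standard position near the surfaces.  By uniqueness of topological normal bundles (tubular neighbourhoods) for locally flat surfaces in \(4\)-manifolds, applied through Freedman--Quinn, we may isotope \(\Psi\) rel \(F\) so that \(\Psi(\nu F)=\nu F'\) and \(\Psi\) is a bundle map there.  Then \(\Psi\) restricts to a homeomorphism of exteriors \(E_F\to E_{F'}\) and to a homeomorphism \(\partial\nu F\to\partial\nu F'\) covering \(\bar f=\Psi|_F\).  Both orientations are preserved, so the normal orientation is preserved and \(\Psi\) carries positive normal circles to positive normal circles.

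\textbf{Basing the map.}  Set \(*=\sigma(p)\) and \(*'=\sigma'(p)\).  Choose the path \(\lambda_f\) from \(p\) to \(\bar f(p)\) in \(F'\) that was used to define \(f_*\).  Let \(\beta=\sigma'\circ\lambda_f\); this is a path in \(\partial\nu F'\) from \(*'\) to \(\sigma'(\bar f(p))\).  Next join \(\sigma'(\bar f(p))\) to \(\Psi(*)=\Psi(\sigma(p))\) by an arc \(\delta\) inside the normal circle over \(\bar f(p)\).  Define
\[
\Phi(x)=(\beta\delta)\,\Psi(x)\,(\beta\delta)^{-1}.
\]
This represents the outer class and is an isomorphism \(G_F\to G_{F'}\).

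\textbf{The meridian.}  A meridian at \(*\) is the fibre circle through \(*\).  It is carried to the positive fibre circle through \(\Psi(*)\).  Conjugating by \(\delta\) gives the fibre circle at \(\sigma'(\bar f(p))\), because \(\delta\) lies in the fibre and fibre circles commute with their own arcs.  Transporting along \(\sigma'\circ\lambda_f\) then gives the fibre circle at \(*'\), since the section carries fibres to fibres in the trivial bundle \(\Sigma_g\times S^1\).  Hence \(\Phi(\mu)=\mu'\).

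\textbf{The push-off.}  The composite \(\Psi\circ\sigma\circ\bar f^{-1}\) is a second positive section of \(\partial\nu F'\cong\Sigma_g\times S^1\).  It therefore differs from \(\sigma'\) by a map \(a:\Sigma_g\to S^1\).  For a loop \(\gamma\) at \(p\), the loop \(\Psi\sigma\gamma\) is homotopic, inside the boundary torus bundle, to \(\sigma'(\bar f\gamma)\) times \(\mu^{\theta(\gamma)}\), with \(\theta=a_*\circ f_*\) up to the identification of basepoints.  After conjugating by \(\beta\delta\) this becomes \(\rho_{F'}(f_*\gamma)\,\mu'^{\theta(\gamma)}\).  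Altogether this yields \eqref{eq:full-peripheral-naturality} with \(q\) equal to a fibre element; in fact \(q=1\) with the choices above.  A different choice of change-of-basepoint path inside \(\partial\nu F'\) changes \(\Phi\) by conjugation by a loop in \(\partial\nu F'\), whose image lies in \(\langle\rho_{F'}(\pi),\mu'\rangle\).  This accounts for the general \(q\) in the boundary group, matching the computation preceding \cref{def:marked-peripheral}.

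\textbf{Main obstacle.}  The delicate step is the first one: making \(\Psi\) preserve normal bundles and fibres in the topological category.  The smooth case is the classical tubular neighbourhood theorem.  For homeomorphisms I would invoke existence and uniqueness of normal bundles for locally flat submanifolds in dimension four (Freedman--Quinn, Section~9.3).  Alternatively, one can avoid fibre preservation altogether.  The restriction \(\Psi|_{\partial\nu F}\) is homotopic to a map covering \(\bar f\), since \(\partial\nu F\) deformation retracts within a collar.  Then \(\Psi\) maps \(\pi_1(\partial\nu F)\cong\pi\times\Z\) into \(\pi_1(\partial\nu F')\) compatibly with the projection to \(\pi\), because \(\pi_1(\nu F')\cong\pi\).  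The \(\Z\)-factor is the kernel of that projection, so its generator is sent to \(\mu'^{\pm1}\), and the sign is \(+\) by orientations.  Any lift of \(f_*\) into \(\pi\times\Z\) has the form \(\gamma\mapsto(f_*\gamma,\theta(\gamma))\) after an inner adjustment, and this gives the formula.
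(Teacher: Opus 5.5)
Your proposal is correct and follows essentially the same route as the paper: Freedman--Quinn uniqueness of topological normal bundles makes \(\Psi\) a bundle map \(\nu F\to\nu F'\); the basing path lifts \(\lambda_f\) through \(\sigma'\) and then follows the normal circle over \(\bar f(p)\) to \(\Psi(*)\), which gives the formula with \(q=1\) and \(\theta\) the winding of the section difference; and changing the boundary path conjugates by an element of \(\langle\rho_{F'}(\pi),\mu'\rangle\). The fibre-free alternative in your last paragraph is not needed, and as sketched it is incomplete: you would still have to show that the fibre \(\Z\) maps by \(\pm1\) rather than by some other multiple, for instance via \(H_1(E_{F'};\Z)\cong\Z\langle[\mu']\rangle\).
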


\begin{proof}
Since \(F\) and \(F'\) are smoothly embedded, they are locally flat.
Equip \(\Psi(\nu F)\) with the normal disc-bundle structure transported
from \(\nu F\).  Thus \(\Psi(\nu F)\) and \(\nu F'\) are two normal
bundles of \(F'\).  Let
\[
i_1:\Psi(\nu F)\hookrightarrow S^4,
\qquad
i_0:\nu F'\hookrightarrow S^4
\]
denote their inclusions.  By the uniqueness theorem for topological
normal bundles in dimension four
\cite[Theorem~9.3D]{FreedmanQuinn}, there exist a bundle isomorphism
\[
b:\Psi(\nu F)\longrightarrow \nu F'
\]
over \(F'\) and an ambient isotopy \(H_t\) of \(S^4\), fixed pointwise
on \(F'\), such that
\[
H_1\circ i_1=i_0\circ b.
\]
Replacing \(\Psi\) by \(H_1\circ\Psi\), we may therefore assume that
\(\Psi(\nu F)=\nu F'\) and that
\[
\Psi|_{\nu F}:\nu F\longrightarrow\nu F'
\]
is a bundle isomorphism covering \(\Psi|_F\).  Since \(\Psi\) preserves
the orientations of both \(S^4\) and \(F\), this bundle isomorphism
preserves the induced orientation of the normal bundle.

Let
\[
*=\sigma(p)\in\partial\nu F,
\qquad
*'=\sigma'(p)\in\partial\nu F'
\]
be the chosen basepoints.  Under the parametrizations, let
\[
\bar f:\Sigma_g\longrightarrow\Sigma_g
\]
be the surface restriction of \(\Psi\), and use the fixed path
\(\lambda_f:p\to\bar f(p)\) to define \(f_*\).  The chosen sections trivialize the oriented normal circle bundles.  In
these trivializations, the boundary restriction of \(\Psi\) is isotopic
through orientation-preserving bundle maps to
\[
(x,z)\longmapsto\bigl(\bar f(x),a(x)z\bigr),
\]
for a map
\[
a:\Sigma_g\longrightarrow S^1
\]
recording the normal-circle rotation.

Lift \(\lambda_f\) through the target section from \(*'\) to
\(\sigma'(\bar f(p))\), then follow the normal circle over \(\bar f(p)\)
to \(\Psi(*)\).  The concatenation
\[
\omega_0:*'\longrightarrow\Psi(*)
\]
lies in \(\partial\nu F'\) and selects a representative
\[
\Phi_0:G_F\longrightarrow G_{F'}
\]
of the induced outer isomorphism.

For \(\gamma\in\pi\), the loop
\[
\omega_0\,\Psi(\sigma(\gamma))\,\omega_0^{-1}
\]
projects to
\[
\lambda_f\,\bar f(\gamma)\,\lambda_f^{-1},
\]
representing \(f_*(\gamma)\), while its additional normal winding is
the degree of \(a\) on \(\gamma\).  Therefore
\[
\Phi_0(\rho_F(\gamma))
=
\rho_{F'}(f_*(\gamma))
(\mu')^{\theta(\gamma)},
\]
where
\[
\theta=a_*:
\pi_1(\Sigma_g,p)\longrightarrow
\pi_1(S^1,a(p))\cong\mathbb Z.
\]
The abelian target makes \(\theta\) a homomorphism factoring through
\(H_1(\Sigma_g;\mathbb Z)\).

For any other boundary path
\[
\omega:*'\longrightarrow\Psi(*)
\]
let \(\Phi\) be the corresponding representative.  The loop
\[
\omega\,\omega_0^{-1}
\]
is based at \(*'\); let \(q\in G_{F'}\) be its class.  Since it lies in
the boundary,
\[
q\in\langle\rho_{F'}(\pi),\mu'\rangle.
\]
The corresponding representatives satisfy
\[
\Phi(x)=q\,\Phi_0(x)\,q^{-1}
\qquad
(x\in G_F).
\]
This is \eqref{eq:full-peripheral-naturality} with one common
conjugator.

Because \(\Psi\) preserves the ambient and surface orientations, it
preserves the induced orientation of the normal \(2\)-plane bundle and
hence the positive meridian: \(\Phi(\mu)=\mu'\).
\end{proof}

\begin{remark}
\label{rem:inner-normalization}
The homeomorphism of exteriors determines only an outer isomorphism of
fundamental groups.  A boundary change-of-basepoint path selects a
representative, and changing that path changes the representative by
one inner automorphism.  The normalization below applies specifically
to automorphisms induced by homeomorphisms of pairs.  It does not
assert that every automorphism of \(G_F\) preserves
\(\rho_F(\pi)\).
\end{remark}

For the groups used later, \cref{prop:early-homology} gives
\begin{equation}
\label{eq:homology-normalization-hyp}
H_1(G;\mathbb Z)=\mathbb Z\langle[\mu]\rangle,
\qquad
\rho(\pi)\subset[G,G].
\end{equation}
Together with injectivity of \(\rho\), these conditions eliminate the
shear by abelianization and then remove the common conjugation by an
inner automorphism represented in \(\rho(\pi)\).

\begin{corollary}[Normalization of the push-off homomorphism]
\label{cor:rooted-normalization}
Let
\[
\rho:\pi\longrightarrow G
\]
be injective, and let \(\mu\in G\) commute with \(\rho(\pi)\).  Suppose
that \eqref{eq:homology-normalization-hyp} holds.  Let
\(\Phi\in\Aut(G)\) and \(f\in\Mod(\Sigma_g)\), and suppose that
\(
\Phi(\mu)=\mu
\)
and
\[
\Phi(\rho(\gamma))
=
q\,\rho(f_*(\gamma))\,
\mu^{\theta(\gamma)}q^{-1}
\]
for every \(\gamma\in\pi\), where
\[
q\in\langle\rho(\pi),\mu\rangle,
\qquad
\theta\in\operatorname{Hom}(\pi,\mathbb Z).
\]
Then \(\theta=0\).  After composing \(\Phi\) with an inner
automorphism represented by an element of \(\rho(\pi)\), one has
\begin{equation}
\label{eq:normalized-peripheral}
\Phi\circ\rho=\rho\circ f_*,
\qquad
\Phi(\mu)=\mu.
\end{equation}
\end{corollary}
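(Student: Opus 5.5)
The plan is to kill the shear \(\theta\) by passing to abelianization, and then to absorb the common conjugator \(q\) into an inner automorphism. First, let \(\mathrm{ab}:G\to H_1(G;\Z)=\Z\langle[\mu]\rangle\) be the abelianization. Since \(\Phi\in\Aut(G)\), it induces an automorphism \(\Phi_{\mathrm{ab}}\) of \(H_1(G;\Z)\), and \(\mathrm{ab}\circ\Phi=\Phi_{\mathrm{ab}}\circ\mathrm{ab}\). Because \(\Phi(\mu)=\mu\) and \([\mu]\) generates \(H_1(G;\Z)\), the induced map \(\Phi_{\mathrm{ab}}\) is the identity. Now apply \(\mathrm{ab}\) to both sides of the hypothesised identity
\[
\Phi(\rho(\gamma))=q\,\rho(f_*(\gamma))\,\mu^{\theta(\gamma)}q^{-1}.
\]
On the left, \(\rho(\gamma)\in[G,G]\) by \eqref{eq:homology-normalization-hyp}, so \(\mathrm{ab}(\Phi(\rho(\gamma)))=\Phi_{\mathrm{ab}}(0)=0\). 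On the right, conjugation is invisible after abelianizing, and \(\rho(f_*(\gamma))\in[G,G]\), so the image is \(\theta(\gamma)[\mu]\). Since \([\mu]\) has infinite order, \(\theta(\gamma)=0\) for every \(\gamma\), and hence \(\theta=0\).

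Second, I simplify the conjugator. Because \(\mu\) commutes with every element of \(\rho(\pi)\), the subgroup \(\langle\rho(\pi),\mu\rangle\) equals \(\rho(\pi)\cdot\langle\mu\rangle\). So I can write \(q=\rho(\delta)\mu^k\) for some \(\delta\in\pi\) and \(k\in\Z\). The factor \(\mu^k\) centralizes \(\rho(\pi)\), so conjugation by \(q\) agrees with conjugation by \(\rho(\delta)\) on \(\rho(\pi)\). This gives \(\Phi(\rho(\gamma))=\rho(\delta)\,\rho(f_*(\gamma))\,\rho(\delta)^{-1}\) for all \(\gamma\in\pi\).

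Third, I normalize. Put \(\Phi'=c_{\rho(\delta)^{-1}}\circ\Phi\), where \(c_x(y)=xyx^{-1}\); this composes \(\Phi\) with an inner automorphism represented by an element of \(\rho(\pi)\). Then \(\Phi'(\rho(\gamma))=\rho(f_*(\gamma))\), which is \(\Phi'\circ\rho=\rho\circ f_*\). Also \(\Phi'(\mu)=\rho(\delta)^{-1}\mu\,\rho(\delta)=\mu\), again because \(\mu\) commutes with \(\rho(\pi)\). This is exactly \eqref{eq:normalized-peripheral}.

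There is no serious obstacle; the only points needing care are these:
\begin{itemize}
\item \(\Phi_{\mathrm{ab}}\) is the identity, which uses that \(H_1(G;\Z)\) is infinite cyclic generated by \([\mu]\).
\item \(\langle\rho(\pi),\mu\rangle=\rho(\pi)\langle\mu\rangle\), which uses that \(\mu\) centralizes \(\rho(\pi)\).
\end{itemize}
Injectivity of \(\rho\) is not needed for these identities. It only ensures that the normalized relation determines \(f_*\) on \(\pi\), and \(\delta\) modulo the kernel, unambiguously.
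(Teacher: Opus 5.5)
Your proof is correct and follows the paper's argument: you abelianize to kill \(\theta\) using \(\rho(\pi)\subset[G,G]\) and the infinite order of \([\mu]\), then write \(q=s\mu^n\) with \(s\in\rho(\pi)\) and compose with conjugation by \(s^{-1}\). The one difference is that the paper uses injectivity of \(\rho\) and the trivial center of \(\pi\) to obtain \(\langle\rho(\pi),\mu\rangle=\rho(\pi)\times\langle\mu\rangle\), which makes the decomposition of \(q\) unique; you correctly observe that existence of such a decomposition is enough, so injectivity is not needed for this corollary.
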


\begin{proof}
Abelianize the naturality equation in
\(H_1(G;\mathbb Z)=G/[G,G]\).  Conjugation disappears, and both
\(\rho(f_*(\gamma))\) and \(\Phi(\rho(\gamma))\) have zero class because
\(\rho(\pi)\subset[G,G]\) and automorphisms preserve \([G,G]\).  Hence
\[
0=\theta(\gamma)[\mu]
\qquad
(\gamma\in\pi).
\]
Since \([\mu]\) generates an infinite cyclic group,
\(\theta(\gamma)=0\) for every \(\gamma\), so \(\theta=0\).

To remove the common conjugation, note that
\(\rho(\pi)\cap\langle\mu\rangle\) centralizes \(\rho(\pi)\).  By
injectivity, \(\rho(\pi)\cong\pi\), whose center is trivial for
\(g\geq2\); hence the intersection is trivial.  Since the two factors
commute,
\[
\langle\rho(\pi),\mu\rangle
=
\rho(\pi)\times\langle\mu\rangle.
\]
Thus \(q=s\mu^n\) uniquely, with \(s\in\rho(\pi)\) and
\(n\in\mathbb Z\).

Since \(\mu\) commutes with \(\rho(\pi)\) and \(\theta=0\), the
naturality equation becomes
\[
\Phi(\rho(\gamma))
=
s\,\rho(f_*(\gamma))\,s^{-1}
\qquad
(\gamma\in\pi).
\]
Define \(\Phi'(x)=s^{-1}\Phi(x)s\).  Then
\[
\Phi'\circ\rho=\rho\circ f_*.
\]
Because \(s\in\rho(\pi)\) commutes with \(\mu\), one also has
\(\Phi'(\mu)=\mu\).  Replacing \(\Phi\) by \(\Phi'\) proves
\eqref{eq:normalized-peripheral}.
\end{proof}

All exterior groups in the construction will satisfy the injectivity and
homological hypotheses above.  Thus an extendable mapping class is
represented by a meridian-fixing exterior automorphism satisfying the
literal push-off intertwining equation.  The next section computes its
change under one rim surgery.

\section{The exterior group and push-off homomorphism for one rim surgery}
\label{sec:rimlocal}

This section computes the exterior group and push-off homomorphism after
one ordinary untwisted rim surgery.  Van Kampen gives the amalgam and a
canonical retraction to the preceding group; a based groupoid calculation
records the preferred-longitude insertions at crossings in traversal order.

\subsection{The local model and the induced parametrization}

Let \(F\subset S^4\) be an oriented parametrized surface with chosen
positive normal-circle section
\(
\sigma:\Sigma_g\longrightarrow\partial\nu F
\)
and exterior basepoint
\(
*=\sigma(p).
\)
Write
\[
G=\pi_1(E_F,*),
\qquad
\rho_F:\pi_1(\Sigma_g,p)\longrightarrow G
\]
for the corresponding push-off homomorphism.

Let \(d\subset\Sigma_g\) be an oriented essential simple closed curve.
Choose a representative disjoint from \(p\), a point \(q\in d\), and a
path
\(
\lambda_d:p\longrightarrow q.
\)
Traversing \(\lambda_d\), then \(d\), and then
\(\lambda_d^{-1}\) determines a based element of
\(\pi_1(\Sigma_g,p)\), which we also denote by \(d\).  The
parametrization of \(F\) identifies the abstract curve \(d\) with an
embedded curve on \(F\).

Choose an oriented annular neighborhood of the curve and a product
neighborhood
\[
(N,N\cap F)
\cong
(S^1_d\times B^3,S^1_d\times I_0),
\]
where \(I_0\subset B^3\) is a standard proper unknotted arc.  Use the
framing obtained from the oriented normal line to \(d\) in \(F\) and the
oriented normal \(2\)-plane bundle of \(F\), chosen so that \(\sigma\) is
constant in the latter factor along the annulus.  Take \(N\) disjoint from
the global basepoint.

Let \(J\subset S^3\) be oriented, and let
\[
J^\circ:I_0\longrightarrow B^3
\]
be the long knot obtained by deleting a trivial open subarc, agreeing with
\(I_0\) near its endpoints.  Ordinary untwisted rim surgery replaces
\(
S^1_d\times I_0
\)
by
\[
S^1\times J^\circ
\subset S^1\times B^3.
\]
It preserves the circle and arc parameters; denote the result by \(F'\).

The identity outside \(N\) and the product parametrization inside it
parametrize \(F'\) by the same \(\Sigma_g\).  Extend \(\sigma\) by keeping
it constant in the chosen normal \(2\)-plane direction in the local model.
The surgery misses \(p\), so the basepoint remains \(*\), and we write
\[
\rho_{F'}:\pi_1(\Sigma_g,p)\longrightarrow\pi_1(E_{F'},*)
\]
for the resulting push-off.

For the equivalent rim-torus description, let \(d^+\) be the copy of \(d\)
in the positive section and push
\[
R_d=d^+\times S^1_\mu
\]
slightly into the exterior.  On
\[
\partial\nu R_d\cong T^3
\]
use the ordered basis
\[
(\alpha,\beta,\delta)
=
(d^+,\mu,\text{rim-torus meridian}),
\]
where \(\delta\) is the positive boundary of a normal disk to \(R_d\) in
\(E_F\).

The local replacement is equivalent to removing \(\nu R_d\) and
attaching \(S^1_s\times E(J)\) by
\begin{equation}
\label{eq:rim-gluing}
s\longmapsto\alpha,
\qquad
m_J\longmapsto\beta,
\qquad
\ell_J\longmapsto\delta,
\end{equation}
where \(m_J\) and \(\ell_J\) are the meridian and preferred longitude
of \(J\).  This is the ordinary untwisted gluing.  The compatibility of
the product framing with the chosen normal-circle section means
precisely that the image of \(\delta\) in
\eqref{eq:rim-gluing} contains no additional power of \(\alpha\) or
\(\beta\).

Set
\[
h=\rho_F(d),
\qquad
K_J=\pi_1(E(J)).
\]
Thus \(h\) is the based push-off image of the surgery curve before the
replacement.  The two elements \(\mu\) and \(h\) commute because they lie in the
image of the boundary group.  

Assume henceforth that
\[
\langle\mu,h\rangle\cong\mathbb Z^2.
\]
This is equivalent to injectivity of the attaching-region map and is the
rank-two case used in the main construction.  The preliminary replacements
with trivial supporting-curve push-off are treated separately in
\cref{sec:disk}.

The new local exterior is
\[
S^1\times(B^3\setminus\nu J^\circ),
\]
whose fundamental group is
\(
K_J\times\mathbb Z
\).
Let \(s\) be the loop around its \(S^1\)-coordinate, so
\[
\pi_1\!\left(
S^1\times(B^3\setminus\nu J^\circ)
\right)
\cong
K_J\times\langle s\rangle.
\]
The gluing identifies \(s\) with the pre-surgery element
\(h=\rho_F(d)\).  Before forming the amalgam we keep \(s\) and \(h\)
distinct because they lie in different vertex groups; neither is the
preferred longitude \(\ell_J\).

For a subgroup \(L\le Q\), write
\(
\normal{L}_Q
\)
for its normal closure in \(Q\).  When the ambient group is clear, we
omit the subscript.

For injective maps
\[
\iota_A:C\longrightarrow A,
\qquad
\iota_B:C\longrightarrow B,
\]
write
\[
A*_C B
\]
for their group pushout, equivalently the quotient of \(A*B\) by
\[
\iota_A(c)=\iota_B(c)
\qquad(c\in C).
\]
The vertex groups are \(A,B\), the edge group is \(C\), and maps from the
vertices agreeing on \(C\) extend uniquely to the amalgam.  In
\[
G*_{\langle\mu,h\rangle}
\bigl(K_J\times\langle s\rangle\bigr),
\]
the abstract edge group is \(\mathbb Z^2\) with ordered basis
\((\bar\mu,\bar h)\), mapped to \((\mu,h)\) in \(G\) and
\((m_J,s)\) in the product vertex.  The same symbols denote the
identified elements only after the pushout is formed.

\subsection{The exterior group and the canonical retraction}

The next theorem gives the one-step group, its canonical retraction, and
the quotient recovering the preceding push-off.

\begin{theorem}[Exterior group and retraction for one rim surgery]
\label{thm:one-step}
Let \(F'\) be obtained from \(F\) by ordinary untwisted rim surgery
along \(d\) using \(J\), and suppose that
\[
\langle\mu,\rho_F(d)\rangle\cong\mathbb Z^2.
\]
Then
\begin{equation}
\label{eq:one-step-amalgam}
G':=\pi_1(E_{F'},*)
\cong
G*_{\langle\mu,h\rangle}
\bigl(K_J\times\langle s\rangle\bigr),
\end{equation}
where the edge maps are
\[
\mu\longmapsto m_J,
\qquad
h\longmapsto s.
\]
Equivalently,
\begin{equation}
\label{eq:one-step-quotient}
G'
\cong
\frac{G*K_J}
{\normal{\,m_J\mu^{-1},\ [h,k]\;:\;k\in K_J\,}_{G*K_J}}.
\end{equation}

There is a canonical retraction
\[
r:G'\onto G
\]
such that
\begin{equation}
\label{eq:retraction-properties}
r\circ\rho_{F'}=\rho_F,
\qquad
\ker r=\normal{[K_J,K_J]}_{G'}.
\end{equation}
In particular, if \(\rho_F\) is injective, then \(\rho_{F'}\) is
injective.
\end{theorem}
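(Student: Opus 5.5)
We will prove the amalgam formula by Seifert--van Kampen applied to the rim-torus description, then build the retraction from the universal property of the pushout.

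\textbf{Decomposition.} Write
\[
E_{F'}=\bigl(E_F\setminus\operatorname{int}\nu R_d\bigr)\cup_{T^3}\bigl(S^1_s\times E(J)\bigr),
\]
glued by \eqref{eq:rim-gluing}. The complement \(E_F\setminus\nu R_d\) of the rim torus has the same fundamental group as \(E_F\) modulo the meridian \(\delta\); more precisely, re-gluing \(\nu R_d\cong T^2\times D^2\) kills \(\delta\). The cleaner route is to cut along the solid-torus-times-interval model. The product neighborhood \(N\) has
\[
N\setminus\nu F\simeq S^1_d\times(B^3\setminus\nu I_0)\simeq S^1\times S^1,
\]
with \(\pi_1=\langle h,\mu\rangle\). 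Its replacement is \(S^1\times(B^3\setminus\nu J^\circ)\), with group \(K_J\times\langle s\rangle\). The two pieces meet the outer part \(E_F\setminus N\) along \(S^1\times(S^2\setminus\text{two disks})\simeq T^2\), which is the same in both cases.

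\textbf{Amalgam.} Van Kampen then gives
\[
G\cong \pi_1(E_F\setminus N)*_{\mathbb Z^2}\mathbb Z^2,
\qquad
G'\cong\pi_1(E_F\setminus N)*_{\mathbb Z^2}(K_J\times\mathbb Z).
\]
Here the boundary torus maps isomorphically onto \(\langle\mu,h\rangle\) in the first amalgam, since the inclusion \(\partial\)-torus \(\to N\setminus\nu F\) is a homotopy equivalence. Hence \(G\cong\pi_1(E_F\setminus N)\), and substituting yields \eqref{eq:one-step-amalgam}. The edge maps are read off from the untwisted framing: the circle factor goes to \(s\leftrightarrow h\), and the meridian of the arc goes to \(m_J\leftrightarrow\mu\). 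The hypothesis \(\langle\mu,h\rangle\cong\mathbb Z^2\) makes the edge injective on the \(G\) side. It is injective on the other side because \(m_J\) has infinite order and is independent of the central \(s\), so the pushout is a genuine amalgam. The presentation \eqref{eq:one-step-quotient} follows by eliminating \(s=h\) and recording that \(h\) commutes with \(K_J\).

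\textbf{Retraction.} Define \(r\) to be the identity on \(G\). On the product vertex, let \(K_J\to\langle\mu\rangle\) be the abelianization, \(k\mapsto\mu^{\operatorname{lk}}\), with \(m_J\mapsto\mu\), and let \(s\mapsto h\). This is a homomorphism \(K_J\times\langle s\rangle\to G\) because \(\mu\) and \(h\) commute. It agrees on the edge, so it extends to \(r\), and \(r\) restricts to the identity on \(G\). The kernel contains \([K_J,K_J]\). Conversely, \(G'/\normal{[K_J,K_J]}\) is the amalgam \(G*_{\mathbb Z^2}(\mathbb Z\times\mathbb Z)\) with an isomorphic edge, hence equals \(G\). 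This gives \(\ker r=\normal{[K_J,K_J]}\).

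\textbf{Push-off.} For \(r\circ\rho_{F'}=\rho_F\), the key point is that the section \(\sigma\) is extended constantly in the product model. A loop \(\gamma\) in \(\Sigma_g\) therefore pushes off to a path that coincides with the old push-off outside \(N\). Inside \(N\), each crossing of \(d\) passes across the pushed-off arc \(J^\circ\) in \(B^3\setminus\nu J^\circ\), contributing an element of \(K_J\) that is a conjugate of the longitude \(\ell_J\). Its image under abelianization is trivial, since the preferred longitude has linking number zero. After applying \(r\), each such crossing contributes the same as the corresponding crossing of \(I_0\), namely trivially.

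The main obstacle is this crossing bookkeeping: verifying carefully that the local push-off arc, which runs parallel to \(J^\circ\) with the untwisted framing, represents an element of \(K_J\) whose abelian image is \(0\), and that basepoint paths are handled consistently.

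Injectivity of \(\rho_{F'}\) is then immediate: if \(\rho_{F'}(\gamma)=1\), then \(\rho_F(\gamma)=r(\rho_{F'}(\gamma))=1\), so \(\gamma=1\) when \(\rho_F\) is injective.
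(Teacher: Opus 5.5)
Your proposal is correct and follows the paper's proof essentially step for step. It uses the same van Kampen decompositions $E_F=X\cup_M V_0$ and $E_{F'}=X\cup_M V_J$ along $M\cong T^2\times I$, the same retraction (identity on $G$, meridional abelianization on $K_J$, $s\mapsto h$) obtained from the pushout's universal property, the same kernel argument via the collapsed pushout $G*_{A}\bigl(B/[K_J,K_J]\bigr)\cong G$, and the same crossing argument for $r\circ\rho_{F'}=\rho_F$. The crossing step you flag as the main obstacle is handled in the paper by \cref{lem:local-crossing-coordinates}: under the gluing \eqref{eq:rim-gluing}, the comparison loop of the old and new connectors is the rim-torus meridian $\delta$, i.e.\ the preferred longitude $\ell_J\in[K_J,K_J]$, which is exactly your linking-number-zero observation.
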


\begin{proof}

Let
\[
A_{\mathrm{loc}}
=
N\cap F
=
S^1_d\times I_0
\]
be the local annulus before surgery, and put
\[
V_0
=
N\setminus\operatorname{int}\nu A_{\mathrm{loc}}
=
S^1_d\times
\bigl(B^3\setminus\operatorname{int}\nu I_0\bigr).
\]
The complement of the standard proper unknotted arc is a solid torus, and
the attaching part of \(\partial N\) is
\[
\begin{aligned}
M
&=
\partial N\setminus
\operatorname{int}\nu_{\partial N}
(\partial A_{\mathrm{loc}})
\cong
S^1_d\times
\bigl(
S^2\setminus
(\mathring D^2\sqcup\mathring D^2)
\bigr)
\cong T^2\times I.
\end{aligned}
\]
Both \(M\) and \(V_0\) have fundamental group
\[
\langle d,\mu\mid[d,\mu]=1\rangle
\cong\mathbb Z^2,
\]
where \(d\) is the \(S^1_d\)-factor and \(\mu\) bounds a small disk in
\(B^3\) transverse to \(I_0\).  The inclusion \(M\hookrightarrow V_0\)
induces an isomorphism on fundamental groups.

Let
\[
X=\overline{E_F\setminus V_0}.
\]
Choose \(*_{\mathrm{loc}}\in M\) and
\(\omega:*\to *_{\mathrm{loc}}\) as in
\cref{conv:peripheral-groupoid} below, using them for both van Kampen
decompositions and for the based local knot-group inclusion.  Since
\[
E_F=X\cup_M V_0
\]
and \(\pi_1(M)\to\pi_1(V_0)\) is an isomorphism, van Kampen gives
\[
\pi_1(X,*)\xrightarrow{\cong}\pi_1(E_F,*)=G.
\]
Under this identification, \(M\hookrightarrow X\) sends
\[
d\longmapsto h=\rho_F(d),
\qquad
\mu\longmapsto\mu.
\]
Thus
\[
\langle\mu,h\rangle\cong\mathbb Z^2
\]
is exactly injectivity of the old edge map.

After surgery the outside piece remains \(X\), while
\[
V_J
=
S^1_s\times(B^3\setminus\nu J^\circ),
\qquad
\pi_1(V_J)
=
\langle s\rangle\times K_J.
\]
On \(M\), the \(d\)-direction maps to \(s\) and \(\mu\) to \(m_J\),
so \(\langle s,m_J\rangle\cong\mathbb Z^2\) and the new edge map is
injective.  Van Kampen for
\[
E_{F'}=X\cup_M V_J
\]
then gives \eqref{eq:one-step-amalgam} with the stated edge maps.

For the quotient description, present
\[
K_J\times\langle s\rangle
=
\left\langle
K_J,s
\ \middle|\
[s,k]=1\text{ for every }k\in K_J
\right\rangle.
\]
The edge relations are
\[
m_J=\mu,
\qquad
s=h.
\]
Eliminating \(s\) changes the product relations to
\[
[h,k]=1
\qquad(k\in K_J).
\]
This is \eqref{eq:one-step-quotient}.

Let
\[
\phi_J:K_J\longrightarrow\mathbb Z
\]
be meridional abelianization,
\[
K_J/[K_J,K_J]\cong\mathbb Z
\]
with \(\phi_J(m_J)=1\).  On the vertex groups define
\[
r_G=\operatorname{id}_G:G\longrightarrow G
\]
and
\[
r_J:
K_J\times\langle s\rangle
\longrightarrow G,
\qquad
r_J(k)=\mu^{\phi_J(k)},
\qquad
r_J(s)=h.
\]
Because \([\mu,h]=1\), \(r_J\) respects the product relation.  Both
vertex maps send the ordered edge basis to
\[
\bar\mu\longmapsto\mu,
\qquad
\bar h\longmapsto h.
\]
The universal property therefore gives a unique
\[
r:G'\longrightarrow G
\]
restricting to \(r_G,r_J\); since \(r|_G=\operatorname{id}_G\), it is a
retraction.

For push-offs, use the common local basepoint and path above and the
conventions of \cref{conv:peripheral-groupoid}.  Outside portions lie in
\(X\), where \(r\) is the identity, while
\cref{lem:local-crossing-coordinates} replaces each old crossing connector
by that connector followed by the endpoint-based
\(\ell_J^{\epsilon}\).  Since
\(
\ell_J\in[K_J,K_J]
\)
and hence \(r(\ell_J)=1\), multiplication with the same endpoints, basing
paths, and traversal order gives
\[
r\circ\rho_{F'}=\rho_F.
\]
The complete ordered formula is \cref{prop:ordered-peripheral}.

For the kernel, put
\[
N_J=\normal{[K_J,K_J]}_{G'}.
\]
The definition of \(r_J\) gives \(N_J\le\ker r\).  Modulo \(N_J\), the
product vertex becomes
\[
\bigl(K_J/[K_J,K_J]\bigr)\times\langle s\rangle
\cong
\langle m_J,s\mid[m_J,s]=1\rangle
\cong\mathbb Z^2.
\]
The edge map
\[
\bar\mu\longmapsto m_J,
\qquad
\bar h\longmapsto s
\]
is then an isomorphism, so the pushout canonically reduces to
\[
G'/N_J\cong G
\]
identically on \(G\), with \(m_J\mapsto\mu\) and \(s\mapsto h\).  Its
composite with the quotient has the same restrictions to both original
vertex groups as \(r\), hence equals \(r\) by uniqueness.  Therefore
\[
\ker r=N_J
=
\normal{[K_J,K_J]}_{G'}.
\]

Finally, if \(\rho_{F'}(x)=1\), then
\[
\rho_F(x)
=
r(\rho_{F'}(x))
=
1.
\]
Thus injectivity of \(\rho_F\) implies \(x=1\), proving injectivity of
\(\rho_{F'}\).
\end{proof}

\subsection{The ordered change in the push-off homomorphism}

The retraction kills the preferred longitudes; the following conventions
retain their endpoint basing and traversal order.

\begin{convention}[Groupoid conventions for crossing paths]
\label{conv:peripheral-groupoid}
For every knot group inserted in \cref{sec:rimlocal,sec:disk,sec:seed},
choose a local basepoint \(*_{\mathrm{loc}}\) in the common attaching
region and use one path \(\omega:*\to *_{\mathrm{loc}}\) both in van
Kampen and in the based inclusion of that knot group.  For local
meridian and preferred longitude loops at \(*_{\mathrm{loc}}\), set
\[
\mu=\omega m_{\mathrm{loc}}\omega^{-1},
\qquad
\ell=\omega\ell_{\mathrm{loc}}\omega^{-1}.
\]
In the groupoid formulas below, \(\ell_J\) denotes
\(\ell_{\mathrm{loc}}\), transported to a crossing endpoint by the
specified boundary path.  The globally based elements at the preliminary
and vertical stages are written \(\ell_j^D\) and \(\ell_\nu^V\).

Let \(\gamma\) be a based surface loop transverse to \(d\).  Let
\(\mathcal O_\Sigma\) consist of the surface basepoint \(p\) and, for
each intersection, one point immediately before and one immediately
after the crossing arc.  Let
\(
\mathcal O=\sigma(\mathcal O_\Sigma)
\)
be their positive normal push-offs, together with the exterior basepoint
\(*=\sigma(p)\).  They lie outside the replacement interior and are regarded in both
exteriors.  For a space \(Y\) containing \(\mathcal O\), write
\[
\Pi_1(Y;\mathcal O)
\]
for the fundamental groupoid with object set \(\mathcal O\).

If
\[
a:u\longrightarrow v,
\qquad
b:v\longrightarrow w,
\]
then the product \(ab\) means that \(a\) is traversed first and \(b\)
second.  Thus a loop \(\lambda\) based at \(v\), transported to \(u\)
along \(a\), is \(a\lambda a^{-1}\).

At each crossing, the old pushed-off connector is the reference
connector in the new exterior.  In the coordinates of
\cref{lem:local-crossing-coordinates}, travel from \(u<0\) to \(u>0\)
is positive and reverse travel is negative.  A boundary path from the
exit endpoint to \(*_{\mathrm{loc}}\) bases the inserted local longitude
at that endpoint; for the reverse crossing, the positive reference
connector transports it to the other endpoint.  These identities use only
the section-compatible product model and ordinary untwisted gluing,
require no injective old edge map, and add no meridian or external
\(S^1\)-factor power.
\end{convention}

\begin{lemma}[Crossing paths in local coordinates]
\label{lem:local-crossing-coordinates}
Choose oriented product coordinates
\[
N
=
S^1_\theta\times[-1,1]_u\times D^2_{v,w}
\]
such that
\[
N\cap F
=
S^1_\theta\times[-1,1]_u\times\{(0,0)\},
\]
the ordered pair
\(
(\partial_\theta,\partial_u)
\)
agrees with the orientation induced on \(N\cap F\) from \(F\), and the positive surface meridian is the
positively oriented boundary of the \((v,w)\)-disk.

Let \(u_-\) and \(u_+\) be groupoid objects on the \(u<0\) and \(u>0\)
sides of the annulus.  A crossing directed from \(u_-\) to \(u_+\) is
called positive.  Let
\(
\kappa_+:u_-\longrightarrow u_+
\)
be the corresponding pushed-off crossing path before surgery, viewed
as the reference connector in the new exterior, and let
\(
\kappa_-=\kappa_+^{-1}
\)
be the same path with the opposite orientation.  After surgery,
\begin{equation}
\label{eq:local-crossing-connectors}
\kappa_+'=\kappa_+\lambda_+,
\qquad
\kappa_-'=\kappa_-\lambda_-^{-1},
\end{equation}
where \(\lambda_+\) is the preferred longitude \(\ell_J\), based at
\(u_+\), and
\begin{equation}
\label{eq:negative-exit-transport}
\lambda_-
=
\kappa_+\lambda_+\kappa_+^{-1}
\end{equation}
is the same longitude transported to \(u_-\).

Then the endpoint-based and
startpoint-based comparison loops for the positive crossing are
respectively
\[
\kappa_+^{-1}\kappa_+'
\in\pi_1(E_{F'};u_+)
\]
and
\[
\kappa_+'\kappa_+^{-1}
\in\pi_1(E_{F'};u_-).
\]
The first is \(\lambda_+\), and the second is
\(\lambda_-\).  Under \eqref{eq:rim-gluing}, both represent the
positively oriented rim-torus meridian \(\delta\) at their respective
basepoints. 
\end{lemma}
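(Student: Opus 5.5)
The plan is to work entirely in the local model \(N\) and use the rim-torus description \eqref{eq:rim-gluing}. Before surgery, the positive crossing path \(\kappa_+\) runs from \(u_-\) to \(u_+\) in the positive section. Concretely, it is a path \(t\mapsto(\theta_0,t,\epsilon)\) with \((v,w)=\epsilon\) fixed along the section direction, so that \(\sigma\) is constant there. This path meets the rim torus \(R_d=d^+\times S^1_\mu\) exactly once, because \(R_d\) sits at a fixed small \(u\)-value, or equivalently as the boundary of a thin collar around the annulus. After surgery we must decide which path in the new exterior counts as the ``new pushed-off connector'' \(\kappa_+'\). I will take it to be the push-off, along the extended section \(\sigma\), of the same surface arc through the new local piece \(S^1\times J^\circ\). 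In the product model \(S^1_s\times(B^3\setminus\nu J^\circ)\) this is a parallel of the arc \(J^\circ\) with \(\theta\) fixed. Because the section is kept constant in the normal direction, this parallel is the Seifert (zero-linking) framing push-off of the long knot \(J^\circ\).

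The first step identifies the comparison loop \(\kappa_+^{-1}\kappa_+'\) as a loop based at \(u_+\). Both paths agree outside a small ball around the crossing, so the comparison loop is the closed curve obtained by following the unknotted arc \(I_0\) backwards and then the zero-framed parallel of \(J^\circ\) forwards. Together these form a zero-framed longitude of the knot \(J=J^\circ\cup(\text{trivial arc})\). Viewed in \(\{\theta_0\}\times(B^3\setminus\nu J^\circ)\subset E(J)\), this closed curve is therefore the preferred longitude \(\ell_J\), based at the exit endpoint by the chosen boundary path. Untwistedness and the section convention give that no power of \(m_J=\mu\) appears. Because \(\theta\) is held fixed, no power of \(s\) appears either. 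This proves \(\kappa_+'=\kappa_+\lambda_+\).

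The second step is pure groupoid algebra. By definition \(\kappa_-=\kappa_+^{-1}\) and \(\kappa_-'=(\kappa_+')^{-1}=\lambda_+^{-1}\kappa_+^{-1}\). Inserting \(\kappa_+^{-1}\kappa_+\), this equals \(\kappa_+^{-1}\bigl(\kappa_+\lambda_+^{-1}\kappa_+^{-1}\bigr)=\kappa_-\lambda_-^{-1}\), which gives \eqref{eq:local-crossing-connectors} with \(\lambda_-\) as in \eqref{eq:negative-exit-transport}. The startpoint-based loop is then \(\kappa_+'\kappa_+^{-1}=\kappa_+\lambda_+\kappa_+^{-1}=\lambda_-\).

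The third step shows that both loops represent \(\delta\). Under the gluing \eqref{eq:rim-gluing}, \(\ell_J\mapsto\delta\). Transport along \(\kappa_+\) moves a normal circle of \(R_d\) from one side to the other, so it preserves the class of \(\delta\) at the respective basepoint. I expect the main obstacle to be the orientation and sign bookkeeping: one must check that traversing from \(u_-\) to \(u_+\) with the orientation \((\partial_\theta,\partial_u)\) and the positive meridian yields \(+\ell_J\) and the positively oriented \(\delta\), rather than their inverses. To settle this, I would compute the linking number of the comparison loop with \(R_d\) in the oriented \(3\)-dimensional slice \(\{\theta_0\}\times[-1,1]_u\times D^2\), using the basis \((\alpha,\beta,\delta)\) and the orientation of \(S^4\). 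If needed, I would fix the orientation of \(J\) to match.
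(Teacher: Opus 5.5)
Your proof is correct. Its groupoid step, giving \(\kappa_-'=\kappa_-\lambda_-^{-1}\) and \(\kappa_+'\kappa_+^{-1}=\lambda_-\), is exactly the paper's. You differ only in the one geometric identification.

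The paper works in the rim-torus picture. It asserts directly that the endpoint-based comparison loop is the positive meridian \(\delta\) of \(R_d\), reading the sign from the positive local intersection of the crossing with the oriented core \(S^1_\theta\times\{0\}\). Only then does it name the loop \(\ell_J\) via \eqref{eq:rim-gluing}.

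You work in the product model \(S^1\times(B^3\setminus\nu J^\circ)\). There the comparison loop is the section push-off of \(J^\circ\) closed up by the old arc, i.e.\ the zero-framed longitude of \(J\), and \(\delta\) follows from the gluing. Your version shows explicitly why no \(m_J\)-power appears (section/untwisted convention) and why no \(s\)-power appears (\(\theta\) fixed); the paper leaves both implicit. The paper's version instead treats the orientation of \(\delta\) as the primary fact.

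Three small corrections:
\begin{itemize}
\item You must read the old connector in the new exterior through the identification near the boundary of the replacement region, i.e.\ pushed into the attaching region. Otherwise ``the unknotted arc \(I_0\)'' has no well-defined class in the complement of \(J^\circ\). With that reading it is a boundary arc, which closes \(J^\circ\) up to \(J\).
\item The remark that \(\kappa_+\) meets \(R_d\) once does not hold after \(R_d\) is pushed into the exterior: a path and a torus in a \(4\)-manifold are generically disjoint. The correct statement is that the strip sweeping \(\kappa_+\) to that boundary arc meets \(R_d\) once, so the comparison loop links \(R_d\) once. You never use this remark.
\item The orientation of \(J\) is not yours to adjust. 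Requiring \(m_J\mapsto\mu\) already forces \(J\) to be oriented in the \(+u\) direction along \(J^\circ\), since reversing \(J\) reverses \(m_J\) as well as \(\ell_J\). So the sign in your first step is automatic, and your proposed linking-number computation is only a consistency check of \eqref{eq:rim-gluing}.
\end{itemize}
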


\begin{proof}
A path from \(u_-\) to \(u_+\) has positive local intersection with the
oriented core \(S^1_\theta\times\{0\}\).  The endpoint-based comparison
of the old and new positive connectors is the positive rim-torus
meridian, so
\(
\kappa_+^{-1}\kappa_+'=\delta=\lambda_+.
\)
Hence
\(
\kappa_+'=\kappa_+\lambda_+,
\)
and transport to the startpoint gives
\(
\kappa_+'\kappa_+^{-1}
=\kappa_+\lambda_+\kappa_+^{-1}=\lambda_-.
\)
For the reverse crossing,
\(
\kappa_- = \kappa_+^{-1}
\)
and
\(
\kappa_-'=(\kappa_+')^{-1}=\lambda_+^{-1}\kappa_-.
\)
Using
\(
\lambda_- = \kappa_+\lambda_+\kappa_+^{-1}
\)
gives
\(
\lambda_+^{-1}\kappa_- = \kappa_-\lambda_-^{-1},
\)
which is the negative identity in
\eqref{eq:local-crossing-connectors}.
\end{proof}

Applying the local identity to all crossings gives an ordered formula,
which records more than algebraic intersection.

\begin{proposition}[Push-off formula in the fundamental groupoid]
\label{prop:ordered-peripheral}
Let \(\gamma\) be a based surface loop transverse to \(d\), with
crossings
\(
p_1,\ldots,p_n
\)
listed in traversal order and signs
\(
\epsilon_j\in\{\pm1\}.
\)
Using the surface points in \(\mathcal O_\Sigma\), write \(\gamma\) in
the fundamental groupoid
\(
\Pi_1(\Sigma_g;\mathcal O_\Sigma)
\)
as
\begin{equation}
\label{eq:surface-groupoid-word}
\gamma
=
a_0\tau_1a_1\cdots\tau_na_n,
\end{equation}
where each \(a_j\) lies in the complement of the interior of the
surgery annulus and each \(\tau_j\) is a short arc crossing that
annulus.

Let \(\widehat a_j\) be the positive normal push-off of \(a_j\).
These pushed-off paths lie in the unchanged outside piece \(X\).  Let \(\kappa_j\) be the positive normal push-off of \(\tau_j\) before
surgery, with the orientation induced from \(\tau_j\).  Using the
identification of the old and new local models near the boundary of the
replacement region, regard \(\kappa_j\) as a reference path in the new
exterior. At the endpoint of
\(\kappa_j\), choose a path \(b_j\) in the boundary of the replacement
region to the chosen local basepoint of the preferred longitude.  Thus
\(
b_j\ell_J^{\epsilon_j}b_j^{-1}
\)
is a loop based at the endpoint of \(\kappa_j\).

Let
\[
\iota:
\Pi_1(X;\mathcal O)
\longrightarrow
\Pi_1(E_{F'};\mathcal O)
\]
be induced by inclusion.  Then
\begin{equation}
\label{eq:ordered-groupoid-formula}
\begin{aligned}
\rho_{F'}(\gamma)
={}&
\iota(\widehat a_0)\kappa_1
\bigl(b_1\ell_J^{\epsilon_1}b_1^{-1}\bigr)
\iota(\widehat a_1)\cdots
\cdots
\kappa_n
\bigl(b_n\ell_J^{\epsilon_n}b_n^{-1}\bigr)
\iota(\widehat a_n).
\end{aligned}
\end{equation}

For \(1\leq j\leq n\), let \(P_j\) be the portion of the modified
pushed-off loop from the global basepoint \(*\) to the endpoint of the
\(j\)-th crossing path.  Thus
\[
P_j
=
\iota(\widehat a_0)\kappa_1
\bigl(b_1\ell_J^{\epsilon_1}b_1^{-1}\bigr)
\iota(\widehat a_1)\cdots
\iota(\widehat a_{j-1})\kappa_j.
\]
The \(j\)-th longitude insertion, transported to the global basepoint,
is therefore
\begin{equation}
\label{eq:prefix-insertion}
P_jb_j\ell_J^{\epsilon_j}b_j^{-1}P_j^{-1}.
\end{equation}
Since \(P_j\) contains all longitude insertions arising from the earlier
crossings, these based factors generally do not commute and cannot be
combined into the single power
\(
\ell_J^{\,d\cdot\gamma},
\)
where \(d\cdot\gamma\) is the algebraic intersection number.
\end{proposition}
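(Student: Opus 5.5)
The plan is to compute \(\rho_{F'}(\gamma)\) directly as a composite in the fundamental groupoid \(\Pi_1(E_{F'};\mathcal O)\). First push off the surface decomposition \eqref{eq:surface-groupoid-word} by the extended section \(\sigma\). Since \(\sigma\) is unchanged outside the local model \(N\), and the arcs \(a_j\) avoid the interior of the surgery annulus, each \(\sigma(a_j)\) equals \(\widehat a_j\). After shrinking \(N\) if necessary, \(\widehat a_j\) lies in the unchanged outside piece \(X\), so in the new exterior it is \(\iota(\widehat a_j)\). The pushed-off loop \(\sigma'(\gamma)\) is the groupoid product of these pieces and the new pushed-off crossing arcs \(\kappa_j'=\sigma'(\tau_j)\), concatenated in traversal order. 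Because \(\sigma'(p)=*\), this product is \(\rho_{F'}(\gamma)\). Functoriality of \(\Pi_1\) under concatenation of paths justifies this step.

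Next, identify each \(\kappa_j'\) using \cref{lem:local-crossing-coordinates}. Each \(\tau_j\) crosses the annulus once, transversally, and can be isotoped within the local coordinates to a \(u\)-segment at fixed \(\theta\). The sign \(\epsilon_j\) then records whether it runs from \(u_-\) to \(u_+\) or the reverse.

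In the positive case, \eqref{eq:local-crossing-connectors} gives \(\kappa_j'=\kappa_j\lambda_+\), where \(\lambda_+\) is the rim-torus meridian \(\delta\) based at the exit endpoint. By \eqref{eq:rim-gluing}, \(\delta\) is \(\ell_J\). Basing \(\ell_J\) at \(*_{\mathrm{loc}}\) and transporting it along the boundary path \(b_j\) expresses \(\lambda_+\) as \(b_j\ell_Jb_j^{-1}\). This identification is legitimate because the rim torus boundary \(\partial\nu R_d\) is connected, and the convention fixes the transport up to the choice of \(b_j\). Any two choices of \(b_j\) differ by an element of \(\pi_1(T^3)\), which is abelian and contains \(\ell_J\), so the loop \(b_j\ell_J b_j^{-1}\) does not depend on the choice.

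In the negative case, \(\kappa_j=\kappa_+^{-1}\), and \eqref{eq:local-crossing-connectors} gives \(\kappa_j'=\kappa_j\lambda_-^{-1}\). Here \(\lambda_-\) is the longitude based at the exit endpoint \(u_-\) of \(\kappa_j\), and it is again of the form \(b_j\ell_Jb_j^{-1}\) with \(b_j\) a boundary path from \(u_-\). So in both cases
\[
\kappa_j'=\kappa_j\,b_j\ell_J^{\epsilon_j}b_j^{-1}.
\]
Substituting this into the product yields \eqref{eq:ordered-groupoid-formula}.

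The prefix statement then follows formally. Write \(\rho_{F'}(\gamma)\) as \(P_j\,(b_j\ell_J^{\epsilon_j}b_j^{-1})\,(\text{rest})\), and insert \(P_j^{-1}P_j\) around the longitude factor. This expresses the \(j\)-th insertion as the based loop \eqref{eq:prefix-insertion}. Because \(P_j\) contains the earlier insertions, these based loops are conjugates of \(\ell_J^{\pm1}\) by generally distinct elements. They therefore need not commute in \(G'\), as the amalgam structure of \cref{thm:one-step} shows, so they cannot be collected into \(\ell_J^{d\cdot\gamma}\). The statement only asserts that this collection fails in general, so no further argument is needed.

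The main obstacle is the bookkeeping at negative crossings: checking that the transported longitude is based at the correct endpoint and appears with the correct inverse. I would handle it exactly as in the proof of \cref{lem:local-crossing-coordinates}, using \(\kappa_-'=(\kappa_+')^{-1}\).
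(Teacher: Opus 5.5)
Your proposal is correct and follows essentially the paper's proof. You apply \cref{lem:local-crossing-coordinates} at each crossing to get \(\kappa_j'=\kappa_j\bigl(b_j\ell_J^{\epsilon_j}b_j^{-1}\bigr)\), substitute into the pushed-off groupoid word in traversal order without rearranging, and read off \eqref{eq:prefix-insertion} by transport along \(P_j\). Your extra check that \(b_j\ell_J^{\epsilon_j}b_j^{-1}\) does not depend on the choice of boundary path \(b_j\) (the relevant peripheral group is abelian) is a harmless elaboration that the paper leaves implicit.
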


\begin{proof}
At the \(j\)-th crossing, \cref{lem:local-crossing-coordinates}, with
\(\kappa_j\) oriented as \(\tau_j\) and the longitude based at its exit
endpoint, gives uniformly
\[
\kappa_j'
=
\kappa_j\lambda_j^{\epsilon_j},
\qquad
\epsilon_j\in\{+1,-1\}.
\]
By the chosen path \(b_j\), the endpoint-based loop is
\(b_j\ell_J^{\epsilon_j}b_j^{-1}\).  The arcs \(a_j\) avoid the annulus,
so their push-offs enter through \(\iota\).  Substitution in
\eqref{eq:surface-groupoid-word}, without commuting or rearranging any
factor, gives \eqref{eq:ordered-groupoid-formula}.

The prefix \(P_j\) runs from \(*\) to the basepoint of that endpoint
loop, so transport to \(*\) gives exactly
\eqref{eq:prefix-insertion}.  Since \(P_j\) already contains all
longitude insertions from crossings \(1,\ldots,j-1\), these conjugating
prefixes cannot in general be discarded or replaced by the total
algebraic-intersection exponent.
\end{proof}

\begin{figure}[t]
\centering
\begin{tikzpicture}[
    x=0.95cm,
    y=0.95cm,
    >=Latex,
    every node/.style={font=\small}
]

% ==================== Left panel ====================
\begin{scope}
    \node[font=\normalsize] at (3.1,4.55)
    {(a) Decomposing \(\gamma\)};

    \draw[rounded corners=8pt, thick]
        (0,0) rectangle (6.2,3.70);

    % Surgery annulus
    \fill[gray!15]
        (2.70,0.30) rectangle (3.50,3.40);
    \draw[thick]
        (2.70,0.30) -- (2.70,3.40);
    \draw[thick]
        (3.50,0.30) -- (3.50,3.40);
    \draw[dashed, thick]
        (3.10,0.30) -- (3.10,3.40);

    % Annulus and core labels
    \node at (1.80,3.96) {\(N\cap F\)};
    \draw[thin] (2.18,3.87) -- (2.72,3.44);

    \node at (4.03,3.18) {\(d\)};
    \draw[thin] (3.83,3.10) -- (3.13,2.96);

    % Basepoint
    \fill (0.78,1.82) circle (1.5pt);
    \node[anchor=east] at (0.58,1.82) {\(p\)};

    % Path gamma
    \draw[thick, ->]
        (0.78,1.82)
        .. controls (1.20,2.72) and (2.05,2.98) ..
        (2.70,2.72);

    \draw[thick, ->]
        (2.70,2.72) -- (3.50,2.18);

    \draw[thick, ->]
        (3.50,2.18)
        .. controls (4.50,1.72) and (4.55,0.98) ..
        (3.50,0.80);

    \draw[thick, ->]
        (3.50,0.80) -- (2.70,1.28);

    \draw[thick, ->]
        (2.70,1.28)
        .. controls (1.90,1.72) and (1.35,1.18) ..
        (0.78,1.82);

    % Labels for arcs outside the annulus
    \node at (1.52,3.03) {\(a_0\)};
    \node at (4.78,1.48) {\(a_1\)};
    \node at (1.62,0.92) {\(a_2\)};

    % Labels for crossing arcs
    \node at (4.22,2.72) {\(\tau_1\)};
    \draw[thin] (3.93,2.65) -- (3.36,2.30);

    \node at (4.08,0.48) {\(\tau_2\)};
    \draw[thin] (3.77,0.57) -- (3.32,0.92);

    % Crossing endpoints
    \fill (2.70,2.72) circle (1.3pt);
    \fill (3.50,2.18) circle (1.3pt);
    \fill (3.50,0.80) circle (1.3pt);
    \fill (2.70,1.28) circle (1.3pt);

    \node at (3.10,-0.38)
    {\(\gamma=a_0\tau_1a_1\tau_2a_2\)};
\end{scope}

% ==================== Right panel ====================
\begin{scope}[xshift=7.35cm]
    \node[font=\normalsize] at (3.1,4.55)
    {(b) The local path identity};

    \draw[rounded corners=8pt, thick]
        (0,0) rectangle (6.2,3.70);

    % Top row: old path followed by the longitude loop
    \fill (1.02,2.76) circle (1.5pt);
    \fill (4.30,2.76) circle (1.5pt);

    \node at (1.02,3.15) {\(u_-\)};
    \node at (4.30,3.15) {\(u_+\)};

    \draw[thick, ->]
        (1.02,2.76) -- (4.30,2.76);

    \node at (2.66,3.05) {\(\kappa_j\)};

    % Longitude loop
    \draw[thick, ->]
        (4.30,2.76)
        .. controls (4.85,3.35) and (5.75,3.30) ..
        (5.82,2.70)
        .. controls (5.88,2.08) and (4.92,2.05) ..
        (4.30,2.76);

    \node at (5.30,1.82)
    {\(\lambda_j^{\epsilon_j}\)};

    % Bottom row: new path
    \fill (1.02,1.02) circle (1.5pt);
    \fill (4.30,1.02) circle (1.5pt);

    \node at (1.02,0.64) {\(u_-\)};
    \node at (4.30,0.64) {\(u_+\)};

    \draw[thick, ->]
        (1.02,1.02) -- (4.30,1.02);

    \node at (2.66,1.38) {\(\kappa_j'\)};

    % Corresponding endpoints
    \draw[dotted]
        (1.02,2.62) -- (1.02,1.16);
    \draw[dotted]
        (4.30,2.58) -- (4.30,1.20);

    \node at (3.10,-0.38)
    {\(\kappa_j'=\kappa_j\lambda_j^{\epsilon_j}\)};
\end{scope}

\end{tikzpicture}

\caption{Schematic for \cref{prop:ordered-peripheral}.  In panel
\textup{(a)}, the loop \(\gamma\) is decomposed into arcs \(a_j\)
outside the surgery annulus and short crossing arcs \(\tau_j\).
In panel \textup{(b)}, the upper path is the old pushed-off crossing
path followed by the suitably based longitude, while the lower path is
the new pushed-off crossing path.  The two paths are homotopic relative
to their endpoints.}
\label{fig:ordered-peripheral}
\end{figure}

\begin{remark}
\label{rem:current-curve}
In the amalgam \eqref{eq:one-step-amalgam}, the generator \(s\) of the
new \(S^1\)-factor is identified with the pre-surgery element
\(
h=\rho_F(d).
\)
This does not imply that the post-surgery element \(\rho_{F'}(d)\)
equals \(h\) as a based element.  The preferred longitude
\(\ell_J\) is a separate element and is identified with neither of
them.
\end{remark}

Thus one rim surgery adds the product vertex
\(K_J\times\langle s\rangle\) and ordered preferred-longitude
insertions to push-offs.  The canonical retraction kills exactly the
normal closure of \([K_J,K_J]\) and recovers the preceding group and
push-off.  The next section uses the same local calculation in the
preliminary surface-group construction.

\section{Preliminary rim surgeries along meridian-disk boundaries}
\label{sec:disk}

Let
\[
P=\Sigma_{r,1}
\quad\text{if }g=2r,
\qquad
P=\Sigma_{r,2}
\quad\text{if }g=2r+1.
\]
Put \(H=P\times I\), so \(\partial H\cong\Sigma_g\), and embed it as a
standard handlebody in an equatorial \(S^3\subset S^4\).  The standard
unknotted genus-\(g\) surface is
\[
\Sigma_g^0=\partial(P\times I)\subset S^3\subset S^4.
\]
It is decomposed as
\[
P^+=P\times\{1\},
\qquad
P^-=P\times\{0\},
\]
joined by \(\partial P\times I\).

Fix the parametrization \(\Sigma_g\cong\Sigma_g^0\).  Choose
\(q_0\in\partial P\) and set
\[
p=(q_0,1)\in P^+.
\]
Put \(p_-=(q_0,0)\), and let
\[
\lambda_-:p\longrightarrow p_-
\]
be \(q_0\times I\), directed from \(P^+\) to \(P^-\).  Choose the
normal-circle section given by a compatibly oriented unit normal to the
equatorial \(S^3\), and let \(*\) be the push-off of \(p\).  Its image
lies in a simply connected complementary \(4\)-ball disjoint from
\(\Sigma_g^0\), so every pushed-off surface loop is null-homotopic.  Thus
\[
\pi_1(E_{\Sigma_g^0},*)
=
\langle\mu\rangle
\cong\mathbb Z,
\]
with positive meridian generator, and the initial push-off is trivial.

We now perform ordinary untwisted rim surgeries making the two horizontal
surface groups visible.  Since each supporting curve initially has trivial
push-off, the exterior is built by cyclic, not rank-two, amalgams.  Let
\(\rho\) be the resulting push-off and \(U_0\) the subgroup generated by
the preferred longitudes.  We prove that
\[
\rho|_{\pi_1(P^+,p)}:
\pi_1(P^+,p)\xrightarrow{\cong}U_0
\]
and
\[
\pi_1(P^-,p_-)
\longrightarrow U_0,
\qquad
\gamma\longmapsto
\rho(\lambda_-\gamma\lambda_-^{-1}),
\]
are isomorphisms.  Section~\ref{sec:seed} then changes their relative
position.

Choose the cocores in a handle decomposition of \(P=\Sigma_{r,b}\)
with one \(0\)-handle and \(2r+b-1\) \(1\)-handles as oriented,
disjoint proper arcs
\[
\alpha_1,\ldots,\alpha_g\subset P,
\]
with distinct endpoints away from \(q_0\).  Their complement is a disk
because
\[
2r+b-1=g
\]
in both parity cases.  For \(g=3\),
\cref{fig:complete-arc-system-g3} depicts the resulting one-face theta
graph on \(\Sigma_{1,2}\).

\begin{figure}[htbp]
\centering
\begin{tikzpicture}[
  x=0.88cm,
  y=0.88cm,
  line cap=round,
  line join=round,
  every node/.style={font=\small}
]
  % Fundamental square.
  \draw[line width=0.8pt] (0,0) rectangle (6,6);

  % Matching side-identification arrows: one on the vertical pair,
  % two on the horizontal pair.
  \draw[-{Latex[length=1.8mm,width=1.2mm]},line width=0.7pt]
    (0.14,4.45) -- (0.14,5.15);
  \draw[-{Latex[length=1.8mm,width=1.2mm]},line width=0.7pt]
    (5.86,4.45) -- (5.86,5.15);
  \draw[-{Latex[length=1.8mm,width=1.2mm]},line width=0.7pt]
    (4.15,0.14) -- (4.75,0.14);
  \draw[-{Latex[length=1.8mm,width=1.2mm]},line width=0.7pt]
    (4.92,0.14) -- (5.52,0.14);
  \draw[-{Latex[length=1.8mm,width=1.2mm]},line width=0.7pt]
    (4.15,5.86) -- (4.75,5.86);
  \draw[-{Latex[length=1.8mm,width=1.2mm]},line width=0.7pt]
    (4.92,5.86) -- (5.52,5.86);

  % The two boundary components.
  \coordinate (BOne) at (2,3);
  \coordinate (BTwo) at (4,3);
  \draw[line width=0.8pt] (BOne) circle[radius=0.48];
  \draw[line width=0.8pt] (BTwo) circle[radius=0.48];

  % The complete arc system.  Alpha_2 and alpha_3 continue across
  % the identified side pairs at matching points.
  \draw[line width=1.0pt] (2.48,3) -- (3.52,3);                         % alpha_1
  \draw[line width=1.0pt] (1.52,3) -- (0,3);                           % alpha_2 left
  \draw[line width=1.0pt] (6,3) -- (4.48,3);                           % alpha_2 right
  \draw[line width=1.0pt] (2,3.48) -- (2,6);                           % alpha_3 top
  \draw[line width=1.0pt]
    (2,0) .. controls (2.25,0.82) and (3.55,1.20) .. (4,2.52);         % alpha_3 bottom

  % Labels in open regions.
  \node at (3,3.34) {$\alpha_1$};
  \node at (0.78,3.34) {$\alpha_2$};
  \node at (3.10,0.72) {$\alpha_3$};

  \node[anchor=east] at (1.28,4.02) {$\partial P$};
  \draw[line width=0.45pt] (1.34,3.90) -- (1.67,3.36);
  \node[anchor=west] at (4.72,4.02) {$\partial P$};
  \draw[line width=0.45pt] (4.66,3.90) -- (4.33,3.36);

  \fill (1.66,2.66) circle[radius=1.15pt];
  \node[anchor=east] at (1.18,2.23) {$q_0$};
  \draw[line width=0.45pt] (1.24,2.30) -- (1.60,2.60);
\end{tikzpicture}
\caption{A complete arc system on \(P=\Sigma_{1,2}\).  The broken
pieces of \(\alpha_2\) and \(\alpha_3\) continue across the indicated
side identifications, and the quotient complement is a disk.}
\label{fig:complete-arc-system-g3}
\end{figure}

The product disks
\[
\Delta_j=\alpha_j\times I\subset H
\]
form a complete meridian-disk system, since cutting along them gives a
\(3\)-ball.  Give each its product orientation and set
\[
c_j=\partial\Delta_j\subset\Sigma_g^0
\]
with boundary orientation; \cref{fig:product-disk-boundary} shows its
four pieces.

\begin{figure}[htbp]
\centering
\begin{tikzpicture}[
  x=0.86cm,
  y=0.86cm,
  line cap=round,
  line join=round,
  every node/.style={font=\small}
]
  % A product box modelling a neighborhood of alpha_j times I.
  \coordinate (Aminus) at (0,0);
  \coordinate (Bminus) at (6,0);
  \coordinate (Cminus) at (7.2,1.4);
  \coordinate (Dminus) at (1.2,1.4);
  \coordinate (Aplus) at (0,3.4);
  \coordinate (Bplus) at (6,3.4);
  \coordinate (Cplus) at (7.2,4.8);
  \coordinate (Dplus) at (1.2,4.8);

  % Bottom and top surface patches.
  \draw[line width=0.75pt] (Aminus) -- (Bminus) -- (Cminus);
  \draw[densely dashed,line width=0.65pt] (Cminus) -- (Dminus);
  \draw[line width=0.75pt] (Dminus) -- (Aminus);
  \draw[line width=0.75pt]
    (Aplus) -- (Bplus) -- (Cplus) -- (Dplus) -- cycle;

  % Vertical boundary faces of the product neighborhood.
  \draw[line width=0.75pt] (Aminus) -- (Aplus);
  \draw[line width=0.75pt] (Bminus) -- (Bplus);
  \draw[densely dashed,line width=0.65pt] (Cminus) -- (Cplus);
  \draw[line width=0.75pt] (Dminus) -- (Dplus);

  % The product disk and its boundary c_j.
  \coordinate (Lminus) at (0.6,0.7);
  \coordinate (Rminus) at (6.6,0.7);
  \coordinate (Rplus) at (6.6,4.1);
  \coordinate (Lplus) at (0.6,4.1);
  \draw[line width=1.15pt]
    (Lminus) -- (Rminus) -- (Rplus) -- (Lplus) -- cycle;
  \draw[densely dotted,line width=0.55pt] (0.6,1.83) -- (6.6,1.83);
  \draw[densely dotted,line width=0.55pt] (0.6,2.97) -- (6.6,2.97);

  % Surface and disk labels.
  \node at (5.78,4.55) {$P^+$};
  \node at (1.45,0.23) {$P^-$};
  \node at (3.58,2.40) {$\Delta_j=\alpha_j\times I$};

  % Boundary-piece labels, all placed off the corresponding curves.
  \node at (3.62,4.55) {$\alpha_j\times\{1\}$};
  \draw[line width=0.45pt] (3.62,4.40) -- (3.62,4.14);

  \node at (4.20,0.23) {$\alpha_j\times\{0\}$};
  \draw[line width=0.45pt] (4.20,0.37) -- (4.20,0.66);

  \node at (1.45,4.55) {$c_j=\partial\Delta_j$};
  \draw[line width=0.45pt] (1.20,4.40) -- (0.68,4.16);

  \node[anchor=east] at (0.10,2.42) {$\partial\alpha_j\times I$};
  \draw[line width=0.45pt] (0.17,2.34) -- (0.55,2.34);

  \node[rotate=90] at (7.02,2.45) {$\partial P\times I$};

  \node at (3.60,-1.15)
    {$c_j=(\alpha_j\times\{1\})\cup(\partial\alpha_j\times I)
      \cup(\alpha_j\times\{0\})$};
\end{tikzpicture}
\caption{A local product model for \(c_j=\partial\Delta_j\).}
\label{fig:product-disk-boundary}
\end{figure}

The curves \(c_1,\ldots,c_g\) are essential and disjoint; cutting along
them gives a sphere with \(2g\) boundary components, so the complement is
connected.  Choose oriented nontrivial hyperbolic knots \(J_j^D\), where
\(D\) marks the meridian-disk stage, and write
\[
K_j^D=K_{J_j^D}=\pi_1(E(J_j^D)),
\]
with chosen meridian and longitude
\[
m_j^D=m_{J_j^D},
\qquad
\ell_j^D=\ell_{J_j^D}.
\]
In disjoint product neighborhoods, avoiding \(p\) and \(\lambda_-\),
perform section-compatible ordinary untwisted rim surgery, identifying
\(m_j^D\) with the positive surface meridian \(\mu\).  The induced
parametrization retains the names \(P^+\) and \(P^-\).

\begin{proposition}[The exterior group after the preliminary rim surgeries]
\label{prop:disk-group}
After the \(g\) rim surgeries along \(c_1,\ldots,c_g\), the exterior
group is
\begin{equation}
\label{eq:disk-amalgam}
A_0=
K_1^D*_{\langle\mu\rangle}
K_2^D*_{\langle\mu\rangle}\cdots
*_{\langle\mu\rangle}K_g^D,
\end{equation}
where the meridian of every knot-group factor is identified with
\(\mu\).

Set
\[
U_0=\langle\ell_1^D,\ldots,\ell_g^D\rangle\le A_0.
\]
Then \(U_0\) is a free group with free basis
\(\ell_1^D,\ldots,\ell_g^D\), and
\begin{equation}
\label{eq:disk-meridian-centralizer}
C_{A_0}(\mu)
=
\langle\mu\rangle\times U_0.
\end{equation}
Moreover,
\begin{equation}
\label{eq:F-characterization}
U_0
=
C_{A_0}(\mu)\cap[A_0,A_0].
\end{equation}
\end{proposition}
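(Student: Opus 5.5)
The plan has three parts: compute \(A_0\) by van Kampen, compute \(C_{A_0}(\mu)\) by Bass--Serre theory, and read off \eqref{eq:F-characterization} by abelianizing.

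\textbf{Step 1: the amalgam.} The computation follows the proof of \cref{thm:one-step}, but with cyclic rather than rank-two edge groups. Each \(c_j\) bounds the disk \(\Delta_j\) in the equatorial \(S^3\), so initially \(\rho(c_j)=1\). At stage \(j\), write the old exterior as \(X\cup_M V_0\) with \(\pi_1(M)\cong\mathbb Z^2=\langle d,\mu\rangle\). The outside inclusion sends \(d\mapsto1\) and \(\mu\mapsto\mu\). After surgery \(V_0\) is replaced by \(V_J\), whose group is \(K_j^D\times\langle s\rangle\). Van Kampen then gives
\[
G_j\cong G_{j-1}*_{\mathbb Z^2}(K_j^D\times\langle s\rangle),
\]
and the relations \(s=1\), \(m_j^D=\mu\) collapse this to \(G_{j-1}*_{\langle\mu\rangle}K_j^D\). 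The surgeries lie in disjoint neighborhoods, and each \(c_j\) still has trivial push-off after the earlier surgeries: it bounds \(\Delta_j\), and \(\Delta_j\) misses the other neighborhoods after shrinking them. Induction starting from \(\langle\mu\rangle\) therefore yields \eqref{eq:disk-amalgam}. Each edge map is injective because \(m_j^D\) has infinite order in a nontrivial knot group.

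\textbf{Step 2: the centralizer via Bass--Serre theory.} View \(A_0\) as the fundamental group of a star-shaped graph of groups: a central vertex \(\langle\mu\rangle\) with edges to the vertices \(K_j^D\), all edge groups \(\langle\mu\rangle\). Since the central vertex equals every edge group, this is the tree of groups for the iterated amalgam. Let \(T\) be the Bass--Serre tree. The element \(\mu\) fixes the central vertex \(v_0\). Any \(x\in C(\mu)\) sends \(v_0\) to \(xv_0\), which \(\mu\) also fixes, so \(\mu\) fixes the geodesic \([v_0,xv_0]\). The main obstacle is to show that \(\Fix(\mu)\) is exactly the star of \(v_0\) together with the ball of radius one at each adjacent \(K_j\)-vertex \(w_j\), intersected appropriately.

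This rests on the peripheral facts for hyperbolic knots: \(C_{K_j}(m_j)=\langle m_j,\ell_j\rangle\cong\mathbb Z^2\), and \(\langle m_j,\ell_j\rangle\) is malnormal in \(K_j\). Edges at \(w_j\) correspond to cosets \(k\langle m_j\rangle\), and \(\mu=m_j\) fixes the edge \(k\langle m_j\rangle\) if and only if \(k^{-1}m_jk\in\langle m_j\rangle\). Malnormality of the peripheral subgroup (together with \(m_j\) not being conjugate to \(m_j^{-1}\)) forces \(k\in\langle m_j,\ell_j\rangle\), so those edges lead to the vertices \(\ell_j^n v_0\). At those vertices \(\mu\) is again the full vertex group, so the fixed subtree keeps branching. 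Consequently \(\Fix(\mu)\) is the subtree on which \(C(\mu)\) acts. Its quotient graph of groups has vertex group \(\langle\mu\rangle\) at \(v_0\), vertex group \(C_{K_j}(m_j)=\langle\mu,\ell_j\rangle\) at \(w_j\), and edge groups \(\langle\mu\rangle\). Hence
\[
C(\mu)=\langle\mu,\ell_1\rangle*_{\langle\mu\rangle}\cdots*_{\langle\mu\rangle}\langle\mu,\ell_g\rangle=\langle\mu\rangle\times F(\ell_1,\dots,\ell_g).
\]
To justify that the stabilizer structure of the fixed subtree really gives this amalgam, I would check directly that \(C(\mu)\) acts on \(\Fix(\mu)\) with quotient the original star and the stabilizers just listed. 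Alternatively, one can verify by normal forms: write \(x\in C(\mu)\) in reduced form \(x=k_1k_2\cdots k_n\) with factors from distinct consecutive \(K_j\setminus\langle\mu\rangle\), and use \(\mu x\mu^{-1}=x\) to peel off factors, each of which must lie in \(\langle\mu,\ell_j\rangle\). Freeness of \(U_0\) on \(\ell_1^D,\dots,\ell_g^D\) then follows from the normal form theorem, since \(\ell_j\notin\langle\mu\rangle\).

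\textbf{Step 3: the commutator characterization.} Abelianizing \(A_0\) gives \(\mathbb Z\) generated by \([\mu]\), because each \(K_j^D\) abelianizes to \(\langle m_j\rangle\). Every \(\ell_j^D\) lies in \([K_j,K_j]\), so \(U_0\subset[A_0,A_0]\). Conversely, an element \(\mu^nu\) with \(u\in U_0\) lies in \([A_0,A_0]\) only if \(n=0\). This gives \eqref{eq:F-characterization}.
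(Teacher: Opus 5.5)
Your proposal is correct and follows the paper's proof. Van Kampen is a pushout that kills the $S^1$-generator and gives the cyclic amalgam. $C_{A_0}(\mu)$ then acts on the connected subtree $\Fix(\mu)$; peripheral malnormality gives $C_{K_j^D}(\mu)=\{k:k^{-1}\mu k\in\langle\mu\rangle\}=\langle\mu,\ell_j^D\rangle$, the quotient graph of groups is read off, and abelianization finishes.

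The differences are organizational. The paper inducts over one-edge amalgams $A^{(j)}=A^{(j-1)}*_{\langle\mu\rangle}K_j^D$, using $H_1=\mathbb Z\langle[\mu]\rangle$ to equate transporter and centralizer, where you use one star of groups. The paper also writes out the step you defer, that every fixed edge is a $C_{A_0}(\mu)$-translate of a base edge, by inducting along geodesics inside $\Fix(\mu)$. Note that $\Fix(\mu)$ is infinite, not the bounded neighborhood your ``main obstacle'' sentence first describes.
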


\begin{proof}
First consider the surgery along \(c_j\), with old group \(G\).  Let
\(N_j\) be its product neighborhood, \(V_j^0\) the standard local
annulus exterior, and
\[
X_j=\overline{E_{\mathrm{old}}\setminus V_j^0}.
\]
As in \cref{thm:one-step}, the attaching region is
\[
M_j\cong T^2\times I,
\qquad
\pi_1(M_j)
=
\langle\mu_M,c_M\mid[\mu_M,c_M]=1\rangle,
\]
where \(\mu_M\) is the positive local meridian and \(c_M\) the
\(S^1\)-direction; \(M_j\hookrightarrow V_j^0\) is an isomorphism on
fundamental groups.

Choose \(*_j\in M_j\).  Connectedness of the complement of the
\(c_k\) gives a surface path from \(p\) to the \(j\)-th annulus; its
push-off and a short segment in \(M_j\) give
\[
\omega_j:*\longrightarrow *_j
\]
in \(X_j\).  As in \cref{conv:peripheral-groupoid}, use \(\omega_j\)
for both van Kampen calculations and for including \(K_j^D\).  Then
\[
\pi_1(X_j,*)\xrightarrow{\cong}G.
\]
When the based push-off of \(c_j\) is trivial, the old edge map sends
\[
\mu_M\longmapsto\mu,
\qquad
c_M\longmapsto1,
\]
with the common basing convention.

The new local group is
\[
K_j^D\times\langle s_j\rangle,
\]
where \(s_j\) is the oriented \(S^1\)-generator, and the gluing sends
\[
\mu_M\longmapsto m_j^D,
\qquad
c_M\longmapsto s_j.
\]
Because the old edge map kills \(c_M\), van Kampen is a pushout rather
than an amalgam over \(\pi_1(M_j)\), with presentation
\[
\begin{aligned}
\pi_1(E_{\mathrm{new}})
&\cong
\frac{
G*K_j^D*\langle s_j\rangle
}{
\normal{
 m_j^D\mu^{-1},\,
 s_j,\,
 [s_j,k]\;:\;k\in K_j^D
}
}\\
&\cong
G*_{\langle\mu\rangle}K_j^D.
\end{aligned}
\]
Thus \(s_j\) is killed and the knot group survives with meridian \(\mu\).

Perform the surgeries in order and put
\[
A^{(0)}=\langle\mu\rangle.
\]
Inductively, after stage \(j\):
\begin{enumerate}[label=\textup{(\roman*)},leftmargin=2.3em]
\item
\[
A^{(j)}
=
A^{(j-1)}*_{\langle\mu\rangle}K_j^D;
\]
\item the map
\[
A^{(j-1)}\longrightarrow A^{(j)}
\]
is injective, so \(\mu\) retains infinite order;
\item every later \(c_k\), \(k>j\), still has trivial based push-off.
\end{enumerate}
The assertions hold at \(j=0\) because the initial group is
\(\langle\mu\rangle\) and the push-off is trivial.  At stage \(j\), the
preceding calculation gives
\[
A^{(j)}
=
A^{(j-1)}*_{\langle\mu\rangle}K_j^D.
\]
Both edge maps are injective, so normal form embeds the vertex groups and
preserves the infinite order of \(\mu\).  A later \(c_k\) lies in the
unchanged outside piece; its old trivial class enters through the
canonical inclusion and remains trivial.  This proves
\eqref{eq:disk-amalgam}, with \(A^{(g)}=A_0\).

Abelianizing each cyclic amalgam identifies the meridian of \(K_j^D\)
with the existing meridian and introduces no other generator.
Consequently
\[
H_1(A^{(j)};\mathbb Z)
=
\mathbb Z\langle[\mu]\rangle
\qquad(0\le j\le g).
\]

For the meridian centralizer, the peripheral subgroup
\[
\langle m_j^D,\ell_j^D\rangle\le K_j^D
\]
is malnormal for the hyperbolic knot
\cite[Corollary~2]{HarpeWeber}, and therefore
\begin{equation}
\label{eq:disk-factor-centralizer}
C_{K_j^D}(\mu)
=
\langle\mu,\ell_j^D\rangle.
\end{equation}
Indeed, if \(a\) centralizes \(\mu\), the peripheral subgroup meets
its \(a\)-conjugate nontrivially, so malnormality puts \(a\) in that
subgroup; the reverse inclusion follows from abelianness.  Likewise
\begin{equation}
\label{eq:disk-factor-transporter}
\{a\in K_j^D:a^{-1}\mu a\in\langle\mu\rangle\}
=
\langle\mu,\ell_j^D\rangle.
\end{equation}
Indeed, if \(a^{-1}\mu a=\mu^n\), knot-group abelianization forces
\(n=1\).  If the same equation holds in \(A^{(j)}\), the identity
\(H_1(A^{(j)};\mathbb Z)=\mathbb Z\langle[\mu]\rangle\) again forces
\(n=1\), giving
\begin{equation}
\label{eq:disk-transporter-induction}
\{a\in A^{(j)}:a^{-1}\mu a\in\langle\mu\rangle\}
=
C_{A^{(j)}}(\mu).
\end{equation}

Let \(T_j\) be the Bass--Serre tree of
\[
A^{(j)}
=
A^{(j-1)}*_{\langle\mu\rangle}K_j^D,
\]
and let \(e_j\) be its base edge.  The fixed set
\(\Fix_{T_j}(\mu)\) is a nonempty connected subtree.  An edge
\(ae_j\) is fixed by \(\mu\) precisely when
\[
a^{-1}\mu a\in\langle\mu\rangle.
\]
By \eqref{eq:disk-transporter-induction}, the fixed edges are therefore
exactly the \(C_{A^{(j)}}(\mu)\)-translates of \(e_j\).  Every fixed
vertex lies on a fixed edge, because the geodesic from that vertex to
\(e_j\) lies in the fixed subtree.  Hence the quotient of
\(\Fix_{T_j}(\mu)\) by \(C_{A^{(j)}}(\mu)\) is a single edge.  The
stabilizers of its two endpoints are
\[
C_{A^{(j-1)}}(\mu)
\quad\text{and}\quad
C_{K_j^D}(\mu)=\langle\mu,\ell_j^D\rangle,
\]
and the edge stabilizer is \(\langle\mu\rangle\).  The Bass--Serre
reconstruction theorem for this action, with this explicit one-edge
quotient graph of groups, gives
\begin{equation}
\label{eq:disk-centralizer-induction}
C_{A^{(j)}}(\mu)
=
C_{A^{(j-1)}}(\mu)
*_{\langle\mu\rangle}
\langle\mu,\ell_j^D\rangle.
\end{equation}

Iterating \eqref{eq:disk-centralizer-induction} gives
\[
C_{A_0}(\mu)
=
\langle\mu,\ell_1^D\rangle
*_{\langle\mu\rangle}\cdots
*_{\langle\mu\rangle}
\langle\mu,\ell_g^D\rangle.
\]
Equivalently it has presentation
\[
\left\langle
\mu,\ell_1^D,\ldots,\ell_g^D
\ \middle|\
[\mu,\ell_j^D]=1\ (1\le j\le g)
\right\rangle.
\]
It is therefore the internal direct product
\[
\langle\mu\rangle
\times
\left(
\langle\ell_1^D\rangle
*\cdots*
\langle\ell_g^D\rangle
\right).
\]
Thus \(U_0\) is free on
\(\ell_1^D,\ldots,\ell_g^D\), and
\eqref{eq:disk-meridian-centralizer} holds.  Since every longitude is
null-homologous, \(U_0\le[A_0,A_0]\).  In the direct product, an element
has the unique form
\[
\mu^n u,
\qquad
u\in U_0,
\]
and its class in
\[
H_1(A_0;\mathbb Z)=\mathbb Z\langle[\mu]\rangle
\]
is \(n[\mu]\).  It lies in \([A_0,A_0]\) exactly when \(n=0\), proving
\eqref{eq:F-characterization}.
\end{proof}

Let
\[
\rho:\pi_1(\Sigma_g,p)\longrightarrow A_0
\]
be the resulting push-off.  Cut \(P\) along the \(\alpha_j\) to a disk
\(D_P\), and choose a tree joining \(q_0\) to corresponding points on
both copies of every cut arc.  After regluing, the associated loops in
\(P^+\) and \(P^-\) form free bases
\[
y_j^+\in\pi_1(P^+,p),
\qquad
y_j^-\in\pi_1(P^-,p_-)
\quad(1\le j\le g).
\]
Orient \(y_j^+\) so its single crossing of
\(\alpha_j\times\{1\}\) is positive, and orient \(y_j^-\) so the
based loop \(\lambda_-y_j^-\lambda_-^{-1}\) has the same positive sign
at \(\alpha_j\times\{0\}\).  Both avoid every \(c_k\), \(k\ne j\).

\begin{proposition}[The push-off homomorphism on \(P^+\) and \(P^-\)]
\label{prop:horizontal-peripheral}
The restriction of \(\rho\) to \(P^+\) is an isomorphism
\[
\rho|_{\pi_1(P^+,p)}:
\pi_1(P^+,p)
\xrightarrow{\cong}
U_0
\]
satisfying the literal based equalities
\[
\rho(y_j^+)=\ell_j^D
\qquad(1\le j\le g).
\]

Using \(\lambda_-\) to base loops in \(P^-\) at \(p\), the homomorphism
\[
\pi_1(P^-,p_-)
\longrightarrow A_0,
\qquad
\gamma\longmapsto
\rho(\lambda_-\gamma\lambda_-^{-1}),
\]
is also an isomorphism onto \(U_0\), and
\[
\rho(\lambda_-y_j^-\lambda_-^{-1})=\ell_j^D
\qquad(1\le j\le g).
\]
\end{proposition}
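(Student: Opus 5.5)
The plan is to compute \(\rho(y_j^+)\) directly from the groupoid formula of \cref{prop:ordered-peripheral}, applied once for each of the \(g\) surgeries.

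\textbf{Step 1: where \(y_j^+\) meets the surgery annuli.} The loop \(y_j^+\) is disjoint from every \(c_k\) with \(k\neq j\). It meets \(c_j\) exactly once, transversely and positively, at a point of \(\alpha_j\times\{1\}\).

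\textbf{Step 2: the single longitude insertion.} Decompose \(y_j^+=a_0\tau_1a_1\) as in \eqref{eq:surface-groupoid-word}, with the arcs \(a_0,a_1\) lying in \(P^+\) away from all the annuli. Before the surgeries, every pushed-off path lies in the complement of the standard surface, and that complement has fundamental group \(\langle\mu\rangle\). Moreover, the whole push-off of \(P^+\) (section normal to the equatorial \(S^3\)) lies in the \(4\)-ball on one side, so these paths are trivial there.

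The choice of \(\omega_j\) in the proof of \cref{prop:disk-group} matters here. We take \(\omega_j\) to be the push-off of a surface path in \(P^+\) from \(p\) to the crossing point, followed by a short segment in \(M_j\). With this choice, the prefix \(P_1=\widehat a_0\kappa_1\) followed by \(b_1\) is homotopic in \(X_j\) to \(\omega_j\). This holds because both are push-offs of paths in the disk \(D_P\times\{1\}\) with the same endpoints. The remaining tail \(b_1^{-1}\widehat a_1\) closes up to a loop homotopic to \(\omega_j^{-1}\).

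By \eqref{eq:prefix-insertion}, we then get \(\rho(y_j^+)=\omega_j\ell_{\mathrm{loc}}\omega_j^{-1}=\ell_j^D\), using \cref{conv:peripheral-groupoid}. The other surgeries do not affect this computation. Their regions are disjoint from these paths, and by \cref{prop:disk-group}(ii)–(iii) the identities persist through the later amalgams.

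\textbf{Step 3: the bottom loops.} For \(\lambda_-y_j^-\lambda_-^{-1}\), the same argument applies with paths in \(P^-\) transported along \(\lambda_-\). The section is normal to the equatorial \(S^3\), so the push-off of \(P\times I\) lies entirely in one complementary ball. Push-offs of homotopic surface paths inside \(H\) that avoid the annuli are therefore homotopic in \(X_j\). The one positive crossing then again produces exactly \(\omega_j\ell_{\mathrm{loc}}\omega_j^{-1}=\ell_j^D\).

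\textbf{Main obstacle.} The main obstacle is justifying that the two basing paths agree. The path reaching the annulus from the \(P^-\) side and the chosen \(\omega_j\) must differ by a loop that is trivial in the final group \(A_0\), not merely in \(X_j\) before later surgeries. I would handle this by choosing \(\omega_j\) through the tree in \(D_P\), and by pushing all comparison homotopies off the other annuli within the product ball, where the homotopy is supported in the complement of \(\bigcup_k\nu c_k\).

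\textbf{Step 4: isomorphism onto \(U_0\).} Since \(y_1^\pm,\dots,y_g^\pm\) are free bases and \(\ell_1^D,\dots,\ell_g^D\) is a free basis of \(U_0\) by \cref{prop:disk-group}, the homomorphism sends a free basis bijectively onto a free basis. It is therefore an isomorphism onto \(U_0\).
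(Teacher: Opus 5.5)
Your outline matches the paper's: one positive crossing with \(c_j\), a single longitude insertion from \cref{prop:ordered-peripheral}, persistence through the disjoint surgeries, and the free-basis conclusion of Step~4. The gap is in the two basing comparisons, which are where the content of the proposition lies.

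In Step~2 the reason you give does not apply. The path \(a_0\tau_1\) crosses \(\alpha_j\times\{1\}\), so \(\widehat a_0\kappa_1\) is not the push-off of a path in \(D_P\times\{1\}\). The surface loop relating it to a tree path is \(y_j^+\) itself, which is nontrivial in \(\pi_1(P^+,p)\). What you need, and what the paper uses, is that the old push-off \(\widehat a_0\kappa_1\widehat a_1\) of \(y_j^+\) is trivial. Every pushed-off surface loop is trivial in the standard exterior. A loop contained in the outside piece \(X_k\) of any other surgery keeps its trivial class, because its new class is the image of its old class under \(\pi_1(X_k,*)\to\pi_1(E_{\mathrm{new}},*)\). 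Now take \(\omega_j=\widehat a_1^{-1}b_1\), the pushed-off tree path to the exit, as in the paper. Then \(\rho(y_j^+)=(\widehat a_0\kappa_1\widehat a_1)(\omega_j\ell_{\mathrm{loc}}\omega_j^{-1})=\ell_j^D\). The same push-forward observation settles your worry about triviality in the final group \(A_0\): no homotopy needs to be pushed off the other annuli.

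For \(\lambda_-y_j^-\lambda_-^{-1}\), the real issue is not the other annuli but the attaching region \(M_j\). The two basing paths \(\omega_j^+=\omega_j\) and \(\omega_j^-\) (through \(\lambda_-\) and the tree in \(P^-\)) reach \(*_j\) from different crossings along \(c_j\). So \(\omega_j^+(\omega_j^-)^{-1}\) is a pushed-off surface loop avoiding the \(c_k\), trivial as above, times the image of some \(c_M^a\mu_M^b\in\pi_1(M_j)\cong\Z^2\). Pushing homotopies off \(\bigcup_k\nu c_k\) does not control this factor, and for an unspecified segment in \(M_j\) it can be a nontrivial power of \(\mu\).

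The paper handles this factor with two facts your proposal never invokes:
\begin{enumerate}
\item In the cyclic-edge pushout, \(c_M\) maps to \(s_j\), which is killed because the old push-off of \(c_j\) is trivial. So the factor becomes \(\mu^b\).
\item \(\mu\) commutes with \(\ell_j^D\), so both transports of the longitude to \(*\) give \(\ell_j^D\).
\end{enumerate}
Alternatively, you could choose the \(M_j\)-segment of \(\omega_j^-\) to be the push-off of an arc parallel to \(c_j\) on the exit side. Then the whole comparison loop is a pushed-off surface loop outside every \(N_k\), hence trivial. But that choice must be made explicitly for your Step~3 to go through.
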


\begin{proof}
For the \(j\)-th surgery, take \(\omega_j\) to follow the pushed-off
tree path to the exit of the positive crossing of \(y_j^+\), then a short
segment to \(*_j\).  Use it for van Kampen and for including \(K_j^D\),
whose based inclusion sends \(k\) to
\[
\omega_j k\omega_j^{-1}.
\]
Thus \(\ell_j^D=\omega_j\ell_{j,\mathrm{loc}}^D\omega_j^{-1}\).  By
\cref{conv:peripheral-groupoid,lem:local-crossing-coordinates}, this
local identity remains valid although the old edge map kills \(c_M\),
and no meridian or \(S^1\)-factor power occurs.

The old push-off of \(y_j^+\) is trivial, initially and through all
other disjoint surgeries.  Its unique crossing is positive and is with
\(c_j\), so its entire new based word is
\[
\rho(y_j^+)=\ell_j^D.
\]

The loop \(\lambda_-y_j^-\lambda_-^{-1}\) also has trivial old
push-off and one positive \(c_j\)-crossing.  Set
\(\omega_j^+=\omega_j\); let \(\omega_j^-\) follow \(\lambda_-\), the
corresponding tree path in \(P^-\), and a path in \(M_j\) to \(*_j\).
Before surgery,
\[
\omega_j^+(\omega_j^-)^{-1}
\]
is a pushed-off loop avoiding the other \(c_k\), hence still trivial,
times the image of
\[
c_M^a\mu_M^b
\]
for some \(a,b\in\mathbb Z\); afterward this is
\[
s_j^a\mu^b=\mu^b.
\]
Because \(\mu\) commutes with the longitude, the two transports give the
same globally based element, and
\[
\rho(\lambda_-y_j^-\lambda_-^{-1})=\ell_j^D.
\]
These are literal equalities for the fixed section, basepoints, tree
paths, and change-of-basepoint paths.  Since the two \(y_j^\pm\)-families
are free bases and the \(\ell_j^D\) freely generate \(U_0\) by
\cref{prop:disk-group}, both homomorphisms are isomorphisms onto \(U_0\).
\end{proof}

The proposition separately identifies both horizontal groups with \(U_0\):
\[
\rho|_{\pi_1(P^+,p)}:
\pi_1(P^+,p)\xrightarrow{\cong}U_0
\]
and
\[
\pi_1(P^-,p_-)
\longrightarrow U_0,
\qquad
\gamma\longmapsto
\rho(\lambda_-\gamma\lambda_-^{-1}),
\]
yet these identifications do not determine how paths through
\(\partial P\times I\) position the two based copies in the full
closed-surface image and therefore do not yet imply injectivity of the
closed surface group.  The vertical-boundary
surgeries of \cref{sec:seed} change that relative position while
preserving the boundary subgroups, enabling the normal-form proof.

\section{Injectivity after rim surgery along the vertical boundary}\label{sec:seed}

Section~\ref{sec:disk} identifies both horizontal surface groups with the
same subgroup \(U_0\):
\[
\rho|_{\pi_1(P^+,p)}:
\pi_1(P^+,p)\xrightarrow{\cong}U_0
\]
and
\[
\pi_1(P^-,p_-)
\longrightarrow A_0,
\qquad
\gamma\longmapsto
\rho(\lambda_-\gamma\lambda_-^{-1}),
\]
but their relative position in the full closed-surface image remains unknown.
We correct it by ordinary untwisted rim surgery along the cores of the
vertical boundary.  In even genus the single core separates the two vertex
images; in odd genus the first does so and the second determines the HNN
stable letter.  The resulting literal based formulas support an amalgam
normal-form proof in even genus and a Britton proof in odd genus.

\subsection{Boundary components and based paths}

Write
\[
\partial P=
\begin{cases}
B_1, & g=2r,\\
B_1\sqcup B_2, & g=2r+1.
\end{cases}
\]
Label the component containing \(q_0\) by \(B_1\) and set \(q_1=q_0\).
In odd genus choose \(q_2\in B_2\) and an embedded path
\[
\beta_2:q_0\longrightarrow q_2
\]
in \(P\setminus\bigcup_j\alpha_j\), with interior off \(\partial P\);
such a path exists because the cut surface is a disk.  Let \(\beta_1\)
be constant at \(q_0\).  For each relevant \(\nu\), set
\[
q_\nu^+=q_\nu\times\{1\},
\qquad
q_\nu^-=q_\nu\times\{0\},
\]
let \(\beta_\nu^\pm\) be the copies in \(P^\pm\), and define
\[
\lambda_\nu=q_\nu\times I:
q_\nu^+\longrightarrow q_\nu^-.
\]
Thus \(\lambda_1=\lambda_-\), and
\begin{equation}
\label{eq:vertical-path-Lambda}
\Lambda_\nu
=
\beta_\nu^+\lambda_\nu(\beta_\nu^-)^{-1}
:
p\longrightarrow p_-
\end{equation}
is \(\lambda_-\) for \(\nu=1\).

For each \(B_\nu\), orient the core
\[
B_\nu\times\{1/2\}
\]
so that the directed path \(\lambda_\nu\), from \(P^+\) to \(P^-\),
has positive local intersection with that core in the convention of
\cref{lem:local-crossing-coordinates}.  Give the copies
\(B_\nu\times\{1\}\subset P^+\) and
\(B_\nu\times\{0\}\subset P^-\) the corresponding orientation under
the product projection to \(B_\nu\).  Let
\[
b_\nu^+\in\pi_1(P^+,p),
\qquad
b_\nu^-\in\pi_1(P^-,p_-)
\]
be the resulting boundary loops, based using \(\beta_\nu^+\) and
\(\beta_\nu^-\), respectively.  In the surface group one has the
literal based equality
\[
\Lambda_\nu b_\nu^-\Lambda_\nu^{-1}=b_\nu^+.
\]
Under the product identification with \(P\), the two boundary loops
are represented by the same word in the corresponding free bases fixed
in \cref{prop:horizontal-peripheral}.  Hence Section~\ref{sec:disk}
gives
\begin{equation}
\label{eq:vertical-boundary-common-image}
\rho(b_\nu^+)
=
\rho(\lambda_-b_\nu^-\lambda_-^{-1})
=:
w_\nu\in U_0.
\end{equation}

Base the core at \(p\) by following \(\beta_\nu^+\) and then
\(q_\nu\times[1,1/2]\).  This based core is homotopic in the surface
to \(b_\nu^+\), so its pre-surgery push-off image is also \(w_\nu\).
In particular, \(w_\nu\) commutes with \(\mu\) by
\eqref{eq:disk-meridian-centralizer}.  In even genus we abbreviate
\(w=w_1\).

Choose disjoint product neighborhoods so that \(\lambda_\nu\) crosses
only its own core neighborhood, once, and every \(\beta_\nu^\pm\) avoids
them.  In odd genus set
\begin{equation}
\label{eq:odd-surface-cycle}
\tau
=
\Lambda_2\Lambda_1^{-1}
=
\beta_2^+\lambda_2(\beta_2^-)^{-1}\lambda_-^{-1}.
\end{equation}
The choices make \(\tau\) disjoint from every \(c_j\); its standard
push-off was trivial and the preliminary disjoint surgeries do not change
it.  Hence
\begin{equation}
\label{eq:old-odd-cycle-trivial}
\rho(\tau)=1
\qquad\text{in }A_0.
\end{equation}

For each relevant \(\nu\), choose an oriented nontrivial hyperbolic
knot \(J_\nu^V\), where the superscript records the vertical-boundary
stage, and write
\[
K_\nu^V=K_{J_\nu^V},
\qquad
m_\nu^V=m_{J_\nu^V},
\qquad
\ell_\nu^V=\ell_{J_\nu^V}.
\]
The parity-specific calculations below verify, before the relevant
one-step theorem is applied, that
\(
\langle\mu,w_\nu\rangle\cong\mathbb Z^2.
\)
Perform ordinary untwisted rim surgery along
\(B_\nu\times\{1/2\}\), with \(m_\nu^V\) identified with the positive
surface meridian.  Use the section-compatible product framing from
\cref{sec:rimlocal}.

Apply \cref{conv:peripheral-groupoid} with \(*_\nu\) just after the
positive crossing of \(\lambda_\nu\).  Let
\[
\omega_\nu^V:*\longrightarrow *_\nu
\]
follow the positive push-off of \(\Lambda_\nu\) through the old reference
connector and then a short attaching-region path.  Use it in van Kampen
and in the inclusion of \(K_\nu^V\), so
\[
\ell_\nu^V
=
\omega_\nu^V\ell_{\nu,\mathrm{loc}}^V(\omega_\nu^V)^{-1}.
\]
Let \(\widehat\Lambda_\nu\) and \(\widehat\Lambda_\nu'\) be the
positive push-offs from \(*\) to the push-off of \(p_-\), before and after
the \(B_\nu\)-surgery.  The crossing choices above and
\cref{lem:local-crossing-coordinates} give the literal groupoid equality
\begin{equation}
\label{eq:vertical-path-insertion}
\widehat\Lambda_\nu'
=
\ell_\nu^V\widehat\Lambda_\nu.
\end{equation}
The section-compatible ordinary untwisted convention supplies no
meridian or new \(S^1\)-factor power.

After all the vertical-boundary surgeries, let \(F_0\) be the resulting
surface, let \(G_0\) be its exterior group, and let
\[
\rho_0:\pi_1(\Sigma_g,p)\longrightarrow G_0
\]
be its push-off homomorphism.  Loops in \(P^+\) may be represented away
from the surgery neighborhoods, so
\begin{equation}
\label{eq:vertical-plus-restriction}
\rho_0(\gamma)=\rho(\gamma)
\qquad
(\gamma\in\pi_1(P^+,p)).
\end{equation}
Using \(\Lambda_1=\lambda_-\) for the change of basepoint on \(P^-\),
\eqref{eq:vertical-path-insertion} gives
\begin{equation}
\label{eq:vertical-minus-restriction}
\rho_0(\lambda_-\gamma\lambda_-^{-1})
=
\ell_1^V\,
\rho(\lambda_-\gamma\lambda_-^{-1})\,
(\ell_1^V)^{-1}
\qquad
(\gamma\in\pi_1(P^-,p_-)).
\end{equation}
Consequently
\[
\rho_0(\pi_1(P^+,p))=U_0,
\qquad
\rho_0(\lambda_-\pi_1(P^-,p_-)\lambda_-^{-1})
=
\ell_1^VU_0(\ell_1^V)^{-1}.
\]
In odd genus, the surgery near \(B_1\times\{1/2\}\) is disjoint
from \(\Lambda_2\), and the surgery near
\(B_2\times\{1/2\}\) is disjoint from \(\Lambda_1\).  Thus each
of these two reference paths is changed only at its own crossing.
Equations \eqref{eq:vertical-path-insertion} and
\eqref{eq:old-odd-cycle-trivial} then give the literal based equality
\begin{equation}
\label{eq:vertical-cycle-image}
\rho_0(\tau)
=
\ell_2^V(\ell_1^V)^{-1}.
\end{equation}
With these fixed basepoints and named paths, all equalities are literal,
not merely up to conjugacy.

We next record the free-group facts used to control the boundary
subgroups.

\begin{lemma}[Boundary words in free groups]\label{lem:free-boundary-words}
Let \(U\) be a finite-rank free group with a fixed free basis.  Call
two nontrivial elements \emph{commensurable} if some nonzero powers of
them are conjugate.
\begin{enumerate}[label=\textup{(\roman*)},leftmargin=2.3em]
\item If a nontrivial cyclically reduced word \(w\) contains an
oriented basis letter exactly once, then \(w\) is not a proper power.
\item If \(w\ne1\) is not a proper power, then
\[
C_U(w)=\langle w\rangle,
\]
and \(\langle w\rangle\) is a maximal cyclic malnormal subgroup of
\(U\).
\item If \(u,v\in U\setminus\{1\}\) and
\[
a\langle u\rangle a^{-1}\cap\langle v\rangle\ne1
\]
for some \(a\in U\), then the maximal cyclic subgroups containing
\(u\) and \(v\) are conjugate.  In particular, if \(u\) and \(v\)
are not commensurable, every conjugate of \(\langle u\rangle\) has
trivial intersection with \(\langle v\rangle\).
\end{enumerate}
\end{lemma}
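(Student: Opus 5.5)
For (i), I will abelianize. Suppose \(w=v^k\) with \(k\ge2\). Since \(w\) is cyclically reduced, first replace \(v\) by a conjugate so that \(v\) is cyclically reduced; the same conjugation carries \(v^k\) to \(w\). For a cyclically reduced \(v\), the word \(v^k\) is already reduced and cyclically reduced. Its cyclically reduced form is unique up to cyclic permutation, so \(w\) is a cyclic permutation of the literal concatenation \(vv\cdots v\). The number of occurrences of a given oriented letter \(x\) in \(w\) is therefore \(k\) times the number in \(v\). This count is either \(0\) or at least \(k\ge2\), never exactly \(1\), which is a contradiction. (Counting occurrences, rather than exponent sums, handles the case where \(x^{-1}\) also appears.)

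For (ii), I will use the standard structure of centralizers in free groups. The centralizer of a nontrivial element of a free group is infinite cyclic: it is a subgroup with nontrivial center, it is free by Nielsen--Schreier, and a free group with nontrivial center is cyclic. It is generated by the unique root of \(w\). Since \(w\) is not a proper power, \(C_U(w)=\langle w\rangle\), and \(\langle w\rangle\) is a maximal cyclic subgroup.

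For malnormality, suppose \(a\langle w\rangle a^{-1}\cap\langle w\rangle\neq1\), so \(aw^ma^{-1}=w^n\) with \(m,n\neq0\). Comparing cyclically reduced lengths, or passing to the homomorphism to \(\Z\) given by counting, gives \(|m|=|n|\). Note that \(w^m\) and \(w^{-m}\) are not conjugate in a free group, since a conjugacy would give a finite-order element of \(\operatorname{Out}\) restricted to a cyclic subgroup, or more concretely a cyclically reduced word equal to a cyclic permutation of its inverse, which is impossible in a free group. Hence \(n=m\), so \(a\) commutes with \(w^m\). Then \(a\in C_U(w^m)\), and this centralizer is generated by the same root \(w\). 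Therefore \(a\in\langle w\rangle\), proving malnormality.

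For (iii), let \(\langle u_0\rangle\) and \(\langle v_0\rangle\) be the maximal cyclic subgroups containing \(u\) and \(v\), with \(u_0,v_0\) not proper powers. A nontrivial element of \(a\langle u\rangle a^{-1}\cap\langle v\rangle\) has the form \(au^ia^{-1}=v^j\). Its centralizer equals both \(aC_U(u_0)a^{-1}=a\langle u_0\rangle a^{-1}\) and \(\langle v_0\rangle\), by (ii) applied to roots. So the two maximal cyclic subgroups are conjugate, and in particular \(u\) and \(v\) are commensurable. The final statement of (iii) is the contrapositive.

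The only delicate point is the sign issue in (ii), namely excluding that \(w^m\) is conjugate to \(w^{-m}\). I handle it directly with cyclically reduced words: a cyclic permutation of \(w^m\) equal to \(w^{-m}\) would force a letter to equal its inverse in the middle of the word.
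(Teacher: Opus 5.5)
Your proof is correct, and for (i) it is the paper's argument: the cyclic word of a proper power \(v^k\) is a \(k\)-fold repetition, so every oriented-letter multiplicity is divisible by \(k\). The opening ``I will abelianize'' is a misnomer, since, as you yourself note, exponent sums would not suffice.

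For (ii) and (iii) you take a different route. The paper works on the Cayley tree. Anything centralizing \(w\), or conjugating a nonzero power of \(w\) to another, preserves the axis of \(w\). The orientation-preserving stabilizer of that axis is the maximal cyclic subgroup. A reflection is ruled out because its square fixes the axis pointwise, which the free action of a torsion-free group forbids. One picture thus yields the centralizer, malnormality, and (iii) at once, with the sign issue absorbed into reflection exclusion; this is the same mechanism as the later \cref{lem:reflection-exclusion}. You argue algebraically instead: Nielsen--Schreier and centerlessness of noncyclic free groups for centralizers, cyclically reduced length for \(|m|=|n|\), and word combinatorics for the sign. Your centralizer-of-powers argument for (iii) is clean and needs no sign analysis at all.

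Two small repairs are needed.

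First, drop the alternative ``homomorphism to \(\Z\) given by counting'' for \(|m|=|n|\). Exponent-sum maps kill exactly the boundary words this lemma is applied to, such as \(w=\prod_j[a_j,b_j]\in[U,U]\). The cyclically-reduced-length comparison is the one that works.

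Second, your justification that \(w^m\) is not conjugate to \(w^{-m}\) covers only half the cases, and the \(\operatorname{Out}\) remark should go. Suppose a cyclically reduced \(x_1\cdots x_L\) is a cyclic permutation of its inverse. The induced reflection \(j\mapsto c-j\) of \(\Z/L\) then does one of two things. It may fix an index, giving a letter equal to its inverse. Or, when \(L\) is even and \(c\) is odd, it swaps two cyclically adjacent indices, giving adjacent mutually inverse letters, which contradicts cyclic reducedness.

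An algebraic alternative, parallel to the paper, also works. If \(aw^ma^{-1}=w^{-m}\), then \(a^2\in C_U(w^m)=\langle w\rangle\). If \(a^2=w^k\) with \(k\ne0\), then \(a\in C_U(w^k)=\langle w\rangle\) commutes with \(w^m\). If \(k=0\), then \(a=1\) by torsion-freeness. Either way \(w^{2m}=1\), a contradiction.
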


\begin{proof}
If \(w=u^n\) with \(n>1\), cyclic reduction makes the cyclic word for
\(w\) an \(n\)-fold repetition, so every oriented-letter multiplicity is
divisible by \(n\); this proves~\textup{(i)}.

For~\textup{(ii)}, let \(r\) generate the maximal cyclic subgroup
containing \(w\).  On the Cayley-tree axis, the orientation-preserving
translation stabilizer is \(\langle r\rangle\).  An axis reversal would
square to an element fixing the axis pointwise, impossible for the free,
torsion-free action.  Hence
\[
C_U(w)=\langle r\rangle.
\]
If \(w\) is not a proper power, \(r=w\).  Moreover, a conjugate of
\(\langle w\rangle\) meeting it nontrivially preserves the same axis,
so the conjugator lies in \(\langle w\rangle\); thus this subgroup is
maximal cyclic and malnormal.

For~\textup{(iii)}, a nontrivial element of the intersection is a
nonzero power of both \(aua^{-1}\) and \(v\), so their axes coincide.
The orientation-preserving translation group of that axis is the common
maximal cyclic subgroup, giving the claimed conjugacy and the stated
noncommensurable consequence.
\end{proof}

\begin{lemma}[A subgroup normal-form criterion]
\label{lem:subgroup-amalgam}
Let \(G=G_1*_{A}G_2\) have injective edge maps
\(\iota_i:A\to G_i\).  If \(C\le A\), \(U_i\le G_i\), and
\[
U_i\cap\iota_i(A)=\iota_i(C)
\qquad(i=1,2),
\]
then, with the maps \(C\to U_i\) given by
\(\iota_i|_C\), the vertex inclusions induce an injection
\[
U_1*_{C}U_2\longrightarrow G.
\]
\end{lemma}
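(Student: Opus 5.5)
The map \(U_1*_C U_2\to G\) exists by the universal property. The vertex inclusions \(U_i\hookrightarrow G_i\hookrightarrow G\) agree on \(C\), since both restrict to \(\iota_i|_C\) followed by the identification in the pushout. To prove injectivity I would use the reduced-word normal form for amalgams and check that reducedness transfers from \(U_1*_C U_2\) to \(G\).

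Take a nontrivial element \(x\in U_1*_C U_2\). First suppose \(x\) lies in a single vertex group \(U_i\). Its image is the image of a nontrivial element of \(U_i\le G_i\), and \(G_i\) embeds in \(G\) by the standard normal form theorem, since both edge maps are injective. So the image is nontrivial.

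Otherwise write \(x=u_1u_2\cdots u_n\) with \(n\ge2\), where consecutive letters come from alternating factors and no letter lies in (the image of) \(C\). Regard each \(u_k\) as an element of the corresponding \(G_i\). The key point is the hypothesis \(U_i\cap\iota_i(A)=\iota_i(C)\). A letter \(u_k\in U_i\setminus\iota_i(C)\) therefore cannot lie in \(\iota_i(A)\). So the same sequence is a reduced alternating word in \(G_1*_A G_2\), with every letter outside the edge group.

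By the normal form theorem for amalgamated free products (Serre, \emph{Trees}, or Lyndon--Schupp), a reduced alternating word of length \(n\ge1\) represents a nontrivial element of \(G\). Hence the kernel is trivial.

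There is no real obstacle here. The only care needed is bookkeeping: \(C\) is identified in \(U_1\) and \(U_2\) via \(\iota_1|_C\) and \(\iota_2|_C\), so "not in \(C\)" in \(U_i\) means exactly "not in \(\iota_i(C)\)". With that identification the intersection hypothesis gives the transfer of reducedness directly.

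Equivalently, in Bass--Serre language, \(U_1*_C U_2\) acts on the tree of \(G\). The intersection condition says the stabilizers of the base vertices and edge in \(U_1*_C U_2\) are \(U_1,U_2,C\), which also yields the embedding. I would present the normal-form argument as the main proof.
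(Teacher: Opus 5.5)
Your proposal is correct and takes the same approach as the paper: the intersection hypothesis shows that a syllable of \(U_i\) lying outside \(\iota_i(C)\) also lies outside \(\iota_i(A)\), so a reduced word in \(U_1*_C U_2\) stays reduced, hence nontrivial, in \(G_1*_A G_2\). Your separate treatment of single-syllable elements and of the well-definedness of the induced map only makes explicit what the paper's two-sentence proof leaves implicit.
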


\begin{proof}
By the intersection hypothesis, each syllable of a reduced word outside
\(\iota_i(C)\) also lies outside \(\iota_i(A)\).  The same word is
therefore reduced and nontrivial in \(G_1*_{A}G_2\).
\end{proof}

\subsection{Even genus}

Let \(g=2r\) and \(P=\Sigma_{r,1}\).  Choose a standard free basis
\[
\bar a_1,\bar b_1,\ldots,\bar a_r,\bar b_r
\in\pi_1(P^+,p)
\]
so that the based oriented core defined above is represented in
\(\pi_1(P^+,p)\) by
\[
\prod_{j=1}^r[\bar a_j,\bar b_j].
\]
Use the corresponding loops in \(P^-\), based at \(p_-\), for
the second copy of \(\pi_1(P)\).  By
\cref{prop:horizontal-peripheral}, the two homomorphisms from
\(\pi_1(P^+,p)\) and \(\pi_1(P^-,p_-)\) send corresponding loops
to the same elements of \(U_0\).  Set
\[
a_j=\rho(\bar a_j),
\qquad
b_j=\rho(\bar b_j).
\]
Because \(\rho|_{\pi_1(P^+,p)}\) is an isomorphism onto \(U_0\),
\[
U_0=\langle a_1,b_1,\ldots,a_r,b_r\rangle
\]
with the displayed free basis, and the pre-surgery boundary word is
\begin{equation}\label{eq:even-seam-word}
w=\prod_{j=1}^r[a_j,b_j].
\end{equation}

Put
\[
E=\langle\mu,w\rangle,
\qquad
C_1^V=K_1^V\times\langle z_1\rangle.
\]
The displayed word is freely reduced and nonempty, so \(w\ne1\).  Since
\[
C_{A_0}(\mu)=\langle\mu\rangle\times U_0,
\]
the equality \(\mu^aw^b=1\) forces \(a=0\) and then \(b=0\), because
\(U_0\) is torsion-free.  Hence
\[
E\cong\mathbb Z^2.
\]
The one-step calculation of \cref{thm:one-step} applies, with edge maps
\[
\mu\longmapsto m_1^V,
\qquad
w\longmapsto z_1,
\]
and gives
\begin{equation}\label{eq:even-seed-group}
G_0=A_0*_{E}C_1^V.
\end{equation}
After forming the amalgam, \(z_1\) is identified with \(w\).

\begin{lemma}\label{lem:even-L-embedding}
The subgroup generated by \(U_0\) and \(\ell_1^V\) in \(G_0\) is
naturally isomorphic to
\begin{equation}\label{eq:even-HNN}
L
=
U_0*_{\langle w\rangle}\langle w,\ell_1^V\rangle
\cong
\left\langle
U_0,\ell_1^V
\ \middle|\
[\ell_1^V,w]=1
\right\rangle.
\end{equation}
\end{lemma}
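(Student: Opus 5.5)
The plan is to apply \cref{lem:subgroup-amalgam} to the amalgam \(G_0=A_0*_{E}C_1^V\) of \eqref{eq:even-seed-group}, with \(A=E=\langle\mu,w\rangle\cong\mathbb Z^2\) and \(C=\langle w\rangle\). On the \(A_0\) side take \(U_1=U_0\); on the \(C_1^V\) side take
\[
U_2=\langle z_1,\ell_1^V\rangle\le K_1^V\times\langle z_1\rangle .
\]
The edge maps send \(w\mapsto w\) in \(A_0\) and \(w\mapsto z_1\) in \(C_1^V\). Once both intersection hypotheses are checked, the lemma shows that the natural map \(U_0*_{\langle w\rangle}U_2\to G_0\) is injective. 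Its image is the subgroup generated by \(U_0\) and \(\ell_1^V\), since \(z_1\) is identified with \(w\in U_0\).

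To identify \(U_2\), note that \(\ell_1^V\) lies in \(K_1^V\) and has infinite order, since \(J_1^V\) is nontrivial and its peripheral subgroup is \(\mathbb Z^2\). Also \(z_1\) generates the central \(\mathbb Z\)-factor. Hence
\[
U_2=\langle\ell_1^V\rangle\times\langle z_1\rangle\cong\mathbb Z^2,
\]
which is the group \(\langle w,\ell_1^V\rangle\) with \(w,\ell_1^V\) commuting, as \eqref{eq:even-HNN} requires. The presentation in \eqref{eq:even-HNN} is then the standard presentation of this amalgam: the generators of \(U_0\) together with \(\ell_1^V\), and the single relation \([\ell_1^V,w]=1\).

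It remains to check the two intersection conditions.

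\textbf{Vertex \(C_1^V\).} We need \(U_2\cap\langle m_1^V,z_1\rangle=\langle z_1\rangle\). An element \((\ell_1^V)^a z_1^b\) lies in \(\langle m_1^V\rangle\times\langle z_1\rangle\) exactly when \((\ell_1^V)^a\in\langle m_1^V\rangle\) inside \(K_1^V\). The peripheral subgroup \(\langle m_1^V,\ell_1^V\rangle\) is free abelian of rank two, so this forces \(a=0\).

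\textbf{Vertex \(A_0\).} We need \(U_0\cap\langle\mu,w\rangle=\langle w\rangle\). By \eqref{eq:disk-meridian-centralizer} we have \(\langle\mu,w\rangle\le C_{A_0}(\mu)=\langle\mu\rangle\times U_0\). An element \(\mu^aw^b\) lies in \(U_0\) only if \(a=0\), by uniqueness of the direct-product decomposition (equivalently, by \eqref{eq:F-characterization} and abelianization).

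The main obstacle is not in these intersections, which are immediate from \cref{prop:disk-group}. It is in matching the identifications carefully. First, \(\ell_1^V\) denotes the globally based element \(\omega_1^V\ell_{1,\mathrm{loc}}^V(\omega_1^V)^{-1}\), and this must coincide with the image of \(\ell_1^V\in K_1^V\) under the van Kampen inclusion; this holds because the same path \(\omega_1^V\) was used for both (\cref{conv:peripheral-groupoid}). Second, \(w\in U_0\) must be the element that the amalgamation identifies with \(z_1\). Both points are built into the set-up of \cref{thm:one-step} and \eqref{eq:vertical-boundary-common-image}, so I would state them explicitly and then apply \cref{lem:subgroup-amalgam}.
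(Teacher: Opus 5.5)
Your proposal is correct and follows the paper's proof essentially step for step. Both apply \cref{lem:subgroup-amalgam} in \(G_0=A_0*_{E}C_1^V\) to \(U_0\), \(\langle z_1,\ell_1^V\rangle\) and \(C=\langle w\rangle\), and both verify the two intersection conditions in the same way: \(U_0\cap E=\langle w\rangle\) from \(C_{A_0}(\mu)=\langle\mu\rangle\times U_0\), and the other by projecting to \(K_1^V\) and using \(\langle m_1^V,\ell_1^V\rangle\cong\mathbb Z^2\). Your extra remarks on the basing path \(\omega_1^V\) and the identification \(z_1=w\) are harmless bookkeeping that the paper leaves implicit.
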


\begin{proof}
The direct-product decomposition from \cref{prop:disk-group} gives
\[
U_0\cap E=\langle w\rangle.
\]
Inside \(C_1^V=K_1^V\times\langle z_1\rangle\), the subgroup
\(\langle z_1,\ell_1^V\rangle\) is free abelian of rank two.  If
\(z_1^a(\ell_1^V)^b\) lies in the image of
\(E=\langle\mu,w\rangle\), projection to \(K_1^V\) gives
\[
(\ell_1^V)^b\in\langle m_1^V\rangle.
\]
The knot peripheral subgroup
\(\langle m_1^V,\ell_1^V\rangle\cong\mathbb Z^2\) therefore forces
\(b=0\).  After identifying \(z_1=w\), this proves
\[
\langle w,\ell_1^V\rangle\cap E=\langle w\rangle.
\]

Thus \cref{lem:subgroup-amalgam}, applied in
\(G_0=A_0*_{E}C_1^V\) to \(U_0\), \(\langle w,\ell_1^V\rangle\), and
\(C=\langle w\rangle\), embeds \eqref{eq:even-HNN} with image
\(\langle U_0,\ell_1^V\rangle\).
\end{proof}

\begin{proposition}[Injectivity of the push-off homomorphism in even genus]
\label{prop:even-seed-injection}
The word \(w\) in \eqref{eq:even-seam-word} is not a proper power,
\(\langle w\rangle\) is maximal cyclic and malnormal in \(U_0\), and
\begin{equation}\label{eq:even-intersection}
U_0\cap
\ell_1^VU_0(\ell_1^V)^{-1}
=
\langle w\rangle
\end{equation}
in \(L\).  Consequently the push-off homomorphism induces an embedding
\begin{equation}\label{eq:even-surface-embedding}
\rho_0:\pi_1(\Sigma_{2r},p)
\cong
U_0*_{\langle w\rangle}U_0
\into
L
\into
G_0,
\end{equation}
where the two displayed copies of \(U_0\) arise from
\(\pi_1(P^+,p)\) and from \(\pi_1(P^-,p_-)\) using \(\lambda_-\),
respectively.
\end{proposition}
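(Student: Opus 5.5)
The proof has three parts, taken in order: free-group facts about \(w\), the intersection \eqref{eq:even-intersection} computed in \(L\), and then the embedding of the surface group.

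\emph{Facts about \(w\).} Cyclically reducing \(w=\prod_{j=1}^r[a_j,b_j]\) does not change it, since its first letter \(a_1\) and last letter \(b_r^{-1}\) do not cancel. In this word the oriented letter \(a_1\) occurs exactly once. So \cref{lem:free-boundary-words}(i) shows that \(w\) is not a proper power, and \cref{lem:free-boundary-words}(ii) shows that \(\langle w\rangle\) is maximal cyclic and malnormal in \(U_0\).

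\emph{The intersection in \(L\).} By \cref{lem:even-L-embedding} we may compute inside
\[
L=U_0*_{\langle w\rangle}\langle w,\ell_1^V\rangle,
\]
whose second vertex group is \(\mathbb Z^2\). We have \(w=\ell_1^V w(\ell_1^V)^{-1}\in \ell_1^VU_0(\ell_1^V)^{-1}\), so \(\langle w\rangle\) lies in the intersection. For the reverse inclusion, let \(u\in U_0\) satisfy \(\ell_1^Vu(\ell_1^V)^{-1}\in U_0\), and suppose \(u\notin\langle w\rangle\). Then \(\ell_1^V\cdot u\cdot(\ell_1^V)^{-1}\) is an alternating word whose syllables \(\ell_1^V\) and \((\ell_1^V)^{-1}\) lie outside \(\langle w\rangle\) in the abelian vertex group, and whose middle syllable \(u\) lies outside \(\langle w\rangle\). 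This word is reduced of length three, so by the normal form theorem it is not in \(U_0\), a contradiction. Hence \(u\in\langle w\rangle\).

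\emph{The surface group.} Cut \(\Sigma_{2r}\) along the core \(B_1\times\{1/2\}\). Van Kampen gives
\[
\pi_1(\Sigma_{2r},p)=\pi_1(P^+,p)*_{\langle b_1^+\rangle}\lambda_-\pi_1(P^-,p_-)\lambda_-^{-1},
\]
using the literal equality \(\lambda_-b_1^-\lambda_-^{-1}=b_1^+\). By \eqref{eq:vertical-plus-restriction} and \eqref{eq:vertical-minus-restriction}, together with \cref{prop:horizontal-peripheral}, \(\rho_0\) maps the first factor isomorphically onto \(U_0\) and the second factor isomorphically onto \(\ell_1^VU_0(\ell_1^V)^{-1}\). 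On the common edge, \(b_1^+\mapsto w\) from the first side. From the second side the image is \(\ell_1^Vw(\ell_1^V)^{-1}=w\), which is consistent.

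It remains to show that the subgroup \(\langle U_0,\ell_1^VU_0(\ell_1^V)^{-1}\rangle\) of \(L\) is the amalgam \(U_0*_{\langle w\rangle}\ell_1^VU_0(\ell_1^V)^{-1}\). This is the main obstacle, because \cref{lem:subgroup-amalgam} does not apply directly: the second subgroup is not a vertex subgroup. I would handle it with a normal-form argument. Take an alternating product
\[
u_1\,\ell_1^Vu_2(\ell_1^V)^{-1}\,u_3\cdots
\]
with each \(u_i\in U_0\setminus\langle w\rangle\). In \(L\) this word is reduced: each \(u_i\notin\langle w\rangle\), and each \(\ell_1^{\pm V}\) syllable lies outside \(\langle w\rangle\) in the abelian vertex group. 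So its length equals its syllable count, and it is nontrivial.

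Equivalently, \(L\) is the HNN extension of \(U_0\) along \(\langle w\rangle\) with stable letter \(\ell_1^V\) acting trivially. The normal form then shows that the vertex stabilizers \(U_0\) and \(\ell_1^VU_0(\ell_1^V)^{-1}\) are adjacent vertices of the Bass--Serre tree with edge stabilizer \(\langle w\rangle\). The claim then follows from the standard fact that the subgroup generated by two adjacent vertex stabilizers is their amalgam over the edge stabilizer, via the combination theorem for tree actions, using \eqref{eq:even-intersection}.

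Composing this injection \(\pi_1(\Sigma_{2r},p)\into L\) with the embedding \(L\into G_0\) of \cref{lem:even-L-embedding} gives \eqref{eq:even-surface-embedding}.
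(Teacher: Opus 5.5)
Your proposal is correct and follows essentially the same route as the paper: the free-group lemma handles \(w\), and a normal-form argument in \(L\) handles both the intersection formula and the injectivity of the surface amalgam. The differences are cosmetic. You use the amalgam normal form for \(U_0*_{\langle w\rangle}\langle w,\ell_1^V\rangle\), while the paper runs the same argument as Britton's lemma for \(\langle U_0,\ell_1^V\mid[\ell_1^V,w]=1\rangle\); and your closing ``Equivalently'' paragraph is redundant, since the normal-form paragraph before it already proves injectivity.
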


\begin{proof}
Each oriented letter
\(a_j,b_j,a_j^{-1},b_j^{-1}\) occurs exactly once in the cyclically
reduced word for \(w\).  By \cref{lem:free-boundary-words}, \(w\) is
not a proper power and \(\langle w\rangle\) is maximal cyclic and
malnormal.

Suppose that
\[
\ell_1^Vu(\ell_1^V)^{-1}\in U_0
\]
for some \(u\in U_0\).  If \(u\notin\langle w\rangle\), then the
word \(\ell_1^Vu(\ell_1^V)^{-1}\) is reduced in the HNN presentation
\eqref{eq:even-HNN}; Britton's lemma says that it does not lie in the
base group \(U_0\).  Thus \(u\in\langle w\rangle\).  Conversely,
\(\ell_1^V\) centralizes \(w\), proving
\eqref{eq:even-intersection}.

The decomposition
\[
\Sigma_{2r}=P^+\cup_{B_1\times I}P^-
\]
and the path \(\lambda_-\) give an amalgamated-product decomposition
of the based surface group.  Under the two isomorphisms supplied by
\cref{prop:horizontal-peripheral}, both boundary generators map to
\(w\), so this decomposition becomes
\[
\pi_1(\Sigma_{2r},p)
\cong
U_0*_{\langle w\rangle}U_0.
\]
By \eqref{eq:vertical-plus-restriction} and
\eqref{eq:vertical-minus-restriction}, \(\rho_0\) maps the first
vertex group identically to \(U_0\) and maps the second by
\[
u\longmapsto\ell_1^Vu(\ell_1^V)^{-1}.
\]
The two edge maps agree because \([\ell_1^V,w]=1\).

Let a nontrivial element of the surface amalgam be represented by a
reduced word.  If it has more than one vertex-group syllable, every
syllable lies outside \(\langle w\rangle\).  Replacing each syllable
from the second copy of \(U_0\) by its \(\ell_1^V\)-conjugate produces
a word in the HNN presentation \eqref{eq:even-HNN}.  A Britton pinch
could occur only in a subword
\[
\ell_1^Vu(\ell_1^V)^{-1}
\quad\text{or}\quad
(\ell_1^V)^{-1}u\ell_1^V
\]
with \(u\in\langle w\rangle\), contrary to reducedness of the
surface-amalgam word.  The image is therefore HNN-reduced and
nontrivial.  A word contained in one vertex group is nontrivial because
each vertex-group map is injective.  Hence \(\rho_0\) is injective into
\(L\), and \cref{lem:even-L-embedding} embeds \(L\) in \(G_0\).
\end{proof}

We denote the embedded surface subgroup by
\begin{equation}\label{eq:even-S0}
S_0=\rho_0(\pi_1(\Sigma_{2r},p))
=
\left\langle
U_0,
\ell_1^VU_0(\ell_1^V)^{-1}
\right\rangle.
\end{equation}

The even case uses the surface amalgam over its single boundary subgroup.
With two boundary components, a maximal tree leaves one stable letter,
whose image is \eqref{eq:vertical-cycle-image}.

\subsection{Odd genus}

Let \(g=2r+1\) and \(P=\Sigma_{r,2}\).  Choose a standard free basis
\[
\bar a_1,\bar b_1,\ldots,\bar a_r,\bar b_r,\bar c
\in\pi_1(P^+,p)
\]
compatible with the based oriented boundary cores fixed above, so that
the core associated to \(B_2\) represents \(\bar c\) and the core
associated to \(B_1\) represents
\[
\left(
\prod_{j=1}^r[\bar a_j,\bar b_j]\bar c
\right)^{-1}.
\]
Use the corresponding loops in \(P^-\), based at \(p_-\), for the
second vertex group.  As in the even case,
\cref{prop:horizontal-peripheral} sends corresponding loops in
\(P^+\) and \(P^-\) to the same elements of \(U_0\).  Set
\[
a_j=\rho(\bar a_j),
\qquad
b_j=\rho(\bar b_j),
\qquad
c=\rho(\bar c).
\]
Then
\[
U_0=\langle a_1,b_1,\ldots,a_r,b_r,c\rangle
\]
with the displayed free basis, and
\begin{equation}\label{eq:odd-seam-words}
w_2=c,
\qquad
w_1=
\left(
\prod_{j=1}^r[a_j,b_j]c
\right)^{-1}.
\end{equation}

For \(\nu=1,2\), put
\[
E_\nu=\langle\mu,w_\nu\rangle,
\qquad
C_\nu^V=K_\nu^V\times\langle z_\nu\rangle.
\]
The displayed words are freely reduced and nonempty, so
\(w_1,w_2\ne1\).  As in the even case, the direct product
\[
C_{A_0}(\mu)=\langle\mu\rangle\times U_0
\]
shows that
\[
E_\nu\cong\mathbb Z^2.
\]
Perform first the ordinary untwisted rim surgery associated to \(B_1\).
The one-step theorem gives
\[
G_0^{(1)}
=
A_0*_{E_1}C_1^V,
\qquad
\mu\longmapsto m_1^V,
\quad
w_1\longmapsto z_1.
\]
The normal-form theorem embeds \(A_0\) in \(G_0^{(1)}\).  The based
representative of the core associated to \(B_2\) is disjoint from the
first surgery neighborhood, so its push-off image remains the element
\(w_2\in A_0\subset G_0^{(1)}\).  Thus the second edge group is still
\(E_2\cong\mathbb Z^2\), and the second ordinary untwisted rim surgery
gives
\begin{equation}\label{eq:odd-seed-group}
G_0
=
\bigl(A_0*_{E_1}C_1^V\bigr)*_{E_2}C_2^V,
\end{equation}
where the second pair of edge maps sends
\[
\mu\longmapsto m_2^V,
\qquad
w_2\longmapsto z_2.
\]
After forming the two amalgams, \(z_\nu\) is identified with
\(w_\nu\).

The subgroup generated by \(U_0,\ell_1^V,\ell_2^V\) has the relative
presentation
\begin{equation}\label{eq:odd-L}
L=
\left\langle
U_0,\ell_1^V,\ell_2^V
\ \middle|\
[\ell_1^V,w_1]=1,
[\ell_2^V,w_2]=1
\right\rangle.
\end{equation}
For the first amalgam, the calculation in
\cref{lem:even-L-embedding}, with \(w,E\) replaced by \(w_1,E_1\), gives
\[
U_0\cap E_1=\langle w_1\rangle,
\qquad
\langle w_1,\ell_1^V\rangle\cap E_1=\langle w_1\rangle.
\]
Hence \cref{lem:subgroup-amalgam} embeds
\(U_0*_{\langle w_1\rangle}\langle w_1,\ell_1^V\rangle\) as
\[
L_1
=
\left\langle
U_0,\ell_1^V
\ \middle|\
[\ell_1^V,w_1]=1
\right\rangle
\le G_0^{(1)}.
\]
The same normal-form argument gives
\[
L_1\cap A_0=U_0:
\]
a reduced word using a syllable from
\(\langle w_1,\ell_1^V\rangle\setminus\langle w_1\rangle\) remains
reduced in \(A_0*_{E_1}C_1^V\) and cannot lie in \(A_0\).  Therefore
\[
E_2\cap L_1
=
E_2\cap U_0
=
\langle w_2\rangle.
\]
Projection to \(K_2^V\), using
\(\langle m_2^V,\ell_2^V\rangle\cong\mathbb Z^2\), gives
\[
E_2\cap\langle w_2,\ell_2^V\rangle
=
\langle w_2\rangle.
\]
Only now does a second application of \cref{lem:subgroup-amalgam} embed
\[
L_1*_{\langle w_2\rangle}\langle w_2,\ell_2^V\rangle
\cong L
\]
in \(G_0\).

The words \(w_1,w_2\) are not proper powers.  The word \(w_2=c\) is a
basis element, and every oriented letter in the cyclically reduced word
for \(w_1\) occurs exactly once, so
\cref{lem:free-boundary-words} applies.  They are also
noncommensurable.  Let
\[
\psi:U_0\longrightarrow
\langle a_1,b_1,\ldots,a_r,b_r\rangle
\]
be the retraction that kills \(c\).  Then
\[
\psi(w_2)=1,
\qquad
\psi(w_1)=
\left(\prod_{j=1}^r[a_j,b_j]\right)^{-1}\ne1.
\]
If nonzero powers of \(w_1\) and \(w_2\) were conjugate, applying
\(\psi\) would make a nonzero power of \(\psi(w_1)\) trivial, which
is impossible in a free group.  Hence the maximal cyclic subgroups
\(\langle w_1\rangle\) and \(\langle w_2\rangle\) have trivial
intersection with every conjugate of one another by
\cref{lem:free-boundary-words}.

Choose the boundary edge associated to \(B_1\times I\) in a maximal
tree for the two-edge graph-of-groups decomposition of the surface.
By \eqref{eq:vertical-plus-restriction} and
\eqref{eq:vertical-minus-restriction}, the two vertex-group images are
\[
U_0
\quad\text{and}\quad
\ell_1^VU_0(\ell_1^V)^{-1}.
\]
Set
\[
H=
\left\langle
U_0,
\ell_1^VU_0(\ell_1^V)^{-1}
\right\rangle
\le L_1.
\]
The even-genus normal-form argument gives
\[
H
\cong
U_0*_{\langle w_1\rangle}
\ell_1^VU_0(\ell_1^V)^{-1}.
\]
The based paths satisfy
\[
\tau
\bigl(\Lambda_1b_2^-\Lambda_1^{-1}\bigr)
\tau^{-1}
=
b_2^+,
\]
because \(\tau=\Lambda_2\Lambda_1^{-1}\) and
\(\Lambda_2b_2^-\Lambda_2^{-1}=b_2^+\).  The two images of the second
boundary subgroup in \(H\) are therefore
\[
W_2^+=\langle w_2\rangle,
\qquad
W_2^-=
\ell_1^V\langle w_2\rangle(\ell_1^V)^{-1}.
\]
The loop \(\tau\) from \eqref{eq:odd-surface-cycle} is the stable
letter in the resulting HNN presentation of the surface group, with
\[
\tau W_2^-\tau^{-1}=W_2^+.
\]

\begin{proposition}[Injectivity of the push-off homomorphism in odd genus]
\label{prop:odd-seed-injection}
With the preceding paths and maximal-tree choice, the stable letter
\(\tau\) satisfies the literal based formula
\begin{equation}\label{eq:odd-stable-letter}
\rho_0(\tau)
=
\ell_2^V(\ell_1^V)^{-1}.
\end{equation}
The resulting push-off homomorphism
\begin{equation}\label{eq:odd-surface-injection}
\rho_0:\pi_1(\Sigma_{2r+1},p)\into L\into G_0
\end{equation}
is injective.
\end{proposition}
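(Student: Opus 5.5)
The formula \eqref{eq:odd-stable-letter} is already \eqref{eq:vertical-cycle-image}, so the plan concentrates on injectivity. The approach is to view \(L\) as an HNN extension of \(L_1\) and apply Britton's lemma, as in the even case. In the presentation \eqref{eq:odd-L}, write \(t=\ell_2^V(\ell_1^V)^{-1}\). The relation \([\ell_2^V,w_2]=1\) becomes
\[
t\bigl(\ell_1^Vw_2(\ell_1^V)^{-1}\bigr)t^{-1}=w_2 .
\]
Eliminating \(\ell_2^V=t\ell_1^V\) exhibits \(L\) as the HNN extension of \(L_1\) with stable letter \(t\) and associated cyclic subgroups \(W_2^-=\ell_1^V\langle w_2\rangle(\ell_1^V)^{-1}\) and \(W_2^+=\langle w_2\rangle\). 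Both are infinite cyclic in \(L_1\).

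On the surface side, the two-edge graph-of-groups decomposition with the \(B_1\times I\) edge in the maximal tree gives
\[
\pi_1(\Sigma_{2r+1},p)\cong\bigl(U_0*_{\langle w_1\rangle}U_0\bigr)*_{\tau}.
\]
Here \(\tau\) conjugates the \(P^-\)-copy of \(\langle w_2\rangle\) to the \(P^+\)-copy. By \eqref{eq:vertical-plus-restriction}, \eqref{eq:vertical-minus-restriction} and \eqref{eq:odd-stable-letter}, \(\rho_0\) has three properties:
\begin{itemize}
\item it maps the base amalgam isomorphically onto \(H\le L_1\), using the even-genus normal-form argument with \(w_1\) maximal cyclic and malnormal;
\item it sends \(\tau\) to \(t\);
\item it carries the surface associated subgroups exactly onto \(W_2^\pm\).
\end{itemize}
So \(\rho_0\) is the map of HNN extensions induced by \(H\into L_1\) with the identity on the stable letter.

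By Britton's lemma, such a map is injective provided two conditions hold:
\[
H\cap W_2^+=\langle w_2\rangle \ \text{as the }P^+\text{-copy},\qquad H\cap W_2^-=\ell_1^V\langle w_2\rangle(\ell_1^V)^{-1}.
\]
The meaning of these conditions is that an element of \(H\) lying in a full associated subgroup of \(L_1\) must already lie in the corresponding surface edge group. Here \(W_2^\pm\) are contained in \(H\), so the real content is a transfer condition. A reduced surface HNN word \(h_0\tau^{\epsilon_1}h_1\cdots\) has no pinch \(\tau h\tau^{-1}\) with \(h\) in the surface edge group. We must show that its image has no pinch \(t\rho_0(h)t^{-1}\) with \(\rho_0(h)\in W_2^-\). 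Since \(\rho_0|\) is injective on the base, \(\rho_0(h)\in W_2^-\) holds exactly when \(h\) lies in the \(P^-\)-edge copy. This is because \(W_2^-\) is itself the image of that copy, and likewise for \(t^{-1}\cdot t\) and \(W_2^+\). Hence pinches correspond exactly, and the image word is Britton-reduced and nontrivial whenever it contains a stable letter. Words with no stable letter are handled by injectivity on \(H\). Together with the embedding \(L\into G_0\) established above, this gives \eqref{eq:odd-surface-injection}.

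The main obstacle is confirming that \(L\) really is this HNN extension, i.e.\ that \(W_2^\pm\) are genuinely cyclic of infinite order in \(L_1\). \(W_2^+\) is infinite cyclic because \(w_2=c\ne1\) in the free group \(U_0\le L_1\), and \(W_2^-\) is a conjugate of it. The noncommensurability of \(w_1,w_2\) recorded before the statement is what one would use if a finer claim were needed, such as malnormality of \(W_2^\pm\) inside \(H\). I would check first whether it is needed at all; in the above argument injectivity on \(H\) appears to suffice.
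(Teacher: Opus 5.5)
Your proposal is correct and is essentially the paper's argument: substitute \(\rho_0(\tau)=\ell_2^V(\ell_1^V)^{-1}\), use injectivity of \(\rho_0\) on the base group \(H\), and apply Britton's lemma in \(L\) viewed as an HNN extension of \(L_1\), checking that pinches in the image word occur exactly for the subwords \(\tau u\tau^{-1}\) with \(u\in W_2^-\) and \(\tau^{-1}u\tau\) with \(u\in W_2^+\), which reducedness excludes. The only difference is cosmetic. You Tietze-change the stable letter to \(t=\ell_2^V(\ell_1^V)^{-1}\); the paper keeps \(\ell_2^V\) with associated subgroup \(\langle w_2\rangle\) and notes that \(\ell_2^V\bigl((\ell_1^V)^{-1}u\ell_1^V\bigr)(\ell_2^V)^{-1}\) pinches iff \(u\in W_2^-\). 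As you suspected, the noncommensurability of \(w_1,w_2\) is not needed here, and your ``main obstacle'' is vacuous, since the associated isomorphism \(W_2^-\to W_2^+\) is just conjugation by \((\ell_1^V)^{-1}\).
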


\begin{proof}
The stable-letter formula is \eqref{eq:vertical-cycle-image}.  It also
respects the HNN relation, because
\[
\begin{aligned}
&\ell_2^V(\ell_1^V)^{-1}
\bigl(\ell_1^Vw_2(\ell_1^V)^{-1}\bigr)
\ell_1^V(\ell_2^V)^{-1}\\
&\hspace{4em}
=
\ell_2^Vw_2(\ell_2^V)^{-1}
=
w_2.
\end{aligned}
\]
Thus the surface HNN presentation maps to \(L\).

Write a reduced word in that surface HNN presentation as
\[
u_0\tau^{\epsilon_1}u_1\cdots
\tau^{\epsilon_n}u_n,
\qquad
u_j\in H,
\quad
\epsilon_j\in\{\pm1\}.
\]
Reducedness means that, whenever
\((\epsilon_j,\epsilon_{j+1})=(1,-1)\), one has
\(u_j\notin W_2^-\), and whenever
\((\epsilon_j,\epsilon_{j+1})=(-1,1)\), one has
\(u_j\notin W_2^+\).

Substitute \(\tau=\ell_2^V(\ell_1^V)^{-1}\) and view the result in
\[
L
=
\left\langle
L_1,\ell_2^V
\ \middle|\
[\ell_2^V,w_2]=1
\right\rangle.
\]
A subword \(\tau u\tau^{-1}\) becomes
\[
\ell_2^V
\bigl((\ell_1^V)^{-1}u\ell_1^V\bigr)
(\ell_2^V)^{-1}.
\]
It is a Britton pinch in the \(\ell_2^V\)-HNN extension exactly when
\[
(\ell_1^V)^{-1}u\ell_1^V\in\langle w_2\rangle,
\]
equivalently when \(u\in W_2^-\).  Similarly, a subword
\(\tau^{-1}u\tau\) contains
\[
(\ell_2^V)^{-1}u\ell_2^V
\]
and pinches exactly when \(u\in W_2^+\).  These are precisely the two
conditions excluded by reducedness of the surface HNN word.  Adjacent
stable letters with the same sign do not form a Britton pinch.  Hence
every word containing a \(\tau\)-letter maps to an
\(\ell_2^V\)-HNN-reduced, and therefore nontrivial, word.  A word with
no \(\tau\)-letter lies in \(H\) and is nontrivial because
\(H\into L_1\into L\).  Britton's lemma proves injectivity into
\(L\), and the two chronological applications of
\cref{lem:subgroup-amalgam} embed \(L\) in \(G_0\).
\end{proof}

In odd genus we write
\begin{equation}\label{eq:odd-S0}
S_0=\rho_0(\pi_1(\Sigma_{2r+1},p))
=
\left\langle
U_0,
\ell_1^VU_0(\ell_1^V)^{-1},
\ell_2^V(\ell_1^V)^{-1}
\right\rangle.
\end{equation}

Both parity arguments prove that the push-off homomorphism
\[
\rho_0:\pi_1(\Sigma_g,p)\longrightarrow G_0
\]
is injective.  We record next the knot-separation and homological
hypotheses used later.

\subsection{Choice of knots and homological consequences}

The preliminary knots \(J_1^D,\ldots,J_g^D\) need only be oriented,
nontrivial, and hyperbolic.  Every knot attached along a rank-two edge---the one or two
\(J_\nu^V\) and all later surgery knots---is chosen hyperbolic, and the
meridian-marked groups at these stages are pairwise nonisomorphic.  No
pairwise-nonisomorphism condition is imposed on the
preliminary \(K_j^D\).  Choices also meeting the Alexander requirements
appear in \cref{prop:hyperbolic-alexander-supply}.

The following calculation supplies the homological normalization and
separates meridian powers from later push-off images.

\begin{proposition}[First homology and push-off images]
\label{prop:early-homology}
For the preliminary group \(A_0\), for the exterior group \(G_0\) after
the vertical-boundary surgeries, and for every exterior group \(G_i\)
obtained by the subsequent ordinary untwisted rim surgeries,
\[
H_1(A_0;\mathbb Z)=\mathbb Z\langle[\mu]\rangle,
\qquad
H_1(G_i;\mathbb Z)=\mathbb Z\langle[\mu]\rangle
\qquad(i\ge0),
\]
and
\[
\rho(\pi_1(\Sigma_g,p))\subset[A_0,A_0],
\qquad
\rho_i(\pi_1(\Sigma_g,p))\subset[G_i,G_i]
\qquad(i\ge0).
\]
In particular, every later pre-surgery element
\(h_i=\rho_{i-1}(d_i)\) is null-homologous.
\end{proposition}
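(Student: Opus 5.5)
Induct along the construction, tracking first homology and the image of the push-off at each stage. For \(A_0\), the homology statement was already noted in the proof of \cref{prop:disk-group}: each cyclic amalgam \(A^{(j-1)}*_{\langle\mu\rangle}K_j^D\) abelianizes to a pushout of \(\mathbb Z\langle[\mu]\rangle\) and \(H_1(K_j^D)=\mathbb Z\langle[m_j^D]\rangle\) over \(\langle\mu\rangle\). This identifies the two generators and gives \(\mathbb Z\langle[\mu]\rangle\). For the push-off, \cref{prop:horizontal-peripheral} shows that \(\rho\) maps \(\pi_1(P^+,p)\) and the \(\lambda_-\)-conjugate of \(\pi_1(P^-,p_-)\) into \(U_0\). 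Every preferred longitude lies in \([K_j^D,K_j^D]\), so \(U_0\le[A_0,A_0]\). Finally, \(\pi_1(\Sigma_g,p)\) is generated by these two subgroups together with loops through \(\partial P\times I\) (such as \(\tau\) in odd genus) whose images \eqref{eq:old-odd-cycle-trivial} are trivial. Hence \(\rho(\pi)\subset[A_0,A_0]\).

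For the rank-two stages, including the vertical-boundary surgeries producing \(G_0\) and all later \(G_i\), use a single one-step lemma. Suppose \(H_1(G)=\mathbb Z\langle[\mu]\rangle\) and \(h\in[G,G]\). Abelianize the presentation \eqref{eq:one-step-quotient}: the generators are \(H_1(G)\oplus H_1(K_J)=\mathbb Z[\mu]\oplus\mathbb Z[m_J]\). The relation \(m_J\mu^{-1}\) identifies the two summands, and the commutator relations \([h,k]\) vanish. Hence \(H_1(G')=\mathbb Z\langle[\mu]\rangle\), and the inclusion \(G\to G'\) induces an isomorphism on \(H_1\).

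For the push-off, use the canonical retraction \(r\) of \cref{thm:one-step}. It induces an isomorphism on \(H_1\) that fixes \([\mu]\), because its composite with the inclusion is the identity on \(G\). The identity \(r\circ\rho_{F'}=\rho_F\) then shows that the class of \(\rho_{F'}(\gamma)\) in \(H_1(G')\) maps to the class of \(\rho_F(\gamma)\), which is \(0\). So \(\rho_{F'}(\pi)\subset[G',G']\). Alternatively, one can read this directly from \eqref{eq:ordered-groupoid-formula}: the new word differs from the old one by inserted conjugates of \(\ell_J^{\pm1}\in[K_J,K_J]\).

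The hypothesis \(h\in[G,G]\) of the step is supplied inductively, since \(h=\rho_{\mathrm{old}}(d)\) lies in the image of the previous push-off. The base case is \(h=w_\nu\in U_0\subset[A_0,A_0]\). In odd genus, the second vertical surgery uses \(w_2\in A_0\subset G_0^{(1)}\), whose class is again zero. Iterating gives all the stated identities, and the last sentence of the statement follows immediately.

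The main obstacle is bookkeeping rather than anything conceptual. One must check that the rank-two hypothesis \(\langle\mu,h\rangle\cong\mathbb Z^2\) holds so that \cref{thm:one-step} applies at each later stage. If this is not yet available at the point where the proposition is invoked, the homology argument still works without it, since it uses only the pushout presentation from van Kampen. This is the route I would take to avoid circularity.
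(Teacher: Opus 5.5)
Your proof is correct and follows the paper's inductive structure: each cyclic or rank-two amalgam abelianizes to \(\mathbb Z\langle[\mu]\rangle\), push-off images stay in the commutator subgroup, and the base attaching elements \(w_\nu\) lie in \(U_0\subset[A_0,A_0]\). The differences from the paper are local. You abelianize the presentation \eqref{eq:one-step-quotient} instead of running Mayer--Vietoris, and you deduce \(\rho_{F'}(\pi)\subset[G',G']\) from \(r_*\) being an isomorphism on \(H_1\) rather than from the inserted longitude conjugates of \cref{prop:ordered-peripheral}. For \(A_0\), you generate \(\pi\) from the two horizontal subgroups and \(\tau\).

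Your worry about the rank-two hypothesis is resolved as in the paper. At the vertical stages, \(\langle\mu,w_\nu\rangle\cong\mathbb Z^2\) is checked directly in \cref{sec:seed}. At later stages, \(\langle\mu,h_i\rangle\cong\mathbb Z^2\) is deduced in \cref{thm:construction} from this proposition for \(G_{i-1}\) together with injectivity of \(\rho_{i-1}\). The induction is therefore simultaneous rather than circular.
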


\begin{proof}
Each \(K_j^D\) abelianizes to the cyclic group generated by its meridian,
and every cyclic edge identifies that generator with \(\mu\).  Thus, as
also shown in \cref{prop:disk-group},
\[
H_1(A_0;\mathbb Z)=\mathbb Z\langle[\mu]\rangle,
\]
and every preliminary push-off is obtained from the initial trivial one by
inserting conjugates of null-homologous preferred longitudes, so its image
lies in \([A_0,A_0]\).

For a rank-two stage, write
\[
G'=G*_{\langle\mu,h\rangle}
\bigl(K_J\times\langle s\rangle\bigr),
\]
with \(\mu\mapsto m_J\), \(h\mapsto s\), and assume
\[
H_1(G;\mathbb Z)=\mathbb Z\langle[\mu]\rangle
\qquad\text{and}\qquad
\rho(\pi_1(\Sigma_g,p))\subset[G,G].
\]
Then \(h=\rho(d)\) is null-homologous.  On the edge basis, the degree-one
difference map
\[
H_1(\langle\mu,h\rangle)
\longrightarrow
H_1(G)\oplus H_1(K_J\times\langle s\rangle),
\]
is
\[
\mu\longmapsto([\mu],-[m_J]),
\qquad
h\longmapsto(0,-[s]).
\]
Since \([m_J],[s]\) form a basis of the product-vertex summand, this map
is injective; so is the degree-zero difference map
\(\mathbb Z\to\mathbb Z\oplus\mathbb Z\).  Mayer--Vietoris therefore identifies
\(H_1(G';\mathbb Z)\) with the cokernel, which identifies \([m_J]=[\mu]\) and kills \([s]\):
\[
H_1(G';\mathbb Z)=\mathbb Z\langle[\mu]\rangle.
\]

By \cref{prop:ordered-peripheral}, a new push-off is the preceding one
with conjugates of \(\ell_J\) inserted in traversal order.  In
abelianization the preceding portions recombine to the old zero class,
and \(\ell_J\) is null-homologous, so the new image lies in
\([G',G']\).

At the first vertical stage the attaching element is
\(w_1\in U_0\subset[A_0,A_0]\); in odd genus, \(w_2\in U_0\) persists
through the first amalgam.  The argument therefore applies to all
vertical stages and then to every later surgery.  Taking the loop to be
\(d_i\) gives \([h_i]=0\).
\end{proof}

Thus \(F_0\) has injective push-off and the homological hypotheses persist
through every later surgery.  The next sections establish the local
Bass--Serre rigidity used to recognize the surgery curves.

\section{Centralizers, transporters, and Bass--Serre axes}
\label{sec:powerclean}

At a later ordinary untwisted rim-surgery stage, the one-step
edge subgroup is
\[
A=\langle\mu,h\rangle\cong\mathbb Z^2,
\]
where \(\mu\) is the positive surface meridian and \(h\) is the
pre-surgery push-off of the surgery curve.  The fixed-subtree arguments
require both \(C_G(x)=A\) and \(\Trans_G(x,A)=A\) for
\(x\in A\setminus\langle\mu\rangle\).  This section records the axis
and reflection criteria, packages their consequences when \(h\) is not
a proper power, and isolates the minimal-subtree deduction used to
compare ambient and surface conjugacy.

All tree actions in this paper are simplicial and without inversions.
For a Bass--Serre tree, vertex and edge stabilizers are conjugate to the
corresponding vertex and edge groups.  For an action of a group \(G\)
on a tree \(T\), define the translation length of \(x\in G\) by
\[
\|x\|_T
=
\min_{v\in V(T)} d_T(v,xv).
\]
The element \(x\) is \emph{elliptic} if it fixes a vertex, equivalently
if \(\|x\|_T=0\), and is \emph{hyperbolic} otherwise.  A hyperbolic
element preserves a unique bi-infinite geodesic, its \emph{axis}
\(\Ax_T(x)\), and acts on that axis by translation through
\(\|x\|_T\) edges.  For a subtree \(L\subset T\), set
\[
\Fix_G^{\mathrm{pt}}(L)
=
\{g\in G:gv=v\text{ for every }v\in L\}
\]
and
\[
\Stab_G(L)
=
\{g\in G:gL=L\}.
\]
If \(L\) is a line, an element of \(\Stab_G(L)\) is a
\emph{reflection} if its restriction to \(L\) reverses the orientation
of the line.

\begin{definition}
\label{def:transporter}
For a subgroup \(A\le G\) and an element \(x\in G\), the
\emph{transporter of \(x\) into \(A\)} is the set
\[
\Trans_G(x,A)
=
\{g\in G:g^{-1}xg\in A\}.
\]
\end{definition}

For a subgroup \(C\le G\), its commensurator is
\[
\Comm_G(C)
=
\{g\in G:
C\cap gCg^{-1}
\text{ has finite index in both }C\text{ and }gCg^{-1}\}.
\]
If \(C=\langle x\rangle\) is infinite cyclic, then
\(g\in\Comm_G(C)\) if and only if there exist
\(m,n\in\mathbb Z\setminus\{0\}\) such that
\[
g x^m g^{-1}=x^n.
\]
The integers \(m\) and \(n\) may have either sign.

The following definition packages the two equalities used for every
later edge subgroup.

\begin{definition}[Centralizer--transporter property]
\label{def:power-clean}
Let \(G\) be a group with a distinguished element \(\mu\), and let
\[
A=\langle\mu,h\rangle\cong\mathbb Z^2.
\]
The subgroup \(A\) has the \emph{centralizer--transporter property in
\(G\) relative to \(\mu\)} if, for every
\(x\in A\setminus\langle\mu\rangle\),
\[
C_G(x)=A
\qquad\text{and}\qquad
\Trans_G(x,A)=A.
\]
Equivalently, for every \(a,b\in\mathbb Z\) with \(b\ne0\),
\begin{equation}
\label{eq:power-clean-centralizer}
C_G(\mu^ah^b)=\langle\mu,h\rangle
\end{equation}
and
\begin{equation}
\label{eq:power-clean-transporter}
\Trans_G(\mu^ah^b,\langle\mu,h\rangle)
=
\langle\mu,h\rangle.
\end{equation}
Thus the property depends only on the inclusions
\(\langle\mu\rangle\le A\le G\), not on the choice of the second basis
element: replacing \(h\) by \(\mu^k h^{\pm1}\) leaves it unchanged.
It is also preserved under simultaneous conjugation: for every
\(q\in G\), the triple
\((q\mu q^{-1},qhq^{-1},qAq^{-1})\) has the same property.
\end{definition}

\begin{lemma}[Torsion-freeness]
\label{lem:tower-torsionfree}
Let \(\mathcal G\) be a graph of groups with injective edge maps and
torsion-free vertex groups.  Then \(\pi_1(\mathcal G)\) is
torsion-free.  In particular, every group obtained in this manuscript
from knot groups, free groups, and infinite cyclic groups by the
amalgamated products, HNN extensions, and direct products with
\(\mathbb Z\) used in the construction is torsion-free.
\end{lemma}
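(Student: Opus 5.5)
The proof uses the standard fact that a finite-order element acting on a tree fixes a vertex. Let \(G=\pi_1(\mathcal G)\) act on its Bass--Serre tree \(T\); the action is without inversions. Suppose \(x\in G\) has finite order \(n\). The orbit \(\{x^kv:0\le k<n\}\) of any vertex \(v\) is finite and \(\langle x\rangle\)-invariant. Its convex hull is then a finite subtree invariant under \(\langle x\rangle\). A finite tree has a center, which is either one vertex or one edge, and this center is preserved by every automorphism of the subtree. If the center is an edge, it is preserved, and since there are no inversions its endpoints are fixed. Either way \(x\) fixes a vertex \(u\), so \(x\) lies in the stabilizer of \(u\). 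That stabilizer is a conjugate of a vertex group and is therefore torsion-free, so \(x=1\).

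It remains to see that every group in the construction has this form, and I would argue by induction along the construction. Knot groups are torsion-free, since knot exteriors are aspherical \(3\)-manifolds with finite-dimensional \(K(\pi,1)\). Free groups and infinite cyclic groups are torsion-free. A direct product of two torsion-free groups is torsion-free: if \((a,b)^n=1\) then \(a^n=1\) and \(b^n=1\), so \(a=1\) and \(b=1\). Amalgamated products and HNN extensions with injective edge maps are fundamental groups of one-edge graphs of groups, so the first part applies to them. Subgroups of torsion-free groups are torsion-free, which covers subgroups such as \(L\) and the centralizers. The injectivity of the edge maps at each stage has already been checked: the cyclic edges \(\langle\mu\rangle\) in \cref{prop:disk-group}, and the rank-two edges by the standing hypothesis \(\langle\mu,h\rangle\cong\mathbb Z^2\) in \cref{thm:one-step}.

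The only delicate point is the center-of-a-finite-tree argument, and in particular the use of the no-inversion hypothesis to pass from an invariant central edge to a fixed vertex. Alternatively, one can cite Serre's theorem that a finite group acting on a tree without inversions has a global fixed point.
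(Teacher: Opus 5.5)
Your argument is correct and matches the paper's: a finite-order element of \(\pi_1(\mathcal G)\) fixes a vertex of the Bass--Serre tree, so it lies in a conjugate of a torsion-free vertex group. The statement about the construction then follows inductively from torsion-freeness of knot, free, and cyclic groups and their products with \(\mathbb Z\); the only difference is that you prove the fixed-point property for \(\langle x\rangle\) via the center of the finite convex hull of an orbit, where the paper cites Serre's theorem for finite groups acting without inversions.
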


\begin{proof}
A finite subgroup acting without inversions on a tree fixes a vertex
\cite[Example~I.6.3.1]{SerreTrees}.  Hence every finite-order element of
\(\pi_1(\mathcal G)\) is conjugate into a vertex group and is therefore
trivial.  Knot groups, free groups, and infinite cyclic groups are
torsion-free, as are their direct products with \(\mathbb Z\), so the
stated consequence follows inductively.
\end{proof}

The next criterion converts information about one axis into all three
group-theoretic conclusions needed later.

\begin{lemma}[Axis criterion for the centralizer--transporter property]
\label{lem:clean-axis-criterion}
Let \(G\) be a group satisfying
\[
H_1(G;\mathbb Z)=\mathbb Z\langle[\mu]\rangle,
\qquad
[h]=0.
\]
Suppose that \(G\) acts on a tree \(T\), that \(h\) is hyperbolic with
axis \(L\), and that:
\begin{enumerate}[label=\textup{(\roman*)},leftmargin=2.3em]
\item \(\mu\) fixes \(L\) pointwise;
\item
\[
\Fix_G^{\mathrm{pt}}(L)=\langle\mu\rangle;
\]
\item the orientation-preserving image of the restriction homomorphism
\[
\Stab_G(L)\longrightarrow\Isom(L)
\]
is generated by the translation induced by \(h\);
\item \(\Stab_G(L)\) contains no reflection of \(L\).
\end{enumerate}
Then
\[
A:=\langle\mu,h\rangle\cong\mathbb Z^2,
\qquad
\Stab_G(L)=A.
\]
The subgroup \(A\) has the centralizer--transporter property in \(G\)
relative to \(\mu\), and
\[
\Comm_G(\langle h\rangle)=A.
\]
\end{lemma}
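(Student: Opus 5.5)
The plan is to study the restriction homomorphism
\[
\mathrm{res}:\Stab_G(L)\longrightarrow\Isom(L)
\]
and read off \(\Stab_G(L)\) from its kernel and image. Its kernel is \(\Fix_G^{\mathrm{pt}}(L)=\langle\mu\rangle\) by~(ii). By~(iv) the image contains no reflection, so it consists only of translations. By~(iii) it is then the infinite cyclic group generated by the translation \(t_h\) of \(h\); note \(t_h\ne1\) because \(h\) is hyperbolic. This gives the short exact sequence
\[
1\to\langle\mu\rangle\to\Stab_G(L)\to\langle t_h\rangle\cong\mathbb Z\to1,
\]
which splits via \(t_h\mapsto h\), since \(h\in\Stab_G(L)\) (it preserves its own axis). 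Hence \(\Stab_G(L)=\langle\mu\rangle\rtimes\langle h\rangle\).

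To upgrade this to \(A\cong\mathbb Z^2\) I need two facts.
\begin{itemize}
\item \(\mu\) has infinite order. This holds because \([\mu]\) generates \(H_1(G;\mathbb Z)\cong\mathbb Z\).
\item \(h\) centralizes \(\mu\). Since \(h\) normalizes \(\Fix_G^{\mathrm{pt}}(L)=\langle\mu\rangle\), we have \(h\mu h^{-1}=\mu^{\pm1}\). Abelianizing gives \([\mu]=\pm[\mu]\) in \(\mathbb Z\langle[\mu]\rangle\), which forces the sign \(+1\).
\end{itemize}
Therefore \(\Stab_G(L)=\langle\mu\rangle\times\langle h\rangle=A\cong\mathbb Z^2\). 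The hypothesis \([h]=0\) is not needed for the splitting, but it is harmless and consistent.

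Next I record that every \(x=\mu^ah^b\) with \(b\ne0\) is hyperbolic with axis exactly \(L\). Indeed, \(\mu^a\) fixes \(L\) pointwise, so \(x\) preserves \(L\) and acts on it as translation by \(|b|\,\|h\|_T>0\). An element translating a line nontrivially is hyperbolic, and that line is its axis. The remaining conclusions then all follow from axis equivariance, \(\Ax_T(gyg^{-1})=g\Ax_T(y)\).
\begin{itemize}
\item \emph{Centralizer.} If \(g\) commutes with \(x\), then \(gL=\Ax_T(gxg^{-1})=\Ax_T(x)=L\). Hence \(g\in\Stab_G(L)=A\). The reverse inclusion holds because \(A\) is abelian.
\item \emph{Transporter.} Suppose \(g^{-1}xg=\mu^ch^e\in A\). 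This element is conjugate to a hyperbolic element, so it is hyperbolic, and therefore \(e\ne0\) (elements with \(e=0\) fix \(L\)). So its axis is \(L\), while it is also \(g^{-1}L\). Thus \(g^{-1}L=L\) and \(g\in A\).
\item \emph{Commensurator.} If \(gh^mg^{-1}=h^n\) with \(m,n\ne0\), then \(gL=\Ax_T(gh^mg^{-1})=\Ax_T(h^n)=L\), so \(g\in A\). Conversely, \(A\) centralizes \(h\), so \(A\le\Comm_G(\langle h\rangle)\).
\end{itemize}

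The main obstacle is the first paragraph: identifying \(\Stab_G(L)\) exactly. The delicate points are using~(iii) and~(iv) to exclude reflections and extra translations, and using the homology hypothesis to rule out \(h\mu h^{-1}=\mu^{-1}\) and torsion in \(\mu\). Once \(\Stab_G(L)=A\) is established, every other claim is a one-line axis argument.
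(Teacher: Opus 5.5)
Your proposal is correct and follows the paper's proof essentially step for step. You identify \(\Stab_G(L)\) from the kernel and image of the restriction map, use the homology hypothesis to force \(h\mu h^{-1}=\mu\), and then get the centralizer, transporter, and commensurator equalities from uniqueness of axes; the only minor difference is that you obtain \(\langle\mu,h\rangle\cong\mathbb Z^2\) from the nontrivial translation of \(h\) (so, as you note, \([h]=0\) is not needed there), whereas the paper kills the \(\mu\)-exponent by abelianization using \([h]=0\).
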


\begin{proof}
The pointwise stabilizer of \(L\) is normal in its setwise stabilizer,
because it is the kernel of the restriction homomorphism to
\(\Isom(L)\).  Since \(h\) preserves \(L\), conjugation by \(h\)
induces an automorphism of \(\langle\mu\rangle\).  Thus
\[
h\mu h^{-1}=\mu^\epsilon
\qquad(\epsilon=\pm1).
\]
Abelianization gives \([\mu]=\epsilon[\mu]\).  The class \([\mu]\)
has infinite order, so \(\epsilon=1\) and \([\mu,h]=1\).
If \(\mu^ah^b=1\), then the hypotheses on first homology give
\(a=0\); since a hyperbolic element has infinite order, they then give
\(b=0\).  Hence \(A\cong\mathbb Z^2\).

Let
\[
r_L:\Stab_G(L)\longrightarrow\Isom(L)
\]
be restriction to the axis.  Its kernel is
\(\langle\mu\rangle\) by~\textup{(ii)}.  By~\textup{(iii)} and
\textup{(iv)}, its image is the infinite cyclic translation group
generated by \(r_L(h)\).  If \(g\in\Stab_G(L)\), then
\(r_L(g)=r_L(h)^n\) for some \(n\in\mathbb Z\), so
\(gh^{-n}\in\langle\mu\rangle\).  Therefore
\[
\Stab_G(L)=\langle\mu,h\rangle=A.
\]

Fix \(a,b\in\mathbb Z\) with \(b\ne0\), and put
\(x=\mu^ah^b\).  The element \(\mu^a\) acts trivially on \(L\),
whereas \(h^b\) acts by a nonzero translation.  Thus \(x\) is
hyperbolic and
\[
\Ax_T(x)=L.
\]
If \(g\in C_G(x)\), then
\[
gL
=
\Ax_T(gxg^{-1})
=
\Ax_T(x)
=
L,
\]
so \(g\in\Stab_G(L)=A\).  Conversely, \(A\) is abelian and therefore
centralizes \(x\).  This proves
\eqref{eq:power-clean-centralizer}.

Now suppose
\[
g^{-1}xg\in A.
\]
Write
\[
g^{-1}xg=\mu^ch^d.
\]
The left side is hyperbolic because it is conjugate to \(x\).  If
\(d=0\), the right side fixes \(L\) pointwise and is elliptic, a
contradiction.  Hence \(d\ne0\), so the right side has axis \(L\).
On the other hand,
\[
\Ax_T(g^{-1}xg)=g^{-1}L.
\]
Uniqueness of the axis gives \(g^{-1}L=L\), and therefore
\(g\in\Stab_G(L)=A\).  The reverse inclusion follows because \(A\)
is abelian, proving \eqref{eq:power-clean-transporter}.

Finally, let \(g\in\Comm_G(\langle h\rangle)\).  There are nonzero
integers \(m,n\) such that
\[
gh^mg^{-1}=h^n.
\]
The axes of the two sides are \(gL\) and \(L\), respectively, so
\(gL=L\).  Thus \(g\in\Stab_G(L)=A\).  Conversely, every element of
the abelian group \(A\) centralizes \(h\), and hence commensurates
\(\langle h\rangle\).
\end{proof}

The following lemma supplies hypothesis~\textup{(iv)} in all later
applications.

\begin{lemma}[Reflection exclusion]
\label{lem:reflection-exclusion}
Let a torsion-free group \(G\) act without inversions on a tree \(T\).
Suppose
\[
H_1(G;\mathbb Z)=\mathbb Z\langle[\mu]\rangle
\]
and that a line \(L\subset T\) satisfies
\[
\Fix_G^{\mathrm{pt}}(L)=\langle\mu\rangle.
\]
Then \(\Stab_G(L)\) contains no reflection of \(L\).
\end{lemma}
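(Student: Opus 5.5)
Suppose, for contradiction, that some \(g\in\Stab_G(L)\) restricts to a reflection of the line \(L\). The plan is to show that \(g^2\) lies in \(\Fix_G^{\mathrm{pt}}(L)=\langle\mu\rangle\). Then we compare how \(g\) conjugates \(\mu\) with what first homology allows, and torsion-freeness gives the contradiction.

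First, a simplicial isometry of a line that reverses orientation is a reflection about a point of \(L\). That point is either a vertex or the midpoint of an edge. Since the action has no inversions, \(g\) cannot swap the two endpoints of an edge, so the centre must be a vertex \(v\in L\), with \(gv=v\). The square \(g^2\) restricts to the identity on \(L\), so \(g^2\in\Fix_G^{\mathrm{pt}}(L)=\langle\mu\rangle\). Write \(g^2=\mu^k\).

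Second, \(\Fix_G^{\mathrm{pt}}(L)\) is the kernel of the restriction homomorphism \(\Stab_G(L)\to\Isom(L)\), so it is normal in \(\Stab_G(L)\). Hence \(g\mu g^{-1}=\mu^{\epsilon}\) with \(\epsilon=\pm1\). Passing to \(H_1(G;\mathbb Z)=\mathbb Z\langle[\mu]\rangle\) gives \([\mu]=\epsilon[\mu]\). Because \([\mu]\) has infinite order, \(\epsilon=1\), so \(g\) commutes with \(\mu\).

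Third, we use homology on \(g^2=\mu^k\). In \(H_1\) we get \(2[g]=k[\mu]\). Write \([g]=n[\mu]\); then \(k=2n\). Put \(y=g\mu^{-n}\). Since \(g\) and \(\mu\) commute, \(y^2=g^2\mu^{-2n}=1\). As \(G\) is torsion-free, \(y=1\), so \(g=\mu^n\). But \(\mu^n\) lies in \(\Fix_G^{\mathrm{pt}}(L)\) and acts trivially on \(L\), which contradicts the assumption that \(g\) reverses \(L\).

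The only delicate step is the first: we must check that a reflection in the simplicial, inversion-free setting fixes a vertex of \(L\), so that \(g^2\) fixes \(L\) pointwise. I expect this to be routine. The remaining steps are the same abelianization trick already used in \cref{lem:clean-axis-criterion}.
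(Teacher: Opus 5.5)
Your proposal is correct and follows essentially the same route as the paper: normality of the pointwise stabilizer plus abelianization makes \(g\) commute with \(\mu\), \(g^2=\mu^k\) with \(k=2n\), and torsion-freeness applied to \(g\mu^{-n}\) contradicts the reflection. Your first step showing the reflection centre is a vertex is harmless but unnecessary, since the square of any orientation-reversing isometry of a simplicial line is already the identity on that line.
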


\begin{proof}
The pointwise stabilizer is the kernel of
\(\Stab_G(L)\to\Isom(L)\), and is therefore normal in
\(\Stab_G(L)\).  If \(g\in\Stab_G(L)\) reflected \(L\), conjugation
by \(g\) would induce an automorphism of the infinite cyclic group
\(\langle\mu\rangle\), so
\[
g\mu g^{-1}=\mu^\epsilon
\qquad(\epsilon=\pm1).
\]
After abelianization this becomes
\([\mu]=\epsilon[\mu]\).  Since \([\mu]\) has infinite order,
\(\epsilon=1\), and hence \(g\) commutes with \(\mu\).

The square of an orientation-reversing isometry of a simplicial line is
the identity on that line.  Therefore
\[
g^2=\mu^k
\]
for some \(k\in\mathbb Z\).  Write \([g]=n[\mu]\).  Abelianizing the
square relation gives \(k=2n\).  Since \(g\) commutes with \(\mu\),
\[
(\mu^{-n}g)^2
=
\mu^{-2n}g^2
=
1.
\]
The element \(\mu^{-n}\) fixes \(L\) pointwise, so
\(\mu^{-n}g\) induces the same reflection of \(L\) as \(g\) and is
therefore nontrivial.  This contradicts torsion-freeness.
\end{proof}

\begin{lemma}[A primitive axis generator]
\label{lem:primitive-axis-generator}
Let a torsion-free group \(G\) act without inversions on a tree \(T\),
and let \(h\in G\) be hyperbolic with axis \(L\).  Suppose
\[
H_1(G;\mathbb Z)=\mathbb Z\langle[\mu]\rangle,
\qquad [h]=0,
\qquad
\Fix_G^{\mathrm{pt}}(L)=\langle\mu\rangle,
\]
where \(\mu\) fixes \(L\) pointwise, and suppose that \(h\) is not a
proper power in \(G\).  Then \(h\) generates the orientation-preserving
image of \(\Stab_G(L)\to\Isom(L)\), and no reflection of \(L\) occurs.
With \(A=\langle\mu,h\rangle\),
\[
A\cong\mathbb Z^2,
\qquad
\Stab_G(L)=A,
\qquad
\Comm_G(\langle h\rangle)=A,
\]
and, for every \(a,b\in\mathbb Z\) with \(b\ne0\),
\[
C_G(\mu^ah^b)=A
=\Trans_G(\mu^ah^b,A).
\]
\end{lemma}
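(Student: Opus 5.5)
The plan is to reduce the statement to \cref{lem:clean-axis-criterion}. Hypotheses (i) and (ii) of that lemma are assumed here. Hypothesis (iv) follows directly from \cref{lem:reflection-exclusion}, since \(G\) is torsion-free, acts without inversions, has \(H_1(G;\mathbb Z)=\mathbb Z\langle[\mu]\rangle\), and satisfies \(\Fix_G^{\mathrm{pt}}(L)=\langle\mu\rangle\). Once (iii) is verified, the criterion gives \(A\cong\mathbb Z^2\), \(\Stab_G(L)=A\), \(\Comm_G(\langle h\rangle)=A\), and the centralizer and transporter equalities.

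So the real content is (iii). Let \(r_L:\Stab_G(L)\to\Isom(L)\) be restriction. By (iv), the image of \(r_L\) consists of translations of the simplicial line \(L\). It is therefore a subgroup of \(\mathbb Z\) containing the nonzero translation \(r_L(h)\), so it is infinite cyclic, generated by some \(r_L(t)\) with \(t\in\Stab_G(L)\). Write \(r_L(h)=r_L(t)^n\); after replacing \(t\) by \(t^{-1}\) we may take \(n\geq1\).

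Then \(h=t^n\mu^k\) for some \(k\in\mathbb Z\), because \(\ker r_L=\langle\mu\rangle\). As in the first paragraph of the proof of \cref{lem:clean-axis-criterion}, normality of \(\langle\mu\rangle\) in \(\Stab_G(L)\) together with the homology hypothesis shows that \(t\) commutes with \(\mu\). Writing \([t]=m[\mu]\) and abelianizing \(h=t^n\mu^k\), the hypothesis \([h]=0\) gives \(nm+k=0\), so \(k=-nm\). Put \(t'=t\mu^{-m}\). Since \(t\) and \(\mu\) commute, \(t'^n=t^n\mu^{-nm}=h\). Because \(h\) is not a proper power in \(G\), \(n=1\), so \(r_L(h)=r_L(t)\) generates the orientation-preserving image, which is (iii).

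The main obstacle is this step: absorbing the meridian ambiguity so that \(h\) becomes an honest \(n\)-th power. It requires both the commutation of \(t\) with \(\mu\) and the vanishing of \([h]\) to choose the correcting meridian power. With (i)--(iv) in hand, \cref{lem:clean-axis-criterion} yields every listed conclusion.
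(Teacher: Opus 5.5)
Your proposal is correct and follows the paper's proof essentially step for step: take a shortest translation \(t\) along the axis, show it commutes with \(\mu\), use \([h]=0\) to absorb the meridian factor so that \(h=(t\mu^{-m})^n\), conclude \(n=1\) because \(h\) is not a proper power, and then invoke \cref{lem:reflection-exclusion} and \cref{lem:clean-axis-criterion}. The only difference is cosmetic: you apply reflection exclusion first, so that the whole image of \(\Stab_G(L)\to\Isom(L)\) is a translation group. The paper instead works with the orientation-preserving image directly and excludes reflections afterward.
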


\begin{proof}
Orient \(L\) in the translation direction of \(h\), and choose
\(u\in\Stab_G(L)\) inducing the shortest positive translation.  The
translation induced by \(h\) is the \(n\)-th power of that induced by
\(u\), for some \(n\ge1\).  If \(n>1\), then
\[
u^nh^{-1}=\mu^k
\]
for some \(k\in\mathbb Z\).  Since \(u\) normalizes
\(\Fix_G^{\mathrm{pt}}(L)=\langle\mu\rangle\), one has
\(u\mu u^{-1}=\mu^{\pm1}\); abelianization forces the positive sign,
so \(u\) commutes with \(\mu\).  Write \([u]=r[\mu]\).  Since
\([h]=0\), abelianizing the displayed relation gives \(k=nr\), and
\[
(\mu^{-r}u)^n=\mu^{-nr}u^n=h,
\]
contrary to the absence of a proper root.  Thus \(n=1\).
\Cref{lem:reflection-exclusion} excludes reflections, and
\cref{lem:clean-axis-criterion} gives the remaining conclusions.
\end{proof}

It remains to recognize when a length-four amalgam word is a proper
power.  We record the needed syllable-periodicity statement.

\begin{lemma}[Cyclic syllable periodicity]
\label{lem:cyclic-periodicity}
Let
\[
G=U*_{E}V,
\]
and suppose that a hyperbolic element \(x\) has a cyclically reduced
normal form
\[
x=u_1v_1\cdots u_qv_q,
\qquad
u_i\in U\setminus E,
\quad
v_i\in V\setminus E.
\]
If
\[
x=y^n
\qquad(n>1),
\]
then \(y\) is hyperbolic, there is an integer \(r\ge1\) such that
\(q=nr\), and, after a cyclic permutation, the cyclic sequence of
double cosets
\[
Eu_1E,\ Ev_1E,\ldots,Eu_qE,\ Ev_qE
\]
is an \(n\)-fold repetition of a sequence of length \(2r\).

In particular, if \(x\) has syllable length four and is a proper power,
then \(x\) is the square of a hyperbolic element of syllable length two,
and its two \(V\)-syllables lie in the same \(E\)-double coset.
\end{lemma}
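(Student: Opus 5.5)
We argue in the Bass--Serre tree \(T\) of \(G=U*_{E}V\). Since the normal form \(x=u_1v_1\cdots u_qv_q\) is cyclically reduced with \(q\ge1\), the element \(x\) is hyperbolic. The base edge \(e\) lies on its axis, and the consecutive edges of \(\Ax_T(x)\) in one fundamental domain are
\[
e,\ u_1e,\ u_1v_1e,\ \ldots,\ u_1v_1\cdots u_qv_q e=xe.
\]
Thus \(\|x\|_T=2q\). If \(x=y^n\) with \(n>1\), then \(y\) is not elliptic, because a power of an elliptic element is elliptic. Hence \(y\) is hyperbolic, and \(\Ax_T(y)=\Ax_T(x)\) by uniqueness of axes. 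Moreover \(\|x\|_T=n\|y\|_T\). The action is without inversions and vertices of \(T\) alternate between \(U\)- and \(V\)-type, so \(\|y\|_T\) is even. Writing \(\|y\|_T=2r\) gives \(2q=2nr\), that is, \(q=nr\).

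Next we record how the double-coset sequence is read off the axis. For consecutive edges \(ge\) and \(g'e\) of the axis sharing a vertex of \(U\)-type (respectively \(V\)-type), the element \(g^{-1}g'\) lies in \(U\) (respectively \(V\)). Changing the coset representatives \(g,g'\) within \(gE\) and \(g'E\) changes \(g^{-1}g'\) only within its \(E\)-double coset. So the cyclic sequence of double cosets \(Eu_1E,Ev_1E,\ldots\) is an invariant of the oriented axis as a sequence of edges, up to the choice of starting edge; this is the point to verify carefully. The only choice involved is which edge of \(\Ax_T(x)\) is taken as base. Replacing \(e\) by \(u_1\cdots e\) cyclically permutes the syllables, and replacing it by a translate \(ke\) conjugates by \(k\), which preserves the double cosets.

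The element \(y\) translates the axis by \(2r\) edges and carries each consecutive pair to a consecutive pair. By the invariance just described, the double coset attached to the pair at position \(j\) equals the one at position \(j+2r\); the \(G\)-translate by \(y\) leaves the representative-free double coset unchanged. Hence the cyclic sequence of length \(2q=2nr\) is \(2r\)-periodic, that is, an \(n\)-fold repetition of a block of length \(2r\).

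For the special case of syllable length four we have \(q=2\), so \(nr=2\) with \(n>1\), forcing \(n=2\) and \(r=1\). Then \(y\) is hyperbolic with translation length \(2\), hence of syllable length two after conjugation, and periodicity gives \(Ev_1E=Ev_2E\). The main obstacle is the bookkeeping in the invariance step, namely making the identification of turning elements with double cosets independent of representatives. I would handle it by defining the double coset of a turn directly as the set \(\{g^{-1}g' : ge,\ g'e \text{ are the two edges}\}\), which is manifestly \(G\)-invariant.
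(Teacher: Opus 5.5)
Your argument is correct and is essentially the paper's proof. Both read the syllables of \(x\) as the turns along its Bass--Serre axis, obtain \(q=nr\) from \(\|x\|_T=n\|y\|_T\) and the evenness of translation lengths in the bipartite tree, and get the \(n\)-fold periodicity because \(y\) translates the axis by \(2r\) edges. Your explicit definition of the turn double coset as \((gE)^{-1}(g'E)=Eg^{-1}g'E\), which is visibly invariant under left translation, is a cleaner justification of the invariance step that the paper only sketches.
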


\begin{proof}
Let \(T\) be the Bass--Serre tree.  Since \(x=y^n\) is hyperbolic,
\(y\) is hyperbolic as well, and
\[
\Ax_T(y)=\Ax_T(x)=:L,
\qquad
\|x\|_T=n\|y\|_T.
\]
The action preserves the two vertex types of the bipartite tree, so
every hyperbolic translation length is even.  A cyclically reduced
normal form of syllable length \(2q\) has translation length \(2q\).
Writing \(\|y\|_T=2r\) therefore gives \(q=nr\).

Choose a vertex of \(L\) and orient \(L\) in the translation direction
of \(y\).  The segment from that vertex to its \(x\)-translate is the
concatenation of the \(n\) translates, by
\(1,y,\ldots,y^{n-1}\), of a fundamental segment for \(y\).  The
successive transitions through \(U\)- and \(V\)-vertices record the
syllables of a cyclically reduced normal form.  Changing the chosen
incident edge at a vertex multiplies the corresponding syllable on the
left or right by an element of \(E\), so the intrinsic record is the
cyclic sequence of \(E\)-double cosets.  The concatenation of the
fundamental segments therefore gives the asserted \(n\)-fold
periodicity.  This is the tree formulation of the conjugacy theorem for
cyclically reduced amalgam words; compare
\cite[Chapter~IV, \S2]{LyndonSchupp}.

If \(x\) has syllable length four, then \(q=2=nr\).  Since
\(n>1\) and \(r\ge1\), one has \(n=2\) and \(r=1\).  The two
\(V\)-syllables therefore occupy the same position in the two repeated
periods and lie in the same \(E\)-double coset.
\end{proof}

\begin{lemma}[Conjugacy detected in a minimal invariant subtree]
\label{lem:surface-subtree-conjugacy}
Let \(G\) act without inversions on a tree \(T\), let \(S\le G\), and
let \(\mathcal T\subset T\) be the minimal nonempty \(S\)-invariant
subtree.  Let \(x\in S\) be hyperbolic with
\(L=\Ax_T(x)\subset\mathcal T\), and let \(\mu\in G\).  Assume:
\begin{enumerate}[label=\textup{(\roman*)},leftmargin=2.3em]
\item \(\mu\) centralizes \(S\) and fixes \(\mathcal T\) pointwise;
\item \(\Fix_G^{\mathrm{pt}}(L)=\langle\mu\rangle\) and
\(S\cap\langle\mu\rangle=1\);
\item for some \(v\in L\),
\(\mathcal T\cap(G\mathbin{\cdot}v)=S\mathbin{\cdot}v\), and
\(\mathcal T\) has valence two at every vertex of
\(S\mathbin{\cdot}v\);
\item the setwise stabilizer in \(G\) of the two incident edges of
\(\mathcal T\) at \(v\) is
\(A_v=\langle\mu,h_v\rangle\cong\mathbb Z^2\) for some \(h_v\in S\).
\end{enumerate}
Then
\[
g\langle x\rangle g^{-1}\le S
\quad\Longrightarrow\quad
g\langle x\rangle g^{-1}=s\langle x\rangle s^{-1}
\text{ for some }s\in S.
\]
\end{lemma}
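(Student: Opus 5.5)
The plan is to move \(g\) by an element of \(S\) until it fixes the vertex \(v\), and then use the valence-two hypothesis to force the adjusted \(g\) into \(A_v\). Conjugation by \(A_v\) preserves \(S\)-conjugacy of \(\langle x\rangle\), since \(\mu\) centralizes \(S\) and \(h_v\in S\).

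\emph{Locate the conjugated axis.} Put \(x'=gxg^{-1}\). By assumption \(x'\in S\), and \(x'\) is hyperbolic with axis \(\Ax_T(x')=gL\). I will use the standard fact that the minimal \(S\)-invariant subtree contains the axis of every hyperbolic element of \(S\): it is the union of those axes when \(S\) contains hyperbolic elements. Hence \(gL\subset\mathcal T\). In particular \(gv\in\mathcal T\cap(G\cdot v)\), so hypothesis (iii) gives \(gv=sv\) for some \(s\in S\).

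\emph{Normalize \(g\).} Replace \(g\) by \(g_1=s^{-1}g\). Then \(g_1v=v\), and \(g_1L=s^{-1}(gL)\subset s^{-1}\mathcal T=\mathcal T\) because \(\mathcal T\) is \(S\)-invariant. Also \(g_1\langle x\rangle g_1^{-1}=s^{-1}(g\langle x\rangle g^{-1})s\), so it suffices to prove the conclusion for \(g_1\).

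\emph{Trap \(g_1\) in \(A_v\).} The line \(L\) passes through \(v\) and lies in \(\mathcal T\), so it uses two distinct edges of \(\mathcal T\) at \(v\). Since \(\mathcal T\) has valence two at \(v\) by (iii), these are exactly the two incident edges \(e,e'\) of \(\mathcal T\) at \(v\). The line \(g_1L\) also passes through \(g_1v=v\) and lies in \(\mathcal T\), so it likewise uses exactly \(e,e'\) at \(v\). Since \(g_1\) fixes \(v\) and carries the two edges of \(L\) at \(v\) to the two edges of \(g_1L\) at \(v\), it maps \(\{e,e'\}\) to itself. By hypothesis (iv), \(g_1\in A_v=\langle\mu,h_v\rangle\). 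Write \(g_1=\mu^ah_v^b\). Because \(\mu\) centralizes \(S\ni x\) by (i) and commutes with \(h_v\),
\[
g_1\langle x\rangle g_1^{-1}=h_v^b\langle x\rangle h_v^{-b}.
\]
Undoing the normalization gives \(g\langle x\rangle g^{-1}=(sh_v^b)\langle x\rangle(sh_v^b)^{-1}\), and \(sh_v^b\in S\).

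The only step needing care is the first: that \(\Ax_T(x')\subset\mathcal T\) for hyperbolic \(x'\in S\). I would verify it directly. Any \(S\)-invariant subtree is \(\langle x'\rangle\)-invariant, so it contains a point \(y\). If \(y\) were off the axis, the geodesic from \(y\) to \(x'y\) would pass through the projection of \(y\) to the axis and through its \(x'\)-translate. Hence an invariant subtree contains points of the axis and their full \(\langle x'\rangle\)-orbit, hence the whole axis by convexity. Hypothesis (ii) does not seem to be needed for this argument; I expect it is included for the later applications.
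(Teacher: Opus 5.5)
Your proof is correct. Through the trapping step it is the paper's argument: both observe that \(gL=\Ax_T(gxg^{-1})\) lies in \(\mathcal T\), use (iii) to move \(gv\) back to \(v\) by an element of \(S\), and then use valence two at \(v\) together with (iv) to place the adjusted element in \(A_v=\langle\mu,h_v\rangle\).

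The two routes differ only at the end.

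\begin{itemize}
\item The paper uses the pointwise fixing of \(\mathcal T\) by \(\mu\) to get \(gL=s_0L\) with \(s_0\in S\). It then compares \(gxg^{-1}\) with \(s_0x^{\pm1}s_0^{-1}\) by translation direction and length. Their quotient therefore lies in \(\Fix_G^{\mathrm{pt}}(s_0L)\cap S=\langle\mu\rangle\cap S\), which is trivial by (ii).
\item You substitute \(g=s\mu^ah_v^b\) directly. You then use only that \(\mu\) commutes with \(h_v^bxh_v^{-b}\in S\), which gives \(gxg^{-1}=(sh_v^b)\,x\,(sh_v^b)^{-1}\) at once.
\end{itemize}

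Your route is shorter and yields the element-level identity with exponent \(+1\), so the sign chosen in the paper's argument is in fact always \(+1\). It also confirms your remark that hypothesis (ii), and likewise the pointwise-fixing clause of (i), is not needed for this lemma.

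Finally, your explicit check that every nonempty \(S\)-invariant subtree contains the axis of each hyperbolic element of \(S\) fills in a step the paper only asserts.
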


\begin{proof}
Assume \(g\langle x\rangle g^{-1}\le S\).  The line \(gL\), the
axis of \(gxg^{-1}\in S\), lies in \(\mathcal T\).  By~\textup{(iii)},
choose \(s_1\in S\) with \(s_1v=gv\), and put \(b=s_1^{-1}g\).  Then
\(b\) fixes \(v\).  The lines \(L\) and \(bL\) pass through \(v\) in
the valence-two subtree, so \(b\) stabilizes the
two incident edges.  Thus
\[
b=\mu^m h_v^n
\]
for some \(m,n\in\mathbb Z\).  Since \(\mu\) fixes \(\mathcal T\)
pointwise, \(gL=s_1h_v^nL\); set \(s_0=s_1h_v^n\in S\).

Choose \(\epsilon\in\{\pm1\}\) so that \(gxg^{-1}\) and
\(s_0x^\epsilon s_0^{-1}\) translate their common axis in the same
direction.  They have the same translation length, so their quotient
lies in
\[
\Fix_G^{\mathrm{pt}}(s_0L)
=s_0\langle\mu\rangle s_0^{-1}
=\langle\mu\rangle,
\]
where the last equality uses centrality of \(\mu\) with \(S\).  The
quotient also lies in \(S\), and~\textup{(ii)} makes it trivial.  Hence
\(gxg^{-1}=s_0x^\epsilon s_0^{-1}\), proving the cyclic-subgroup
equality.
\end{proof}

The next section verifies the primitive-axis and minimal-subtree
hypotheses separately in even and odd genus.  Section~\ref{sec:latercarriers}
proves the corresponding inductive statement for a curve meeting the
preceding surgery curve once.

\section{The first surgery curve and its Bass--Serre axis}
\label{sec:initialcarriers}

This section constructs the initial nonseparating curve \(d_1\),
computes its literal based push-off
\[
h_1=\rho_0(d_1)\in G_0,
\]
and verifies the criteria of
\cref{lem:primitive-axis-generator,lem:surface-subtree-conjugacy}
separately in even and odd genus.  By
\cref{prop:even-seed-injection,prop:odd-seed-injection}, \(\rho_0\) is
injective; hence we identify
\(S_0=\rho_0(\pi_1(\Sigma_g,p))\) with the surface group.  The
resulting cyclic-subgroup conjugacy statement recovers the unoriented
curve and supplies the base case for \cref{sec:latercarriers}.  We first
record the required condition.

\begin{definition}[Cyclic-subgroup conjugacy condition]
\label{def:rooted-isolation}
Let \(S\le G\) and let \(1\ne h\in S\).  We say that the pair
\((S,\langle h\rangle)\) satisfies the cyclic-subgroup conjugacy
condition in \(G\) if
\[
a\langle h\rangle a^{-1}\le S
\quad\Longrightarrow\quad
 a\langle h\rangle a^{-1}
 =s\langle h\rangle s^{-1}
\text{ for some }s\in S.
\]
Thus every ambient conjugate of \(\langle h\rangle\) contained in
\(S\) is already conjugate to \(\langle h\rangle\) by an element of
\(S\).
\end{definition}

For a simplicial tree \(T\), write \(\operatorname{sd}(T)\) for its
barycentric subdivision, obtained by inserting one vertex in the
interior of every edge.

\subsection{Even genus}

Assume \(g=2r\), so \(P=\Sigma_{r,1}\).  Retain the basis
\[
U_0=\langle a_1,b_1,\ldots,a_r,b_r\rangle
\]
and the boundary word
\[
w=\prod_{j=1}^r[a_j,b_j]
\]
from \cref{eq:even-seam-word}.  Choose distinct points
\(q_a,q_b\in B_1\setminus\{q_0\}\).  Choose an embedded boundary path
\(\zeta:q_0\to q_a\) and an embedded boundary path
\(\xi:q_a\to q_b\), with disjoint interiors.  Denote their copies
in \(P^+\) and \(P^-\) by \(\zeta^\pm\) and \(\xi^\pm\), and put
\[
\lambda_a=q_a\times I:q_a\times\{1\}\longrightarrow
q_a\times\{0\},
\qquad
\lambda_b=q_b\times I:q_b\times\{1\}\longrightarrow
q_b\times\{0\}.
\]

Choose embedded proper arcs
\[
\alpha_+:q_a\times\{1\}\longrightarrow q_b\times\{1\},
\qquad
\alpha_-:q_b\times\{0\}\longrightarrow q_a\times\{0\},
\]
such that, together with the boundary paths fixed above, the based
loops defined below correspond to the previously fixed basis elements
\(a_1\) and \(b_1\), respectively, under the push-off homomorphisms
from \(P^+\) and \(P^-\).  Choose \(\alpha_+\) to be
nonseparating.  Define
\begin{equation}
\label{eq:xy-arcs}
\begin{aligned}
\bar x
&=
\zeta^+\alpha_+(\xi^+)^{-1}(\zeta^+)^{-1}
\in\pi_1(P^+,p),\\
\bar y
&=
\zeta^-\xi^-\alpha_-(\zeta^-)^{-1}
\in\pi_1(P^-,p_-),\\
x
&=\rho_0(\bar x)=a_1,\\
y
&=\rho(\lambda_-\bar y\lambda_-^{-1})=b_1.
\end{aligned}
\end{equation}
Here the last equality uses the identification of corresponding basis
loops in \(P^+\) and \(P^-\) from
\cref{prop:horizontal-peripheral}.  In particular,
\[
x,y\in U_0\setminus\langle w\rangle,
\]
because \(w\) is trivial in the abelianization of \(U_0\), whereas
\(a_1\) and \(b_1\) are not.

Orient the following curve by traversing its displayed pieces in order:
\begin{equation}
\label{eq:even-d1}
 d_1
 =
 \bigl(
 \alpha_+\cup\lambda_b\cup\alpha_-
 \cup\lambda_a^{-1}
 \bigr)_{\mathrm{sm}},
\end{equation}
where the four corners are smoothed in pairwise disjoint
neighborhoods.  Choose the smoothing so that
\(q_a\times\{1\}\) remains a point of the smoothed curve.  Since
\(q_a\ne q_0\), the resulting curve is disjoint from \(p\).  We use
\(\zeta^+\) as the path from \(p\) to
\(q_a\times\{1\}\), thereby fixing the based element denoted by
\(d_1\).

\begin{lemma}
\label{lem:even-d1-geometry}
The curve \(d_1\) in \eqref{eq:even-d1} is embedded, essential, and
nonseparating.  Its based push-off image is the literal equality
\begin{equation}
\label{eq:even-h1-word}
 h_1=\rho_0(d_1)
 =x\,\ell_1^V\,y\,(\ell_1^V)^{-1}.
\end{equation}
\end{lemma}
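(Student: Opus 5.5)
The plan is to separate the topological assertions from the push-off computation. The push-off computation reduces to the single crossing formula \eqref{eq:vertical-path-insertion} together with the literal identities already fixed in \eqref{eq:xy-arcs}.

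\emph{Topology.} The pieces \(\alpha_+\subset P^+\), \(\alpha_-\subset P^-\), \(\lambda_a\) and \(\lambda_b\) meet only at the four corners \(q_a\times\{1\}\), \(q_b\times\{1\}\), \(q_b\times\{0\}\), \(q_a\times\{0\}\). Hence the piecewise curve is a simple closed curve, and smoothing in disjoint small neighborhoods keeps it embedded. To see that \(d_1\) is nonseparating, I will produce a closed curve meeting it transversely exactly once. The arc \(\alpha_+\) is nonseparating in \(P^+\), so there is an arc in \(P^+\) from one side of \(\alpha_+\) to the other that avoids \(\alpha_+\) and \(\partial P\). Closing it up near \(\alpha_+\) gives a loop crossing \(\alpha_+\) once and disjoint from the rest of \(d_1\). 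Alternatively one can argue homologically: \(\rho_0(d_1)\) abelianizes in \(U_0\)-coordinates nontrivially after applying the retraction \(r\) of \cref{thm:one-step}. In either case, a nonseparating curve is essential. One can also obtain essentiality from \(h_1\neq1\) via injectivity of \(\rho_0\).

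\emph{Push-off word.} I will write the based loop \(d_1\), based by \(\zeta^+\), as a groupoid word and insert trivial back-and-forth paths along \(\xi^\pm\) and \(\zeta^\pm\):
\[
d_1\simeq \bigl[\zeta^+\alpha_+(\xi^+)^{-1}(\zeta^+)^{-1}\bigr]\cdot
\bigl[\zeta^+\xi^+\lambda_b(\xi^-)^{-1}(\zeta^-)^{-1}\bigr]\cdot
\bigl[\zeta^-\xi^-\alpha_-(\zeta^-)^{-1}\bigr]\cdot
\bigl[\zeta^-\lambda_a^{-1}(\zeta^+)^{-1}\bigr].
\]
The first bracket is \(\bar x\) and the third is \(\bar y\). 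The boundary paths \(\zeta,\xi\) in \(B_1\times\{0,1\}\) together with the vertical segments can be homotoped inside the annulus \(B_1\times I\), rel endpoints, to paths of the form \(\lambda_-\) conjugated appropriately. The homotopy goes through \(\zeta^+\xi^+\lambda_b(\xi^-)^{-1}(\zeta^-)^{-1}\simeq\lambda_-\simeq\zeta^+\lambda_a(\zeta^-)^{-1}\), since the rectangles \([q_0,q_a]\times I\) and \([q_a,q_b]\times I\) are disks avoiding \(p\). This gives \(d_1=\bar x\,\lambda_-\bar y\lambda_-^{-1}\) in \(\pi_1(\Sigma_g,p)\).

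The key point is that the homotopy of paths across the rectangle must happen in the new exterior after pushing off. The rectangle crosses the surgery annulus around the core \(B_1\times\{1/2\}\), so I will instead track crossings directly. Each of \(\lambda_b\) and \(\lambda_a^{-1}\) crosses the core once, positively and negatively respectively. By \cref{lem:local-crossing-coordinates} and \cref{prop:ordered-peripheral}, with the local basepoint \(*_1\) placed just after the crossing of \(\lambda_-\) and transport along boundary paths in the product region (where only \(\mu\) and the \(S^1\)-direction appear, commuting with \(\ell_1^V\)), the new pushed-off vertical connectors are \(\ell_1^V\widehat\lambda\) and \(\widehat\lambda^{-1}(\ell_1^V)^{-1}\), exactly as in \eqref{eq:vertical-path-insertion}. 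Substituting gives
\[
\rho_0(d_1)=\rho(\bar x)\,\ell_1^V\rho(\lambda_-\bar y\lambda_-^{-1})(\ell_1^V)^{-1}=x\,\ell_1^V y(\ell_1^V)^{-1}.
\]
This matches \eqref{eq:vertical-minus-restriction}, and \eqref{eq:vertical-plus-restriction} handles \(\bar x\).

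\emph{Main obstacle.} The hardest part is the basing bookkeeping. I must check that transporting the longitude from the crossing points \(q_b\) and \(q_a\) to \(*_1\) along paths in the attaching region introduces no extra \(\mu\) or \(S^1\) powers that fail to commute. The transports differ by elements of \(\langle\mu,w\rangle\), and \(w\) and \(\mu\) both commute with \(\ell_1^V\) in \(C_1^V\), so the conjugates agree. This is the same argument as in \cref{prop:horizontal-peripheral}.
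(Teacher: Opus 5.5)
Your proof is correct and follows the paper's argument. Disjoint interiors give embeddedness, and a curve in \(P^+\) meeting \(\alpha_+\) once gives nonseparation. The rectangle homotopies give \(d_1=\bar x\,\Lambda_b\,\bar y\,\Lambda_a^{-1}=\bar x\,\lambda_-\bar y\,\lambda_-^{-1}\) in \(\pi_1(\Sigma_{2r},p)\), and then \eqref{eq:vertical-plus-restriction}, \eqref{eq:vertical-minus-restriction}, and \eqref{eq:xy-arcs} yield the word.

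Your detour of tracking the crossings of \(\lambda_b\) and \(\lambda_a^{-1}\) directly is harmless but unnecessary. The map \(\rho_0\) is induced by a continuous push-off map defined on the whole reparametrized surface, including the replaced annulus. So a homotopy in \(\Sigma_{2r}\) rel endpoints automatically pushes off to a homotopy in the new exterior, and you can simply apply the homomorphism \(\rho_0\) to the surface-group identity.
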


\begin{proof}
The interiors of \(\alpha_+\), \(\alpha_-\),
\(\lambda_a\), and \(\lambda_b\) are pairwise disjoint:
the first two lie in the distinct subsurfaces \(P^+\) and \(P^-\),
and the last two lie in distinct fibers of \(B_1\times I\).  Their
union is therefore an embedded circle after smoothing the corners.
Because \(\alpha_+\) is a nonseparating proper arc, there is a simple
closed curve in the interior of \(P^+\) meeting \(\alpha_+\) once and
disjoint from the other three pieces.  It meets \(d_1\) once, so
\(d_1\) is nonseparating and hence essential.

Define paths from \(p\) to \(p_-\) by
\[
\Lambda_a
=
\zeta^+\lambda_a(\zeta^-)^{-1}
\]
and
\[
\Lambda_b
=
\zeta^+\xi^+\lambda_b
(\xi^-)^{-1}(\zeta^-)^{-1}.
\]
The rectangles \(\zeta\times I\) and
\((\zeta\xi)\times I\) give homotopies relative to endpoints from
both of these paths to \(\lambda_-\).  With the path-product
convention of \cref{conv:peripheral-groupoid}, the based curve factors
as
\[
 d_1
 =
 \bar x\,\Lambda_b\,\bar y\,\Lambda_a^{-1}
 =
 \bar x\,\lambda_-\,\bar y\,\lambda_-^{-1}
\]
in \(\pi_1(\Sigma_{2r},p)\).  Applying
\eqref{eq:vertical-plus-restriction} and
\eqref{eq:vertical-minus-restriction}, together with
\eqref{eq:xy-arcs}, gives
\[
\rho_0(d_1)
=
 x\,\ell_1^V\,y\,(\ell_1^V)^{-1}.
\]
All basepoints and change-of-basepoint paths have been fixed, so this is
a literal based equality.
\end{proof}

Recall the exterior-group splitting
\begin{equation}
\label{eq:even-initial-tree-splitting}
G_0=A_0*_{E}C_1^V,
\qquad
E=\langle\mu,w\rangle,
\qquad
C_1^V=K_1^V\times\langle z_1\rangle,
\end{equation}
where \(z_1=w\) in the amalgam.  Let \(T_0\) be its Bass--Serre
tree.  The quotient graph is one edge with vertex groups \(A_0\) and
\(C_1^V\) and edge group \(E\); thus the vertices of \(T_0\) have
types \(G_0/A_0\) and \(G_0/C_1^V\), the edges are the cosets
\(G_0/E\), and their stabilizers are the corresponding conjugates of
these groups.

\begin{lemma}[No proper root in even genus]
\label{lem:even-h1-primitive}
The word in \eqref{eq:even-h1-word} is cyclically reduced of syllable
length four in the splitting \eqref{eq:even-initial-tree-splitting}.
Consequently \(h_1\) is hyperbolic on \(T_0\) with
\[
\|h_1\|_{T_0}=4.
\]
It is not a proper power in \(G_0\).
\end{lemma}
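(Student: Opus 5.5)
We treat \(h_1=x\,\ell_1^V\,y\,(\ell_1^V)^{-1}\) as a four-syllable word in \(G_0=A_0*_E C_1^V\). The syllables alternate between \(A_0\) (namely \(x\) and \(y\)) and \(C_1^V\) (namely \(\ell_1^V\) and \((\ell_1^V)^{-1}\)), so the plan is to check reducedness, read off the translation length, and then rule out roots with \cref{lem:cyclic-periodicity}.

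\textbf{Reducedness.} For reducedness and cyclic reducedness we need each syllable to lie outside \(E\).
\begin{itemize}
\item For \(x=a_1\) and \(y=b_1\): by \cref{prop:disk-group}, \(C_{A_0}(\mu)=\langle\mu\rangle\times U_0\) and \(E\le C_{A_0}(\mu)\). Hence \(U_0\cap E=\langle w\rangle\), exactly as in the proof of \cref{lem:even-L-embedding}. Since \(a_1,b_1\notin\langle w\rangle\) (they are nontrivial in \(H_1(U_0)\) while \(w\) is not), we get \(x,y\notin E\).
\item For \(\ell_1^{V}\) and its inverse: suppose \((\ell_1^V)^{\pm1}=m^a z_1^b\) lies in \(E\) (image \(\langle m_1^V,z_1\rangle\)). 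Projecting to \(K_1^V\) gives \(\ell_1^V\in\langle m_1^V\rangle\). This contradicts \(\langle m_1^V,\ell_1^V\rangle\cong\mathbb Z^2\).
\end{itemize}
The word has even syllable length and alternating factors, so it is cyclically reduced. The standard Bass--Serre fact (used in the proof of \cref{lem:cyclic-periodicity}) then says a cyclically reduced normal form of length \(4\) is hyperbolic with \(\|h_1\|_{T_0}=4\).

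\textbf{Excluding roots.} Suppose \(h_1=u^n\) with \(n>1\). By \cref{lem:cyclic-periodicity}, \(n=2\), and after a cyclic permutation the double-coset sequence is \(2\)-periodic. Because the sequence alternates between vertex types, the two \(A_0\)-syllables must lie in one \(E\)-double coset, and likewise the two \(C_1^V\)-syllables. We aim to contradict the \(C_1^V\) condition, \(E\ell_1^V E=E(\ell_1^V)^{-1}E\).
\begin{itemize}
\item Project \(C_1^V=K_1^V\times\langle z_1\rangle\to K_1^V\). The image of \(E\) is the meridian group \(\langle m\rangle\) with \(m=m_1^V\), so we would need \((\ell_1^V)^{-1}=m^i\ell_1^V m^j\) in \(K_1^V\).
\item Since \(m\) commutes with \(\ell_1^V\), this becomes \(\ell_1^{-2}=m^{i+j}\) in the abelian peripheral group \(\langle m,\ell\rangle\cong\mathbb Z^2\). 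Rank two forces \(\ell^2=1\), which is impossible in a torsion-free group with \(\ell\ne1\).
\end{itemize}
As a cross-check via the \(A_0\)-syllables: \(a_1\) and \(b_1\) differ in \(H_1(U_0)\), and this persists modulo \(\langle w\rangle\subset[U_0,U_0]\). An \(E\)-double coset equality also involves \(\mu\)-powers, which we handle by projecting \(\langle\mu\rangle\times U_0\) to \(U_0\). The one real subtlety is double cosets: \(E a_1E\) could \emph{a priori} meet elements outside \(C_{A_0}(\mu)\). That is why we prefer the \(C_1^V\) side, where \(C_1^V\) is a product and the projection argument is clean.

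\textbf{Main obstacle.} The main risk is the periodicity bookkeeping. Cyclic permutations pair \(\ell\) with \(\ell^{-1}\) in every case, so we will use the cyclic-permutation freedom in \cref{lem:cyclic-periodicity} to verify that the two \(C_1^V\)-syllables are always \(\ell_1^V\) and \((\ell_1^V)^{-1}\), up to \(E\) on both sides.
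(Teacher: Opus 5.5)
Your proposal is correct and follows the paper's proof. The same syllable checks appear (\(U_0\cap E=\langle w\rangle\) for \(x,y\), and projection to \(K_1^V\) for \((\ell_1^V)^{\pm1}\)), then \cref{lem:cyclic-periodicity} forces \(E\ell_1^VE=E(\ell_1^V)^{-1}E\), which is refuted in the peripheral \(\mathbb Z^2\). The only difference is the order: the paper first uses that \(\ell_1^V\) centralizes all of \(E\) to get \((\ell_1^V)^2\in E\) and then projects, while you project first and use only \([m_1^V,\ell_1^V]=1\).

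Your aside about the \(A_0\)-syllables is over-cautious. Since \(E\) and \(a_1\) both lie in \(C_{A_0}(\mu)=\langle\mu\rangle\times U_0\), so does \(Ea_1E\), and projecting to \(U_0\) and abelianizing would have given a contradiction from that side as well.
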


\begin{proof}
By \cref{prop:disk-group,prop:even-seed-injection},
\[
U_0\cap E=\langle w\rangle.
\]
Since \(x,y\notin\langle w\rangle\), one has
\(x,y\in A_0\setminus E\).  Also
\(\ell_1^V,(\ell_1^V)^{-1}\in C_1^V\setminus E\): projection
\(C_1^V\to K_1^V\) maps \(E\) to
\(\langle m_1^V\rangle\), whereas
\(\ell_1^V\notin\langle m_1^V\rangle\) in the rank-two peripheral
subgroup of \(K_1^V\).  The displayed word is therefore cyclically
reduced with four alternating syllables, and its translation length is
four.

If \(h_1\) were a proper power, \cref{lem:cyclic-periodicity} would
make it the square of a length-two hyperbolic element and would force
the two \(C_1^V\)-syllables to lie in the same \(E\)-double coset:
\[
E\ell_1^VE
=
E(\ell_1^V)^{-1}E.
\]
The longitude \(\ell_1^V\) commutes with \(E\): it commutes with
\(m_1^V=\mu\) in the knot peripheral subgroup and with the external
factor \(z_1=w\).  Hence the displayed double-coset equality implies
\[
(\ell_1^V)^2\in E.
\]
Projecting to \(K_1^V\) gives
\((\ell_1^V)^2\in\langle m_1^V\rangle\), contradicting
\(\langle m_1^V,\ell_1^V\rangle\cong\mathbb Z^2\).
\end{proof}

\begin{lemma}[The axis of \(h_1\) in even genus]
\label{lem:even-clean-axis}
Let
\[
L_1=\Ax_{T_0}(h_1).
\]
Then
\begin{equation}
\label{eq:even-pointwise-axis}
\Fix_{G_0}^{\mathrm{pt}}(L_1)=\langle\mu\rangle.
\end{equation}
The translation induced by \(h_1\) generates the
orientation-preserving image of
\[
\Stab_{G_0}(L_1)\longrightarrow\Isom(L_1),
\]
and \(\Stab_{G_0}(L_1)\) contains no reflection.  Therefore
\begin{equation}
\label{eq:even-full-axis}
\Stab_{G_0}(L_1)=\langle\mu,h_1\rangle.
\end{equation}
If
\[
A_1=\langle\mu,h_1\rangle,
\]
then \(A_1\cong\mathbb Z^2\), and for all
\(a,b\in\mathbb Z\) with \(b\ne0\),
\[
C_{G_0}(\mu^ah_1^b)=A_1,
\qquad
\Trans_{G_0}(\mu^ah_1^b,A_1)=A_1.
\]
Moreover,
\[
\Comm_{G_0}(\langle h_1\rangle)=A_1.
\]
\end{lemma}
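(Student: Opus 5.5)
The plan is to reduce everything to \cref{lem:primitive-axis-generator}. Its hypotheses are: \(G_0\) is torsion-free (\cref{lem:tower-torsionfree}); \(H_1(G_0;\mathbb Z)=\mathbb Z\langle[\mu]\rangle\) and \([h_1]=0\) (\cref{prop:early-homology}); \(h_1\) is hyperbolic and not a proper power (\cref{lem:even-h1-primitive}). Two further facts remain to be checked: \(\mu\) fixes \(L_1\) pointwise, and \(\Fix_{G_0}^{\mathrm{pt}}(L_1)=\langle\mu\rangle\). Once these hold, \cref{lem:primitive-axis-generator} gives the primitive generation of the orientation-preserving image, the absence of reflections, \(\Stab_{G_0}(L_1)=A_1\cong\mathbb Z^2\), the centralizer and transporter equalities, and \(\Comm_{G_0}(\langle h_1\rangle)=A_1\).

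To describe \(L_1\), use the cyclically reduced normal form \(h_1=x\,\ell_1^V\,y\,(\ell_1^V)^{-1}\). The axis passes through the base edge \(e=E\) and through its translates. Reading off a fundamental segment, the edges of \(L_1\) are the \(\langle h_1\rangle\)-translates of four edges:
\[
e,\quad xe,\quad x\ell_1^Ve,\quad x\ell_1^Vye,
\]
with the next edge \(h_1e\). Each of these edges has stabilizer a conjugate of \(E=\langle\mu,w\rangle\), namely \(gEg^{-1}\) with \(g\in\{1,x,x\ell_1^V,x\ell_1^Vy\}\). The element \(\mu\) commutes with \(x,y\in U_0\) by \eqref{eq:disk-meridian-centralizer}, and with \(\ell_1^V\) in the knot peripheral subgroup. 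So \(\mu\) commutes with each such \(g\), and also with \(h_1\). Hence \(\mu\in gEg^{-1}\) for every edge of \(L_1\), and \(\mu\) fixes \(L_1\) pointwise.

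For the reverse inclusion, compute the intersection of consecutive edge stabilizers at each vertex; the pointwise fixer lies in it. At the \(A_0\)-vertex between \(e\) and \(xe\), the intersection is \(E\cap xEx^{-1}\). An element \(\mu^aw^b\) lies there only if \(x^{-1}\mu^aw^bx\in E\). Projecting to \(U_0\) via \(C_{A_0}(\mu)=\langle\mu\rangle\times U_0\), this requires \(x^{-1}w^bx\in\langle w\rangle\). By the malnormality of \(\langle w\rangle\) in \(U_0\) (\cref{prop:even-seed-injection}), since \(x\notin\langle w\rangle\), we get \(b=0\). At the \(C_1^V\)-vertex, work in \(K_1^V\times\langle z_1\rangle\): \(E\) corresponds to \(\langle m_1^V\rangle\times\langle z_1\rangle\), and \(\ell_1^V\) commutes with it. So that vertex alone gives \(E\cap\ell_1^VE(\ell_1^V)^{-1}=E\), and no reduction happens there. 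Therefore the intersection must be taken over the whole fundamental segment. Combining the \(A_0\)-vertices at \(x\) and \(x\ell_1^Vy\) (using \(y\notin\langle w\rangle\) likewise) removes the \(w\)-component, leaving \(\langle\mu\rangle\). I would also check that the pointwise fixer of an edge path lies in \(E\), which is automatic because it fixes \(e\).

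The main obstacle is this pointwise-fixer computation. One must carefully track which conjugate of \(E\) each axis edge has, and verify that the malnormality of \(\langle w\rangle\) in \(U_0\) transfers to the required statement in \(A_0\). That transfer uses the precise centralizer \(C_{A_0}(\mu)\), and it needs a short argument that \(E\cap xEx^{-1}\le C_{A_0}(\mu)\). Once \(\Fix^{\mathrm{pt}}_{G_0}(L_1)=\langle\mu\rangle\) is established, the remaining conclusions follow formally from \cref{lem:primitive-axis-generator}.
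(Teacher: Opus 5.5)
Your proposal is correct and follows essentially the same route as the paper. You show that \(\mu\) fixes the axis because it commutes with \(x\), \(y\), and \(\ell_1^V\). You compute \(E\cap xEx^{-1}=\langle\mu\rangle\times\bigl(\langle w\rangle\cap x\langle w\rangle x^{-1}\bigr)=\langle\mu\rangle\) from the direct-product structure of \(C_{A_0}(\mu)\) and the malnormality of \(\langle w\rangle\) in \(U_0\). You then invoke \cref{lem:primitive-axis-generator} with \cref{lem:even-h1-primitive,lem:tower-torsionfree,prop:early-homology}.

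The only slips are cosmetic. Your single \(A_0\)-vertex computation already yields \(\langle\mu\rangle\), so there is no need to combine several \(A_0\)-vertices along the fundamental segment. Also, \(E\cap xEx^{-1}\le C_{A_0}(\mu)\) is immediate, since \(E\) itself lies in \(C_{A_0}(\mu)\).
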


\begin{proof}
Along a fundamental segment of \(L_1\), the two consecutive edge
stabilizers at an \(A_0\)-vertex are, up to a common conjugation,
\[
E\text{ and }xEx^{-1}
\quad\text{or}\quad
E\text{ and }yEy^{-1}.
\]
Since \(x,y\in U_0\) and \(\mu\) centralizes \(U_0\),
\[
\begin{aligned}
E\cap xEx^{-1}
&=
\langle\mu\rangle\times
\bigl(\langle w\rangle\cap
x\langle w\rangle x^{-1}\bigr),\\
E\cap yEy^{-1}
&=
\langle\mu\rangle\times
\bigl(\langle w\rangle\cap
y\langle w\rangle y^{-1}\bigr).
\end{aligned}
\]
The subgroup \(\langle w\rangle\) is malnormal in \(U_0\) by
\cref{prop:even-seed-injection}, and
\(x,y\notin\langle w\rangle\), so both intersections are
\(\langle\mu\rangle\).  At a \(C_1^V\)-vertex the consecutive
edge stabilizers are, up to common conjugation,
\[
E
\quad\text{and}\quad
\ell_1^VE(\ell_1^V)^{-1}=E.
\]
The elements \(x\), \(y\), and \(\ell_1^V\) all commute with
\(\mu\).  Thus \(\mu\) lies in every edge stabilizer along a
fundamental segment and hence along all its \(h_1\)-translates.  It
fixes \(L_1\) pointwise.  Conversely, an element fixing \(L_1\)
pointwise lies in either of the preceding intersections at an
\(A_0\)-vertex.  This proves \eqref{eq:even-pointwise-axis}.  By
\cref{lem:even-h1-primitive}, \(h_1\) is hyperbolic and not a proper
power; \cref{lem:tower-torsionfree,prop:early-homology} give
torsion-freeness,
\(H_1(G_0;\mathbb Z)=\mathbb Z\langle[\mu]\rangle\), and \([h_1]=0\).
Thus \cref{lem:primitive-axis-generator} applies and gives every
remaining assertion, including \eqref{eq:even-full-axis}.
\end{proof}

The axis calculation controls the edge subgroup in \(G_0\).  The next
proposition compares ambient conjugacy with conjugacy inside \(S_0\),
which is the additional information needed later to recover the
unoriented curve.

\begin{proposition}[Conjugacy in the even-genus surface subgroup]
\label{prop:even-rooted-isolation}
The pair \((S_0,\langle h_1\rangle)\) satisfies the cyclic-subgroup
conjugacy condition in \(G_0\).  Explicitly, if \(a\in G_0\) and
\[
a\langle h_1\rangle a^{-1}\le S_0,
\]
then there is \(s\in S_0\) such that
\[
a\langle h_1\rangle a^{-1}
=
s\langle h_1\rangle s^{-1}.
\]
\end{proposition}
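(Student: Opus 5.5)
The plan is to apply \cref{lem:surface-subtree-conjugacy} with \(G=G_0\), \(T=T_0\) the Bass--Serre tree of \eqref{eq:even-initial-tree-splitting}, \(S=S_0\), and \(x=h_1\). \Cref{lem:even-h1-primitive} makes \(h_1\) hyperbolic, and \cref{lem:even-clean-axis} gives \(\Fix_{G_0}^{\mathrm{pt}}(L_1)=\langle\mu\rangle\). So the work is to identify the minimal \(S_0\)-invariant subtree \(\mathcal T\) and then check hypotheses (i)--(iv).

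\emph{Identifying \(\mathcal T\).} Let \(v_A\) be the vertex with stabilizer \(A_0\), \(v_C\) the vertex with stabilizer \(C_1^V\), and \(e\) the base edge joining them, with stabilizer \(E\). Since \(\ell_1^V\in C_1^V\), the element \(\ell_1^V\) fixes \(v_C\). Hence the path
\[
v_A \overset{e}{-} v_C \overset{\ell_1^Ve}{-} \ell_1^Vv_A
\]
has endpoints fixed by \(U_0\) and by \(\ell_1^VU_0(\ell_1^V)^{-1}\), respectively. The edges \(e\) and \(\ell_1^Ve\) are distinct because \(\ell_1^V\notin E\).

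Let \(\widetilde T\) be the Bass--Serre tree of the surface splitting \(S_0\cong U_0*_{\langle w\rangle}\ell_1^VU_0(\ell_1^V)^{-1}\) from \cref{prop:even-seed-injection}. Send each \(\widetilde T\)-edge to the corresponding two-edge path above; this gives an \(S_0\)-equivariant map \(\operatorname{sd}(\widetilde T)\to T_0\). I would check local injectivity at each vertex type:
\begin{itemize}
\item At \(v_A\): the link in \(\widetilde T\) is \(U_0/\langle w\rangle\), which maps to \(U_0E/E\). This is injective because \(U_0\cap E=\langle w\rangle\).
\item At \(\ell_1^Vv_A\): the same argument applies after conjugating by \(\ell_1^V\), which centralizes \(E\).
\item At a subdivision vertex: the two incident edges go to the distinct edges \(e\) and \(\ell_1^Ve\).
\end{itemize}
A locally injective simplicial map of trees is injective, so the image is an \(S_0\)-invariant subtree. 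It is minimal because the surface splitting is nondegenerate, so \(\widetilde T\) is already minimal. This step, constructing the embedding and justifying minimality, is the main obstacle.

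\emph{Checking (i)--(iv).}
\begin{itemize}
\item (i) The meridian \(\mu\) centralizes \(U_0\) and \(\ell_1^V\), so it centralizes \(S_0\). It lies in \(E\), so it fixes \(v_A\), \(v_C\) and \(\ell_1^Vv_A\), and hence fixes the whole orbit \(\mathcal T\) pointwise.
\item (ii) The first part is \cref{lem:even-clean-axis}. For \(S_0\cap\langle\mu\rangle=1\): \(S_0\subset[G_0,G_0]\) by \cref{prop:early-homology}, while \([\mu]\) has infinite order in \(H_1(G_0)\).
\item (iii) Take \(v=v_C\). The \(C\)-type vertices of \(\mathcal T\) are exactly the subdivision vertices, and these form the single orbit \(S_0v_C\) because \(\widetilde T\) has one edge orbit. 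Each has valence two in \(\mathcal T\). Also \(L_1\subset\mathcal T\), since \(h_1\in S_0\) is hyperbolic and its axis lies in the minimal subtree.
\item (iv) An element of \(C_1^V\) fixing both \(e\) and \(\ell_1^Ve\) lies in \(E\cap\ell_1^VE(\ell_1^V)^{-1}=E\). An element swapping them would be some \(c\in\ell_1^VE\) with \(c\ell_1^V\in E\), forcing \((\ell_1^V)^2\in E\). This is excluded by projecting to \(K_1^V\), exactly as in \cref{lem:even-h1-primitive}. Hence the setwise stabilizer is \(E=\langle\mu,w\rangle\cong\mathbb Z^2\), and \(h_v=w\in S_0\).
\end{itemize}

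With all hypotheses verified, \cref{lem:surface-subtree-conjugacy} yields exactly the cyclic-subgroup conjugacy condition stated in the proposition.
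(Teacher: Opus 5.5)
Your proposal is correct and follows the paper's proof essentially step for step. You use the same $S_0$-equivariant embedding of the subdivided surface-splitting tree via the path $E,\ \ell_1^VE$, the same local-injectivity and minimality argument, and the same pair-stabilizer computation at the $C_1^V$-vertex, concluding with \cref{lem:surface-subtree-conjugacy}. Your homological proof of $S_0\cap\langle\mu\rangle=1$ (from $S_0\subset[G_0,G_0]$) is a valid substitute for the paper's centerlessness argument.

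The only point you leave implicit is that hypothesis (iii) requires $v\in L_1$. This does hold for $v=v_C$: with $x,y$ as in \eqref{eq:even-h1-word}, the edge path $v_C,\,v_A,\,xv_C,\,x\ell_1^Vv_A,\,h_1v_C$ has no backtracking and length $4=\|h_1\|_{T_0}$. Alternatively, as the paper does, you can replace $v_C$ by an $S_0$-translate $s_vv_C$ lying on $L_1$ and take $h_v=s_vws_v^{-1}$.
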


\begin{proof}
By \eqref{eq:even-S0}, the surface subgroup has the splitting
\[
S_0
=
U_0*_{\langle w\rangle}
\ell_1^VU_0(\ell_1^V)^{-1}.
\]
Let \(T_{\Sigma}\) be its Bass--Serre tree.  There is an
\(S_0\)-equivariant simplicial embedding
\[
\operatorname{sd}(T_{\Sigma})\longrightarrow T_0
\]
obtained by mapping a chosen subdivided edge to the two-edge path
represented by
\[
E,
\qquad
\ell_1^VE.
\]
Indeed, the two endpoints map to the vertices \(A_0\) and
\(\ell_1^VA_0\), and the inserted barycentric vertex maps to
\(C_1^V\).  At an \(A_0\)-type endpoint, local injectivity is exactly
\(U_0\cap E=\langle w\rangle\); the conjugate calculation applies at
the other endpoint.  At the inserted vertex it follows from
\(\ell_1^V\notin E\).  Denote the image by \(\mathcal T_0\).
Since
\[
\langle w\rangle<U_0
\qquad\text{and}\qquad
\langle w\rangle<
\ell_1^VU_0(\ell_1^V)^{-1}
\]
are proper inclusions, the displayed amalgamated-product
decomposition of \(S_0\) is reduced.  Its action on
\(T_{\Sigma}\), and hence on \(\operatorname{sd}(T_{\Sigma})\),
is therefore minimal.  By equivariance of the embedding,
\(\mathcal T_0\) is the minimal nonempty \(S_0\)-invariant subtree
of \(T_0\).

At each \(C_1^V\)-type vertex of \(\mathcal T_0\), exactly two edges
of \(\mathcal T_0\) are incident.  At the base vertex they are
represented by \(E\) and \(\ell_1^VE\).  Their setwise stabilizer in
\(C_1^V\) is exactly \(E\).  Every element of \(E\) fixes both edge
cosets because \(\ell_1^V\) centralizes \(E\).  Conversely, an
element fixing \(E\) belongs to \(E\).  If an element exchanged the
two cosets, it would belong to \(\ell_1^VE\) and would carry
\(\ell_1^VE\) to \((\ell_1^V)^2E\); exchange would therefore imply
\((\ell_1^V)^2\in E\), which was excluded in
\cref{lem:even-h1-primitive}.

Since \(h_1\in S_0\) is hyperbolic, \(L_1\subset\mathcal T_0\).
Choose \(v=s_vC_1^V\in L_1\), with \(s_v\in S_0\).  The
\(C_1^V\)-type vertices form one \(S_0\)-orbit, and at \(v\) the
preceding calculation gives valence two and pair stabilizer
\[
s_vEs_v^{-1}=\langle\mu,s_vws_v^{-1}\rangle\cong\mathbb Z^2,
\qquad s_vws_v^{-1}\in S_0.
\]
The meridian fixes \(\mathcal T_0\) pointwise because it fixes
\(A_0-C_1^V-\ell_1^VA_0\), centralizes \(S_0\), and hence fixes all
\(S_0\)-translates of this path.  Also
\(S_0\cap\langle\mu\rangle=1\): injectivity of \(\rho_0\) identifies
\(S_0\) with a centerless closed surface group, while \(\mu\)
centralizes it.  With \eqref{eq:even-pointwise-axis}, these are exactly
the hypotheses of \cref{lem:surface-subtree-conjugacy}; the lemma gives
the conclusion.
\end{proof}

\subsection{Odd genus}

Assume \(g=2r+1\), so \(P=\Sigma_{r,2}\).  Retain the path
\(\beta_2:q_0\to q_2\) and the based loop
\[
\tau
=
\beta_2^+\lambda_2(\beta_2^-)^{-1}\lambda_-^{-1}
\]
from \eqref{eq:odd-surface-cycle}.  The underlying simple closed curve is the boundary of the product disk
\[
\beta_2\times I\subset P\times I.
\]
Choose a small isotopy of
\(\partial(\beta_2\times I)\), supported near \(p\), that moves the
curve off \(p\).  The trajectory under this isotopy of the point of
the original curve initially equal to \(p\) gives a path from \(p\)
to the displaced curve.  With this basing path, the displaced curve
represents exactly the based element \(\tau\).  Denote the resulting
oriented curve by \(d_1\).  Since \(\beta_2\) joins the two distinct boundary
components of \(P\), cutting \(P\) along \(\beta_2\) is connected.
Cutting \(P\times I\) along \(\beta_2\times I\) therefore gives the
connected manifold \((P\setminus\!\setminus\beta_2)\times I\).  Thus
the product disk is nonseparating.  For a properly embedded disk in an
orientable handlebody, the disk is nonseparating exactly when its
boundary is nonseparating; hence \(d_1\) is a nonseparating, and
therefore essential, simple closed curve on \(\partial(P\times I)\).

By \cref{prop:odd-seed-injection}, the based push-off image is the
literal equality
\begin{equation}
\label{eq:odd-h1}
 h_1=\rho_0(d_1)
 =\rho_0(\tau)
 =\ell_2^V(\ell_1^V)^{-1}.
\end{equation}

Recall the graph-of-groups decomposition
\begin{equation}
\label{eq:odd-initial-tree-splitting}
G_0
=
\bigl(A_0*_{E_1}C_1^V\bigr)*_{E_2}C_2^V,
\qquad
E_i=\langle\mu,w_i\rangle.
\end{equation}
We regard this as the fundamental group of the three-vertex tree of
groups with central vertex group \(A_0\), leaf vertex groups
\(C_1^V,C_2^V\), and edge groups \(E_1,E_2\).  Let \(T_0\) be its
Bass--Serre tree.  The quotient graph has one vertex of each of the
three types and one edge of each of the two types; the corresponding
vertex and edge stabilizers in \(T_0\) are conjugates of
\(A_0,C_i^V\), and \(E_i\), respectively.

\begin{lemma}[The axis of \(h_1\) in odd genus]
\label{lem:odd-clean-axis}
The element \(h_1\) is not a proper power.  It is hyperbolic on
\(T_0\), with
\[
\|h_1\|_{T_0}=4,
\]
and its axis \(L_1=\Ax_{T_0}(h_1)\) satisfies
\[
\Fix_{G_0}^{\mathrm{pt}}(L_1)=\langle\mu\rangle.
\]
The translation induced by \(h_1\) generates the
orientation-preserving image of
\[
\Stab_{G_0}(L_1)\longrightarrow\Isom(L_1),
\]
and no reflection occurs.  Consequently, with
\[
A_1=\langle\mu,h_1\rangle,
\]
one has \(A_1\cong\mathbb Z^2\),
\[
\Stab_{G_0}(L_1)=A_1,
\]
and, for every \(a,b\in\mathbb Z\) with \(b\ne0\),
\[
C_{G_0}(\mu^ah_1^b)=A_1,
\qquad
\Trans_{G_0}(\mu^ah_1^b,A_1)=A_1.
\]
Moreover,
\[
\Comm_{G_0}(\langle h_1\rangle)=A_1.
\]
\end{lemma}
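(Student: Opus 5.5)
The plan follows the even-genus pattern of \cref{lem:even-h1-primitive,lem:even-clean-axis}, now in the three-vertex tree $T_0$ for \eqref{eq:odd-initial-tree-splitting}. The goal is to verify the hypotheses of \cref{lem:primitive-axis-generator}; that lemma then gives every remaining conclusion. Torsion-freeness comes from \cref{lem:tower-torsionfree}, and $H_1(G_0;\Z)=\Z\langle[\mu]\rangle$ with $[h_1]=0$ comes from \cref{prop:early-homology}. So the real work is: hyperbolicity with $\|h_1\|_{T_0}=4$, the pointwise fixator $\langle\mu\rangle$, and the absence of a proper root.

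\emph{Axis.} The word $h_1=\ell_2^V(\ell_1^V)^{-1}$ alternates between the leaves $C_2^V$ and $C_1^V$. Its cyclic path therefore runs $C_2^V\!-\!A_0\!-\!C_1^V\!-\!A_0\!-\!C_2^V$, passing the central vertex with trivial $A_0$-syllables.

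To see that the path does not backtrack, I will check $\ell_\nu^{V}\notin E_\nu$ inside $C_\nu^V$. As in \cref{lem:even-L-embedding}, projecting to $K_\nu^V$ sends $E_\nu$ to $\langle m_\nu^V\rangle$, and $\langle m_\nu^V,\ell_\nu^V\rangle\cong\Z^2$. At each $A_0$-vertex the two incident edges are of different types ($E_1$ and $E_2$), so they are distinct. Hence the word is cyclically reduced, $h_1$ is hyperbolic, and a fundamental domain has four edges, giving $\|h_1\|_{T_0}=4$.

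\emph{Pointwise fixator.} The fundamental segment passes, up to a common conjugation, through the edges $E_2$, $E_1$, $\ell_1^{V}E_1$-type edges, and so on. At an $A_0$-vertex the two consecutive edge stabilizers are conjugate into $E_1$ and $E_2$ by the same element. Their intersection is therefore $E_1\cap E_2=\langle\mu\rangle\times(\langle w_1\rangle\cap\langle w_2\rangle)$, using $C_{A_0}(\mu)=\langle\mu\rangle\times U_0$. By the noncommensurability of $w_1,w_2$ recorded before \cref{prop:odd-seed-injection}, this intersection is $\langle\mu\rangle$. At a $C_\nu^V$-vertex, $\ell_\nu^V$ centralizes $E_\nu$, so the two edge stabilizers coincide and contain $\mu$. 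Since $\mu$ commutes with both $\ell_\nu^V$, it lies in every edge stabilizer along all $h_1$-translates of the segment. This gives $\Fix^{\mathrm{pt}}_{G_0}(L_1)=\langle\mu\rangle$.

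\emph{No proper root.} This is the step I expect to be the main obstacle, because \cref{lem:cyclic-periodicity} is stated for a two-vertex amalgam. I would rerun its tree argument in $T_0$, which is bipartite between $A_0$-type vertices and leaf-type vertices. Suppose $h_1=y^n$ with $n>1$. Then $y$ is hyperbolic on the same axis and $4=n\|y\|$. Translation lengths are even, so $n=2$ and $\|y\|=2$. In that case $y$ maps the $C_2^V$-type vertex on the fundamental segment to the vertex two steps along, which is of $C_1^V$-type. But $G_0$ preserves vertex types, since the quotient graph has three distinct vertices, so this is impossible. (Equivalently, the cyclic sequence of leaf types $(2,1)$ is not $2$-periodic.) This is in fact simpler than the even case, since no double-coset computation is needed. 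Finally, \cref{lem:primitive-axis-generator} supplies the generator statement, the exclusion of reflections, $\Stab_{G_0}(L_1)=A_1\cong\Z^2$, the centralizer–transporter equalities, and $\Comm_{G_0}(\langle h_1\rangle)=A_1$.
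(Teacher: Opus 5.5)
Your proposal is correct and follows essentially the same route as the paper: cyclic reduction from $\ell_\nu^V\notin E_\nu$, $\Fix_{G_0}^{\mathrm{pt}}(L_1)=\langle\mu\rangle$ from $E_1\cap E_2=\langle\mu\rangle$ at the central vertices, exclusion of a square root because a length-two translation cannot carry a $C_2^V$-type vertex to a $C_1^V$-type vertex, and then \cref{lem:primitive-axis-generator}. The only cosmetic difference is that you deduce $\langle w_1\rangle\cap\langle w_2\rangle=1$ from the previously recorded noncommensurability of $w_1$ and $w_2$, whereas the paper reruns the retraction killing $c$; this is the same argument.
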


\begin{proof}
The graph-of-groups normal form for
\(h_1=\ell_2^V(\ell_1^V)^{-1}\) uses one non-edge syllable from each
leaf vertex group.  Its cyclic normal-form path traverses each quotient
edge once in each direction, with no reduction because
\(\ell_i^V\notin E_i\).  It is therefore cyclically reduced of length
four.  A fundamental segment of its axis contains vertices of both
leaf types \(C_1^V\) and \(C_2^V\).

If \(h_1=v^n\) with \(n>1\), then \(v\) is hyperbolic on the same
axis and
\[
4=n\|v\|_{T_0}.
\]
The action preserves the central and the two leaf vertex types, so
every positive translation length is even.  Hence \(n=2\) and
\(\|v\|_{T_0}=2\).  The axis of a length-two translation passes from
a central vertex through a leaf vertex and back to a central vertex.
Because the group action preserves each leaf-vertex orbit, every leaf
vertex on that axis has the same type.  This contradicts the presence
of both \(C_1^V\)- and \(C_2^V\)-type vertices on \(L_1\).  Thus
\(h_1\) is not a proper power.

At a central \(A_0\)-type vertex of a fundamental segment, the two
consecutive axis-edge stabilizers are \(E_1\) and \(E_2\), up to a
common conjugation.  The direct-product description of the meridian
centralizer gives
\[
E_1\cap E_2
=
\langle\mu\rangle\times
\bigl(\langle w_1\rangle\cap\langle w_2\rangle\bigr)
=
\langle\mu\rangle.
\]
Indeed, using the basis in \eqref{eq:odd-seam-words}, the retraction
of \(U_0\) that kills \(c=w_2\) sends \(w_1\) to
\(\bigl(\prod_{j=1}^r[a_j,b_j]\bigr)^{-1}\ne1\).  Hence no nonzero
power of \(w_1\) can equal a nonzero power of \(w_2\), and
\(\langle w_1\rangle\cap\langle w_2\rangle=1\).
At a \(C_i^V\)-type vertex, the two consecutive edge stabilizers are
\[
E_i
\quad\text{and}\quad
\ell_i^VE_i(\ell_i^V)^{-1}=E_i.
\]
The meridian commutes with \(\ell_1^V\) and \(\ell_2^V\) and belongs
to both \(E_1\) and \(E_2\), so it fixes every edge of a fundamental
segment and every \(h_1\)-translate of that segment.  Hence it fixes
\(L_1\) pointwise.  Conversely, an element fixing \(L_1\) pointwise
lies in the intersection at a central vertex and therefore belongs to
\(\langle\mu\rangle\).

The preceding paragraphs prove that \(h_1\) is hyperbolic, is not a
proper power, and has pointwise axis stabilizer \(\langle\mu\rangle\).
Together with torsion-freeness and
\(H_1(G_0;\mathbb Z)=\mathbb Z\langle[\mu]\rangle\), \([h_1]=0\)
from \cref{lem:tower-torsionfree,prop:early-homology}, these are the
hypotheses of \cref{lem:primitive-axis-generator}, which gives all
remaining assertions.
\end{proof}

\begin{proposition}[Conjugacy in the odd-genus surface subgroup]
\label{prop:odd-rooted-isolation}
The pair \((S_0,\langle h_1\rangle)\) satisfies the cyclic-subgroup
conjugacy condition in \(G_0\).  Explicitly, if \(a\in G_0\) and
\[
a\langle h_1\rangle a^{-1}\le S_0,
\]
then there is \(s\in S_0\) such that
\[
a\langle h_1\rangle a^{-1}
=
s\langle h_1\rangle s^{-1}.
\]
\end{proposition}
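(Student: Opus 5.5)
The plan is to mirror the even-genus proof of \cref{prop:even-rooted-isolation}: verify the hypotheses of \cref{lem:surface-subtree-conjugacy} for \(S=S_0\), \(x=h_1\), and \(T=T_0\) the Bass--Serre tree of \eqref{eq:odd-initial-tree-splitting}. The lemma then gives the conclusion directly. The odd surface group has the graph-of-groups decomposition used in \cref{prop:odd-seed-injection}. It has two vertex groups, \(U_0\) and \(\ell_1^VU_0(\ell_1^V)^{-1}\). These are joined by the edge group \(\langle w_1\rangle\), coming from \(B_1\times I\), and by a loop edge with group \(W_2^-\to W_2^+\) via the stable letter \(\tau\), coming from \(B_2\times I\). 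Let \(T_\Sigma\) be its Bass--Serre tree.

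First I will construct an \(S_0\)-equivariant simplicial embedding \(\operatorname{sd}(T_\Sigma)\to T_0\). The base vertices go to \(A_0\) and \(\ell_1^VA_0\). The subdivided \(B_1\)-edge goes to the path \(A_0\!-\!C_1^V\!-\!\ell_1^VA_0\), represented by the edges \(E_1\) and \(\ell_1^VE_1\). The subdivided \(B_2\)-edge goes to the path from \(\ell_1^VA_0\) through \(\ell_1^VC_2^V\) to \(\ell_2^VA_0=\tau\ell_1^VA_0\). Its edges are \(\ell_1^VE_2\) and \(\ell_2^VE_2\). Equivariance is checked on generators using \(\rho_0(\tau)=\ell_2^V(\ell_1^V)^{-1}\) from \eqref{eq:odd-stable-letter}.

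Local injectivity is checked at each vertex type. At \(A_0\)-type vertices the incident \(T_\Sigma\)-edges are the cosets of \(\langle w_1\rangle\) and \(\langle w_2\rangle\) in \(U_0\). These map to distinct \(E_1\)- and \(E_2\)-edges, because \(U_0\cap E_\nu=\langle w_\nu\rangle\), the two edge types are distinct, and \(E_1\ne E_2\)-type edges cannot coincide. At \(C_\nu^V\)-vertices injectivity follows from \(\ell_\nu^V\notin E_\nu\). Because the graph of groups has proper edge inclusions (\(\langle w_\nu\rangle<U_0\)), it is reduced. Hence the \(S_0\)-action on \(T_\Sigma\) is minimal, and the image \(\mathcal T_0\) is the minimal \(S_0\)-invariant subtree.

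Next I will take \(v\) to be a \(C_1^V\)-type vertex on \(L_1\); the axis passes through both leaf types by \cref{lem:odd-clean-axis}. There are two things to check for (iii). The first is \(\mathcal T_0\cap G_0v=S_0v\): the \(C_1^V\)-type vertices of \(\mathcal T_0\) are exactly the images of the subdivision vertices of \(B_1\)-edges, which form one \(S_0\)-orbit. The second is that the valence of \(\mathcal T_0\) at \(v\) is two. For (iv), the setwise stabilizer of the two edges \(E_1,\ell_1^VE_1\) at the base \(C_1^V\)-vertex is \(E_1=\langle\mu,w_1\rangle\). The argument is the even one verbatim: \(\ell_1^V\) centralizes \(E_1\), and an exchange would force \((\ell_1^V)^2\in E_1\), which fails after projecting to \(K_1^V\). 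Transported by \(s_v\in S_0\), this gives \(h_v=s_vw_1s_v^{-1}\in S_0\).

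For (i), \(\mu\) centralizes \(S_0\). It fixes the fundamental path \(A_0\!-\!C_1^V\!-\!\ell_1^VA_0\!-\!\ell_1^VC_2^V\!-\!\ell_2^VA_0\), since \(\mu\) lies in \(E_1\), \(E_2\) and commutes with \(\ell_1^V,\ell_2^V\). Hence it fixes all \(S_0\)-translates, that is, all of \(\mathcal T_0\). Condition (ii) comes from \cref{lem:odd-clean-axis} together with \(S_0\cap\langle\mu\rangle=1\), which holds because \(S_0\) is a centerless surface group.

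The main obstacle is the embedding step, specifically local injectivity at \(A_0\)-type vertices. Here the \(B_2\)-edges incident to \(\ell_1^VA_0\) and to \(A_0\) come from both \(W_2^-\) and \(W_2^+\) ends of the loop edge, and one must check that distinct \(U_0\)-cosets of \(\langle w_2\rangle\) map to distinct \(E_2\)-edges. I will do this through \(U_0\cap E_2=\langle w_2\rangle\) and separate the \(E_1\)- from the \(E_2\)-directions by edge type. If needed, I will use the noncommensurability of \(w_1,w_2\) established before \cref{prop:odd-seed-injection}.
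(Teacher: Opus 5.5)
Your overall strategy matches the paper's proof. You embed \(\operatorname{sd}(T_\Sigma)\) equivariantly in \(T_0\), identify the image as the minimal \(S_0\)-invariant subtree, use a \(C_1^V\)-type vertex with pair stabilizer \(E_1\), check that \(\mu\) fixes the image, and apply \cref{lem:surface-subtree-conjugacy}. However, the explicit lift of the \(B_2\)-edge is wrong, and that is exactly the step you flagged as the main obstacle.

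The \(B_2\times I\) edge is not a loop. The surface graph of groups has two vertices, \(P^+\) and \(P^-\), and two edges, both joining \(P^+\) to \(P^-\). It only looks like a loop after the \(B_1\)-edge is collapsed into \(H\). With the \(B_1\)-edge as maximal tree, the lift \(e_2\) joins \(v_+\) (stabilizer \(U_0\)) to \(\tau v_-\). It does not join \(v_-\) to \(\tau v_-\); those two vertices are at distance two in \(T_\Sigma\), through \(v_+\).

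Correspondingly, your path "from \(\ell_1^VA_0\) through \(\ell_1^VC_2^V\) to \(\ell_2^VA_0\)" does not exist in \(T_0\). The edge \(\ell_1^VE_2\) joins \(\ell_1^VA_0\) to \(\ell_1^VC_2^V\). The edge \(\ell_2^VE_2\) joins \(\ell_2^VC_2^V=C_2^V\) to \(\ell_2^VA_0\). These share no vertex, since \(\ell_1^VC_2^V\neq C_2^V\): an element of \(C_1^V\cap C_2^V\) fixes \(C_1^V-A_0-C_2^V\) and so lies in \(E_1\), whereas \(\ell_1^V\notin E_1\). In fact the geodesic from \(\ell_1^VA_0\) to \(\ell_2^VA_0\) is \(\ell_1^VA_0-C_1^V-A_0-C_2^V-\ell_2^VA_0\), of length four. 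So the map you define is not simplicial and the equivariance check cannot be carried out. The same non-path reappears as your "fundamental path" in the verification of (i).

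The repair is the paper's choice. Send \(e_2\) to the path \(A_0-C_2^V-\ell_2^VA_0\) with edges \(E_2\) and \(\ell_2^VE_2\). Its endpoint \(\ell_2^VA_0=\rho_0(\tau)\ell_1^VA_0\) is exactly what equivariance requires. Equivalently, use its \(\rho_0(\tau)^{-1}\)-translate, which runs from \(\ell_1^V(\ell_2^V)^{-1}A_0\) through \(\ell_1^VC_2^V\) to \(\ell_1^VA_0\) and whose second edge is your \(\ell_1^VE_2\).

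With this lift, the \(B_2\)-half-edges at \(v_-\) map to \(\ell_1^VuE_2\) with \(u\in U_0\), because \(h_1^{-1}E_2h_1=\ell_1^VE_2(\ell_1^V)^{-1}\) (as \(\ell_2^V\) centralizes \(E_2\)). Local injectivity there again reduces to \(U_0\cap E_2=\langle w_2\rangle\). Noncommensurability of \(w_1,w_2\) is not needed, since edges of types \(E_1\) and \(E_2\) lie in different \(G_0\)-orbits. After this correction, the rest of your argument goes through as in the paper: the \(C_1^V\)-vertex stabilizer computation, meridian fixation, and the application of the lemma.
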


\begin{proof}
Let \(T_{\Sigma}\) be the Bass--Serre tree of the two-vertex,
two-edge graph-of-groups decomposition of the surface group determined
by
\[
\Sigma_{2r+1}=P^+\cup_{(B_1\sqcup B_2)\times I}P^-.
\]
Through the injective homomorphism \(\rho_0\), its two vertex groups
are represented by \(U_0\) and
\(\ell_1^VU_0(\ell_1^V)^{-1}\), and its edge groups of types
\(B_i\) are represented by \(\langle w_i\rangle\).  There is an \(S_0\)-equivariant
simplicial embedding
\[
\operatorname{sd}(T_{\Sigma})\longrightarrow T_0
\]
defined on chosen edge lifts as follows.  The edge associated to
\(B_1\times I\) maps to the two-edge path represented by
\[
E_1,
\qquad
\ell_1^VE_1,
\]
through the vertex \(C_1^V\).  The edge associated to
\(B_2\times I\) maps to the path represented by
\[
E_2,
\qquad
\ell_2^VE_2,
\]
through \(C_2^V\).  The endpoint of the second path is
\(\ell_2^VA_0\), which equals
\(\rho_0(\tau)\ell_1^VA_0\) by
\eqref{eq:odd-stable-letter}; hence the two chosen paths have exactly
the endpoints prescribed by the surface graph of groups.

At a lift of the first surface vertex, local injectivity for edges of
type \(i\) follows from
\[
U_0\cap E_i=\langle w_i\rangle.
\]
At a lift of the second surface vertex, the same assertion is conjugated
by \(\ell_1^V\).  For the edge of type \(B_2\), this uses
\[
h_1^{-1}E_2h_1
=
\ell_1^VE_2(\ell_1^V)^{-1},
\]
which follows from
\(h_1=\ell_2^V(\ell_1^V)^{-1}\) and the fact that
\(\ell_2^V\) centralizes \(E_2\).  Edges of different types cannot
coincide because they belong to different \(G_0\)-orbits in
\(T_0\).  At a newly inserted vertex,
local injectivity follows from \(\ell_i^V\notin E_i\).  Thus the map is an embedding.  Denote its image by
\(\mathcal T_0\).  Each surface edge group
\(\langle w_i\rangle\) is a proper subgroup of both adjacent surface
vertex groups, so the two-vertex, two-edge graph-of-groups
decomposition of \(S_0\) is reduced.  Its action on
\(T_{\Sigma}\), and hence on \(\operatorname{sd}(T_{\Sigma})\),
is therefore minimal.  By equivariance of the embedding,
\(\mathcal T_0\) is the minimal nonempty \(S_0\)-invariant subtree
of \(T_0\).

At a \(C_i^V\)-type vertex in \(\mathcal T_0\), the two incident
edges of \(\mathcal T_0\) are a common translate of
\(E_i\) and \(\ell_i^VE_i\).  Their setwise stabilizer in
\(C_i^V\) is exactly \(E_i\).  Indeed, \(E_i\) fixes both cosets
because \(\ell_i^V\) centralizes \(E_i\).  An element exchanging them
would imply \((\ell_i^V)^2\in E_i\); projection to \(K_i^V\) would
then give \((\ell_i^V)^2\in\langle m_i^V\rangle\), contrary to
\(\langle m_i^V,\ell_i^V\rangle\cong\mathbb Z^2\).

Since \(h_1\in S_0\) is hyperbolic, \(L_1\subset\mathcal T_0\)
and a fundamental segment contains both leaf types.  Fix
\(i\in\{1,2\}\) and choose \(v=s_vC_i^V\in L_1\), with \(s_v\in S_0\).
Vertices of this fixed type form one \(S_0\)-orbit; at \(v\),
\(\mathcal T_0\) has valence two and pair stabilizer
\[
s_vE_is_v^{-1}=\langle\mu,s_vw_is_v^{-1}\rangle\cong\mathbb Z^2,
\qquad s_vw_is_v^{-1}\in S_0.
\]
The meridian fixes \(\mathcal T_0\) pointwise because it fixes the
chosen paths through both \(C_i^V\)-types and centralizes \(S_0\).
Moreover \(S_0\cap\langle\mu\rangle=1\), by injectivity of \(\rho_0\)
and triviality of the closed surface-group center.  With the pointwise
axis stabilizer from \cref{lem:odd-clean-axis}, all hypotheses of
\cref{lem:surface-subtree-conjugacy} hold; the lemma gives the
conclusion.
\end{proof}

Thus both parity cases give the base case: \(d_1\) is nonseparating
with explicit push-off \(h_1\), the subgroup
\(\langle\mu,h_1\rangle\) has the centralizer--transporter and
commensurator conclusions, and \((S_0,\langle h_1\rangle)\) satisfies
the cyclic-subgroup conjugacy condition.  Injectivity of \(\rho_0\)
then recovers the unoriented free-homotopy class of \(d_1\).  The next
section propagates these conclusions along the ordered curve sequence.

\section{The inductive step for subsequent surgery curves}
\label{sec:latercarriers}

This section proves the inductive step for the ordered rim-surgery
sequence.  Suppose ordinary untwisted rim surgery has been performed
along a nonseparating curve $\bar d$, and let $\bar c$ be a
nonseparating curve with $i(\bar c,\bar d)=1$.  The ordered push-off
formula shows that the post-surgery image of $\bar c$ has one
preferred-longitude syllable and therefore translation length two in
the Bass--Serre tree of the new exterior group.  We use the resulting
axis, together with an equivariant embedding of the subdivided surface
splitting tree, to establish the centralizer--transporter and
cyclic-subgroup conjugacy properties proved directly for the initial
curve in the preceding section.

We fix the one-step setting used throughout the section.  Let $F'$ be
obtained from $F$ by ordinary untwisted rim surgery along the oriented
nonseparating curve $\bar d$.  Choose a based representative
$d\in\pi=\pi_1(\Sigma_g,p)$, and let
\[
\rho:\pi\longrightarrow G,
\qquad
h=\rho(d).
\]
Assume that $\rho$ is injective and that
\[
A=\langle\mu,h\rangle\cong\mathbb Z^2.
\]
By \cref{thm:one-step}, the post-surgery exterior group and the
canonical retraction are
\begin{equation}\label{eq:generic-rim-stage}
G'=G*_{A}B,
\qquad
B=K\times\langle h\rangle,
\qquad
r:G'\onto G.
\end{equation}
Here the knot meridian is identified with $\mu$, and $\ell\in K$ is
the preferred knot longitude.  The knot peripheral subgroup satisfies
$\langle\mu,\ell\rangle\cong\mathbb Z^2$.  Let
\[
\rho':\pi\longrightarrow G',
\qquad
S=\rho(\pi)\le G,
\qquad
S'=\rho'(\pi)\le G'.
\]
The one-step theorem gives $r\circ\rho'=\rho$ and, because $\rho$ is
injective, also the injectivity of $\rho'$.  We use the conclusions of
\cref{prop:early-homology} for both $G$ and $G'$; in particular,
\[
H_1(G;\mathbb Z)=H_1(G';\mathbb Z)
 =\mathbb Z\langle[\mu]\rangle,
\qquad
S\le[G,G],
\qquad
S'\le[G',G'].
\]
The positive surface meridian centralizes both push-off images by
\cref{sec:peripheral}, and $G'$ is torsion-free by
\cref{lem:tower-torsionfree}.

Choose an orientation and a based representative $c\in\pi$ of
$\bar c$, where $i(\bar c,\bar d)=1$, and put
\[
x=\rho'(c).
\]
The choices of orientation and basing will be made explicitly below.
Replacing a based representative by a surface conjugate conjugates its
push-off image by an element of the corresponding surface subgroup, so
none of the conclusions about translation length, axis stabilizers, or
cyclic-subgroup conjugacy depends on this choice.

We first compute the intersection of the pre-surgery surface subgroup
with the edge group.

\begin{lemma}[Intersection with the surgery edge group]
\label{lem:surface-edge-intersection}
One has the literal subgroup equality in $G$
\begin{equation}\label{eq:S-cap-A}
S\cap A=\langle h\rangle.
\end{equation}
\end{lemma}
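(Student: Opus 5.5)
The inclusion \(\langle h\rangle\le S\cap A\) is immediate, since \(h=\rho(d)\in S\) and \(h\in A\).  For the reverse inclusion, take an element of \(S\cap A\) and write it as \(\mu^ah^b\).  It suffices to show \(a=0\); then the element is \(h^b\in\langle h\rangle\).

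The plan is to use the homological hypotheses of \cref{prop:early-homology} for \(G\):
\[
H_1(G;\mathbb Z)=\mathbb Z\langle[\mu]\rangle,
\qquad
S\le[G,G].
\]
In particular \([h]=0\), because \(h\in S\).  Abelianizing the equality
\[
s=\mu^ah^b,
\qquad s\in S,
\]
gives \(0=[s]=a[\mu]+b[h]=a[\mu]\) in \(H_1(G;\mathbb Z)\).  Since \([\mu]\) generates an infinite cyclic group, \(a=0\), and hence \(s=h^b\in\langle h\rangle\).

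Alternatively, one can avoid homology.  Since \(\mu\) centralizes \(S\), the element \(\mu^a=sh^{-b}\) lies in \(S\) and is central there.  Injectivity of \(\rho\) identifies \(S\) with a closed surface group of genus \(g\geq3\), whose center is trivial, so \(\mu^a=1\).  Because \(A\cong\mathbb Z^2\), \(\mu\) has infinite order, so \(a=0\).  This second route uses only injectivity of \(\rho\) and the commuting of \(\mu\) with \(S\) (as in \cref{cor:rooted-normalization}), so I would present it as the main argument and the homological one as a remark.

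There is no substantial obstacle.  The only point to check is that the equality is literal, with no conjugation involved.  This holds because \(h=\rho(d)\) is defined with the same basepoint and basing path as \(S=\rho(\pi)\), and \(A\) is generated by these literal elements.
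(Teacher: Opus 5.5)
Your proposal is correct. The homological argument you give first is exactly the paper's proof: by \cref{prop:early-homology} one has $S\le[G,G]$ and $H_1(G;\mathbb Z)=\mathbb Z\langle[\mu]\rangle$, so abelianizing $s=\mu^ah^b$ forces $a=0$.

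The route you say you would make primary is genuinely different but equally valid. It is the same centerlessness argument the paper uses in the proof of \cref{cor:rooted-normalization} to show $\rho(\pi)\cap\langle\mu\rangle=1$.

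The two routes trade hypotheses. The homological one needs only $S\le[G,G]$ and that $[\mu]$ has infinite order in $H_1(G;\mathbb Z)$; in particular it never uses injectivity of $\rho$. Yours needs injectivity of $\rho$ (so that $S$ is a centerless closed surface group, which only requires $g\ge2$) and the fact that $\mu$ centralizes $S$, but no homology computation. In fact your version does not even need $\mu$ to have infinite order: once $\mu^a=1$, you get $s=h^b$ directly.

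Both sets of hypotheses hold in the one-step setting of \cref{sec:latercarriers}, so either proof works there. The paper's choice matches the neighbouring \cref{lem:length-two-carrier}, which also eliminates meridian exponents by abelianization.
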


\begin{proof}
By \cref{prop:early-homology}, every element of $S$ is
null-homologous in $G$.  Write an element of $S\cap A$ uniquely as
$\mu^a h^b$, using $A\cong\mathbb Z^2$.  Since $[h]=0$ and
$[\mu]$ has infinite order in
$H_1(G;\mathbb Z)=\mathbb Z\langle[\mu]\rangle$, its homology class
$a[\mu]$ vanishes only when $a=0$.  Thus
$S\cap A\le\langle h\rangle$.  The reverse inclusion is literal
because $h=\rho(d)\in S$.
\end{proof}

\begin{lemma}[Push-off of a curve meeting the surgery curve once]
\label{lem:one-longitude-syllable}
The based representative of $c$ may be chosen so that
\[
x=\rho'(c)=u\ell^\epsilon,
\qquad
u=\rho(c)\in S\setminus A,
\qquad
\epsilon\in\{\pm1\}.
\]
This is a cyclically reduced amalgam normal form of syllable length two.
Consequently $x$ is hyperbolic in the Bass--Serre tree $T$ of
\eqref{eq:generic-rim-stage}, and
\[
\|x\|_T=2.
\]
\end{lemma}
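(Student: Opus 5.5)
Since \(i(\bar c,\bar d)=1\), we may take a representative of \(\bar c\) transverse to \(\bar d\) with exactly one crossing. We choose its basing so that this crossing is the last event before returning to \(p\). Concretely, we decompose the based loop in the groupoid as
\[
c=a_0\tau_1a_1,
\]
with \(a_1\) a short arc from the exit side of the annulus back toward the base, and then conjugate inside the surface group so that the crossing sits at the end. By the remark preceding \cref{lem:surface-edge-intersection}, this only conjugates \(x\) within \(S'\). \Cref{prop:ordered-peripheral} with \(n=1\) then gives
\[
x=P_1\,b_1\ell_J^{\epsilon}b_1^{-1}P_1^{-1}\,\rho(c),
\]
where \(P_1\) contains no earlier insertions and \(\epsilon\) is the crossing sign. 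By \cref{conv:peripheral-groupoid}, the based longitude \(\ell\in K\le G'\) is defined using the common path \(\omega\). We choose the basing path of \(c\) to run along \(\omega\) to the exit endpoint, so that \(P_1b_1\) is homotopic to \(\omega\). The single insertion is then literally \(\ell^{\epsilon}\) placed at the endpoint of the loop. This yields \(x=u\ell^\epsilon\) with \(u=\rho(c)\), using \(r(\ell)=1\) and \(r\circ\rho'=\rho\) to confirm that the remaining factor is \(\rho(c)\) computed in \(G\subset G'\).

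Next we show \(u\notin A\). Since \(u\in S\), \cref{lem:surface-edge-intersection} gives \(u\in A\) iff \(u\in\langle h\rangle\), i.e. \(\rho(c)=\rho(d)^k\). Injectivity of \(\rho\) converts this into \(c=d^k\) in \(\pi\). Abelianizing to \(H_1(\Sigma_g)\) gives \([c]=k[d]\), so the algebraic intersection \([c]\cdot[d]=k[d]\cdot[d]=0\). But \(i(\bar c,\bar d)=1\) forces the algebraic intersection to be \(\pm1\), a contradiction.

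We also need \(\ell^\epsilon\in B\setminus A\). Projecting \(B=K\times\langle h\rangle\to K\) sends \(A\) to \(\langle\mu\rangle\), and \(\ell^{\pm1}\notin\langle\mu\rangle\) because \(\langle\mu,\ell\rangle\cong\mathbb Z^2\), exactly as in \cref{lem:even-h1-primitive}. Hence \(u\ell^\epsilon\) alternates one \(G\)-syllable outside \(A\) with one \(B\)-syllable outside \(A\). Its cyclic permutation \(\ell^\epsilon u\) has the same property, so the normal form is cyclically reduced of length two. Standard Bass--Serre theory (a cyclically reduced word of syllable length \(2q\) acts hyperbolically with translation length \(2q\), as used in \cref{lem:cyclic-periodicity}) gives \(\|x\|_T=2\).

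The main obstacle is the bookkeeping in the first paragraph. The basing of \(c\), the path \(\omega\) used to include \(K\), and the connector \(b_1\) must be chosen compatibly, so that the insertion is exactly \(\ell^{\epsilon}\) rather than a conjugate \(q\ell^\epsilon q^{-1}\) with \(q\) in the boundary subgroup. If a residual conjugator \(q\in\langle\mu,h\rangle\) remains, we absorb it: \(q\) commutes with \(\ell\) inside \(B\) because \(\ell\) centralizes both \(\mu\) and the external factor \(h\). The formula therefore persists up to replacing \(u\) by \(u\) times a power of \(\mu\) and \(h\), and the \(\mu\)-part vanishes after comparing with \(r\). This leaves \(u=\rho(c)\) up to surface conjugation, which suffices.
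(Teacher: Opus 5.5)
Your route is the paper's: base $c$ so that its single crossing comes last, and apply the one-crossing case of \cref{prop:ordered-peripheral}. Then identify the inserted loop with a power of $\ell$ through the path used to include $K$, and check $u\notin A$ and $\ell^{\pm1}\notin A$.

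Your proof that $u\notin A$ is a genuinely different and slightly more elementary argument. The paper passes from $c\in\langle d\rangle$ to equality of unoriented free homotopy classes using primitivity of simple-curve elements. You instead note that $c=d^k$ would make the algebraic intersection of $[c]$ with $[d]$ equal to $k$ times the self-intersection of $[d]$, hence $0$, whereas one transverse crossing forces $\pm1$. This is correct and avoids the primitivity fact.

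The normalization step, which you single out as the main obstacle, is misstated. The prefix $P_1=\widehat a_0\kappa_1$ is not the basing path of $c$. As groupoid elements, $P_1b_1=u\cdot\widehat a_1^{-1}b_1$ with $u=\widehat a_0\kappa_1\widehat a_1=\rho(c)$.

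The path that should serve as $\omega$ is $\widehat a_1^{-1}b_1$, which runs from $*$ to the exit point without crossing the annulus. This is the paper's $\widehat\lambda_+b$ with $\lambda_+=a_1^{-1}$. It is also the choice for which the edge element $\omega d_{\mathrm{loc}}\omega^{-1}$ is literally $h$ when $d$ is based along $\lambda_+$. With this choice, the insertion transported along the prefix is $u\ell^{\epsilon}u^{-1}$, not $\ell^{\epsilon}$, and your own formula gives $x=(u\ell^{\epsilon}u^{-1})u=u\ell^{\epsilon}$, as claimed.

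If you literally imposed $P_1b_1\simeq\omega$, you would get $x=\ell^{\epsilon}u$ instead. With $d$ still based along $a_1^{-1}$, the edge element would then become $uhu^{-1}$.

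For the same reason your fallback paragraph misidentifies the residual conjugator. It is $u$ itself, and $u\notin A$. In any case, a conjugator in $\langle\mu,h\rangle$ commutes with $\ell$ and would change nothing, so no shift of $u$ by powers of $\mu$ and $h$ arises.

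Since $\ell^{\epsilon}u$ and $u\ell^{\epsilon}$ are both cyclically reduced two-syllable words, hyperbolicity and $\|x\|_T=2$ survive the slip. However, \cref{prop:surface-subtree} later uses the literal equality $x=u\ell^{\epsilon}$, so the basing should be fixed as above.
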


\begin{proof}
Use the surgery annulus
$S^1_{\bar d}\times[-1,1]$ from \cref{sec:rimlocal}.  Choose embedded
representatives of $\bar c$ and $\bar d$ realizing their geometric
intersection number, so $\bar c$ meets this annulus in one transverse
crossing arc
\[
\tau:q_-\longrightarrow q_+.
\]
Orient $\bar c$ in the direction shown, and let
$\alpha:q_+\to q_-$ be the complementary arc of $\bar c$, which lies
outside the interior of the surgery annulus.  Since $\bar d$ is
nonseparating, its annular complement is connected.  Choose a path
$\lambda_+:p\to q_+$ in that complement, and put
$\lambda_-=\lambda_+\alpha$.  We take
\begin{equation}\label{eq:based-one-crossing-curve}
c=\lambda_-\tau\lambda_+^{-1}
 =\lambda_+\alpha\tau\lambda_+^{-1}.
\end{equation}
Geometrically, starting from any other based representative of the
same oriented curve and moving its chosen point along $\bar c$ to
$q_+$ replaces that element of $\pi$ by a conjugate; the displayed
choice is that cyclic change of basing written in the based surface
group.

Let $\widehat\lambda_+$ and $\widehat\alpha$ be the positive normal
push-offs in the unchanged outside piece, let $\kappa$ be the old
pushed-off crossing path, and let $b$ be the path from the endpoint of
$\kappa$ to the local basepoint of the preferred longitude, as in
\cref{conv:peripheral-groupoid,prop:ordered-peripheral}.  Use
$\widehat\lambda_+b$ as the change-of-basepoint path that identifies
the local longitude with the element $\ell\in B$.  The one-crossing
case of \eqref{eq:ordered-groupoid-formula} is then the literal based
equality
\begin{align*}
\rho'(c)
&=
\widehat\lambda_+\widehat\alpha\kappa
 \bigl(b\ell_{\mathrm{loc}}^\epsilon b^{-1}\bigr)
\widehat\lambda_+^{-1}\\
&=
\bigl(\widehat\lambda_+\widehat\alpha\kappa
      \widehat\lambda_+^{-1}\bigr)
\bigl(\widehat\lambda_+b\ell_{\mathrm{loc}}^\epsilon
      b^{-1}\widehat\lambda_+^{-1}\bigr)
=u\ell^\epsilon.
\end{align*}
The first factor is precisely the pre-surgery push-off of the based
loop in \eqref{eq:based-one-crossing-curve}; hence
$u=\rho(c)\in S$.  This also follows by applying the canonical
retraction, since $r(\ell)=1$ and $r\circ\rho'=\rho$.

If $u\in A$, then \cref{lem:surface-edge-intersection} gives
$u\in\langle h\rangle$.  Injectivity of $\rho$ would imply that the
simple-curve element $c$ belongs to $\langle d\rangle$.  An essential
simple-curve element is not a proper power, so $c$ would represent the
same unoriented free homotopy class as $d$, contrary to
$i(\bar c,\bar d)=1$.  Thus $u\notin A$.

Projection $B=K\times\langle h\rangle\to K$ sends $A$ to
$\langle\mu\rangle$.  Since
$\langle\mu,\ell\rangle\cong\mathbb Z^2$, neither
$\ell$ nor $\ell^{-1}$ lies in $A$.  Therefore
$u\ell^\epsilon$ is a cyclically reduced two-syllable word in the
amalgam.  Its translation length is its cyclic syllable length, namely
two.
\end{proof}

The next geometric statement identifies the cyclic subgroup represented
by the surgery curve after the annulus has been replaced.  It will also
select the surface edge lift used in the tree embedding.

\begin{lemma}[The surgery curve in the post-surgery surface subgroup]
\label{lem:creation-stage-edge}
There is an element $a\in S'$ such that
\[
\rho'(\langle d\rangle)=a\langle h\rangle a^{-1}.
\]
Equivalently, after orienting the relevant parallel copy coherently,
one may choose $a$ so that
$\rho'(d)=aha^{-1}$; reversing that orientation replaces $h$ by
$h^{-1}$ without changing the displayed cyclic subgroup.
\end{lemma}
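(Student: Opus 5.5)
The idea is to represent the surgery curve by a parallel copy \(d'\) of \(\bar d\) that lies inside the surgery annulus \(S^1_{\bar d}\times[-1,1]\) itself, and to read off its push-off directly in the new local model. In that model the annulus is replaced by \(S^1\times J^\circ\). The parallel copy \(S^1\times\{t_0\}\), for a point \(t_0\) of \(J^\circ\), is carried to a curve whose positive normal push-off, by section-compatibility of the framing, is the circle \(S^1_s\times\{\text{pt}\}\) in the new local exterior \(V_J=S^1_s\times(B^3\setminus\nu J^\circ)\). Its class in \(\pi_1(V_J,*_{\mathrm{loc}})\) is the generator \(s\). Transported to \(*\) along the fixed path \(\omega\) of \cref{conv:peripheral-groupoid}, it becomes \(h\), since the amalgam \eqref{eq:one-step-amalgam} identifies \(s\) with \(h\).

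The remaining task is to compare the two basings. I will choose \(t_0\) at an endpoint of \(J^\circ\), that is, on the boundary of the replacement region. Then \(d'\) is freely homotopic on the surface to the oriented \(\bar d\), and its pushed-off circle runs through the attaching region \(M\). Choose a surface path \(\beta\) from \(p\) to a point of \(d'\) on that boundary circle, and base \(d'\) along \(\beta\). The resulting based element \(\beta d'\beta^{-1}\) is conjugate in \(\pi\) to the based \(d\); write \(\beta d'\beta^{-1}=\gamma d\gamma^{-1}\) with \(\gamma\in\pi\). Now compute \(\rho'(\beta d'\beta^{-1})\) in the groupoid. The pushed-off path \(\widehat\beta\) ends in \(M\), and comparing it with \(\omega\) gives a loop \(\widehat\beta\,\omega^{-1}\) based at \(*\), which is some element \(g\in G'\). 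This yields
\[
\rho'(\beta d'\beta^{-1})=g\,h\,g^{-1}.
\]
It follows that \(\rho'(d)=\rho'(\gamma)^{-1}g\,h\,g^{-1}\rho'(\gamma)\).

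The main obstacle is to see that the conjugator can be taken in \(S'\). The path \(\omega\) itself is a pushed-off surface path followed by a short segment in \(M\). This holds in all the settings used: in \cref{lem:one-longitude-syllable} and in the vertical stages, the basing path follows \(\widehat\lambda_+\) or the push-off of \(\Lambda_\nu\). Both \(\omega\) and \(\widehat\beta\) are therefore pushed-off surface paths up to segments in \(M\), so the loop \(\widehat\beta\,\omega^{-1}\) has the form \(\rho'(\delta)\cdot m\). Here \(\delta\in\pi\) is obtained by closing up the surface projections inside the annulus, and \(m\) lies in the image of \(\pi_1(M)\), namely in \(\langle\mu,s\rangle=A\).

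These segments may cross the replacement region, and in that case they pick up longitude insertions. To handle this, I will choose the projection \(\delta\) to stay on one side of the annulus, so that no crossing occurs and no longitude is inserted. Elements of \(A\) commute with \(h\), so \(m\) drops out of the conjugation. This gives \(a=\rho'(\gamma^{-1}\delta)\in S'\) with \(\rho'(d)=aha^{-1}\), using the coherent orientation of the parallel copy. Reversing that orientation replaces \(h\) by \(h^{-1}\) and leaves the cyclic subgroup unchanged, so \(\rho'(\langle d\rangle)=a\langle h\rangle a^{-1}\).

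As a consistency check, apply \(r\). Since \(r\circ\rho'=\rho\), we get \(\rho(d)=r(a)\,h\,r(a)^{-1}\), so \(r(a)\) centralizes \(h\). This agrees with \(r(a)\in S\), because \(C_S(h)=\langle h\rangle\).
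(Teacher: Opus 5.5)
Your proposal is correct and follows essentially the paper's argument. The endpoint-collar parallel copy is untouched by the surgery, its push-off is identified with \(h\) through the common van Kampen path, and the isotopy in the new annulus (your \(\gamma\), the paper's track \(\eta\)) conjugates it to \(d\) inside \(\pi\). The only difference is bookkeeping: the paper chooses the basing path \(\lambda_{\mathrm{col}}\) so that its push-off is exactly the van Kampen path, which makes your correction factor \(\rho'(\delta)m\) trivial, whereas you allow an arbitrary \(\beta\) and absorb the discrepancy using \(m\in A\) and \([m,h]=1\).
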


\begin{proof}
Write the new annulus in the local model as
$S^1_{\bar d}\times J^\circ$.  The long knot $J^\circ$ agrees with the
straight arc on fixed endpoint collars.  Choose a parameter
$t_{\mathrm{col}}$ in one such collar and let
\[
\bar d_{\mathrm{col}}
 =S^1_{\bar d}\times\{J^\circ(t_{\mathrm{col}})\}.
\]
This parallel copy is literally unchanged by the surgery and is a
circle on the post-surgery surface.  Choose a surface path
$\lambda_{\mathrm{col}}:p\to q_{\mathrm{col}}$ to a point on it so
that the same pushed-off path is used in the van Kampen identification
of the common attaching region in \cref{thm:one-step}.  With the
coherent orientation, the positive push-off of the based loop
$\lambda_{\mathrm{col}}\bar d_{\mathrm{col}}
\lambda_{\mathrm{col}}^{-1}$ is then literally $h$ in the old vertex
group $G\le G'$.

Let $\bar d'$ denote the copy of the abstract curve $\bar d$ on the
new annulus, and let $\lambda_d':p\to q_d$ be the surface path used to
represent $d$ after surgery.  Sliding the level circle
$S^1_{\bar d}\times\{J^\circ(t)\}$ from the level containing
$\bar d'$ to $t_{\mathrm{col}}$ gives an isotopy in the new annulus
from $\bar d'$ to $\bar d_{\mathrm{col}}$.  Let
$\eta:q_d\to q_{\mathrm{col}}$ be the track of the chosen point under
this isotopy, and set
\[
\gamma=\lambda_d'\eta\lambda_{\mathrm{col}}^{-1}\in\pi,
\qquad
a=\rho'(\gamma)\in S'.
\]
The isotopy gives the based surface-group equality
\[
\lambda_d'\bar d'(\lambda_d')^{-1}
=
\gamma
\bigl(\lambda_{\mathrm{col}}\bar d_{\mathrm{col}}
      \lambda_{\mathrm{col}}^{-1}\bigr)
\gamma^{-1}
\]
for coherent orientations.  Applying $\rho'$ gives
$\rho'(d)=aha^{-1}$.  The asserted equality of cyclic subgroups is
therefore independent of the orientation.
\end{proof}

We now specify the surface splitting.  Let
\[
\Sigma^\dagger
=
\overline{\Sigma_g\setminus
          \bigl(S^1_{\bar d}\times(-1,1)\bigr)}.
\]
Because $\bar d$ is nonseparating, $\Sigma^\dagger$ is connected; it
has genus $g-1$ and two boundary components, denoted $D_-$ and $D_+$.
Take \(D_+\) to be the endpoint-collar parallel copy
\(\bar d_{\mathrm{col}}\) used in
\cref{lem:creation-stage-edge}, and take
\(\lambda_+=\lambda_{\mathrm{col}}\).  With these choices, the
positive push-off of the based boundary element \(\delta_+\) is
literally \(h\) in the old vertex group.  Reversing the orientation
replaces \(h\) by \(h^{-1}\), but introduces no additional
conjugation.

Choose the endpoint $q_-$ and the complementary arc $\alpha$ as in the
proof of \cref{lem:one-longitude-syllable}; then
$q_-\in D_-$, $q_+\in D_+$, and
$\lambda_-:=\lambda_+\alpha$ is a path in $\Sigma^\dagger$ from $p$
to $q_-$.  Put
\[
R=\pi_1(\Sigma^\dagger,p).
\]
Choose oriented boundary loops $\bar\delta_\pm$ based at $q_\pm$ so
that the crossing arc $\tau:q_-\to q_+$ satisfies
$\tau\bar\delta_+\tau^{-1}=\bar\delta_-$ after the annulus is
reglued, and define
\[
\delta_\pm
=
\lambda_\pm\bar\delta_\pm\lambda_\pm^{-1}
\in R.
\]
The stable letter is the based loop
$\tau_d:=\lambda_-\tau\lambda_+^{-1}=c$, and the surface group has the
HNN decomposition
\begin{equation}\label{eq:surface-HNN}
\pi
=
\left\langle R,\tau_d\ \middle|\
\tau_d\delta_+\tau_d^{-1}=\delta_-
\right\rangle.
\end{equation}
Both boundary cyclic subgroups are proper in $R$, so this splitting is
reduced.  Let $T_d$ denote its Bass--Serre tree.

The loops representing $R$ lie in the complement of the interior of
the surgery annulus.  With the common outside-piece identification of
\cref{thm:one-step}, their post-surgery push-offs are their
pre-surgery push-offs; hence
\[
\rho'(R)=\rho(R)\le S\le G.
\]
The collar and basing choices above, together with coherent orientation,
give the literal element equality
\begin{equation}\label{eq:boundary-images}
\rho'(\delta_+)=\rho(\delta_+)=h.
\end{equation}
Since $x=\rho'(\tau_d)=u\ell^\epsilon$ and $\ell$ centralizes $h$ in
$B$, the HNN relation gives
\[
\rho'(\delta_-)=\rho(\delta_-)=x h x^{-1}=u h u^{-1}.
\]

\begin{proposition}[The subdivided surface splitting tree in the ambient tree]
\label{prop:surface-subtree}
Let $T$ be the Bass--Serre tree of \eqref{eq:generic-rim-stage}.
There is an $S'$-equivariant simplicial embedding
\[
\operatorname{sd}(T_d)\into T.
\]
Its image, denoted $\mathcal T$, is the minimal nonempty
$S'$-invariant subtree of $T$.

At every $B$-type vertex of $\mathcal T$, the two incident edges of
$\mathcal T$ are, up to a common left translate, the unordered pair
\begin{equation}\label{eq:rooted-edge-pair}
A,
\qquad
\ell A.
\end{equation}
The setwise stabilizer of this pair is exactly $A$.  Its intersection
with $S'$ is the cyclic stabilizer of the corresponding surface edge.
\end{proposition}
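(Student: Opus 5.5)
I will model the argument on the proofs of \cref{prop:even-rooted-isolation,prop:odd-rooted-isolation}, now for the HNN splitting \eqref{eq:surface-HNN}. The tree \(\operatorname{sd}(T_d)\) has vertices of two kinds: cosets \(\pi/R\), and barycentres of edges \(\pi/\langle\delta_+\rangle\). I will define the map \(\operatorname{sd}(T_d)\to T\) on one fundamental domain and extend it \(S'\)-equivariantly through \(\rho'\).

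The fundamental domain is the base vertex \(R\) together with the subdivided edge from \(R\) to \(\tau_dR\), whose stabilizer is \(\langle\delta_-\rangle\) in the convention \(\tau_d\delta_+\tau_d^{-1}=\delta_-\). I will map \(R\) to the vertex \(G\) of \(T\). The barycentre goes to the \(B\)-type vertex \(uB\), and \(\tau_dR\) goes to \(xG=u\ell^\epsilon G\). The two edges used are \(uA\) and \(u\ell^\epsilon A\), so the path is \(G - uB - u\ell^\epsilon G\).

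First check that this is well defined. The stabilizer of \(uB\) in \(\rho'(R)\) must contain \(\rho'(\delta_-)=uhu^{-1}\), and it does, since \(h\) is central in \(B\). Both edges \(uA\) and \(u\ell^\epsilon A\) are fixed by \(uhu^{-1}\), because \(\ell\) centralizes \(A\). The vertex \(x G\) is \(x\cdot G\), which is consistent with \(\rho'(\tau_d)=x\). Equivariance then extends the map, and it is simplicial.

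For injectivity it suffices to show local injectivity at every vertex, since a locally injective simplicial map between trees is injective.

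\emph{At a \(G\)-type vertex.} The incident edges of the image at the base vertex are the \(\rho'(R)\)-translates of \(uA\) and \(A\). The first comes from \(\delta_-\)-edges and the second from \(\delta_+\)-edges, the latter through the translate \(\tau_d^{-1}\) of the fundamental edge. Two translates \(r_1uA=r_2uA\) with \(r_i\in\rho(R)\) give \(r_2^{-1}r_1\in uAu^{-1}\cap S=u(A\cap S)u^{-1}=\langle uhu^{-1}\rangle\), by \cref{lem:surface-edge-intersection}. This is exactly the surface edge stabilizer, and the same holds for the \(A\) edges.

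\emph{Separating the two families at a \(G\)-type vertex.} This is the step I expect to be the main obstacle. I must exclude \(r_1uA=r_2A\), that is, an element of \(R\) lying in \(\rho^{-1}(\mathrm{Trans}(h,A))\) in a form that would identify a \(\delta_+\)-edge with a \(\delta_-\)-edge. The plan is to reduce this to surface topology. Such a coincidence gives \(s\in S\) with \(s^{-1}hs\) and \(uhu^{-1}\) generating the same subgroup modulo \(\langle\mu\rangle\), which forces an \(R\)-conjugacy between \(\delta_+\) and \(\delta_-^{\pm1}\) inside \(R\). That is false because \(D_\pm\) are distinct boundary components of \(\Sigma^\dagger\), and boundary subgroups of distinct components are not conjugate in the free group \(R\). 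To carry this out I would use injectivity of \(\rho\) together with \(S\cap A=\langle h\rangle\).

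\emph{At a \(B\)-type vertex.} The two edges \(uA\) and \(u\ell^\epsilon A\) are distinct because \(\ell\notin A\).

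\emph{Setwise stabilizer of the pair.} The stabilizer of \(\{A,\ell A\}\) in \(B\) is computed exactly as in \cref{prop:even-rooted-isolation}. Elements of \(A\) fix both edges since \(\ell\) centralizes \(A\). An element exchanging the two edges would force \(\ell^2\in A\), and projecting to \(K\) contradicts \(\langle\mu,\ell\rangle\cong\mathbb Z^2\). So the stabilizer is \(A\). Its intersection with \(S'\) is \(S'\cap A\). Applying the retraction \(r\), which is the identity on \(A\), and using \(r(S')=S\), this equals \(\langle h\rangle\), the surface edge stabilizer \(\rho'(\langle\delta_+\rangle)\).

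\emph{Minimality.} The splitting \eqref{eq:surface-HNN} is reduced, so the action of \(S'\) on \(T_d\) is minimal, and hence so is its action on the subdivision. By equivariance, the image \(\mathcal T\) is the minimal nonempty \(S'\)-invariant subtree of \(T\).
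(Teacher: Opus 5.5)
Your proposal is correct. It has the same skeleton as the paper's proof:
\begin{itemize}
\item the fundamental path $G - uB - xG$ through the edges $uA$ and $u\ell^\epsilon A$, extended equivariantly;
\item local injectivity at the two vertex types, then the argument that a locally injective simplicial map of trees is an embedding;
\item exclusion of an exchange via $\ell^2\notin A$;
\item minimality from reducedness of the HNN splitting.
\end{itemize}
Two local steps differ.

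\textbf{Separating the two families at the $G$-vertex.} This is the step you flagged as the obstacle. The paper uses the edge coincidence itself, not just the equality of edge stabilizers. From $\rho(r_1)uA=\rho(r_2)A$ one gets
\[
\rho(r_2^{-1}r_1)u=\rho(r_2^{-1}r_1\tau_d)\in S\cap A=\langle h\rangle=\rho(\langle\delta_+\rangle).
\]
Injectivity of $\rho$ then puts $r_2^{-1}r_1\tau_d$ in $R$, which Britton's lemma forbids. No further input is needed.

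Your route discards the information $su\in A$ and keeps only the stabilizer equality. It therefore needs the extra fact that $\delta_+$ is not conjugate to $\delta_-^{\pm1}$ in $R=\pi_1(\Sigma^\dagger)$. That fact is true, since $\Sigma^\dagger$ has genus $g-1\ge2$ and is not an annulus. The argument also works only because the conjugator $s=r_2^{-1}r_1$ lies in $\rho(R)$, not merely in $S$ as you wrote: in $\pi$ the two boundary elements are conjugate via $\tau_d$.

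\textbf{The intersection $A\cap S'$.} Your retraction argument is shorter than the paper's. You use $y=r(y)\in r(S')\cap A=S\cap A=\langle h\rangle$ together with $h=\rho'(\delta_+)\in S'$. The paper instead deduces the equality from injectivity and equivariance of the embedding plus the absence of exchanges. Your version needs neither.

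\textbf{Matching the statement's pair.} The pair at the base $B$-vertex is
\[
x^{-1}\{uA,u\ell^\epsilon A\}=\{\ell^{-\epsilon}A,A\}.
\]
When $\epsilon=1$ it becomes $\{A,\ell A\}$ after left multiplication by $\ell$. Because $\ell$ centralizes $A$, this does not change the pair stabilizer.
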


\begin{proof}
Let $v_0$ be the vertex of $T_d$ stabilized by $R$, and choose the
edge $e_-$ joining $v_0$ to $\tau_dv_0$ whose stabilizer is
$\langle\delta_-\rangle$.  Subdivide $e_-$.  Map its two half-edges
to the ambient edge cosets
\[
uA,
\qquad
u\ell^\epsilon A.
\]
Their common endpoint is the $B$-vertex $uB$, since
$u\ell^\epsilon B=uB$.  Their $G$-endpoints are
$uG=G$ and
$u\ell^\epsilon G=xG$, exactly the images of $v_0$ and
$\tau_dv_0$.  Both ambient edges have stabilizer $uAu^{-1}$; the
surface edge stabilizer maps to
$\langle uhu^{-1}\rangle=\rho'(\langle\delta_-\rangle)$.
At the first endpoint this is the image of the boundary subgroup
$\langle\delta_-\rangle$, while at the second endpoint conjugating by
$x^{-1}$ gives
\[
x^{-1}uhu^{-1}x=h,
\]
the image of $\langle\delta_+\rangle$.

The adjacent edge $\tau_d^{-1}e_-$ has stabilizer
$\langle\delta_+\rangle$, which maps literally to $\langle h\rangle$.
Its subdivided image is obtained by translating the preceding path by
$x^{-1}$, and its two edge cosets are
\[
\ell^{-\epsilon}A,
\qquad
A.
\]
Thus the selected surface edge whose stabilizer is $\langle h\rangle$
maps to a two-edge path through the base $B$-vertex.  Extending the
construction $\rho'$-equivariantly defines a simplicial map
\[
\Phi:\operatorname{sd}(T_d)\longrightarrow T.
\]

We verify local injectivity at a lift of the surface vertex.  At
$v_0$, the incident half-edges have two families of ambient images,
represented by
\[
\rho(r)A
\quad\text{and}\quad
\rho(r)uA,
\qquad r\in R.
\]
In the first family,
$\rho(r_1)A=\rho(r_2)A$ implies
\[
\rho(r_2^{-1}r_1)\in S\cap A=\langle h\rangle
 =\rho(\langle\delta_+\rangle).
\]
Injectivity of $\rho$ therefore says that the two representatives
determine the same surface half-edge.  In the second family,
$\rho(r_1)uA=\rho(r_2)uA$ implies
\[
u^{-1}\rho(r_2^{-1}r_1)u
\in S\cap A=\langle h\rangle.
\]
Since $u=\rho(\tau_d)$ before surgery, injectivity of $\rho$ and the
HNN relation give
$r_2^{-1}r_1\in\langle\delta_-\rangle$, again exactly the equality
relation for those half-edges. Finally, suppose that a half-edge from the second family has the
same image as one from the first family:
\[
\rho(r_1)uA=\rho(r_2)A.
\]
Then
\[
\rho(r_2^{-1}r_1)u
=
\rho(r_2^{-1}r_1\tau_d)
\in S\cap A=\langle h\rangle.
\]
Injectivity of \(\rho\) would therefore place
\(r_2^{-1}r_1\tau_d\) in \(\langle\delta_+\rangle\le R\).
But \(r_2^{-1}r_1\tau_d\) is a reduced HNN word containing one
stable-letter syllable, so Britton's lemma says that it does not lie
in \(R\).  This excludes equality between the two families.

At an inserted barycentric vertex the two images are distinct because
$\ell^\epsilon\notin A$.  Hence $\Phi$ is locally injective.  A
locally injective simplicial map from a tree to a tree sends every
geodesic to a reduced path; two distinct vertices therefore cannot
have the same image, because their joining geodesic would map to a
nontrivial closed reduced path in a tree.  Thus $\Phi$ is an embedding.

The HNN splitting \eqref{eq:surface-HNN} is reduced, so the action of
$\pi$ on $T_d$, and hence on $\operatorname{sd}(T_d)$, is minimal.
The image $\mathcal T$ is therefore an $S'$-invariant subtree with no
proper nonempty $S'$-invariant subtree.  More intrinsically, a group
action on a tree containing a hyperbolic element has a unique minimal
nonempty invariant subtree, namely the convex hull of the axes of its
hyperbolic elements.  The equivariant embedding takes each axis in
$\operatorname{sd}(T_d)$ isometrically to the corresponding ambient axis, so
$\mathcal T$ is that minimal subtree in $T$.

It remains to compute the stabilizer at an inserted $B$-vertex.  After
a common left translation and, if necessary, multiplication of both
cosets by $\ell$, the two incident edges are the pair in
\eqref{eq:rooted-edge-pair}.  The preferred longitude centralizes
$\mu$ in the knot peripheral subgroup and centralizes $h$ in the
direct product $B=K\times\langle h\rangle$; hence it centralizes all
of $A$.  Every element of $A$ therefore fixes both cosets $A$ and
$\ell A$.

Conversely, an element of $B$ fixing the coset $A$ belongs to $A$.  If
$b\in B$ exchanged the two cosets, then $b\in\ell A$, and centrality
of $\ell$ with $A$ would give
\[
b(\ell A)=\ell^2A.
\]
Exchange would force $\ell^2\in A$.  Projection
$B=K\times\langle h\rangle\to K$ would then give
$\ell^2\in\langle\mu\rangle$, contradicting
$\langle\mu,\ell\rangle\cong\mathbb Z^2$.  Thus the setwise pair
stabilizer is exactly $A$.

For the selected pair, the cyclic surface-edge stabilizer
$\langle h\rangle$ lies in $A\cap S'$.  Conversely, an element of
$A\cap S'$ fixes each of the two ambient edges, because no element of
the pair stabilizer exchanges them.  By injectivity and equivariance
of $\Phi$, it fixes the corresponding surface edge and therefore lies
in its cyclic stabilizer.  Thus $A\cap S'=\langle h\rangle$ for the
selected pair.  Conjugating by $S'$ proves the asserted equality at
every $B$-type vertex of $\mathcal T$.
\end{proof}

The two-syllable normal form already gives the translation length.  We
now compute the intersection that determines the pointwise stabilizer
of its axis.

\begin{lemma}[Consecutive edge stabilizers on the length-two axis]
\label{lem:length-two-carrier}
The element $x=\rho'(c)$ is hyperbolic with $\|x\|_T=2$.  At the
$G$-vertex of the fundamental axis segment determined by
$x=u\ell^\epsilon$, the two consecutive edge stabilizers are $A$ and
$uAu^{-1}$, and
\[
A\cap uAu^{-1}=\langle\mu\rangle.
\]
\end{lemma}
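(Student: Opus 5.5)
The hyperbolicity and the value \(\|x\|_T=2\) are already given by \cref{lem:one-longitude-syllable}. The remaining work is to identify the edge stabilizers and compute their intersection.

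First, the fundamental segment. Take the standard normal-form path from the base \(G\)-vertex \(G\) to its translate \(xG=u\ell^\epsilon G\). It runs through the edge \(uA\), the \(B\)-vertex \(uB\), and the edge \(u\ell^\epsilon A\). Translating this segment by \(x^{-1}\) gives the preceding segment, which ends with the edge \(x^{-1}u\ell^\epsilon A=A\). So at the \(G\)-vertex \(G\), the two consecutive axis edges are \(A\) (incoming) and \(uA\) (outgoing), with stabilizers \(A\) and \(uAu^{-1}\). These edges are distinct because \(u\notin A\), so the path is reduced and this is indeed a segment of the axis.

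Second, the intersection. Since \(S\le G\) and \(\mu\) centralizes \(S\), we have \(u\mu u^{-1}=\mu\), hence \(\langle\mu\rangle\le A\cap uAu^{-1}\). For the reverse inclusion, let \(z=\mu^ah^b\in A\) with \(u^{-1}zu\in A\). If \(b\ne0\), then \(u\in\Trans_G(\mu^ah^b,A)\). The plan is to use the centralizer--transporter property of \(A\) in \(G\) relative to \(\mu\), which by induction holds at the preceding stage: \cref{lem:even-clean-axis} or \cref{lem:odd-clean-axis} for the first curve, and the inductive output of this section for later curves. That property gives \(u\in A\), contradicting \(u\notin A\) from \cref{lem:one-longitude-syllable}. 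Hence \(b=0\) and \(z\in\langle\mu\rangle\).

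The main obstacle is that this lemma is stated in the generic one-step setting, where the property \(\Trans_G(\mu^ah^b,A)=A\) is not among the listed standing hypotheses. If that property cannot be invoked, I would argue directly inside \(C_G(\mu)\), which contains \(S\), \(A\), and \(u\). Write \(u^{-1}\mu^ah^bu=\mu^ch^e\). Abelianizing gives \(a=c\), so cancelling \(\mu^a\) yields \(u^{-1}h^bu=h^e\), an equation in \(S\cong\pi\). Pull it back through the injective \(\rho\): \(\rho^{-1}(u)=c\) conjugates \(d^b\) to \(d^e\) in the surface group. In a closed surface group, a nontrivial element conjugating \(d^b\) to a power of itself must lie in the maximal cyclic subgroup \(\langle d\rangle\), since \(d\) is simple and hence primitive and \(\langle d\rangle\) is malnormal. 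This gives \(c\in\langle d\rangle\). That contradicts \(i(\bar c,\bar d)=1\), exactly as in the proof of \cref{lem:one-longitude-syllable}. This direct argument is the route I would commit to.
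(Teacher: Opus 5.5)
Your committed direct argument is essentially the paper's proof: abelianize to match the meridian exponents, cancel them using that $\mu$ centralizes $u\in S$, pull $u^{-1}h^bu=h^e$ back through the injective $\rho$, and use primitivity and malnormality of the simple-curve element $d$ to force $u\in\langle h\rangle\le A$, a contradiction. Your explicit identification of the incoming edge $A$ and outgoing edge $uA$ at the vertex $G$ also matches the paper, and you were right not to rely on the centralizer--transporter property of $A$ in $G$, since it is not a hypothesis of the one-step setting.
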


\begin{proof}
Hyperbolicity and translation length two were proved in
\cref{lem:one-longitude-syllable}.  The standard axis path for the
cyclically reduced word $u\ell^\epsilon$ has consecutive edges $A$
and $uA$ at the $G$-vertex $G$, so their stabilizers are $A$ and
$uAu^{-1}$.

Suppose an element lies in their intersection.  Using
$A\cong\mathbb Z^2$, write
\begin{equation}\label{eq:consecutive-intersection}
\mu^r h^s
=
u\mu^{r'}h^{s'}u^{-1}
\end{equation}
for integers $r,r',s,s'$.  Abelianization in $G$ gives $r=r'$, because
$[h]=0$ and conjugation does not change homology.  Since
$u\in S$ and the positive surface meridian centralizes $S$, the
meridian powers cancel and
\begin{equation}\label{eq:surface-commensuration-image}
h^s=u h^{s'}u^{-1}.
\end{equation}
If exactly one of $s,s'$ is zero, this equality contradicts the
infinite order of $h$, which follows from injectivity of $\rho$.  If
both vanish, the original element is a power of $\mu$.

Suppose therefore that $ss'\ne0$.  Write $u=\rho(v)$ with $v\in\pi$.
Injectivity of $\rho$ turns
\eqref{eq:surface-commensuration-image} into
\[
d^s=v d^{s'}v^{-1}
\]
in the closed surface group.  We recall the precise surface-group fact
used here.  An essential simple-curve element is primitive, and the
maximal cyclic subgroup that it generates is malnormal.  Indeed,
equip $\Sigma_g$ with a hyperbolic metric and represent the curve by
its simple closed geodesic.  If its element were a proper power, that
geodesic would traverse the primitive closed geodesic more than once
and would not be embedded.  If a conjugate of its cyclic subgroup had
nontrivial intersection with it, the corresponding hyperbolic
isometries would have the same axis in $\mathbb H^2$.  The stabilizer
of that axis in the torsion-free surface group is cyclic, generated by
the primitive translation represented by the simple curve.  Hence the
conjugating element belongs to $\langle d\rangle$.

Applying this fact gives $v\in\langle d\rangle$ and therefore
$u\in\langle h\rangle\le A$, contradicting
$u\notin A$.  Thus only $s=s'=0$ is possible.  This proves
$A\cap uAu^{-1}\le\langle\mu\rangle$.  The reverse inclusion holds
because $\mu\in A$ and $\mu$ centralizes $u\in S$.
\end{proof}

\begin{theorem}[The inductive axis and conjugacy step]
\label{thm:later-clean-carrier}
In the one-step setting fixed above,
\[
\langle\mu,x\rangle\cong\mathbb Z^2,
\qquad
\Fix_{G'}^{\mathrm{pt}}(\Ax_T(x))=\langle\mu\rangle,
\qquad
\Stab_{G'}(\Ax_T(x))=\langle\mu,x\rangle.
\]
For all $a,b\in\mathbb Z$ with $b\ne0$,
\[
C_{G'}(\mu^a x^b)=\langle\mu,x\rangle,
\]
\[
\Trans_{G'}(\mu^a x^b,\langle\mu,x\rangle)
=\langle\mu,x\rangle,
\]
and
\[
\Comm_{G'}(\langle x\rangle)=\langle\mu,x\rangle.
\]
Thus $\langle\mu,x\rangle$ has the centralizer--transporter property
in $G'$ relative to $\mu$.

Moreover, the pair $(S',\langle x\rangle)$ satisfies the
cyclic-subgroup conjugacy condition in $G'$.  Explicitly, if
\[
g\langle x\rangle g^{-1}\le S',
\]
then there is $s\in S'$ such that
\[
g\langle x\rangle g^{-1}
=
s\langle x\rangle s^{-1}.
\]
\end{theorem}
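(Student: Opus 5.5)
The plan is to reduce everything to \cref{lem:primitive-axis-generator} for the first group of assertions, and to \cref{lem:surface-subtree-conjugacy} for the cyclic-subgroup conjugacy condition. All ingredients are already in place from \cref{lem:one-longitude-syllable,lem:length-two-carrier,prop:surface-subtree}.

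\textbf{Step 1: the axis and its pointwise stabilizer.} By \cref{lem:one-longitude-syllable}, \(x=u\ell^\epsilon\) is hyperbolic with \(\|x\|_T=2\). Its axis \(L=\Ax_T(x)\) is the union of the \(x\)-translates of the two-edge fundamental segment \(A,\ uA\) through the \(G\)-vertex.

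First, \(\mu\) fixes \(L\) pointwise. It lies in \(A\), it lies in \(uAu^{-1}\) because \(\mu\) centralizes \(u\in S\), and it commutes with \(x\), since \(\mu\) centralizes both \(u\) and \(\ell\). So \(\mu\) fixes every translate of the segment.

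Conversely, any element fixing \(L\) pointwise fixes the two consecutive edges at the \(G\)-vertex. It therefore lies in \(A\cap uAu^{-1}=\langle\mu\rangle\) by \cref{lem:length-two-carrier}. Hence \(\Fix^{\mathrm{pt}}_{G'}(L)=\langle\mu\rangle\).

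\textbf{Step 2: \(x\) is not a proper power.} I expect this to be the main point needing an argument. Suppose \(x=y^n\) with \(n>1\). Then \(y\) is hyperbolic with the same axis, and \(2=n\|y\|_T\). But translation lengths are even, because \(T\) is bipartite and the action preserves vertex types. So \(\|y\|_T\ge2\), which forces \(n=1\), a contradiction. This is just the length-two case of the periodicity argument in \cref{lem:cyclic-periodicity}.

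With torsion-freeness from \cref{lem:tower-torsionfree}, \(H_1(G';\Z)=\Z\langle[\mu]\rangle\), and \([x]=0\) from \(S'\le[G',G']\), all hypotheses of \cref{lem:primitive-axis-generator} hold. That lemma yields \(\langle\mu,x\rangle\cong\Z^2\), \(\Stab_{G'}(L)=\langle\mu,x\rangle\), the centralizer and transporter equalities, and the commensurator equality.

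\textbf{Step 3: cyclic-subgroup conjugacy.} Apply \cref{lem:surface-subtree-conjugacy} with \(S=S'\), \(\mathcal T\) the image of \(\operatorname{sd}(T_d)\) from \cref{prop:surface-subtree}, and \(v=uB\), a \(B\)-vertex on \(L\). Since \(x\in S'\) is hyperbolic, \(L\subset\mathcal T\). The hypotheses are checked as follows.

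\begin{itemize}
\item[(i)] \(\mu\) centralizes \(S'\). It fixes \(\mathcal T\) pointwise because it fixes the chosen two-edge path \(uA,\ u\ell^\epsilon A\) and commutes with \(S'\), so it fixes every \(S'\)-translate of that path.
\item[(ii)] \(\Fix^{\mathrm{pt}}_{G'}(L)=\langle\mu\rangle\) is Step 1. Also \(S'\cap\langle\mu\rangle=1\), because \(S'\) is a centerless closed surface group centralized by \(\mu\).
\item[(iii)] This is the one mildly delicate check. The \(B\)-type vertices of \(\mathcal T\) are exactly the barycenters of \(\operatorname{sd}(T_d)\). The HNN splitting \eqref{eq:surface-HNN} has a single edge orbit, so these barycenters form one \(S'\)-orbit. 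Any \(B\)-type vertex of \(T\) lying in \(\mathcal T\) is such a barycenter, because the \(G\)-type vertices of \(\mathcal T\) are the images of the original vertices of \(T_d\). Hence \(\mathcal T\cap(G'\cdot v)=S'\cdot v\), and the valence at each such vertex is two.
\item[(iv)] By \cref{prop:surface-subtree}, the setwise pair stabilizer at \(v\) is a conjugate \(uAu^{-1}=\langle\mu,uhu^{-1}\rangle\cong\Z^2\), with \(uhu^{-1}=\rho'(\delta_-)\in S'\).
\end{itemize}

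The lemma then gives the conjugacy statement.
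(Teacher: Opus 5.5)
Your proof is correct and follows essentially the paper's argument: the same pointwise-stabilizer computation via \cref{lem:length-two-carrier}, the same bipartite evenness of translation lengths, and the same verification of the hypotheses of \cref{lem:surface-subtree-conjugacy} using the embedded subdivided tree of \cref{prop:surface-subtree}. The differences are cosmetic. You show that $\mu$ fixes $L$ pointwise because it commutes with $x$, whereas the paper deduces this from $\mu$ fixing $\mathcal T$. You also pass through non-properness of $x$ and \cref{lem:primitive-axis-generator}, whereas the paper feeds evenness directly into hypothesis (iii) of \cref{lem:clean-axis-criterion} and applies \cref{lem:reflection-exclusion} separately.
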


\begin{proof}
Let $L=\Ax_T(x)$.  An element fixing $L$ pointwise belongs, at the
$G$-vertex of a fundamental segment, to the intersection computed in
\cref{lem:length-two-carrier}.  Hence
\[
\Fix_{G'}^{\mathrm{pt}}(L)\le\langle\mu\rangle.
\]
For the reverse inclusion, $\mu$ fixes both edges of the selected
two-edge path in $\mathcal T$ represented by $A$ and
$\ell^{-\epsilon}A$: it lies in $A$, and $\ell$ centralizes $A$.  The
meridian also centralizes $S'$ by
\cref{sec:peripheral}, so it fixes every $S'$-translate of this path.
Those translates cover the embedded tree $\mathcal T$ constructed in
\cref{prop:surface-subtree}.  Thus $\mu$ fixes $\mathcal T$ pointwise.
Since $x\in S'$ is hyperbolic, its axis lies in the minimal
$S'$-invariant subtree $\mathcal T$, and therefore
\[
\Fix_{G'}^{\mathrm{pt}}(L)=\langle\mu\rangle.
\]

The action on the Bass--Serre tree preserves the bipartition into
$G$- and $B$-vertex types.  Every positive translation length is
therefore even.  Since $\|x\|_T=2$, the translation induced by $x$
generates the orientation-preserving image of
$\Stab_{G'}(L)\to\Isom(L)$.  The homological hypotheses of the axis
criterion are supplied by \cref{prop:early-homology}:
\[
H_1(G';\mathbb Z)=\mathbb Z\langle[\mu]\rangle,
\qquad
[x]=0.
\]
The group $G'$ is torsion-free by \cref{lem:tower-torsionfree}, and
the pointwise stabilizer is $\langle\mu\rangle$, so
\cref{lem:reflection-exclusion} excludes reflections of $L$.  All
hypotheses of \cref{lem:clean-axis-criterion} are now verified.  That
criterion gives $\langle\mu,x\rangle\cong\mathbb Z^2$, the stated axis
stabilizer, the two centralizer--transporter equalities, and the
commensurator equality.

For the cyclic-subgroup conjugacy conclusion, the minimal subtree
\(\mathcal T\) from \cref{prop:surface-subtree} contains \(L\); its
\(B\)-type vertices form one \(S'\)-orbit and have valence two.  At
such a vertex the proposition gives
\[
A_v=\langle\mu,h_v\rangle\cong\mathbb Z^2,
\qquad A_v\cap S'=\langle h_v\rangle,
\qquad h_v\in S'.
\]
The preceding argument gives pointwise meridian fixation of
\(\mathcal T\), \(\Fix_{G'}^{\mathrm{pt}}(L)=\langle\mu\rangle\),
and centrality of \(\mu\) with \(S'\).  Also
\(S'\cap\langle\mu\rangle=1\), since injectivity of \(\rho'\)
identifies \(S'\) with a centerless closed surface group.  Hence
\cref{lem:surface-subtree-conjugacy} applies and gives the asserted
condition.
\end{proof}

\begin{corollary}[Induction along the ordered curve sequence]
\label{cor:carrier-induction}
Let
\[
d_1,\ldots,d_N\subset\Sigma_g
\]
be the ordered nonseparating curves used for the ordinary untwisted
rim surgeries, with
\[
i(d_i,d_{i+1})=1
\qquad(1\le i<N).
\]
Choose an orientation and a basing path for each curve, and continue
to denote the resulting element of
\(\pi=\pi_1(\Sigma_g,p)\) by \(d_i\).  Let
$F_i$, $G_i$, and
\[
\rho_i:\pi_1(\Sigma_g,p)\longrightarrow G_i
\]
denote the surface, exterior group, and push-off homomorphism after the
first $i$ surgeries, with index $0$ denoting the pre-sequence surface.
For $1\le i\le N$, put
\[
h_i=\rho_{i-1}(d_i),
\qquad
S_{i-1}=\rho_{i-1}(\pi_1(\Sigma_g,p)).
\]
Then $\rho_i$ is injective at every stage, and each subgroup
$\langle\mu,h_i\rangle\cong\mathbb Z^2$ has the
centralizer--transporter property in $G_{i-1}$ relative to $\mu$.
Moreover, each pair $(S_{i-1},\langle h_i\rangle)$ satisfies the
cyclic-subgroup conjugacy condition in $G_{i-1}$.
\end{corollary}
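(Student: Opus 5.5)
The plan is induction on \(i\), with the base case supplied by \cref{sec:initialcarriers} and the inductive step by \cref{thm:later-clean-carrier} applied to the consecutive pair \(\bar c=d_{i+1}\), \(\bar d=d_i\).

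\emph{Injectivity.} \Cref{prop:even-seed-injection,prop:odd-seed-injection} show that \(\rho_0\) is injective. If \(\rho_{i-1}\) is injective and \(\langle\mu,h_i\rangle\cong\mathbb Z^2\), then \cref{thm:one-step} gives injectivity of \(\rho_i\). So injectivity propagates once the rank-two condition is known at each stage, and that condition comes from the centralizer--transporter statement below.

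\emph{Base case \(i=1\).} Here \(d_1\) is the curve constructed in \cref{sec:initialcarriers}. \Cref{lem:even-clean-axis,lem:odd-clean-axis} give \(\langle\mu,h_1\rangle\cong\mathbb Z^2\) together with the centralizer--transporter property in \(G_0\). \Cref{prop:even-rooted-isolation,prop:odd-rooted-isolation} give the cyclic-subgroup conjugacy condition for \((S_0,\langle h_1\rangle)\). If the curve \(d_1\) in the statement is only required to be isotopic to the constructed one, or carries a different orientation or basing, this changes \(h_1\) to \(s h_1^{\pm1}s^{-1}\) with \(s\in S_0\). By the remark after \cref{def:power-clean}, the property is invariant under simultaneous conjugation and under replacing \(h\) by \(h^{-1}\). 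The cyclic-subgroup conjugacy condition is invariant under these changes for the same reason, since \(s\in S_0\).

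\emph{Inductive step.} Assume the statement through stage \(i\) with \(i<N\), so \(\rho_{i-1}\) and \(\rho_i\) are injective and \(\langle\mu,h_i\rangle\cong\mathbb Z^2\). Apply the one-step setting of \cref{sec:latercarriers} with
\[
F=F_{i-1},\quad G=G_{i-1},\quad \rho=\rho_{i-1},\quad \bar d=d_i,\quad G'=G_i,\quad \rho'=\rho_i,\quad \bar c=d_{i+1}.
\]
The standing hypotheses hold: \(\rho\) is injective, \(A\cong\mathbb Z^2\), \(\bar d\) is nonseparating, the homological conditions follow from \cref{prop:early-homology}, torsion-freeness follows from \cref{lem:tower-torsionfree}, and \(i(\bar c,\bar d)=1\). \Cref{thm:later-clean-carrier} then yields, for \(x=\rho_i(c)\) with \(c\) a specially based representative of \(d_{i+1}\), that \(\langle\mu,x\rangle\cong\mathbb Z^2\) has the centralizer--transporter property in \(G_i\) and that \((S_i,\langle x\rangle)\) satisfies the cyclic-subgroup conjugacy condition. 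For the chosen basing and orientation of \(d_{i+1}\), the element \(h_{i+1}=\rho_i(d_{i+1})\) equals \(s x^{\pm1}s^{-1}\) with \(s\in S_i\), because changing the basing conjugates within the surface group, as noted before \cref{lem:surface-edge-intersection}. The invariances from the base case then transfer both conclusions to \(h_{i+1}\). In particular \(\langle\mu,h_{i+1}\rangle\cong\mathbb Z^2\), so \cref{thm:one-step} applies and \(\rho_{i+1}\) is injective, which closes the induction.

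\emph{Main obstacle.} The one place needing care is checking that the one-step setting really applies at every stage. In particular, the rim surgery at stage \(i\) must be performed with the surface parametrization inherited from the earlier surgeries, so that \(d_i\) and \(d_{i+1}\) still meet once on the current surface \(F_{i-1}\). This holds because every surgery preserves the parametrization of \(\Sigma_g\) (\cref{sec:rimlocal}) and intersection numbers are computed on the abstract surface \(\Sigma_g\). Beyond this, only the bookkeeping of conjugation and orientation invariance is required.
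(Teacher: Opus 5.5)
Your proposal is correct and follows the paper's proof. The base case comes from \cref{lem:even-clean-axis,lem:odd-clean-axis,prop:even-rooted-isolation,prop:odd-rooted-isolation}, the inductive step applies \cref{thm:later-clean-carrier} to the surgery along $d_i$ with next curve $d_{i+1}$, and injectivity propagates through \cref{thm:one-step}. Your explicit check that the conclusions survive replacing $x$ by $s x^{\pm1}s^{-1}$ with $s\in S_i$, using that $\mu$ centralizes $S_i$, is a detail the paper leaves implicit when it writes $x=\rho_i(d_{i+1})=h_{i+1}$.
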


\begin{proof}
For \(i=1\), the axis and centralizer--transporter conclusions are
\cref{lem:even-clean-axis} in even genus and
\cref{lem:odd-clean-axis} in odd genus.  The cyclic-subgroup
conjugacy condition is
\cref{prop:even-rooted-isolation} or
\cref{prop:odd-rooted-isolation}, respectively.  This is the base
case established in \cref{sec:initialcarriers}.

Now suppose \(1\le i<N\), that \(\rho_{i-1}\) is injective, and that
the conclusions hold for
\[
h_i=\rho_{i-1}(d_i).
\]
Perform the \(i\)-th ordinary untwisted rim surgery along \(d_i\).
The one-step theorem gives the injectivity of \(\rho_i\), while
\cref{prop:early-homology,lem:tower-torsionfree} supply the
homological and torsion hypotheses for the new group.

Apply \cref{thm:later-clean-carrier} to this surgery with the
geometric surgery curve \(d_i\) and the next curve \(d_{i+1}\).
The element denoted by \(x\) in that theorem is
\[
x=\rho_i(d_{i+1})=h_{i+1}.
\]
The theorem therefore gives
\[
\langle\mu,h_{i+1}\rangle\cong\mathbb Z^2,
\]
the centralizer--transporter property for this subgroup in \(G_i\),
and the cyclic-subgroup conjugacy condition for
\[
(S_i,\langle h_{i+1}\rangle).
\]
These are precisely the conclusions required for the next stage.
Induction proves the result for the entire ordered sequence.
\end{proof}

\section{Centralizers in the amalgam arising from rim surgery}
\label{sec:centralizers}

This section is group-theoretic.  We classify the centralizer of every
nontrivial element in the one-edge amalgam introduced by an ordinary
untwisted rim surgery.  The classification is organized by the fixed set
in the Bass--Serre tree: an axis, a single vertex, or a subtree containing
an edge.  In the fixed-edge case, the exponents of the positive surface
meridian and the pre-surgery element determine the fixed subtree.  The
resulting centralizers isolate the product vertex group and the meridian
centralizer, which are the two nonabelian fixed-edge centralizers relevant
to the recognition argument of the next section.

We begin with the hyperbolic knot-group facts needed on the product side.
Let $J\subset S^3$ be a hyperbolic knot, let
\[
K=\pi_1(E(J)),
\]
and let
\[
P_K=\langle m,\ell\rangle\cong\Z^2
\]
be its standard cusp subgroup, with $m$ the knot meridian and $\ell$ the
preferred longitude.

\begin{lemma}[Centralizers and transporters in a hyperbolic knot group]
\label{lem:knot-centralizers}
The group $K$ is torsion-free and centerless.  Every nontrivial element of
$K$ is either loxodromic or parabolic, and the following hold.
\begin{enumerate}[label=\textup{(\roman*)},leftmargin=2.3em]
\item If $k$ is loxodromic, then
\[
C_K(k)=\langle r\rangle\cong\Z,
\]
where $r$ generates the maximal cyclic subgroup containing $k$.
\item If $k$ is parabolic, then it lies in a unique maximal parabolic
subgroup $P$, and
\[
C_K(k)=P\cong\Z^2.
\]
\item The cusp subgroup $P_K$ is malnormal and therefore
self-normalizing.
\item For every $n\ne0$,
\[
C_K(m^n)=P_K
\]
and
\[
\{q\in K:q^{-1}m^nq\in\langle m\rangle\}=P_K.
\]
\end{enumerate}
\end{lemma}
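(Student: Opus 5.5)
We will use the complete finite-volume hyperbolic structure on \(S^3\setminus J\), which makes \(K\) a discrete torsion-free subgroup of \(\mathrm{PSL}_2(\mathbb C)\cong\Isom^+(\mathbb H^3)\) with one cusp. Torsion-freeness follows either from discreteness together with the asphericity of the knot exterior, or directly from \cref{lem:tower-torsionfree}-style reasoning, since knot groups have cohomological dimension at most two. For a discrete torsion-free Kleinian group, every nontrivial element is either loxodromic or parabolic, because elliptic elements have finite order or generate nondiscrete subgroups. Centerlessness will follow from (i) and (ii), since \(K\) is neither cyclic nor virtually \(\Z^2\), the latter because it has finite covolume in \(\mathbb H^3\).

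For (i), a loxodromic \(k\) has a unique axis \(\gamma\subset\mathbb H^3\) and fixed-point pair \(\{\xi_\pm\}\subset\partial\mathbb H^3\). Any element commuting with \(k\) preserves this pair. The stabilizer of \(\gamma\) in \(K\) is discrete and torsion-free, and it contains no element swapping the endpoints, since such an element would be an involution about a point of \(\gamma\) or would square to the identity on \(\gamma\), giving torsion. So the stabilizer is infinite cyclic, generated by the primitive root \(r\). The elements commuting with \(k\) are exactly those fixing both endpoints, so \(C_K(k)=\langle r\rangle\).

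For (ii), a parabolic \(k\) has a unique fixed point \(\xi\in\partial\mathbb H^3\). Any \(q\) commuting with \(k\) fixes \(\xi\). By discreteness, \(\Stab_K(\xi)\) contains no loxodromics, since a loxodromic fixing \(\xi\) together with a parabolic fixing \(\xi\) generate a nondiscrete group. Hence \(\Stab_K(\xi)\) is a discrete group of parabolics, a lattice of Euclidean translations of rank at most two. Since \(K\) has one cusp, this stabilizer is a conjugate \(P\) of \(P_K\), of rank two, and it is abelian, so \(C_K(k)=P\). Maximality and uniqueness of \(P\) follow because \(P\) equals the full stabilizer of the unique fixed point of \(k\).

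For (iii), suppose \(qP_Kq^{-1}\cap P_K\ne1\). A nontrivial element of the intersection is parabolic with fixed point both \(\xi\) and \(q\xi\), so \(q\xi=\xi\) and \(q\in\Stab_K(\xi)=P_K\). Malnormality implies self-normalization.

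For (iv), \(m^n\) is a nontrivial parabolic in \(P_K\), since \(K\) is torsion-free, so (ii) gives \(C_K(m^n)=P_K\). For the transporter, suppose \(q^{-1}m^nq=m^k\). Then \(k\ne0\), and abelianization \(K\to\Z\) with \(m\mapsto1\) forces \(k=n\). So \(q\) centralizes \(m^n\) and lies in \(P_K\); the reverse inclusion holds because \(P_K\) is abelian. Alternatively, \(m^n=qm^kq^{-1}\in qP_Kq^{-1}\cap P_K\), and (iii) applies directly, citing \cite[Corollary~2]{HarpeWeber} as in \cref{prop:disk-group}.

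The main obstacle is the rigorous identification, in (ii), of the parabolic fixed-point stabilizer with a conjugate of the rank-two cusp group. This uses the standard thick–thin or cusp-structure theorem for finite-volume hyperbolic 3-manifolds. We intend to cite this rather than reprove it.
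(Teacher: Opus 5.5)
Your proposal is correct and follows essentially the same route as the paper: realize $K$ as a torsion-free finite-covolume discrete subgroup of $\operatorname{PSL}_2(\mathbb C)$, compute centralizers via fixed points at infinity, and invoke the cusp structure theorem to get the rank-two maximal parabolic subgroups. The differences are only local. You prove malnormality of $P_K$ directly from uniqueness of parabolic fixed points, where the paper cites de la Harpe--Weber. You obtain the transporter equality from the meridional abelianization rather than from malnormality. Finally, you assert without proof that the discrete torsion-free stabilizer of the axis is infinite cyclic, which the paper justifies via the proper projection $\mathbb R\times\operatorname{SO}(2)\to\mathbb R$.
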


\begin{proof}
The complete hyperbolic structure realizes $K$ as a torsion-free,
finite-covolume discrete subgroup of $\operatorname{PSL}_2(\mathbb C)$.
Thus every nontrivial element is loxodromic or parabolic.  If $k$ is
loxodromic, every element commuting with $k$ preserves its two fixed
points at infinity and hence its oriented invariant geodesic.  The centralizer lies in the subgroup of orientation-preserving
isometries preserving this oriented geodesic, which is isomorphic to
\(\mathbb R\times\operatorname{SO}(2)\).  Projection to the
\(\mathbb R\)-factor is proper because
\(\operatorname{SO}(2)\) is compact.  Since \(C_K(k)\) is a closed
discrete subgroup, its image is a closed subgroup of \(\mathbb R\).
It is nonzero and countable, and is therefore an infinite cyclic
subgroup of \(\mathbb R\).  The kernel is a discrete subgroup of the
compact group \(\operatorname{SO}(2)\), and hence is finite.
Torsion-freeness makes this kernel trivial.  Thus \(C_K(k)\) is
infinite cyclic, generated by the element inducing the shortest
positive translation along the axis.

If $k$ is parabolic, every element commuting with $k$ fixes its unique
fixed point at infinity.  It consequently lies in the maximal parabolic
subgroup $P$ stabilizing that point.  For a finite-volume hyperbolic knot
complement, $P$ is free abelian of rank two, so $P\le C_K(k)$ and hence
$C_K(k)=P$.  These centralizer descriptions also imply centerlessness:
a nontrivial central element would have centralizer all of the
non-elementary group $K$, whereas the two possibilities above are cyclic
or rank-two abelian.  Torsion-freeness is already part of the manifold
holonomy realization.

For a hyperbolic knot, the standard cusp subgroup is malnormal by
\cite[Corollary~2]{HarpeWeber}.  If $q$ normalizes $P_K$, then
$P_K\cap qP_Kq^{-1}=P_K\ne1$, so malnormality gives $q\in P_K$; thus
$P_K$ is self-normalizing.  Since $m^n$ is a nontrivial parabolic,
its unique maximal parabolic subgroup is $P_K$, and therefore
$C_K(m^n)=P_K$.

Finally, suppose that $q^{-1}m^nq\in\langle m\rangle$.  This nontrivial
element belongs to
\[
q^{-1}P_Kq\cap P_K.
\]
Malnormality gives $q\in P_K$.  Conversely, every element of the abelian
group $P_K$ centralizes $m^n$.  This proves the transporter equality.
\end{proof}

The one-step decomposition of \cref{thm:one-step} will be written
\begin{equation}
\label{eq:clean-extension}
H=G*_{A}B,
\qquad
A=\langle\mu,h\rangle\cong\Z^2,
\qquad
B=K\times\langle h\rangle.
\end{equation}
Here $\mu$ is the positive surface meridian, $h$ is the pre-surgery
push-off image of the surgery curve, the knot meridian $m\in K$ is
identified with $\mu$, and the external central factor of $B$ is
identified with $\langle h\rangle$.  Put
\begin{equation}
\label{eq:D-definition}
D=\langle\mu,\ell,h\rangle
=P_K\times\langle h\rangle
\cong\Z^3.
\end{equation}
Let $T$ be the Bass--Serre tree of \eqref{eq:clean-extension}.  The
quotient $H\backslash T$ is one edge with vertex groups $G$ and $B$ and
edge group $A$.  Thus the $G$-vertices, $B$-vertices, and edges of $T$
are the cosets $H/G$, $H/B$, and $H/A$, and their stabilizers are the
corresponding conjugates of $G$, $B$, and $A$.

For the complete classification we assume:
\begin{enumerate}[label=\textup{(C\arabic*)},leftmargin=3.4em]
\item\label{cond:powerclean}
$A$ has the centralizer--transporter property in $G$ relative to $\mu$
in the sense of \cref{def:power-clean};
\item\label{cond:homology}
\[
H_1(G;\Z)=\Z\langle[\mu]\rangle,
\qquad
[h]=0;
\]
\item\label{cond:meridian-old}
with $M_G:=C_G(\mu)$, the group $M_G$ is nonabelian and
\[
C_G(\mu^n)=M_G
\qquad(n\ne0).
\]
\end{enumerate}
Condition~\ref{cond:powerclean} is the only one used for edge elements
with nonzero $h$-exponent.  Conditions~\ref{cond:homology}
and~\ref{cond:meridian-old} enter the transporter calculation for pure
meridian powers and the corresponding fixed-subtree case.

\begin{lemma}[Transporter of a nonzero meridian power]
\label{lem:meridian-transporter}
For every $n\ne0$,
\begin{equation}
\label{eq:old-meridian-transporter}
\Trans_G(\mu^n,A)
=
\{g\in G:g^{-1}\mu^ng\in A\}
=M_G.
\end{equation}
\end{lemma}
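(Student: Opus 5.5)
The plan is to prove the two inclusions separately. The inclusion \(M_G\subseteq\Trans_G(\mu^n,A)\) is immediate. If \(g\in M_G=C_G(\mu)\), then \(g^{-1}\mu^ng=\mu^n\in\langle\mu\rangle\le A\).

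For the reverse inclusion, take \(g\in G\) with \(g^{-1}\mu^ng\in A\), and write the result uniquely as
\[
g^{-1}\mu^ng=\mu^ah^b,
\]
using \(A\cong\Z^2\). The first step is to abelianize in \(G\). Conjugation does not change the homology class, \([h]=0\), and \([\mu]\) has infinite order by \ref{cond:homology}. This gives \(n=a\).

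The second step is to show that \(b=0\). Suppose instead that \(b\ne0\). Then \(x:=\mu^ah^b\) lies in \(A\setminus\langle\mu\rangle\), so \ref{cond:powerclean} applies to it. By the centralizer--transporter property, \(C_G(x)=A\). On the other hand, \(x\) is conjugate to \(\mu^n\), so
\[
C_G(x)=g^{-1}C_G(\mu^n)g=g^{-1}M_Gg
\]
by \ref{cond:meridian-old}. Hence \(g^{-1}M_Gg=A\) is abelian, and so \(M_G\) is abelian. This contradicts the assumption in \ref{cond:meridian-old} that \(M_G\) is nonabelian. Therefore \(b=0\).

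It follows that \(g^{-1}\mu^ng=\mu^n\), so \(g\in C_G(\mu^n)=M_G\), again by \ref{cond:meridian-old}. This proves \eqref{eq:old-meridian-transporter}.

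The only step that needs care is ruling out \(b\ne0\). The argument there relies on two facts: the centralizer--transporter property gives \(C_G(x)=A\) exactly, not merely that \(C_G(x)\) is contained in \(A\), and \(M_G\) is nonabelian. I do not expect any further obstacle.
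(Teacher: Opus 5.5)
Your proposal is correct and matches the paper's proof step for step. You abelianize to force \(a=n\), rule out \(b\ne0\) by comparing the abelian centralizer \(C_G(\mu^nh^b)=A\) from \ref{cond:powerclean} with the nonabelian conjugate centralizer \(C_G(\mu^n)=M_G\), and then conclude \(g\in C_G(\mu^n)=M_G\); your easy inclusion, which uses only \(C_G(\mu)\le C_G(\mu^n)\), is a harmless simplification of the paper's appeal to \ref{cond:meridian-old}.
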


\begin{proof}
Let $g^{-1}\mu^ng\in A$.  Since $A\cong\Z^2$, write the target uniquely
as
\[
g^{-1}\mu^ng=\mu^ah^b.
\]
Conjugation preserves abelianization.  Condition~\ref{cond:homology}
therefore gives
\[
n[\mu]=a[\mu]+b[h]=a[\mu],
\]
so $a=n$.

Suppose that $b\ne0$.  By Condition~\ref{cond:powerclean},
\[
C_G(\mu^nh^b)=A,
\]
which is abelian.  On the other hand,
$C_G(\mu^n)=M_G$ is nonabelian by
Condition~\ref{cond:meridian-old}.  Centralizers of conjugate elements
are conjugate, so this is impossible.  Hence $b=0$, and
\[
g^{-1}\mu^ng=\mu^n.
\]
It follows that $g\in C_G(\mu^n)=M_G$.

Conversely, Condition~\ref{cond:meridian-old} gives
$M_G=C_G(\mu^n)$, and every element of this group conjugates $\mu^n$
to itself, which lies in $A$.
\end{proof}

\begin{lemma}[Centralizer from a fixed edge]
\label{lem:fixed-edge-centralizer}
Let
\[
H=G*_{A}B
\]
have injective edge maps, let $T$ be its Bass--Serre tree, and let $e$
be the base edge.  Suppose that $1\ne x\in A$ and that subgroups
\[
A\le U\le G,
\qquad
A\le V\le B
\]
satisfy
\[
C_G(x)=\Trans_G(x,A)=U,
\qquad
C_B(x)=\Trans_B(x,A)=V.
\]
Then the vertex inclusions induce an injection
\[
U*_{A}V\longrightarrow H
\]
whose image is $L:=\langle U,V\rangle$.  Every edge of
$\Fix_T(x)$ is an $L$-translate of $e$, every element of $L$
centralizes $x$, and
\[
C_H(x)=L\cong U*_{A}V.
\]
Moreover,
\[
C_H(x)\backslash\Fix_T(x)
\]
is one edge, with vertex stabilizers $U,V$ and edge stabilizer $A$.
\end{lemma}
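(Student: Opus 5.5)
The plan is to follow the same fixed-subtree pattern used in the proof of \cref{prop:disk-group} for \(C_{A^{(j)}}(\mu)\).

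First, injectivity of \(U*_AV\to H\) comes from \cref{lem:subgroup-amalgam}: the hypotheses give \(U\cap A=A\) and \(V\cap A=A\). The lemma then embeds \(U*_AV\), with image \(L=\langle U,V\rangle\). Every element of \(L\) centralizes \(x\), since \(U=C_G(x)\) and \(V=C_B(x)\) each centralize \(x\). Hence \(L\le C_H(x)\).

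Second, I describe the fixed edges. An edge \(ae\) is fixed by \(x\) exactly when \(a^{-1}xa\in A\). I will show by induction on the distance from \(e\) that every edge of the connected subtree \(\Fix_T(x)\) lies in \(Le\). The subtree contains \(e\) because \(x\in A\).

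\begin{itemize}
\item Let \(e'=le\) be a fixed edge with \(l\in L\), and let \(e''\) be a fixed edge adjacent to \(e'\) at the \(G\)-type endpoint \(lG\).
\item Then \(e''=lge\) for some \(g\in G\).
\item Fixing \(e''\) means \((lg)^{-1}x(lg)\in A\). Since \(l\) centralizes \(x\), this reduces to \(g^{-1}xg\in A\).
\item So \(g\in\Trans_G(x,A)=U\subseteq L\), and \(e''\in Le\).
\item The \(B\)-type endpoint is handled the same way using \(\Trans_B(x,A)=V\).
\end{itemize}

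Because \(\Fix_T(x)\) is connected, every fixed edge is reached this way. Every fixed vertex lies on a fixed edge, since the geodesic from it to \(e\) is fixed. Therefore \(\Fix_T(x)=L\cdot e\) together with its endpoints.

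Third, I prove \(C_H(x)\le L\). Any \(c\in C_H(x)\) preserves \(\Fix_T(x)\), so \(ce=le\) for some \(l\in L\). Then \(l^{-1}c\in\Stab(e)=A\le L\), and so \(c\in L\).

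The quotient description follows from the above. \(C_H(x)\) acts on \(\Fix_T(x)\) transitively on edges. The stabilizers of the endpoints of \(e\) are \(\Stab(G\text{-vertex})\cap C_H(x)=C_G(x)=U\) and likewise \(V\), and the stabilizer of \(e\) is \(A\). No element of \(C_H(x)\) can exchange the two vertex types, since the action has no inversions and preserves types. So the quotient is a single edge. Bass--Serre reconstruction then gives \(C_H(x)\cong U*_AV\) once more.

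The main obstacle is the induction step. It requires using \(\Trans=C\) at both vertex types, so that the edges adjacent at the next vertex are translates by elements of \(U\) or \(V\) and not merely by the transporter set. The equality of centralizer and transporter in the hypotheses is exactly what allows this step.
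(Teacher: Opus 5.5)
Your proposal is correct and follows essentially the same route as the paper's proof. Both use \cref{lem:subgroup-amalgam} with edge group \(A\) for the embedding, induction along geodesics in the connected subtree \(\Fix_T(x)\) via the transporter equalities at both vertex types, the step \(l^{-1}c\in\operatorname{Stab}_H(e)=A\le L\) to get \(C_H(x)=L\), and the one-edge quotient with stabilizers \(U,V,A\).
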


\begin{proof}
Since $U\cap A=A=V\cap A$, the subgroup normal-form criterion
\cref{lem:subgroup-amalgam}, with its smaller edge group equal to $A$,
identifies $L=\langle U,V\rangle$ with $U*_{A}V$.  The groups $U$
and $V$ centralize $x$, so $L\le C_H(x)$ and every $L$-translate of
$e$ is fixed by $x$.

Conversely, let $f$ be an edge fixed by $x$.  The geodesic from $e$ to
$f$ lies in the connected subtree $\Fix_T(x)$.  Induct along this
geodesic.  If an already reached edge is $qe$ with $q\in L$, then an
adjacent edge at its $G$-endpoint has the form $qge$, with $g\in G$.
Because $q$ centralizes $x$, this edge is fixed exactly when
\[
g^{-1}xg\in A,
\]
and the transporter equality gives $g\in U$.  The analogous step at a
$B$-endpoint uses $\Trans_B(x,A)=V$.  Hence every fixed edge is an
$L$-translate of $e$.

If $c\in C_H(x)$, then $ce$ is fixed, so $ce=le$ for some $l\in L$.
Thus $l^{-1}c\in\Stab_H(e)=A\le L$, proving $C_H(x)=L$.
Every fixed vertex lies on a fixed edge, because $\Fix_T(x)$ is
connected and contains $e$.  The fixed edges form one $C_H(x)$-orbit,
and the stabilizers of the base $G$-vertex, base $B$-vertex, and base
edge in $C_H(x)$ are respectively $U,V,A$.  This gives the stated
one-edge quotient.
\end{proof}

The fixed set of an elliptic element of $T$ is a connected subtree.  In
particular, an elliptic element that fixes no edge fixes exactly one
vertex.  The next proposition applies this observation and the axis
conventions of \cref{sec:powerclean} to every nontrivial element of $H$.

\begin{proposition}[Complete centralizer classification]
\label{prop:complete-centralizers}
For every nontrivial $x\in H$, exactly one of the following mutually
exclusive cases occurs, up to conjugacy.

\begin{enumerate}[label=\textup{(\roman*)},leftmargin=2.6em]
\item \emph{Hyperbolic on $T$.}
There is an exact sequence
\begin{equation}
\label{eq:hyperbolic-centralizer-sequence}
1\longrightarrow E_x
\longrightarrow C_H(x)
\longrightarrow\Z
\longrightarrow1,
\end{equation}
where $E_x$ is free abelian of rank at most two.  In particular,
$C_H(x)$ is metabelian.

\item \emph{Elliptic with one fixed $G$-vertex.}
If
\[
\Fix_T(x)=\{gG\},
\]
then
\[
C_H(x)=C_{gGg^{-1}}(x).
\]

\item \emph{Elliptic with one fixed $B$-vertex.}
After conjugating the fixed vertex to the base $B$-vertex, write
\[
x=(k,h^n)\in K\times\langle h\rangle.
\]
Then $k\ne1$ and
\[
C_H(x)=C_K(k)\times\langle h\rangle.
\]
If $k$ is loxodromic, this centralizer is $\Z^2$.  If $k$ is
parabolic, conjugate further in $B$ so that $k\in P_K$; the one-vertex
hypothesis forces $k\notin\langle\mu\rangle$, and the centralizer is
$D\cong\Z^3$.  In both alternatives the centralizer is abelian and is
therefore equal to its center.

\item \emph{Elliptic with a fixed edge.}
After conjugacy, $x=\mu^ah^b\in A$, and exactly one of the following
three subcases occurs.
\begin{enumerate}[label=\textup{(\alph*)},leftmargin=2.3em]
\item If $a=0$ and $b\ne0$, then
\begin{equation}
\label{eq:centralizer-h-power}
C_H(h^b)=B=K\times\langle h\rangle.
\end{equation}
\item If $ab\ne0$, then
\begin{equation}
\label{eq:centralizer-mixed}
C_H(\mu^ah^b)
=D
=\langle\mu,\ell,h\rangle
\cong\Z^3.
\end{equation}
\item If $a\ne0$ and $b=0$, then
\begin{equation}
\label{eq:centralizer-meridian-composite}
C_H(\mu^a)
=M_G*_{A}D.
\end{equation}
Consequently
\[
C_H(\mu^a)=C_H(\mu)
\qquad(a\ne0).
\]
\end{enumerate}
\end{enumerate}
\end{proposition}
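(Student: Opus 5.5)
The plan is to sort nontrivial \(x\in H\) by the shape of \(\Fix_T(x)\): empty (hyperbolic), a single vertex of either type, or a subtree containing an edge. These alternatives are mutually exclusive and exhaustive, since \(H\) acts without inversions. In each case \(C_H(x)\) preserves the relevant invariant set (\(\Ax_T(x)\), the fixed vertex, or \(\Fix_T(x)\)), and the centralizer is read off from stabilizers.

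\emph{Hyperbolic case.} Every \(c\in C_H(x)\) preserves \(L=\Ax_T(x)\), so \(C_H(x)\le\Stab_H(L)\). Restricting to \(L\) gives a map to \(\Isom(L)\). Any \(c\) centralizing \(x\) commutes with a nontrivial translation, so it cannot reflect \(L\); hence the image is a nontrivial discrete translation group, isomorphic to \(\Z\). Its kernel \(E_x\) fixes \(L\) pointwise, and so lies in the intersection of two consecutive edge stabilizers at some vertex. These are conjugates of \(A\), so \(E_x\) is a subgroup of \(\Z^2\), hence free abelian of rank at most two. The quotient \(C_H(x)/E_x\cong\Z\) then makes \(C_H(x)\) metabelian.

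\emph{Single-vertex cases.} If \(\Fix_T(x)=\{v\}\), every centralizing element fixes \(v\), so \(C_H(x)=C_{\Stab(v)}(x)\). This gives (ii) directly. For (iii), write \(x=(k,h^n)\). Then \(C_B(x)=C_K(k)\times\langle h\rangle\), and \(k\ne1\) because otherwise \(x=h^n\in A\) would fix an edge. I then apply \cref{lem:knot-centralizers}. If \(k\) is loxodromic, the centralizer is \(\Z\times\Z\). If \(k\) is parabolic, conjugate in \(K\) so that \(k\in P_K\); then \(C_K(k)=P_K\) and the centralizer is \(D\). Moreover \(k\notin\langle\mu\rangle\), since otherwise \(x\in A\) and \(x\) would fix the base edge.

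\emph{Fixed-edge case.} Conjugate so that \(x=\mu^ah^b\in A\), and apply \cref{lem:fixed-edge-centralizer} with explicit \(U,V\).
\begin{itemize}
\item On the \(G\) side, if \(b\ne0\), condition \ref{cond:powerclean} gives \(U=A\). If \(b=0\), \cref{lem:meridian-transporter} together with \ref{cond:meridian-old} gives \(U=M_G\); here I also need \(A\le M_G\), which holds because \(A\) is abelian.
\item On the \(B\) side, compute \(C_B(\mu^ah^b)=C_K(\mu^a)\times\langle h\rangle\). This equals \(B\) when \(a=0\), and equals \(D\) when \(a\ne0\) by \cref{lem:knot-centralizers}(iv).
\item For the transporter \(\Trans_B(x,A)\), write \(q=(k,h^j)\). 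The condition \(q^{-1}xq\in A\) becomes \(k^{-1}\mu^ak\in\langle\mu\rangle\). This forces \(k\in P_K\) when \(a\ne0\) by (iv), and is vacuous when \(a=0\). So \(V=D\) or \(V=B\) respectively, matching the centralizer.
\end{itemize}

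Assembling the subcases via the lemma:
\begin{itemize}
\item (a) \(a=0,b\ne0\): \(A*_AB=B\).
\item (b) \(ab\ne0\): \(A*_AD=D\).
\item (c) \(a\ne0,b=0\): \(M_G*_AD\).
\end{itemize}
In (c) the answer is independent of \(a\), since \(M_G=C_G(\mu^a)\) and \(D\) do not depend on \(a\); hence \(C_H(\mu^a)=C_H(\mu)\).

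The main obstacle is the bookkeeping in the fixed-edge case: verifying that each pair \(U,V\) is simultaneously the centralizer and the transporter in its vertex group, and keeping track of exactly which hypotheses (C1)–(C3) are used. Mutual exclusivity also needs a short justification: the fixed-set type is a conjugacy invariant, and the subcases (a)–(c) are separated by the invariants \(a\) and \(b\). These are well defined because \([h]=0\) makes \(a\) equal to the homology class of \(x\), and because \(b=0\) exactly when \(C_H(x)\) is the non-abelian group \(M_G*_AD\), whereas in (a) and (b) the structure of \(C_H(x)\) differs.
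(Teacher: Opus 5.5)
Your proposal is correct and follows the paper's proof essentially step for step. It uses the same split by the shape of \(\Fix_T(x)\), the same axis-restriction argument for hyperbolic elements, the same singleton-fixed-vertex argument for (ii)--(iii), and \cref{lem:fixed-edge-centralizer} with \((U,V)=(A,B),(A,D),(M_G,D)\) in the fixed-edge subcases.

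The one small deviation is in mutual exclusivity of the fixed-edge subcases. You separate them by the homology class \(a[\mu]\) and by whether the centralizer is abelian. The paper instead uses abelianness together with the number of \(B\)-vertices in the fixed subtree. Both arguments work. Yours needs \([\mu]\) to have infinite order in \(H_1(H;\Z)\), which follows, for example, from the retraction \(H\to G\).
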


\begin{proof}
We determine the centralizer from the action on the axis or fixed
subtree.

\smallskip
\noindent\emph{Hyperbolic elements.}
Suppose that $x$ is hyperbolic and put $L=\Ax_T(x)$.  If $c\in C_H(x)$,
then
\[
cL
=\Ax_T(cxc^{-1})
=\Ax_T(x)
=L,
\]
so the centralizer preserves $L$.  The centralizer in $\Isom(L)$ of a
nonzero translation consists only of translations; in particular, no
element centralizing $x$ can restrict to a reflection.  The image of
\[
C_H(x)\longrightarrow\Isom(L)
\]
is therefore a subgroup of the infinite cyclic translation group of
the simplicial line.  It contains the nonzero translation induced by
$x$, so it is a nonzero subgroup and hence isomorphic to $\Z$.
Identifying this image with an abstract $\Z$ gives the right-hand map
in \eqref{eq:hyperbolic-centralizer-sequence}.

Its kernel is
\[
E_x
=C_H(x)\cap\Fix_H^{\mathrm{pt}}(L).
\]
An element fixing $L$ pointwise lies in the stabilizer of every edge of
$L$.  Each such edge stabilizer is a conjugate of
$A\cong\Z^2$, so $E_x$ is a subgroup of a free abelian group of rank
two.  Hence $E_x$ is free abelian of rank at most two.  Since the
quotient is cyclic, the commutator subgroup of $C_H(x)$ lies in the
abelian group $E_x$; thus $C_H(x)$ is metabelian.

\smallskip
\noindent\emph{A unique fixed vertex.}
Suppose that $x$ is elliptic and fixes no edge.  Its fixed subtree is
connected, so it consists of one vertex.  Every element of $C_H(x)$
preserves this singleton fixed set.  If the vertex is $gG$, the full
centralizer therefore lies in $gGg^{-1}$, and
\[
C_H(x)=C_{gGg^{-1}}(x).
\]

If the fixed vertex is of $B$-type, conjugate it to the base vertex and
write $x=(k,h^n)$.  Direct-product centralizers give
\[
C_B(x)=C_K(k)\times\langle h\rangle.
\]
The full centralizer lies in $B$ because the fixed vertex is unique, so
this is also $C_H(x)$.  If $k=1$, then nontriviality of $x$ gives
$n\ne0$, and $x=h^n\in A$ fixes the base edge, a contradiction.  Thus
$k\ne1$.  The loxodromic and parabolic descriptions follow from
\cref{lem:knot-centralizers}.  In the parabolic case, conjugate within \(B\) so that
\(k\in P_K\).  The one-vertex hypothesis first gives
\(k\notin\langle\mu\rangle\).  Moreover, \(k\) cannot be conjugate
in \(K\) into \(\langle\mu\rangle\).  Indeed, if
\[
q^{-1}kq\in\langle\mu\rangle
\]
for some \(q\in K\), then
\[
q^{-1}P_Kq\cap P_K\ne1.
\]
Malnormality of \(P_K\) gives \(q\in P_K\), and the abelianness of
\(P_K\) then gives \(q^{-1}kq=k\), contradicting
\(k\notin\langle\mu\rangle\).  Thus \(x\) is not conjugate into an
edge group, and its fixed set is exactly the base \(B\)-vertex.

\smallskip
\noindent\emph{A fixed edge.}
Every edge stabilizer is a conjugate of $A$, so after conjugacy we may
write
\[
x=\mu^ah^b\in A,
\qquad
(a,b)\ne(0,0),
\]
and take the base edge $e$ to be fixed.

First suppose that $x=h^b$ with $b\ne0$.  Condition~\ref{cond:powerclean}
gives
\[
C_G(h^b)=\Trans_G(h^b,A)=A.
\]
Since $h$ is central in $B$,
\[
C_B(h^b)=\Trans_B(h^b,A)=B.
\]
Applying \cref{lem:fixed-edge-centralizer} with $U=A$ and $V=B$
gives \eqref{eq:centralizer-h-power}.  Its fixed edges are the
$B$-translates of $e$, so they form the star about the base $B$-vertex;
in particular, that is the unique $B$-vertex of the fixed subtree.

Now suppose that $ab\ne0$.  Condition~\ref{cond:powerclean} gives
\[
C_G(x)=\Trans_G(x,A)=A,
\]
while
\[
C_B(x)
=C_K(\mu^a)\times\langle h\rangle
=P_K\times\langle h\rangle
=D.
\]
For $q=(k,h^r)\in B$, one has
\[
q^{-1}xq=k^{-1}\mu^ak\,h^b,
\]
which lies in $A$ exactly when
$k^{-1}\mu^ak\in\langle\mu\rangle$.  The knot-group transporter in
\cref{lem:knot-centralizers} therefore gives
\[
\Trans_B(x,A)=D.
\]
The fixed-edge lemma with $U=A$ and $V=D$ proves
\eqref{eq:centralizer-mixed}.  The fixed edges are the $D$-translates
of $e$ and hence again have the base $B$-vertex as their unique
$B$-endpoint.

Finally suppose that $x=\mu^a$ with $a\ne0$.  Conditions
\ref{cond:meridian-old} and \cref{lem:meridian-transporter} give
\[
C_G(x)=\Trans_G(x,A)=M_G,
\]
and \cref{lem:knot-centralizers} gives
\[
C_B(x)=\Trans_B(x,A)=D;
\]
the transporter calculation is the same reduction
$q=(k,h^r)\mapsto k^{-1}\mu^ak$ used in the mixed case.  Applying
\cref{lem:fixed-edge-centralizer} with $U=M_G$ and $V=D$ gives
\eqref{eq:centralizer-meridian-composite}, together with the one-edge
fixed-subtree quotient.  These vertex-side centralizer and transporter
groups are independent of $a\ne0$ by Condition~\ref{cond:meridian-old}
and \cref{lem:knot-centralizers}; hence
$C_H(\mu^a)=C_H(\mu)$.

To finish, every nontrivial element is hyperbolic or elliptic.  An
elliptic element that fixes no edge fixes exactly one vertex; if it
fixed two vertices, it would fix their intervening geodesic and hence
an edge.  The action preserves the two vertex types, so the singleton
$G$- and $B$-vertex cases are disjoint.  Every edge-fixing element is
conjugate into $A$, and the
three displayed exponent conditions exhaust all nonzero pairs $(a,b)$.
Their fixed-subtree descriptions are mutually exclusive even if the
element fixes more than one edge.  The $h$-power and mixed cases both
have a unique $B$-vertex, but their centralizers are respectively the
nonabelian group $B$ and the abelian group $D$.  In the meridian case,
$A$ is proper in the nonabelian group $M_G$, so the fixed edges at a
$G$-vertex include at least two distinct $M_G/A$ cosets and the fixed
subtree has more than one $B$-vertex.  Consequently no element can
belong, up to conjugacy, to two different listed cases.
\end{proof}

The center is immediate in every abelian case of the classification.
The two nonabelian centralizers arising in the fixed-edge cases require
separate arguments: the product vertex group $B$ has center $\langle h\rangle$,
whereas the meridian centralizer has center $\langle\mu\rangle$ under
the additional hypotheses below.  We first exclude the hyperbolic case
from having a hyperbolic knot-group central quotient.

\begin{corollary}[Central quotients of hyperbolic centralizers]
\label{cor:hyperbolic-not-knot}
If $x$ is hyperbolic on $T$, then
\[
C_H(x)/Z(C_H(x))
\]
is solvable.  In particular, it is not isomorphic to a nonabelian
hyperbolic knot group.
\end{corollary}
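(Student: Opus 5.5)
The plan is to use the exact sequence \eqref{eq:hyperbolic-centralizer-sequence} from case~(i) of \cref{prop:complete-centralizers}. Write \(C=C_H(x)\) and \(Z=Z(C)\). That sequence shows \(C\) is an extension of a free abelian group \(E_x\) of rank at most two by \(\Z\). Any extension of an abelian group by an abelian group is metabelian, hence solvable of derived length at most two. Quotients of solvable groups are solvable, so \(C/Z\) is solvable, with derived length at most two. The first assertion needs nothing beyond this.

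For the second assertion, it suffices to show that a nonabelian hyperbolic knot group \(K\) is not solvable. I would derive this from \cref{lem:knot-centralizers}, avoiding any general citation.

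\begin{itemize}
\item Since \(K\) is nonabelian, pick noncommuting \(a,b\in K\). The commutator \(c=[a,b]\) is then nontrivial and lies in \([K,K]\).
\item Suppose \(K\) were solvable. Its last nontrivial derived term \(K^{(n)}\) would be a nontrivial abelian normal subgroup.
\item Choose \(1\ne k\in K^{(n)}\). Then \(K^{(n)}\le C_K(k)\), which by \cref{lem:knot-centralizers}\textup{(i),(ii)} is either an infinite cyclic subgroup or a maximal parabolic subgroup.
\item Normality gives \(qkq^{-1}\in K^{(n)}\le C_K(k)\) for every \(q\in K\).
\item In the parabolic case, every conjugate of \(k\) lies in the same maximal parabolic subgroup \(P\). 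Hence \(P\cap qPq^{-1}\ne1\) for all \(q\). Malnormality of maximal parabolic subgroups (the \cite{HarpeWeber} argument, applied to the conjugate cusp) forces \(q\in P\) for all \(q\), so \(K=P\) would be abelian, a contradiction.
\item In the loxodromic case, every conjugate of \(k\) shares the axis of \(k\). So \(K\) would preserve a pair of points at infinity, contradicting non-elementarity of the finite-covolume group \(K\), which must contain parabolics with other fixed points.
\end{itemize}

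Hence \(K\) is not solvable and cannot be isomorphic to the solvable group \(C/Z\).

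The only mildly delicate step is the non-solvability of \(K\). If malnormality of arbitrary maximal parabolics is not wanted, I would instead use the elementary fact that a discrete torsion-free group of \(\operatorname{PSL}_2(\mathbb C)\) with a nontrivial abelian normal subgroup is elementary (cyclic or \(\Z^2\)), so it cannot be a finite-volume knot group. Alternatively, a nonabelian knot group with finite-covolume action contains a free subgroup of rank two by the Tits alternative, which immediately rules out solvability. I would cite whichever of these is cleanest; the rest of the argument is formal.
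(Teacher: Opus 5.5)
Your proof is correct. The first half is the paper's argument verbatim: \cref{prop:complete-centralizers} makes $C_H(x)$ metabelian, and every quotient of a metabelian group is metabelian.

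The second half takes a different route. The paper argues that a finite-volume hyperbolic knot group is non-elementary, so ping-pong on two loxodromic elements with disjoint fixed-point sets produces a nonabelian free subgroup, which a solvable group cannot contain. You instead take the last nontrivial derived term as a nontrivial abelian normal subgroup and eliminate it with the centralizer dichotomy of \cref{lem:knot-centralizers}: malnormality in the parabolic case, an invariant axis in the loxodromic case.

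Your route stays closer to the paper's own lemmas and proves slightly more, namely that $K$ has no nontrivial abelian normal subgroup. It does need two small supplements that you only gesture at:
\begin{itemize}
\item Malnormality of an arbitrary maximal parabolic. This holds because the knot complement has a single cusp, so every maximal parabolic is a $K$-conjugate of $P_K$, and malnormality is invariant under conjugation.
\item In the loxodromic case, the reason the existence of parabolics gives a contradiction. A parabolic such as the meridian cannot preserve the pair $\{\xi_+,\xi_-\}$ of axis endpoints, since its square would then be a nontrivial parabolic fixing two points.
\end{itemize}

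The paper's argument is shorter and uses only non-elementarity. Your Tits-alternative fallback is essentially the paper's proof.
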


\begin{proof}
By \cref{prop:complete-centralizers}, the group $C_H(x)$ is
metabelian.  Every quotient of a metabelian group is metabelian and
therefore solvable.  A finite-volume hyperbolic knot group is
non-elementary and contains a nonabelian free subgroup, obtained by
ping-pong from two loxodromic elements with disjoint fixed-point sets.
Since every subgroup of a solvable group is solvable and a nonabelian
free group is not solvable, a nonabelian hyperbolic knot group cannot be
solvable.
\end{proof}

\begin{lemma}[Primitivity and the center of the product vertex group]
\label{lem:carrier-primitive}
Under Conditions~\ref{cond:powerclean} and~\ref{cond:homology}, the
element $h$ is not a proper power in $G$ or in $B$.  It generates the
external central factor of $B$, and
\[
Z(B)=\langle h\rangle.
\]
\end{lemma}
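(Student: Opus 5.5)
The proof has three parts: primitivity of \(h\) in \(G\), then in \(B\), then the computation of \(Z(B)\).

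\emph{Primitivity in \(G\).} Suppose \(h=y^n\) with \(n>1\) and \(y\in G\). Then \(y\) centralizes \(h\), and Condition~\ref{cond:powerclean} with \(a=0\), \(b=1\) gives \(C_G(h)=A\). Hence \(y=\mu^a h^b\) for some integers \(a,b\), and uniqueness of coordinates in \(A\cong\Z^2\) turns \(y^n=h\) into \(na=0\) and \(nb=1\). The second equation is impossible for \(n>1\). (Condition~\ref{cond:homology} gives the same conclusion through abelianization, but it is not needed.)

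\emph{Primitivity in \(B\).} Write \(y=(k,h^b)\in K\times\langle h\rangle\). Then \(y^n=h\) means \(k^n=1\) in \(K\) and \(nb=1\). The second equation is again impossible for \(n>1\). This also shows that \(h\) generates the external factor, since that factor is identified with \(\langle h\rangle\) by construction in \eqref{eq:clean-extension}.

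\emph{Center of \(B\).} We have \(Z(B)=Z(K)\times\langle h\rangle\), because the center of a direct product is the product of the centers. By \cref{lem:knot-centralizers}, \(K\) is centerless, so \(Z(B)=\langle h\rangle\).

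I expect no step to be a real obstacle. The only point needing care is that primitivity in \(G\) depends on the centralizer equality \(C_G(h)=A\) together with the independence of \(\mu\) and \(h\) in \(A\).
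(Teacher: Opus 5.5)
Your proposal is correct and follows the paper's proof: in \(G\) you use \(C_G(h)=A\), in \(B\) you use the product coordinates, and \(Z(B)\) comes from centerlessness of \(K\). The one difference is that you read off \(na=0\) and \(nb=1\) directly from unique coordinates in \(A\cong\Z^2\), and \(nb=1\) from the \(\langle h\rangle\)-factor in \(B\). This is valid, and it makes the paper's abelianization step (Condition~\ref{cond:homology}) and its appeal to torsion-freeness of \(K\) unnecessary.
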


\begin{proof}
Suppose that $u^n=h$ in $G$ with $|n|>1$.  Then $u$ commutes with
$u^n=h$, so Condition~\ref{cond:powerclean} gives
\[
u\in C_G(h)=A.
\]
Write $u=\mu^ph^q$.  Abelianization and $[h]=0$ give
\[
0=[h]=n[u]=np[\mu],
\]
so $p=0$.  Equality in the free abelian group
$A=\langle\mu,h\rangle$ then gives $nq=1$, a contradiction.

If $(k,h^q)^n=h$ in $B$ with $|n|>1$, then $k^n=1$ in the torsion-free
group $K$, so $k=1$, and the external factor again gives $nq=1$.
Thus $h$ is not a proper power in $B$.  Finally,
\[
Z(B)=Z(K)\times\langle h\rangle=\langle h\rangle
\]
by the centerlessness of $K$ from \cref{lem:knot-centralizers}.
\end{proof}

We next record the center calculation used for the meridian
centralizer.  The usual containment is in fact an equality.

\begin{lemma}[Center of a nondegenerate amalgam]
\label{lem:center-amalgam}
Let
\[
X=U*_{A}V
\]
with $A$ proper in both $U$ and $V$.  Then
\[
Z(X)=A\cap Z(U)\cap Z(V).
\]
\end{lemma}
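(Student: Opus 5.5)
The containment \(A\cap Z(U)\cap Z(V)\subseteq Z(X)\) is immediate: such an element commutes with every element of \(U\) and of \(V\), and these generate \(X\). The work is the reverse inclusion. For that I would use the Bass--Serre tree \(T\) of the splitting \(X=U*_AV\), with base edge \(e\) joining the vertices \(U\) and \(V\).

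Let \(z\in Z(X)\). I would first show that \(z\) is elliptic. If \(z\) were hyperbolic, every element of \(X\) would commute with \(z\) and therefore preserve \(\Ax_T(z)\). Then \(X\) would act on \(T\) with an invariant line, so the minimal invariant subtree would be that line. But the splitting is nondegenerate: \(A\) is proper in both \(U\) and \(V\). Hence every vertex of \(T\) has valence at least two, the action is minimal, and the whole tree is the minimal subtree. Taking \(u\in U\setminus A\), the edges \(e\) and \(ue\) are distinct and both meet the vertex \(U\); a third edge there is \(u^{-1}e\) or \(u^2e\) unless \(U\) has index two over \(A\). So an invariant line forces both indices \([U:A]\) and \([V:A]\) to equal \(2\).

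I expect this index-two case to be the main obstacle, because \(T\) really is a line there and \(z\) could a priori translate along it. The way out is the infinite dihedral quotient: when \([U:A]=[V:A]=2\), \(A\) is normal in \(X\) and \(X/A\cong\Z/2*\Z/2\). A hyperbolic \(z\) maps to a nontrivial translation in this quotient, and a translation in \(\Z/2*\Z/2\) is not central, since the generating reflections invert it. That contradicts centrality of \(z\).

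So \(z\) is elliptic, and it remains to show \(z\) fixes every edge. The fixed set \(\Fix_T(z)\) is nonempty, and it is \(X\)-invariant because \(z\) is central. Since it is an invariant subtree and the action is minimal (in the index-two case \(T\) is a line and minimal as well), \(\Fix_T(z)=T\). In particular \(z\) fixes \(e\), so \(z\in A\). As \(z\) is central in \(X\), it is central in both \(U\) and \(V\), which gives \(z\in A\cap Z(U)\cap Z(V)\).
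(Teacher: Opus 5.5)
Your proof is correct. Its second half is the paper's argument: an elliptic central $z$ has a nonempty $X$-invariant fixed subtree, and minimality of the nondegenerate one-edge splitting gives $\Fix_T(z)=T$. Hence $z\in A$, and therefore $z\in Z(U)\cap Z(V)$.

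The two routes differ only in how a hyperbolic central element is ruled out.

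The paper argues uniformly. Every element of $U$ or $V$ is elliptic and commutes with $z$, so it preserves $L=\Ax_T(z)$. It also restricts to a translation of $L$, because it commutes with the nonzero translation induced by $z$. Being elliptic, it fixes the projection to $L$ of any vertex it fixes, so this translation is trivial. Thus $U$ and $V$, and hence all of $X$, fix $L$ pointwise. This contradicts the fact that $z\in X$ translates along $L$. The argument needs neither minimality nor a case split.

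You instead use minimality to force $T=L$, deduce $[U:A]=[V:A]=2$, and finish in the centerless quotient $X/A\cong\Z/2*\Z/2$. That is valid. It also shows explicitly that an invariant line can only occur in the dihedral configuration. However, the index-two case you expected to be the main obstacle does not arise at all in the paper's argument.

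One local repair is needed. Your reason why a line forces index two is not right as written. For $u\in U\setminus A$, neither $u^{-1}e$ nor $u^2e$ need be a new edge even when $[U:A]\ge3$: if $u^2\in A$, they coincide with $ue$ and $e$. The correct reason is that the edges at the vertex $U$ are the cosets $uA$. So that vertex has valence $[U:A]$, and every vertex of a line has valence two.
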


\begin{proof}
Let $T_X$ be the Bass--Serre tree and let $z\in Z(X)$.  The element
$z$ cannot be hyperbolic.  Indeed, if $L=\Ax_{T_X}(z)$, centrality
makes $L$ invariant under $X$.  Every element of either vertex group
is elliptic on $T_X$ and commutes with the nonzero translation induced
by $z$.  Its restriction to $L$ lies in the translation subgroup of
$\Isom(L)$, and ellipticity forces that translation to be trivial.
Thus both vertex groups, and hence the group they generate, fix $L$
pointwise.  This contradicts the fact that $z$ translates nontrivially
on $L$.

Therefore $z$ is elliptic.  Its fixed subtree is nonempty and is
$X$-invariant because $z$ is central.  The Bass--Serre action of a
one-edge amalgam with both edge inclusions proper is minimal, so
$\Fix_{T_X}(z)=T_X$.  In particular, $z$ fixes the base edge and hence
lies in $A$.  Since $z$ is central in $X$, it also lies in
$Z(U)\cap Z(V)$.  This proves
\[
Z(X)\subset A\cap Z(U)\cap Z(V).
\]
The reverse inclusion is immediate: an element of the displayed
intersection commutes with both vertex groups, which generate $X$.
\end{proof}

For a group $Q$, write
\[
b_1(Q)=\dim_{\mathbb Q}H_1(Q;\mathbb Q).
\]

\begin{proposition}[The meridian centralizer]
\label{prop:meridian-composite}
Assume in addition that
\[
Z(M_G)=\langle\mu\rangle
\]
and
\[
b_1(M_G/\langle\mu\rangle)\ge g\ge3.
\]
For
\[
M_H:=C_H(\mu)=M_G*_{A}D,
\]
one has
\begin{equation}
\label{eq:meridian-center}
Z(M_H)=\langle\mu\rangle
\end{equation}
and a canonical isomorphism
\begin{equation}
\label{eq:meridian-quotient}
M_H/\langle\mu\rangle
\cong
\bigl(M_G/\langle\mu\rangle\bigr)
*_{\langle h\rangle}
\langle h,\ell\rangle.
\end{equation}
Moreover,
\begin{equation}
\label{eq:b1-meridian-composite}
b_1(M_H/\langle\mu\rangle)
=
b_1(M_G/\langle\mu\rangle)+1.
\end{equation}
Consequently $M_H/Z(M_H)$ is not isomorphic to the group of any
classical knot in $S^3$.
\end{proposition}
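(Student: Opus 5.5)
The plan is to read off all four conclusions from the amalgam $M_H=M_G*_AD$ supplied by \cref{prop:complete-centralizers}(iv)(c). The steps are: compute the center with \cref{lem:center-amalgam}; pass to the quotient by the central subgroup $\langle\mu\rangle$; count Betti numbers; and conclude with the fact that knot groups have $b_1=1$.

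\emph{Center.} First I will check that $A$ is proper in both vertex groups. It is proper in $M_G$ because $M_G$ is nonabelian by \ref{cond:meridian-old} while $A$ is abelian. It is proper in $D\cong\Z^3$ because $\ell\notin A$, which follows by projecting $B\to K$ and using $\langle\mu,\ell\rangle\cong\Z^2$. \Cref{lem:center-amalgam} then gives $Z(M_H)=A\cap Z(M_G)\cap Z(D)=A\cap\langle\mu\rangle\cap D=\langle\mu\rangle$, using the new hypothesis $Z(M_G)=\langle\mu\rangle$.

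\emph{Quotient.} Since $\mu$ lies in the edge group and is central in both vertex groups, I will quotient the pushout presentation by $\langle\mu\rangle$. The amalgam of quotients is $(M_G/\langle\mu\rangle)*_{A/\langle\mu\rangle}(D/\langle\mu\rangle)$. Here $A/\langle\mu\rangle=\langle\bar h\rangle$ and $D/\langle\mu\rangle=\langle\bar h,\bar\ell\rangle\cong\Z^2$. This is the canonical isomorphism \eqref{eq:meridian-quotient}, obtained from the universal property: killing a subgroup of the edge group that is normal in both factors commutes with the pushout.

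The step I expect to be the main obstacle is checking that the edge maps stay injective after the quotient. We need $\langle\bar h\rangle\cong\Z$ in $M_G/\langle\mu\rangle$, that is, $h^k\in\langle\mu\rangle$ forces $k=0$. This holds because $A=\langle\mu,h\rangle\cong\Z^2$. Injectivity into $D/\langle\mu\rangle$ holds for the same reason. With injectivity in hand, the result is a genuine amalgam, although only the pushout presentation is needed for homology.

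\emph{Betti number.} Mayer--Vietoris for the amalgam with rational coefficients gives
\[
H_1(\langle\bar h\rangle)\to H_1(M_G/\langle\mu\rangle)\oplus\mathbb Q^2\to H_1(M_H/\langle\mu\rangle)\to 0,
\]
where the last map is surjective because $H_0$ of the edge group injects. The image of $\bar h$ is $([\bar h],\bar h)$, which is nonzero in the $D$-summand. So the cokernel has dimension $b_1(M_G/\langle\mu\rangle)+2-1$, which is \eqref{eq:b1-meridian-composite}. Finally, $b_1(M_H/Z(M_H))\ge g+1\ge4$, whereas every classical knot group has $H_1\cong\Z$ and hence $b_1=1$, so the two groups cannot be isomorphic.
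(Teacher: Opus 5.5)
Your proposal is correct and follows essentially the same route as the paper: nondegeneracy of $M_G*_AD$ plus \cref{lem:center-amalgam} for the center, the canonical quotient amalgam over $A/\langle\mu\rangle=\langle h\rangle$, and a rational Mayer--Vietoris count in which the edge-to-vertex difference map has rank one. The one small deviation is that you obtain $A\neq M_G$ from the nonabelianness in Condition~\ref{cond:meridian-old}, whereas the paper derives it from the hypothesis $b_1(M_G/\langle\mu\rangle)\ge g\ge 3$; both arguments are valid under the standing assumptions.
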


\begin{proof}
The edge group $A$ is proper in $D$ because
$\ell\notin\langle\mu,h\rangle$.  It is also proper in $M_G$.  Indeed,
if $M_G=A$, then
\[
M_G/\langle\mu\rangle\cong\langle h\rangle\cong\Z,
\]
contrary to $b_1(M_G/\langle\mu\rangle)\ge g\ge3$.  Thus the amalgam
$M_H=M_G*_{A}D$ is nondegenerate.  By
\cref{lem:center-amalgam},
\[
Z(M_H)
=A\cap Z(M_G)\cap Z(D)
=A\cap\langle\mu\rangle\cap D
=\langle\mu\rangle.
\]
Here $\mu$ is central in $M_G=C_G(\mu)$, and $D$ is abelian, so the
last subgroup is indeed contained in the center.  This proves
\eqref{eq:meridian-center}.

Set $N=\langle\mu\rangle$.  The subgroup $N$ is central in $M_G$, in
$D$, and in the common edge group $A$.  The induced edge maps
\[
A/N\longrightarrow M_G/N,
\qquad
A/N\longrightarrow D/N
\]
remain injective: an element of $A$ mapping into $N$ in either vertex
group already lies in $N$.  The universal property of the amalgam
therefore gives the canonical quotient formula
\[
M_H/N
\cong
(M_G/N)*_{A/N}(D/N).
\]
Under the evident identifications
\[
A/N=\langle h\rangle,
\qquad
D/N=\langle h,\ell\rangle,
\]
this is \eqref{eq:meridian-quotient}.

Apply rational group homology to the one-edge amalgam in
\eqref{eq:meridian-quotient}.  Since the degree-zero difference map is
injective, its first homology is the cokernel of
\[
H_1(\langle h\rangle;\mathbb Q)
\longrightarrow
H_1(M_G/N;\mathbb Q)
\oplus
H_1(\langle h,\ell\rangle;\mathbb Q).
\]
The domain has dimension one, and the second component sends its
generator to the nonzero basis vector $[h]$ in the rank-two abelian
vertex group.  The difference map therefore has rank one.  Hence
\[
\begin{aligned}
b_1(M_H/N)
&=b_1(M_G/N)+2-1\\
&=b_1(M_G/N)+1,
\end{aligned}
\]
which proves \eqref{eq:b1-meridian-composite}.  In particular this
first Betti number is at least $g+1>1$, whereas every classical knot
group has abelianization $\Z$ and first Betti number one.
\end{proof}

The fixed-set classification now gives the intrinsic separation needed
in the next section.  Among centralizers not inherited from the
$G$-vertex, hyperbolic elements have solvable central quotients, the
unique-$B$-vertex and mixed edge cases have abelian centralizers, and
the meridian centralizer has central quotient of first Betti number
greater than one under the preceding hypotheses.  The only remaining
centralizer whose quotient by its center can be a nonabelian hyperbolic knot group is
\[
C_H(h^b)=B,
\qquad
Z(B)=\langle h\rangle,
\qquad
B/Z(B)\cong K,
\]
which is precisely the product vertex group recognized in
\cref{sec:recognition}.

\section{Product vertex groups determined by meridian-marked
centralizer pairs}\label{sec:recognition}

This section is group-theoretic.  The preceding section classifies
centralizers in one rim-surgery amalgam.  We now use that classification
inductively to recognize every product vertex group from the ambient
group together with the distinguished meridian.  The recognition datum
is the simultaneous conjugacy class of a centralizer and a meridian
inside it, together with the induced quotient by the center.  We also
show that this characterization persists under later amalgamations and
is therefore preserved by every automorphism fixing the meridian.

Recall that a meridian-marked group is a pair $(Q,\mu)$ with a
distinguished element.  An isomorphism of meridian-marked groups sends
the distinguished element literally to the distinguished element in
the target.  Vertex groups in a graph-of-groups decomposition, on the
other hand, are determined only up to conjugacy in the ambient group.

\begin{definition}[The ordered rank-two surgery groups]
\label{def:clean-tower}
Fix a finite initial segment of the construction, and enumerate its
rim surgeries with rank-two edge group in chronological order: first
the one vertical-boundary surgery in even genus or the two
vertical-boundary surgeries in odd genus, and then the specified
ordered rim surgeries along $d_1,d_2,\ldots,d_N$.  Write
\[
\Gamma_0=A_0
\]
and, for $1\le r\le R$, at the $r$-th rank-two attachment write
\begin{equation*}
\Gamma_r
=
\Gamma_{r-1}*_{E_r}B_r,
\qquad
E_r=\langle\mu,h_r\rangle,
\qquad
B_r=K_r\times\langle z_r\rangle,
\end{equation*}
where $z_r$ is identified with $h_r$ and the chosen meridian
$m_r\in K_r$ is identified with $\mu$.  Thus, in even genus,
$\Gamma_1=G_0$, while in odd genus
\[
\Gamma_1=G_0^{(1)},
\qquad
\Gamma_2=G_0.
\]
In even genus,
\[
\Gamma_{i+1}=G_i
\qquad(0\le i\le N),
\]
whereas in odd genus,
\[
\Gamma_{i+2}=G_i
\qquad(0\le i\le N).
\]
All edge maps are injective, so the normal-form theorem gives canonical
inclusions
\[
\Gamma_{r-1}\hookrightarrow\Gamma_r.
\]
We therefore regard every earlier group \(\Gamma_s\) and every earlier
product vertex group \(B_s\) as subgroups of \(\Gamma_r\) whenever
\(s\le r\). This notation
only reindexes the groups occurring in the construction; it does not
define a larger abstract class of groups.  The preliminary knot-group
factors $K_j^D$ in $A_0$ are not among the $K_r$.
\end{definition}

We begin with the meridian centralizer in the cyclic-edge preliminary
group.

\begin{lemma}[Powers of the meridian in $A_0$]
\label{lem:disk-meridian-powers}
For every $n\ne0$,
\[
C_{A_0}(\mu^n)=C_{A_0}(\mu)
=\langle\mu\rangle\times U_0.
\]
Moreover,
\[
Z(C_{A_0}(\mu))=\langle\mu\rangle,
\qquad
b_1(C_{A_0}(\mu)/\langle\mu\rangle)=g.
\]
\end{lemma}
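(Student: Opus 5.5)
The plan is to redo the Bass--Serre fixed-set argument from \cref{prop:disk-group}, this time for a nonzero power \(\mu^n\) instead of \(\mu\). We induct over the cyclic amalgams
\[
A^{(j)}=A^{(j-1)}*_{\langle\mu\rangle}K_j^D,
\]
proving at each stage that
\[
C_{A^{(j)}}(\mu^n)=C_{A^{(j)}}(\mu)
\qquad\text{and}\qquad
\Trans_{A^{(j)}}(\mu^n,\langle\mu\rangle)=C_{A^{(j)}}(\mu).
\]
The base case \(A^{(0)}=\langle\mu\rangle\) is trivial. On the knot side, \cref{lem:knot-centralizers}(iv) gives
\[
C_{K_j^D}(\mu^n)=\{q:q^{-1}\mu^nq\in\langle\mu\rangle\}=\langle\mu,\ell_j^D\rangle .
\]
On the old side, suppose \(a^{-1}\mu^n a=\mu^k\). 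Abelianizing with \(H_1=\Z\langle[\mu]\rangle\) forces \(k=n\), so \(a\in C_{A^{(j-1)}}(\mu^n)\), which equals \(C_{A^{(j-1)}}(\mu)\) by induction. Since \(\mu^n\) fixes the base edge, \cref{lem:fixed-edge-centralizer} applies with \(A=\langle\mu\rangle\), \(U=C_{A^{(j-1)}}(\mu)\), \(V=\langle\mu,\ell_j^D\rangle\). It gives \(C_{A^{(j)}}(\mu^n)=U*_{\langle\mu\rangle}V\). This is the same subgroup that \eqref{eq:disk-centralizer-induction} identifies with \(C_{A^{(j)}}(\mu)\). The transporter statement at level \(j\) then follows by the same abelianization argument. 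Iterating to \(j=g\) and using \eqref{eq:disk-meridian-centralizer} gives the first displayed equality.

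For the center, the centralizer is \(\langle\mu\rangle\times U_0\) with \(U_0\) free of rank \(g\ge3\) by \cref{prop:disk-group}. Hence
\[
Z(C_{A_0}(\mu))=\langle\mu\rangle\times Z(U_0)=\langle\mu\rangle,
\]
because a nonabelian free group is centerless. The quotient by \(\langle\mu\rangle\) is isomorphic to \(U_0\), whose abelianization is \(\Z^g\). This gives \(b_1=g\).

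The only delicate point is the transporter hypothesis needed for \cref{lem:fixed-edge-centralizer}, namely identifying \(\Trans(\mu^n,\langle\mu\rangle)\) on each vertex side. Malnormality of the cusp subgroup handles the knot side, and the homology normalization from \cref{prop:early-homology} handles the old side. I expect no further obstacle.
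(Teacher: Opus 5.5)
Your proposal is correct and follows essentially the same route as the paper: an induction over the cyclic amalgams $A^{(j)}$ using \cref{lem:fixed-edge-centralizer} with $U=C_{A^{(j-1)}}(\mu)$ and $V=\langle\mu,\ell_j^D\rangle$. The transporter hypotheses come from abelianization on the old side and cusp malnormality on the knot side, and the center and Betti-number computation from $\langle\mu\rangle\times U_0$ is the same. The only differences are cosmetic: you start the induction at $A^{(0)}=\langle\mu\rangle$ rather than $A^{(1)}=K_1^D$, and you identify $\langle U,V\rangle$ with $C_{A^{(j)}}(\mu)$ via \eqref{eq:disk-centralizer-induction} rather than by reapplying the fixed-edge lemma with $x=\mu$.
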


\begin{proof}
For $1\le j\le g$, set
\[
A^{(j)}
=
K_1^D*_{\langle\mu\rangle}\cdots
*_{\langle\mu\rangle}K_j^D.
\]
We prove simultaneously, for every $n\ne0$, that
\[
C_{A^{(j)}}(\mu^n)=C_{A^{(j)}}(\mu)
\]
and
\[
\Trans_{A^{(j)}}(\mu^n,\langle\mu\rangle)
=C_{A^{(j)}}(\mu^n),
\qquad
\Trans_{A^{(j)}}(\mu,\langle\mu\rangle)
=C_{A^{(j)}}(\mu).
\]
For $A^{(1)}=K_1^D$, all three equalities are
\cref{lem:knot-centralizers}.

Assume them for $A^{(j-1)}$ and write
\[
A^{(j)}
=
A^{(j-1)}*_{\langle\mu\rangle}K_j^D.
\]
The two cyclic edge maps are injective by the induction in the proof of
\cref{prop:disk-group}.  Put
\[
U=C_{A^{(j-1)}}(\mu),
\qquad
V=\langle\mu,\ell_j^D\rangle.
\]
For $x=\mu^n$, the inductive hypothesis gives
\[
C_{A^{(j-1)}}(x)
=\Trans_{A^{(j-1)}}(x,\langle\mu\rangle)=U,
\]
while \cref{lem:knot-centralizers} gives
\[
C_{K_j^D}(x)
=\Trans_{K_j^D}(x,\langle\mu\rangle)=V.
\]
Hence \cref{lem:fixed-edge-centralizer} yields
\[
C_{A^{(j)}}(\mu^n)
=\langle U,V\rangle
\cong U*_{\langle\mu\rangle}V.
\]
The identical application with $x=\mu$ has the same two vertex-side
groups, so it gives the same centralizer.

Abelianizing the cyclic amalgam gives
\[
H_1(A^{(j)};\mathbb Z)=\mathbb Z\langle[\mu]\rangle.
\]
If $q^{-1}\mu^nq=\mu^k$, abelianization forces $k=n$, so $q$
centralizes $\mu^n$.  Thus its transporter into $\langle\mu\rangle$
equals its centralizer; the case $n=1$ gives the same assertion for
$\mu$.  This completes the induction.

Taking $j=g$ and using \eqref{eq:disk-meridian-centralizer} gives
\[
C_{A_0}(\mu^n)=C_{A_0}(\mu)
=\langle\mu\rangle\times U_0.
\]
The free group $U_0=\langle\ell_1^D,\ldots,\ell_g^D\rangle$ is
nonabelian and centerless.  Therefore the center of this direct product
is $\langle\mu\rangle$, and its quotient is $U_0$, whose first Betti
number is $g$.
\end{proof}

The preferred-longitude subgroups provide the elliptic case needed in
the analysis of the vertical-boundary edge groups.

\begin{lemma}[Centralizer--transporter property for a preferred
longitude in $A_0$]
\label{lem:disk-leaf-power-clean}
Let $c=\ell_j^D$ be the preferred longitude in the factor $K_j^D$ of
\[
A_0
=
K_1^D*_{\langle\mu\rangle}\cdots
*_{\langle\mu\rangle}K_g^D.
\]
For every $a,b\in\mathbb Z$ with $b\ne0$,
\[
C_{A_0}(\mu^ac^b)=\langle\mu,c\rangle
\]
and
\[
\Trans_{A_0}
(\mu^ac^b,\langle\mu,c\rangle)
=
\langle\mu,c\rangle.
\]
\end{lemma}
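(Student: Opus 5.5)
The plan is to use the Bass--Serre tree \(T\) of the iterated cyclic amalgam, viewed as a tree of groups with vertex groups \(K_1^D,\ldots,K_g^D\) and all edge groups \(\langle\mu\rangle\); equivalently, one can induct along the one-edge splittings \(A^{(j)}=A^{(j-1)}*_{\langle\mu\rangle}K_j^D\) as in \cref{lem:disk-meridian-powers}. Put \(x=\mu^ac^b\) with \(b\ne0\), and set \(A_c=\langle\mu,c\rangle=P_{K_j}\), the cusp subgroup of \(K_j^D\), which is \(\mathbb Z^2\).

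First I determine the fixed set of \(x\). Inside \(K_j^D\), the element \(x\) is a nontrivial parabolic in \(P_{K_j}\) that is not in \(\langle\mu\rangle\), since \(b\ne0\). By malnormality (\cref{lem:knot-centralizers}(iii)), the argument already used in case (iii) of \cref{prop:complete-centralizers} shows that \(x\) is not \(K_j^D\)-conjugate into \(\langle\mu\rangle\). So \(x\) fixes no edge at the base \(K_j^D\)-vertex. Since fixed sets are connected subtrees, \(\Fix_T(x)\) is exactly that vertex. Consequently every element of \(C_{A_0}(x)\) fixes this vertex and lies in \(K_j^D\), so \(C_{A_0}(x)=C_{K_j^D}(x)=P_{K_j}=\langle\mu,c\rangle\) by \cref{lem:knot-centralizers}(ii).

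For the transporter, suppose \(q^{-1}xq=\mu^{a'}c^{b'}\in A_c\). Abelianization in \(H_1(A_0)=\mathbb Z\langle[\mu]\rangle\) gives \(a'=a\), because \([c]=0\). I expect \(b'\ne0\), since otherwise a conjugate of \(x\) would fix an edge, contradicting the previous paragraph. Then \(q^{-1}xq\) has unique fixed vertex the base \(K_j^D\)-vertex \(v\), while its fixed set is also \(q^{-1}v\). Hence \(q\) fixes \(v\) and \(q\in K_j^D\). This gives \(q^{-1}Pq\cap P\ne1\) with \(P=P_{K_j}\), so malnormality yields \(q\in P=A_c\). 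The reverse inclusion holds because \(A_c\) is abelian.

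The main obstacle is justifying that the fixed set is a single vertex in the multi-factor tree. This needs care that the vertex type (a \(K_j^D\) vertex) is preserved and that no edge at that vertex is fixed. For the latter I would argue directly: the edges at \(v\) are the cosets \(k\langle\mu\rangle\), \(k\in K_j^D\), and such an edge is fixed iff \(k^{-1}xk\in\langle\mu\rangle\). Malnormality excludes this as above. If the iterated-amalgam tree is awkward to use directly, I would instead induct over \(A^{(i)}\), treating \(i<j\) and \(i\ge j\) separately. In the inductive step the new factor's vertex is attached along \(\langle\mu\rangle\), which \(x\) does not fix up to conjugacy, so neither the centralizer nor the transporter changes.
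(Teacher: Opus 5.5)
Your proposal is correct and follows the paper's proof essentially step for step. Cusp malnormality shows that \(x\) is not \(K_j^D\)-conjugate into \(\langle\mu\rangle\), so \(\Fix_T(x)\) is the single \(K_j^D\)-vertex, which gives the centralizer. For the transporter, the target cannot be a pure meridian power, the unique fixed vertex forces \(q\in K_j^D\), and malnormality then places \(q\) in \(\langle\mu,c\rangle\). Your abelianization step giving \(a'=a\) is not needed, but it does no harm.
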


\begin{proof}
Put $x=\mu^ac^b$.  The element $x$ is a nontrivial parabolic element
of $K_j^D$, and \cref{lem:knot-centralizers} gives
\begin{equation}
\label{eq:disk-leaf-centralizer-in-factor}
C_{K_j^D}(x)=\langle\mu,c\rangle.
\end{equation}
It is not conjugate in $K_j^D$ into $\langle\mu\rangle$.  Indeed, if
$q^{-1}xq=\mu^r$, then $r\ne0$, and
\[
q^{-1}\langle\mu,c\rangle q
\cap
\langle\mu,c\rangle
\ne1.
\]
Cusp malnormality places $q$ in $\langle\mu,c\rangle$.  Conjugation
is then trivial, so $x=\mu^r$, contradicting $b\ne0$ in the
rank-two peripheral subgroup.

Let $T_D$ be the Bass--Serre tree of the cyclic-edge decomposition of
$A_0$, and let $v_j$ be the vertex stabilized by $K_j^D$.  The element
$x$ fixes $v_j$.  If it fixed an edge incident to $v_j$, it would be
conjugate inside $K_j^D$ into $\langle\mu\rangle$, which was just
excluded.  If it fixed any other point, connectedness of its fixed
subtree would force such an incident edge.  Therefore
\begin{equation}
\label{eq:disk-leaf-unique-vertex}
\Fix_{T_D}(x)=\{v_j\}.
\end{equation}
Every centralizing element preserves this vertex, so
\eqref{eq:disk-leaf-centralizer-in-factor} gives
\[
C_{A_0}(x)=\langle\mu,c\rangle.
\]

Suppose now that
\[
q^{-1}xq\in\langle\mu,c\rangle.
\]
The target cannot be a nonzero pure meridian power, because such an
element fixes an edge, whereas a conjugate of $x$ has a one-vertex
fixed subtree.  It is therefore another nonmeridional element of
$\langle\mu,c\rangle$, and the preceding argument gives its unique
fixed vertex as $v_j$.  Since
\[
\Fix_{T_D}(q^{-1}xq)=q^{-1}\Fix_{T_D}(x),
\]
one has $q^{-1}v_j=v_j$, so $q\in K_j^D$.  The intersection
\[
q^{-1}\langle\mu,c\rangle q
\cap
\langle\mu,c\rangle
\]
contains the nontrivial element $q^{-1}xq$, and cusp malnormality in
$K_j^D$ therefore gives $q\in\langle\mu,c\rangle$.  The reverse
inclusion is immediate because this subgroup is abelian.
\end{proof}

\begin{lemma}[Centralizer--transporter property for the
vertical-boundary edge groups]
\label{lem:seam-power-clean}
In even genus, $\langle\mu,w\rangle$ has the
centralizer--transporter property in $A_0$.  In odd genus,
$\langle\mu,w_1\rangle$ has this property in $A_0$, and
$\langle\mu,w_2\rangle$ has this property in
\[
G_0^{(1)}=A_0*_{E_1}C_1^V,
\qquad
E_1=\langle\mu,w_1\rangle,
\]
before the second vertical-boundary surgery is performed.
\end{lemma}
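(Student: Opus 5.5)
The plan is to reduce everything to the Bass--Serre tree \(T_D\) of the cyclic-edge decomposition
\(A_0=K_1^D*_{\langle\mu\rangle}\cdots*_{\langle\mu\rangle}K_g^D\)
and apply \cref{lem:primitive-axis-generator}.

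\emph{First step: hyperbolicity.} Let \(w'\) denote \(w\) (even genus) or \(w_1\) or \(w_2\) (odd genus). In every case \(w'\in U_0\setminus\{1\}\) and \(w'\) is not a proper power in \(U_0\) (\cref{prop:even-seed-injection} and the odd-genus discussion). I will show that \(w'\) is hyperbolic on \(T_D\).

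By \cref{prop:disk-group}, \(C_{A_0}(\mu)=\langle\mu\rangle\times U_0\), and \(U_0\) is the free product \(\langle\ell_1^D\rangle*\cdots*\langle\ell_g^D\rangle\). The action of \(C_{A_0}(\mu)\) on \(\Fix_{T_D}(\mu)\) is, modulo \(\mu\), the Bass--Serre tree of this free product; this is the one-edge quotient computation in the proof of \cref{prop:disk-group}. An element of \(U_0\) that is elliptic there is conjugate in \(U_0\) into some \(\langle\ell_j^D\rangle\).

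Suppose \(w'\) were elliptic on \(T_D\). Then it fixes a vertex \(v\) of \(T_D\). Since \(w'\) commutes with \(\mu\), it also fixes the projection of \(v\) to \(\Fix_{T_D}(\mu)\), so it is elliptic in the \(U_0\)-tree just described. Hence \(w'\) is conjugate in \(U_0\) to a power \((\ell_j^D)^k\) with \(k\neq 0\). This power is nonzero in \(H_1(U_0)\), so \(w'\) is nonzero in \(H_1(U_0)\). That contradicts the case analysis:
\begin{itemize}
\item \(w\) and \(w_1^{-1}\prod[a_j,b_j]\)-type words are products of commutators, hence vanish in \(H_1(U_0)\);
\item for \(w_2=c\), which is a basis element, I will instead use that the \(\ell^D\)-basis and the \((a,b,c)\)-basis differ by the free-basis isomorphism of \cref{prop:horizontal-peripheral}, and check directly that \(c=\rho(\bar c)\) is not conjugate into a single \(\langle\ell_j^D\rangle\).
\end{itemize}
The \(w_2\) case is the main obstacle: it needs a geometric check that the boundary curve \(B_2\) is not homotopic into a single cocore-dual loop \(y_j^+\). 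I would try first to arrange the arc system so that this holds visibly.

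\emph{Second step: the axis and its stabilizers (the \(A_0\) cases).} Let \(L=\Ax(w')\). Since \(w'\) centralizes \(\mu\), the axis \(L\) lies in the \(w'\)-invariant subtree \(\Fix_{T_D}(\mu)\), so \(\mu\) fixes \(L\) pointwise.

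Any element fixing \(L\) pointwise lies in an edge stabilizer \(q\langle\mu\rangle q^{-1}\) that contains \(\mu\). Abelianization, together with \(H_1(A_0)=\mathbb Z\langle[\mu]\rangle\), forces this edge stabilizer to be \(\langle\mu\rangle\). Hence \(\Fix^{\mathrm{pt}}(L)=\langle\mu\rangle\).

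Next, \(w'\) is not a proper power in \(A_0\). Suppose \(u^n=w'\). Then \(u\) preserves \(L\) and so normalizes \(\langle\mu\rangle\). Homology forces \(u\) to commute with \(\mu\), so \(u=\mu^kv\) with \(v\in U_0\). Since \([w']=0\), homology gives \(nk=0\), so \(k=0\) and \(v^n=w'\) in \(U_0\), contradicting \cref{lem:free-boundary-words}.

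Torsion-freeness (\cref{lem:tower-torsionfree}) and \([w']=0\) are in hand. So \cref{lem:primitive-axis-generator} gives the centralizer--transporter property of \(\langle\mu,w'\rangle\) in \(A_0\). This settles \(w\) in even genus and both \(w_1\) and \(w_2\) in \(A_0\).

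\emph{Third step: \(w_2\) in \(G_0^{(1)}=A_0*_{E_1}C_1^V\).} Let \(x=\mu^aw_2^b\) with \(b\neq 0\). I will show that \(x\) fixes only the base \(A_0\)-vertex of the Bass--Serre tree of this splitting. Equivalently, no \(A_0\)-conjugate of \(x\) lies in \(E_1\).

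Suppose \(q^{-1}xq=\mu^cw_1^d\) with \(q\in A_0\).
\begin{itemize}
\item Homology gives \(c=a\).
\item If \(d=0\), the centralizer of the right-hand side is the nonabelian group \(C_{A_0}(\mu^a)=C_{A_0}(\mu)\) (\cref{lem:disk-meridian-powers}), while the centralizer of \(x\) is abelian by the second step. So \(d\neq 0\).
\item Comparing centralizers then gives \(q^{-1}\langle\mu,w_2\rangle q=\langle\mu,w_1\rangle\). Hence \(q^{-1}\mu q=\mu w_1^e\) (using homology). The same abelian-versus-nonabelian centralizer comparison forces \(e=0\), so \(q\in C_{A_0}(\mu)\).
\item Therefore \(w_2^b\) and \(w_1^d\) are conjugate in \(U_0\), which contradicts their noncommensurability.
\end{itemize}

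Since the fixed set of \(x\) is a single vertex, \(C_{G_0^{(1)}}(x)=C_{A_0}(x)=\langle\mu,w_2\rangle\). For the transporter, if \(q^{-1}xq\in E_2\subset A_0\), then both \(x\) and \(q^{-1}xq\) have the base vertex as their unique fixed vertex. Hence \(q\in A_0\), and the \(A_0\) transporter equality from the second step finishes the proof.
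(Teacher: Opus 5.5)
There is a genuine gap in your first step, and the rest of the proof depends on it. Your route needs each $w'$ to be hyperbolic on $T_D$. The homological argument gives this only for $w$ in even genus.

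\begin{itemize}
\item It fails for $w_1$. Since $w_1^{-1}=\prod_{j=1}^r[a_j,b_j]\,c$, one has $[w_1]=-[c]\neq0$ in $H_1(U_0)$. So $w_1$ is not a product of commutators, and homology cannot rule out $w_1$ being $U_0$-conjugate to some $(\ell_j^D)^{\pm1}$. The same holds for $w_2=c$, as you noticed.
\item This is not just a missing check. Whether $w_1$ and $w_2$ are hyperbolic depends on the arc system chosen in \cref{sec:disk}, which the statement does not constrain. For example, take a handle decomposition in which one $1$-handle has adjacent feet, and let $q_0$ lie on the other boundary component. Then $B_2$ meets exactly one cocore endpoint and is freely homotopic to a dual loop $(y_j^+)^{\pm1}$. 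Hence $w_2$ is $U_0$-conjugate to $(\ell_j^D)^{\pm1}$ and is elliptic on $T_D$. There is no axis, and \cref{lem:primitive-axis-generator} does not apply.
\item Arranging the arc system to avoid this would add a hypothesis to the construction rather than prove the lemma as stated, and you have not carried it out.
\end{itemize}

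The missing idea is to handle the elliptic case directly instead of excluding it.

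\begin{itemize}
\item If a non-proper-power $v\in U_0$ is elliptic on the free-product tree, then $v=q(\ell_j^D)^{\pm1}q^{-1}$ with $q\in U_0\le C_{A_0}(\mu)$. Hence $\langle\mu,v\rangle=q\langle\mu,\ell_j^D\rangle q^{-1}$.
\item By the conjugation invariance noted in \cref{def:power-clean}, it suffices to treat the cusp subgroup $\langle\mu,\ell_j^D\rangle$.
\item For $x=\mu^a(\ell_j^D)^b$ with $b\neq0$, cusp malnormality in $K_j^D$ shows that $x$ is not $K_j^D$-conjugate into $\langle\mu\rangle$. So $\Fix_{T_D}(x)$ is the single vertex stabilized by $K_j^D$.
\item Consequently the centralizer of $x$, and its transporter into $\langle\mu,\ell_j^D\rangle$, both lie in $K_j^D$. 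There malnormality again gives $\langle\mu,\ell_j^D\rangle$. This is \cref{lem:disk-leaf-power-clean}.
\end{itemize}

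Once this case is added, the rest of your proposal goes through. Your second step correctly handles the hyperbolic case; reducing the proper-power question to $U_0$ is a valid substitute for the paper's shortest-translation argument. Your third step for $w_2$ in $G_0^{(1)}$ follows the paper's argument, with a slightly shorter transporter deduction. It needs the $A_0$ properties of $E_1$ and $E_2$, which the repaired first two steps now supply.
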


\begin{proof}
We first prove a uniform statement in $A_0$.  Let
$v\in U_0\setminus\{1\}$ be an element that is not a proper power.
Then $\langle\mu,v\rangle$ has the centralizer--transporter property
in $A_0$.

Let $T_D$ be the Bass--Serre tree of the cyclic-edge decomposition of
$A_0$.  The action of
\[
C_{A_0}(\mu)/\langle\mu\rangle\cong U_0
\]
on $\Fix_{T_D}(\mu)$ is the Bass--Serre action for
\[
U_0
=
\langle\ell_1^D\rangle*\cdots*
\langle\ell_g^D\rangle.
\]
There are two cases.

Suppose first that $v$ is hyperbolic in this free-product tree, and let
$L$ be its axis.  Every edge of $\Fix_{T_D}(\mu)$ has stabilizer
exactly $\langle\mu\rangle$ in $A_0$.  Indeed, its stabilizer is a
conjugate of $\langle\mu\rangle$ containing $\mu$; abelianization
forces the conjugate generator to be $\mu$, rather than a distinct
power.  Hence
\[
\Fix_{A_0}^{\mathrm{pt}}(L)=\langle\mu\rangle.
\]
If an element of $A_0$ preserves $L$ setwise, it normalizes this
pointwise stabilizer.  It therefore conjugates $\mu$ to
$\mu^{\pm1}$, and
$H_1(A_0;\mathbb Z)=\mathbb Z\langle[\mu]\rangle$ forces the positive
sign.  Thus the setwise stabilizer lies in $C_{A_0}(\mu)$.

After quotienting by \(\langle\mu\rangle\), the pointwise stabilizer of
\(L\) is trivial.  Hence the orientation-preserving setwise stabilizer
of \(L\) embeds in the translation group of the simplicial line and is
infinite cyclic.  Let \(r\in U_0\) induce its shortest positive
translation.  Since \(v\) preserves \(L\) and acts by a nonzero
translation, one has
\[
v=r^k
\]
for some \(k\in\mathbb Z\setminus\{0\}\).  If \(|k|>1\), then \(v\)
is a proper power in \(U_0\).  Thus \(k=\pm1\), and after orienting
\(L\) in the translation direction of \(v\), one has \(k=1\).
Therefore the orientation-preserving image is generated by the
translation induced by \(v\).
Reflections are excluded by
\cref{lem:tower-torsionfree,lem:reflection-exclusion}, using
$H_1(A_0;\mathbb Z)=\mathbb Z\langle[\mu]\rangle$.  Finally,
$v\in U_0\le[A_0,A_0]$ by \eqref{eq:F-characterization}.  All
hypotheses of \cref{lem:clean-axis-criterion} are satisfied, and that
lemma gives the centralizer--transporter property for
$\langle\mu,v\rangle$.

Suppose instead that $v$ is elliptic.  In a free product of the cyclic
factors $\langle\ell_j^D\rangle$, an elliptic element is conjugate to
$(\ell_j^D)^r$ for some $j$ and some $r\ne0$.  Since $v$ is not a
proper power, $r=\pm1$.  Thus, for some $q\in U_0$,
\[
v=q(\ell_j^D)^{\pm1}q^{-1}.
\]
The element $q$ centralizes $\mu$, so
\[
\langle\mu,v\rangle
=
q\langle\mu,\ell_j^D\rangle q^{-1}.
\]
The conclusion now follows from
\cref{lem:disk-leaf-power-clean} and the conjugation invariance in
\cref{def:power-clean}.

The even-genus element \(w\) is nontrivial and is not a proper power
by \cref{prop:even-seed-injection}.  In odd genus,
\(w_2=c\) is a member of the free basis of \(U_0\), while the
cyclically reduced word
\[
w_1=
\left(
\prod_{j=1}^r[a_j,b_j]c
\right)^{-1}
\]
contains each oriented basis letter exactly once.  Hence neither
\(w_1\) nor \(w_2\) is a proper power, by
\cref{lem:free-boundary-words}.  The uniform statement therefore proves
that $\langle\mu,w\rangle$ and
$\langle\mu,w_1\rangle$ have the required property in $A_0$; it also
proves the property for $\langle\mu,w_2\rangle$ in $A_0$ before the
first odd vertical-boundary attachment.

It remains to show that the property for $w_2$ persists after the
first odd attachment.  Put
\[
G_0^{(1)}=A_0*_{E_1}C_1^V,
\qquad
E_1=\langle\mu,w_1\rangle,
\qquad
E_2=\langle\mu,w_2\rangle.
\]
Fix
\[
x=\mu^aw_2^b,
\qquad b\ne0.
\]
We first prove that no $A_0$-conjugate of $x$ lies in $E_1$.  Suppose
that
\[
q^{-1}xq=\mu^{a'}w_1^{b'}
\]
for some $q\in A_0$.  Abelianization gives $a'=a$.  If $b'=0$ and
$a=0$, the right side is trivial, whereas $x\ne1$.  If $b'=0$ and
$a\ne0$, the two conjugate elements would have conjugate
centralizers, but
\[
C_{A_0}(x)=E_2
\]
is abelian while
\[
C_{A_0}(\mu^a)=C_{A_0}(\mu)
\]
is nonabelian by \cref{lem:disk-meridian-powers}.  Hence $b'\ne0$.

The centralizer--transporter property for $E_1$ and $E_2$ now gives
\[
q^{-1}E_2q=E_1.
\]
In particular,
\[
q^{-1}\mu q=\mu w_1^s
\]
for some $s\in\mathbb Z$, because the meridional exponent is one after
abelianization.  If $s\ne0$, then the right side has the abelian
centralizer $E_1$, whereas $\mu$ has the nonabelian centralizer
$C_{A_0}(\mu)$.  Thus $s=0$, and $q\in C_{A_0}(\mu)$.  Passing to
\[
C_{A_0}(\mu)/\langle\mu\rangle\cong U_0
\]
would then conjugate $\langle w_2\rangle$ to
$\langle w_1\rangle$.  This contradicts the noncommensurability of
$w_1$ and $w_2$ proved in \cref{sec:seed}.  No such conjugacy into
$E_1$ exists.

Let $T_1$ be the Bass--Serre tree of $G_0^{(1)}$.  The element $x$
fixes the base $A_0$-vertex.  If it fixed an edge, connectedness of its
fixed subtree would give a fixed edge incident to that vertex, and
therefore an $A_0$-conjugate of $x$ in $E_1$.  The preceding paragraph
excludes this, so the base $A_0$-vertex is the unique fixed vertex.
Consequently
\[
C_{G_0^{(1)}}(x)
=
C_{A_0}(x)
=
E_2.
\]

Finally, suppose that
\[
y^{-1}xy\in E_2
\]
for some $y\in G_0^{(1)}$.  The target cannot be a pure nonzero
meridian power, because such a power fixes the base edge of $T_1$,
whereas a conjugate of $x$ has a one-vertex fixed subtree.  It is
therefore of the form $\mu^{a''}w_2^{b''}$ with $b''\ne0$, and the
preceding argument gives the base $A_0$-vertex as its unique fixed
vertex.  Conjugacy of fixed subtrees forces $y$ to stabilize that
vertex, so $y\in A_0$.  The transporter equality for $E_2$ in $A_0$
then gives $y\in E_2$.  Thus both the centralizer and transporter are
unchanged in $G_0^{(1)}$, as required.
\end{proof}

The next results control conjugacy across one amalgam.  Their purpose is
to retain the exponent pair of an edge element and, in particular, to
reflect ambient meridian conjugacy back into a vertex group.

\begin{lemma}[Edge-group conjugacy chain]
\label{lem:edge-conjugacy-chain}
Let
\[
H=G*_{A}B
\]
be an amalgamated product with $A$ abelian.  If $a,a'\in A$ are
conjugate in $H$, then there are elements
\[
a=a_0,a_1,\ldots,a_n=a'
\]
of $A$ such that each $a_{i+1}$ is conjugate to $a_i$ by an element of
one of the base vertex groups $G$ or $B$.
\end{lemma}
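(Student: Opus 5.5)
We argue on the Bass--Serre tree \(T\) of \(H=G*_AB\), with base edge \(e\) stabilized by \(A\) and endpoints \(v_G\), \(v_B\). Suppose \(a'=tat^{-1}\) with \(t\in H\). Since \(a\) fixes \(e\), the conjugate \(a'=tat^{-1}\) fixes \(te\). Also \(a'\in A\), so \(a'\) fixes \(e\) as well. The fixed set \(\Fix_T(a')\) is a connected subtree, so it contains the geodesic from \(e\) to \(te\). The plan is to walk along this geodesic one edge at a time. At each step we transport the element back to \(e\) by a vertex-group element, and we record the resulting element of \(A\).

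Concretely, write the edges of the geodesic as \(e=f_0,f_1,\ldots,f_n=te\). Consecutive edges \(f_{i-1},f_i\) share a vertex, which is a translate of \(v_G\) or of \(v_B\). We proceed by induction on the length \(n\). Equivalently, we work with \(t\) in normal form \(t=g_1g_2\cdots g_n c\) with \(c\in A\) and syllables alternating between \(G\setminus A\) and \(B\setminus A\). In that case the geodesic passes through the edges \(e,\ g_1e,\ g_1g_2e,\ldots\). Put \(t_i=g_1\cdots g_i\). Because \(a'\) fixes \(t_ie\), the element \(a_i:=t_i^{-1}a't_i\) lies in \(A\). We have \(a_n=c\,a\,c^{-1}=a\), since \(A\) is abelian, and \(a_0=a'\). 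Moreover \(a_{i-1}=g_i\,a_i\,g_i^{-1}\) with \(g_i\in G\) or \(B\). Reversing the indices gives the chain from \(a\) to \(a'\) required in \cref{lem:edge-conjugacy-chain}, each step being conjugation by an element of \(G\) or of \(B\).

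Abelianness of \(A\) enters only to absorb the final edge-group factor \(c\). The case \(t\in A\) is trivial (\(n=0\)). If \(t\) has no normal-form letters outside a single vertex group, the chain has length one.

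The main point to check is that each intermediate conjugate \(t_i^{-1}a't_i\) really lies in \(A\). This is exactly where the fixed-subtree convexity is used. With the algebraic normal form, the edges \(t_ie\) are precisely the edges of the geodesic from \(e\) to \(te\), by the standard Bass--Serre description of reduced words. Since \(a'\) fixes both endpoints of this geodesic and its fixed set is convex, \(a'\) fixes every \(t_ie\), and hence \(t_i^{-1}a't_i\in\Stab_H(e)=A\). We also need that the chain ends at \(a\) rather than at a conjugate of \(a\), and abelianness of \(A\) handles this.
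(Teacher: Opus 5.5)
Your proof is correct and follows essentially the same argument as the paper. Both walk along the geodesic from $e$ to $te$ inside the fixed subtree of the edge element, transport back to the base edge by vertex-group elements, and use abelianness of $A$ to absorb the final edge-group factor. The only difference is cosmetic: you read the geodesic off the reduced normal form of the conjugator, whereas the paper picks the coset representatives $q_i$ recursively, one edge at a time.
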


\begin{proof}
Let $T$ be the Bass--Serre tree and let $e$ be the base edge.  Choose
$x\in H$ with
\[
a'=x^{-1}ax.
\]
The element $a$ fixes $e$, and the equality $ax=xa'$ shows that it also
fixes $xe$.  Therefore the geodesic of edges
\[
e=e_0,e_1,\ldots,e_n=xe
\]
lies in the connected subtree $\Fix_T(a)$.

Choose representatives $q_i\in H$ recursively so that
$q_i e=e_i$ and $q_0=1$.  If $e_i$ and $e_{i+1}$ meet at the vertex
$q_i v_G$, then every edge incident to that vertex has the form
$q_i g e$ with $g\in G$; choose $q_{i+1}=q_i g$.  If they meet at the
vertex $q_i v_B$, choose analogously $q_{i+1}=q_i b$ with $b\in B$.
Thus
\[
q_{i+1}=q_i v_i,
\qquad
v_i\in G\text{ or }v_i\in B,
\]
where the vertex group used at each step is the base vertex group
obtained after conjugating the common transported vertex back by
$q_i^{-1}$.

Since $a$ fixes $q_i e$, the element
\[
a_i=q_i^{-1}a q_i
\]
lies in $A$, and
\[
a_{i+1}=v_i^{-1}a_i v_i.
\]
At the final edge, $q_nA=xA$, so $q_n=x\alpha$ for some
$\alpha\in A$.  Hence
\[
a_n=\alpha^{-1}x^{-1}ax\alpha
=\alpha^{-1}a'\alpha
=a',
\]
because $A$ is abelian.  This gives the required chain.
\end{proof}

\begin{lemma}[Conjugacy rigidity inside the edge group]
\label{lem:edge-slope-rigidity}
Let
\[
H=G*_{A}B,
\qquad
A=\langle\mu,h\rangle\cong\mathbb Z^2,
\qquad
B=K\times\langle h\rangle,
\]
where $A$ has the centralizer--transporter property in $G$ relative to
$\mu$, where
\[
\Trans_G(\mu^n,A)=C_G(\mu^n)
\qquad(n\ne0),
\]
and where $K$ is a hyperbolic knot group with standard cusp subgroup
$P_K=\langle\mu,\ell\rangle$.  If $a,a'\in A$ are conjugate in $H$,
then $a=a'$.
\end{lemma}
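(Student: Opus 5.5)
By \cref{lem:edge-conjugacy-chain}, it suffices to treat a single step of the chain. So we will show that if \(a\in A\) and \(v^{-1}av\in A\) for some \(v\) in \(G\) or in \(B\), then \(v^{-1}av=a\). Induction along the chain then gives \(a=a'\). The argument is a case analysis on the exponent pair \(a=\mu^ph^q\) and on which vertex group contains \(v\). The case \(a=1\) is trivial.

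Suppose first that \(v\in G\). If \(q\ne0\), the centralizer--transporter property gives \(\Trans_G(a,A)=A\), so \(v\in A\). Since \(A\) is abelian, \(v\) fixes \(a\). If \(q=0\) and \(p\ne0\), the hypothesis \(\Trans_G(\mu^p,A)=C_G(\mu^p)\) says directly that \(v\) centralizes \(\mu^p\). Thus every \(G\)-step fixes its element.

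Now suppose \(v\in B=K\times\langle h\rangle\). Write \(v=(k,h^r)\) and \(a=\mu^ph^q\). Because \(h\) is central in \(B\),
\[
v^{-1}av=(k^{-1}\mu^pk)\,h^q.
\]
This lies in \(A\) exactly when \(k^{-1}\mu^pk\in\langle\mu\rangle\) inside \(K\) (the \(K\)-coordinate of an element of \(A\) is a meridian power). If \(p=0\), conjugation does nothing. If \(p\ne0\), \cref{lem:knot-centralizers}(iv) gives \(k\in P_K\), which is abelian, so \(k^{-1}\mu^pk=\mu^p\). Hence \(v^{-1}av=a\) in this case as well.

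The only delicate point is the bookkeeping in the reduction. We must check that the chain produced by \cref{lem:edge-conjugacy-chain} consists of elements of the base edge group \(A\) itself, with each consecutive pair related by an element of \(G\) or of \(B\). That is exactly what the lemma provides, and its final step already uses the abelianness of \(A\). We must also identify \(A\) correctly inside \(B\) as \(\langle\mu\rangle\times\langle h\rangle\), and this uses injectivity of the edge map \(\mu\mapsto m\), \(h\mapsto h\). Neither point requires anything further. The \(G\)-side cases use exactly the two hypotheses in the statement, and the \(B\)-side case uses cusp malnormality.
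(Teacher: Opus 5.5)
Your proposal is correct and follows the paper's proof essentially step for step: reduce via \cref{lem:edge-conjugacy-chain} to a single vertex-group conjugacy, then handle the $G$-side with the centralizer--transporter property (for $q\ne0$) or the meridian transporter hypothesis (for $q=0$, $p\ne0$), and the $B$-side by centrality of $h$ together with the knot-group transporter equality of \cref{lem:knot-centralizers}, which puts $k$ in the abelian group $P_K$.
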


\begin{proof}
Apply \cref{lem:edge-conjugacy-chain}.  It is enough to prove that a
single vertex-group conjugacy taking an element of $A$ back into $A$
fixes that element.  Write the edge element as
\[
\mu^ph^q.
\]

Suppose first that the conjugator lies in $G$.  If $q\ne0$, the
centralizer--transporter property places the conjugator in $A$, whose
conjugation action on $A$ is trivial.  If $q=0$ and $p\ne0$, the
meridian transporter hypothesis places the conjugator in
$C_G(\mu^p)$, so the element is again unchanged.  The identity case
$p=q=0$ is immediate.

Now let the conjugator lie in $B$, and write it as $(k,h^r)$.  Since
the external $h$-factor is central,
\[
(k,h^r)^{-1}\mu^ph^q(k,h^r)
=
k^{-1}\mu^pk\,h^q.
\]
Thus the $h$-exponent is unchanged.  If $p=0$, the original element is
central in $B$.  If $p\ne0$ and the displayed conjugate belongs to
$A$, then
\[
k^{-1}\mu^pk\in\langle\mu\rangle.
\]
The knot-group transporter equality in
\cref{lem:knot-centralizers} gives $k\in P_K$.  Since $P_K$ is
abelian, $k^{-1}\mu^pk=\mu^p$.  Hence the exponent pair $(p,q)$ is
unchanged in every vertex-group step, and the conjugacy chain consists
entirely of equal elements.
\end{proof}

\begin{corollary}[Meridian conjugacy inside the vertex groups]
\label{cor:meridian-reflection}
Under the hypotheses of \cref{lem:edge-slope-rigidity},
\[
(H\mathbin{\cdot}\mu)\cap G=G\mathbin{\cdot}\mu
\]
and
\[
(H\mathbin{\cdot}\mu)\cap B=B\mathbin{\cdot}\mu,
\]
where $Q\mathbin{\cdot}\mu$ denotes the $Q$-conjugacy class of
$\mu$.
\end{corollary}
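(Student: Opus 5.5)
The plan is to handle the two vertex groups separately. In each case one inclusion is trivial, and the other comes from reducing an ambient conjugacy to a chain of vertex-group conjugacies inside the edge group \(A\). The inclusions \(G\mathbin{\cdot}\mu\subseteq(H\mathbin{\cdot}\mu)\cap G\) and \(B\mathbin{\cdot}\mu\subseteq(H\mathbin{\cdot}\mu)\cap B\) hold because \(\mu\in A\subseteq G\cap B\).

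For the reverse inclusion in \(G\), take \(y=x^{-1}\mu x\in G\) with \(x\in H\), and work in the Bass--Serre tree \(T\). The element \(\mu\) fixes the base edge \(e\), so \(y\) fixes \(x^{-1}e\). Both \(y\) and \(x^{-1}e\) lie over the base \(G\)-vertex \(v_G\) (the vertex condition holds because \(y\in G\)). Connectedness of \(\Fix_T(y)\) then lets us pick the geodesic from \(v_G\) to \(x^{-1}e\); its first edge at \(v_G\) is some \(ge\) with \(g\in G\) fixed by \(y\). Hence \(a:=g^{-1}yg\in A\), and \(a\) is \(H\)-conjugate to \(\mu\).

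Next apply \cref{lem:edge-slope-rigidity}: since \(\mu\) and \(a\) are conjugate elements of \(A\), we get \(a=\mu\). Therefore \(y=g\mu g^{-1}\in G\mathbin{\cdot}\mu\). The hypotheses of that lemma, including \(\Trans_G(\mu^n,A)=C_G(\mu^n)\), are exactly those assumed in the corollary, so nothing new needs to be checked.

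The \(B\) case is the same argument with \(v_B\) in place of \(v_G\). If \(y=x^{-1}\mu x\in B\), then \(y\) fixes \(v_B\) and also the edge \(x^{-1}e\). Along the geodesic we find \(b\in B\) with \(b^{-1}yb\in A\), and the slope rigidity lemma forces \(b^{-1}yb=\mu\).

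The only delicate step is producing an edge at the base vertex that is fixed by \(y\). Once \(y\) fixes both the base vertex and the translated edge \(x^{-1}e\), convexity of the fixed subtree supplies it. I expect this to be routine rather than a real obstacle, since it is just the tree argument already used in \cref{lem:edge-conjugacy-chain}.
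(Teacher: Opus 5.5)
Your proposal is correct and follows the same route as the paper. Both arguments use connectedness of the fixed subtree of the conjugate to find a fixed edge \(ge\) at the base vertex, with \(g\) in that vertex group, and then apply \cref{lem:edge-slope-rigidity} to get \(g^{-1}yg=\mu\). The only omission is the degenerate case where \(x^{-1}e\) is already incident to the base vertex, in which case you take that edge itself; this is trivial.
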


\begin{proof}
Let $T$ be the Bass--Serre tree, with base edge $e$ and base vertices
$v_G$ and $v_B$.  Suppose first that $u\in G$ is conjugate to $\mu$ in
$H$.  The element $u$ fixes $v_G$ and, as a conjugate of $\mu$, fixes
a conjugate of $e$.  Connectedness of $\Fix_T(u)$ therefore gives a
fixed edge incident to $v_G$.  Such an edge has the form $q e$ with
$q\in G$, so
\[
q^{-1}u q\in A.
\]
This element and $\mu$ lie in $A$ and are conjugate in $H$.
\Cref{lem:edge-slope-rigidity} gives $q^{-1}u q=\mu$, proving that
$u$ is already $G$-conjugate to $\mu$.  The reverse inclusion is
immediate.

The proof for $B$ is identical.  If $u\in B$ is $H$-conjugate to
$\mu$, then a fixed edge incident to $v_B$ has the form $q e$ with
$q\in B$.  Thus $q^{-1}u q\in A$, edge-group conjugacy rigidity makes
it equal to $\mu$, and $u$ is $B$-conjugate to $\mu$.
\end{proof}

We now define the recognition datum.  The use of one common conjugator
is essential: it retains the distinguished meridian inside the same
centralizer rather than independently conjugating the two entries.

\begin{definition}[Meridian-marked centralizer pairs]
\label{def:marked-centralizer}
Let $(Q,\mu)$ be a meridian-marked group, and let $(K,m)$ be a
meridian-marked nonabelian hyperbolic knot group.  Two pairs $(C,u)$
and $(C',u')$, with $C,C'\le Q$ and $u\in C$, $u'\in C'$, are
\emph{simultaneously conjugate} if there is one $q\in Q$ such that
\[
C'=qCq^{-1},
\qquad
u'=quq^{-1}.
\]
A simultaneous conjugacy class of pairs $(C,u)$ in $Q$ is said to
\emph{realize $(K,m)$} if:
\begin{enumerate}[label=\textup{(\roman*)},leftmargin=2.3em]
\item $C=C_Q(x)$ for some nontrivial $x\in Q$;
\item $u\in C$ is conjugate in $Q$ to $\mu$;
\item $C$ is nonabelian;
\item $Z(C)$ is infinite cyclic;
\item the quotient pair satisfies
\[
(C/Z(C),\bar u)\cong(K,m)
\]
as a meridian-marked group.
\end{enumerate}
No generator of $Z(C)$ is chosen.  In condition~\textup{(v)}, the
isomorphism sends the distinguished element $\bar u$ literally to
$m$.
\end{definition}

Every condition in \cref{def:marked-centralizer} is preserved by an
automorphism $\Phi\in\Aut(Q)$ with $\Phi(\mu)=\mu$.  Indeed,
\[
\Phi(C_Q(x))=C_Q(\Phi(x)),
\qquad
\Phi(Z(C))=Z(\Phi(C)),
\]
and $\Phi$ induces an isomorphism
\[
C/Z(C)\longrightarrow
\Phi(C)/Z(\Phi(C)).
\]
If $u$ is conjugate to $\mu$, then $\Phi(u)$ is also conjugate to
$\mu$.  Composing the quotient isomorphism induced by $\Phi^{-1}$
with the marked isomorphism in condition~\textup{(v)} shows that
$(\Phi(C),\Phi(u))$ realizes the same meridian-marked knot group.

\begin{theorem}[Recognition of the new product vertex group]
\label{thm:terminal-recognition}
Let
\[
H
=
G*_{\langle\mu,h\rangle}
(K\times\langle h\rangle)
\]
be the amalgam \eqref{eq:clean-extension}, where $K$ is a nonabelian
hyperbolic knot group with chosen meridian $m=\mu$.  Assume
Conditions~\ref{cond:powerclean}, \ref{cond:homology}, and
\ref{cond:meridian-old}.  In addition, for
\[
M_G=C_G(\mu),
\]
assume
\[
Z(M_G)=\langle\mu\rangle,
\qquad
b_1(M_G/\langle\mu\rangle)\ge g\ge3.
\]
Suppose that the meridian-marked nonabelian hyperbolic knot groups
realized by pairs in $G$ form a specified collection
\[
\{(K_i,m_i):i\in I\},
\]
and that, for each $i$, there is exactly one simultaneous conjugacy
class of pairs in $G$ realizing $(K_i,m_i)$.  Assume these marked knot
groups are pairwise nonisomorphic and that $(K,m)$ is not isomorphic
to any of them.

Put
\[
B=K\times\langle h\rangle.
\]
Then every pair in $H$ realizing a meridian-marked nonabelian
hyperbolic knot group is simultaneously conjugate either to a pair
lying in $G$ and realizing one of the $(K_i,m_i)$, or to $(B,\mu)$.
In particular, the pairs in $H$ realizing $(K,m)$ are exactly the
simultaneous conjugates of $(B,\mu)$.
\end{theorem}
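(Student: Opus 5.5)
Let \((C,u)\) be a pair in \(H\) realizing some meridian-marked nonabelian hyperbolic knot group \((K',m')\), so \(C=C_H(x)\) for some \(1\ne x\in H\). The plan is to run through the cases of \cref{prop:complete-centralizers} for \(x\) after conjugating by one element of \(H\). This common conjugator keeps the pair simultaneously conjugate, and we show that only two cases survive.

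The first step eliminates every case except (ii) and (iv)(a). If \(x\) is hyperbolic, \cref{cor:hyperbolic-not-knot} shows that \(C/Z(C)\) is solvable, so it cannot be \(K'\). Case (iii) and case (iv)(b) give abelian centralizers, which contradicts condition (iii) of \cref{def:marked-centralizer}. In case (iv)(c) we have \(C=M_H\). The hypotheses \(Z(M_G)=\langle\mu\rangle\) and \(b_1\ge g\ge 3\) let \cref{prop:meridian-composite} apply, and it shows that \(M_H/Z(M_H)\) is not a knot group.

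The second step treats case (iv)(a). Up to conjugacy \(x=h^b\) with \(b\ne0\), so \(C=B\). \Cref{lem:carrier-primitive} gives \(Z(B)=\langle h\rangle\), and then \(C/Z(C)\cong K\). Now \(u\in B\) is \(H\)-conjugate to \(\mu\), so \cref{cor:meridian-reflection} gives \(u=b\mu b^{-1}\) for some \(b\in B\). The hypotheses of \cref{lem:edge-slope-rigidity} hold: \ref{cond:powerclean} is assumed, and the meridian transporter condition is \cref{lem:meridian-transporter}. Since \(b\) normalizes \(B\), conjugating by \(b^{-1}\) moves \((B,u)\) to \((B,\mu)\). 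For the ``in particular'' clause, \((B,\mu)\) does realize \((K,m)\): \(\bar\mu\) maps to \(m\) under \(B/\langle h\rangle\cong K\), and this identification is literal because \(m\) is identified with \(\mu\).

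The third step treats case (ii). Here \(\Fix_T(x)=\{gG\}\), so \(C=C_{gGg^{-1}}(x)\). Conjugating by \(g^{-1}\) we may assume \(x\in G\) and \(C=C_G(x)\le G\). Then \(u\in C\le G\) is \(H\)-conjugate to \(\mu\), and \cref{cor:meridian-reflection} makes it \(G\)-conjugate to \(\mu\). The center and the quotient are intrinsic to \(C\), so \((C,u)\) realizes \((K',m')\) inside \(G\). By hypothesis \((K',m')\) is then some \((K_i,m_i)\). Since \(G\) has exactly one simultaneous conjugacy class realizing it, \((C,u)\) is \(G\)-, hence \(H\)-, simultaneously conjugate to the chosen pair in \(G\).

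Finally, suppose a pair realizes \((K,m)\). It cannot come from \(G\), because \((K,m)\) is not isomorphic to any \((K_i,m_i)\), so it is a conjugate of \((B,\mu)\). The main obstacle is the case (ii) reduction: we must check that the ambient condition ``\(u\) conjugate to \(\mu\) in \(H\)'' descends to \(G\), which is exactly what \cref{cor:meridian-reflection} provides. We must also make sure that the conjugations used in the case analysis are applied to \(C\) and \(u\) simultaneously.
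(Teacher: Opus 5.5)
Your proposal is correct and follows the paper's proof essentially step for step. It uses the same case analysis via \cref{prop:complete-centralizers}, with \cref{cor:meridian-reflection} moving the meridian conjugacy into \(G\) or \(B\) in the two surviving cases. Your explicit check that \cref{lem:meridian-transporter} supplies the transporter hypothesis of \cref{lem:edge-slope-rigidity} is a detail the paper leaves implicit.
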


\begin{proof}
Let $(C,u)$ realize a meridian-marked nonabelian hyperbolic knot group,
with
\[
C=C_H(x).
\]
Apply the complete centralizer classification of
\cref{prop:complete-centralizers}, conjugating the pair $(C,u)$ by the
same element whenever the classification conjugates $x$.

If $x$ is hyperbolic on the Bass--Serre tree, then
$C/Z(C)$ is solvable by \cref{cor:hyperbolic-not-knot}.  It cannot be
a nonabelian hyperbolic knot group.

If $x$ fixes a unique $B$-vertex and is not a nonzero power of $h$,
then $C$ is $\mathbb Z^2$ or $\mathbb Z^3$ and is abelian.  If
$x=\mu^ah^b$ with $ab\ne0$, then
\[
C=D\cong\mathbb Z^3.
\]
These cases fail the nonabelian condition in
\cref{def:marked-centralizer}.

If $x$ is a nonzero meridian power, then
\[
C=C_H(\mu)=M_G*_{\langle\mu,h\rangle}D.
\]
By \cref{prop:meridian-composite},
\[
Z(C)=\langle\mu\rangle
\]
and
\[
b_1(C/Z(C))
=
b_1(M_G/\langle\mu\rangle)+1
\ge g+1.
\]
A classical knot group has first Betti number one, so this central
quotient is not a knot group.

Suppose that $x$ fixes a unique $G$-vertex.  Simultaneously conjugate
$(C,u)$ so that this vertex is the base $G$-vertex.  Then
\[
C=C_G(x)\le G,
\qquad
u\in G.
\]
The element $u$ remains $H$-conjugate to $\mu$.
\Cref{cor:meridian-reflection} upgrades this to conjugacy inside $G$.
Thus $(C,u)$ is a pair in the meridian-marked group $(G,\mu)$, and the
hypothesis on realizations in $G$ makes it simultaneously conjugate to
the unique pair realizing one of the $(K_i,m_i)$.

The only remaining case is
\[
x=h^b,
\qquad b\ne0.
\]
The classification gives
\[
C_H(h^b)=B,
\]
and \cref{lem:carrier-primitive} gives
\[
Z(B)=\langle h\rangle.
\]
If $u\in B$ is $H$-conjugate to $\mu$, then
\cref{cor:meridian-reflection} makes it $B$-conjugate to $\mu$.
Choose $q\in B$ with $q^{-1}u q=\mu$.  Since conjugation by $q$
preserves $B$ setwise, the pair $(B,u)$ is simultaneously conjugate
to $(B,\mu)$.  Finally,
\[
(B/Z(B),\bar\mu)\cong(K,m),
\]
so $(B,\mu)$ realizes $(K,m)$.

The cases above are exhaustive.  Pairwise nonisomorphism is used only
at the final identification: none of the pairs inherited from $G$
realizes the new marked knot group $(K,m)$.  Hence the simultaneous
conjugates of $(B,\mu)$ are exactly the pairs in $H$ realizing
$(K,m)$.
\end{proof}

The preliminary cyclic amalgam supplies the induction base.  Its
knot-group factors are vertex groups, but they do not occur as the
nonabelian centralizers required by the recognition datum.

\begin{proposition}[Centralizer pairs in the preliminary group]
\label{prop:disk-spectrum}
No pair in $A_0$ realizes a meridian-marked nonabelian hyperbolic knot
group.
\end{proposition}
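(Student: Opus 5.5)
We must show: no centralizer \(C=C_{A_0}(x)\), \(x\ne1\), satisfies conditions (iii)--(v) of \cref{def:marked-centralizer} with \(C/Z(C)\) a nonabelian hyperbolic knot group. The plan is to classify centralizers in the cyclic-edge tree of groups
\[
A_0=K_1^D*_{\langle\mu\rangle}\cdots*_{\langle\mu\rangle}K_g^D
\]
through the action on its Bass--Serre tree \(T_D\), in the same three cases as \cref{prop:complete-centralizers}: \(x\) hyperbolic, \(x\) fixing exactly one vertex, or \(x\) fixing an edge. In each case we rule out the realization conditions.

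\emph{Hyperbolic \(x\).} The centralizer preserves \(\Ax(x)\) and acts on it by translations. The kernel consists of elements fixing the axis pointwise, hence lies in an edge stabilizer, which is a conjugate of \(\langle\mu\rangle\cong\mathbb Z\). So \(C\) is an extension of \(\mathbb Z\) by a cyclic group and is metabelian. Then \(C/Z(C)\) is solvable, and the argument of \cref{cor:hyperbolic-not-knot} excludes a nonabelian hyperbolic knot group.

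\emph{Unique fixed vertex.} Conjugate so that \(x\in K_j^D\) fixes only \(v_j\). Then \(C=C_{K_j^D}(x)\), which \cref{lem:knot-centralizers} shows is \(\mathbb Z\) or \(\mathbb Z^2\). This is abelian, so condition (iii) fails.

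\emph{Fixed edge.} Conjugate so that \(x=\mu^n\) with \(n\ne0\). By \cref{lem:disk-meridian-powers}, \(C=\langle\mu\rangle\times U_0\), whose center is \(\langle\mu\rangle\). The quotient \(C/Z(C)\cong U_0\) is free of rank \(g\ge3\), with \(b_1=g>1\). A classical knot group has first Betti number one, so (v) fails.

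These three cases exhaust the nontrivial elements. An elliptic element that fixes no edge fixes exactly one vertex, since its fixed set is a connected subtree. An edge-fixing element is conjugate into an edge group \(\langle\mu\rangle\). Because the three cases are up to conjugacy, and simultaneous conjugation preserves every condition, this finishes the argument.

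The main point needing care is the hyperbolic case. We must justify that the pointwise axis stabilizer is cyclic, not merely abelian; this holds because every edge stabilizer is infinite cyclic. We must also check that no element of the centralizer reflects the axis, since an element commuting with a nonzero translation cannot. Everything else is a direct citation of \cref{lem:knot-centralizers} and \cref{lem:disk-meridian-powers}.
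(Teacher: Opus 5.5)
Your proposal is correct and follows the paper's proof essentially step for step. You use the same trichotomy on $T_D$: a hyperbolic $x$ has metabelian centralizer and hence solvable central quotient; an elliptic $x$ fixing no edge has abelian centralizer inside a single factor $K_j^D$ by \cref{lem:knot-centralizers}; and an edge-fixing $x$ is conjugate to $\mu^n$, so by \cref{lem:disk-meridian-powers} its centralizer is $\langle\mu\rangle\times U_0$, whose central quotient has first Betti number $g\neq1$. Your remarks on the cyclicity of the pointwise axis stabilizer and the absence of reflections only make explicit what the paper's hyperbolic case leaves implicit.
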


\begin{proof}
Let $T_D$ be the Bass--Serre tree of the cyclic-edge graph of groups for
\[
A_0
=
K_1^D*_{\langle\mu\rangle}\cdots
*_{\langle\mu\rangle}K_g^D,
\]
and let $x\ne1$.

If $x$ is hyperbolic on $T_D$, its centralizer preserves its axis.  The
pointwise kernel is contained in a cyclic edge stabilizer, and the
translation image is cyclic.  Thus the centralizer is metabelian, and
its quotient by its center is solvable.  A nonabelian hyperbolic knot
group is not solvable.

Suppose that $x$ is elliptic and is not conjugate to a nonzero power of
$\mu$.  It fixes no edge, because all edge stabilizers are conjugates
of $\langle\mu\rangle$.  Its fixed subtree is therefore a single
vertex stabilized by one of the knot-group factors $K_j^D$.  The full
centralizer lies in that vertex group.  By
\cref{lem:knot-centralizers}, it is cyclic for a loxodromic element and
rank-two abelian for a parabolic element.  In either case it fails the
nonabelian-centralizer condition.

Finally, if $x$ is conjugate to $\mu^n$ with $n\ne0$, then
\cref{lem:disk-meridian-powers} gives
\[
C_{A_0}(x)\cong\langle\mu\rangle\times U_0.
\]
Its center is the meridian factor, and its central quotient is $U_0$,
whose first Betti number is $g\ge3$.  A knot group has first Betti
number one.  No case realizes a meridian-marked nonabelian hyperbolic
knot group.
\end{proof}

We next prove that a product vertex group recognized at an earlier
attachment remains a centralizer after a later attachment.  The
hypotheses concerning the meridian centralizer are stated explicitly so
that this result does not rely on a second, implicit induction.

\begin{lemma}[Persistence of earlier product-vertex centralizers]
\label{lem:old-block-not-enlarge}
Let $(G,\mu)$ be a meridian-marked group, let
\[
A=\langle\mu,h\rangle\cong\mathbb Z^2
\]
have the centralizer--transporter property in $G$, and assume that
\[
C_G(\mu^a)=M_G:=C_G(\mu)
\qquad(a\ne0),
\]
\[
Z(M_G)=\langle\mu\rangle,
\qquad
b_1(M_G/\langle\mu\rangle)\ge g\ge3.
\]
Suppose an earlier product vertex group
\[
B_j=K_j\times\langle z_j\rangle
\]
satisfies, for every $n\ne0$,
\[
C_G(z_j^n)=B_j,
\qquad
Z(B_j)=\langle z_j\rangle,
\qquad
B_j/Z(B_j)\cong K_j,
\]
where $K_j$ is a nonabelian hyperbolic knot group.  Form the later
amalgam
\[
H
=
G*_{A}(K\times\langle h\rangle).
\]
Then no nonzero power of $z_j$ is conjugate in $G$ into $A$, and
\[
C_H(z_j^n)=C_G(z_j^n)=B_j
\qquad(n\ne0).
\]
\end{lemma}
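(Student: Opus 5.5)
The plan has two steps: first prove the non-conjugacy claim, then read off the centralizer from the Bass--Serre tree \(T\) of \(H=G*_A(K\times\langle h\rangle)\).

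\emph{Step 1: no nonzero power of \(z_j\) is \(G\)-conjugate into \(A\).} Fix \(n\ne0\) and suppose \(q^{-1}z_j^nq=\mu^ah^b\in A\) with \(q\in G\). Centralizers of conjugate elements are conjugate, so \(C_G(\mu^ah^b)=q^{-1}B_jq\). This group is nonabelian, since it is isomorphic to \(K_j\times\mathbb Z\) with \(K_j\) nonabelian.

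We then split on \(b\). If \(b\ne0\), the centralizer--transporter property gives \(C_G(\mu^ah^b)=A\), which is abelian, a contradiction. If \(b=0\), then \(a\ne0\) because \(z_j^n\ne1\). The hypothesis then gives \(C_G(\mu^a)=M_G\), so \(M_G\cong B_j\). Passing to centers gives \(Z(M_G)=\langle\mu\rangle\) and \(Z(B_j)=\langle z_j\rangle\), so an isomorphism \(M_G\cong B_j\) induces \(M_G/\langle\mu\rangle\cong B_j/Z(B_j)\cong K_j\). But \(b_1(K_j)=1\), whereas \(b_1(M_G/\langle\mu\rangle)\ge3\), a contradiction. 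Note that the conjugator \(q\) carries \(M_G\) to \(B_j\) and hence carries centers to centers, so this comparison is legitimate. Thus no conjugate lies in \(A\).

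\emph{Step 2: the centralizer.} The element \(z_j^n\) lies in \(G\), so it fixes the base \(G\)-vertex \(v_G\) of \(T\). Suppose it fixed any other point of \(T\). Since \(\Fix_T(z_j^n)\) is connected, it would then fix an edge incident to \(v_G\). Such an edge has the form \(qe\) with \(q\in G\), which forces \(q^{-1}z_j^nq\in A\), contradicting Step 1. Therefore \(\Fix_T(z_j^n)=\{v_G\}\). Every element of \(C_H(z_j^n)\) preserves this fixed set, so it stabilizes \(v_G\) and lies in \(G\). Hence \(C_H(z_j^n)=C_G(z_j^n)=B_j\); this is exactly case (ii) of \cref{prop:complete-centralizers}.

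The only real obstacle is the \(b=0\) branch of Step 1, where one must distinguish \(M_G\) from a conjugate of \(B_j\) intrinsically. The first invariant to try is the first Betti number of the central quotient, exactly as in \cref{prop:meridian-composite}. If that comparison needed more care, the centers can be matched directly: since \(q\) conjugates \(z_j^n\) to \(\mu^a\), it carries \(Z(B_j)=\langle z_j\rangle\) onto \(Z(M_G)=\langle\mu\rangle\), and the quotients \(K_j\) and \(M_G/\langle\mu\rangle\) then have different first Betti numbers (\(1\) versus at least \(3\)).
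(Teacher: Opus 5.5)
Your proposal is correct and follows the paper's proof essentially step for step. It uses the same case split on the $h$-exponent: when $b\ne0$, the abelian centralizer $A$ is compared with the nonabelian $B_j$; when $b=0$, the first Betti numbers of the central quotients are compared, $1$ versus at least $g$. It then uses the same argument that $z_j^n$ fixes exactly the base $G$-vertex of the Bass--Serre tree, so its centralizer lies in $G$.
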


\begin{proof}
Suppose that, for some $q\in G$ and $n\ne0$,
\[
q^{-1}z_j^nq=\mu^ah^b.
\]
The target is nontrivial.  If $b\ne0$, the
centralizer--transporter property gives
\[
C_G(\mu^ah^b)=A,
\]
which is abelian, whereas $C_G(z_j^n)=B_j$ is nonabelian.  This is
impossible because conjugate elements have conjugate centralizers.

If $b=0$, then $a\ne0$ and
\[
C_G(\mu^a)=M_G.
\]
Conjugation would give an isomorphism
\[
B_j\cong M_G.
\]
Any group isomorphism carries the center onto the center and therefore
induces an isomorphism of central quotients.  Here
\[
b_1(B_j/Z(B_j))=b_1(K_j)=1,
\]
whereas
\[
b_1(M_G/Z(M_G))
=
b_1(M_G/\langle\mu\rangle)
\ge g.
\]
This is again impossible.  Thus no nonzero power of $z_j$ has a
$G$-conjugate in $A$.

Let $T$ be the Bass--Serre tree of the new amalgam.  The element
$z_j^n$ fixes the base $G$-vertex.  If it fixed any edge, connectedness
of its fixed subtree would give a fixed edge incident to that vertex.
Such an edge is represented by $qA$ with $q\in G$, and its fixedness
is equivalent to $q^{-1}z_j^nq\in A$, which was just excluded.  Hence
\[
\Fix_T(z_j^n)=\{G\}.
\]
Every centralizing element preserves this unique vertex, and therefore
\[
C_H(z_j^n)=C_G(z_j^n)=B_j.
\]
\end{proof}

The preceding results can now be iterated without circularity.  The
induction simultaneously maintains the earlier product centralizers,
the exact collection of realized marked knot groups, and the full
meridian-centralizer information required at the next attachment.

\begin{theorem}[Recognition of all product vertex groups]
\label{thm:tower-spectrum}
Use the ordered groups $\Gamma_r$ of
\cref{def:clean-tower}, and suppose that the meridian-marked hyperbolic
knot groups $(K_r,m_r)$ used in all vertical-boundary and subsequent
rank-two rim surgeries are pairwise nonisomorphic.  For every $0\le r\le R$,
the following statements hold in $\Gamma_r$.
\begin{enumerate}[label=\textup{(\roman*)},leftmargin=2.3em]
\item For every $1\le s\le r$ and every $n\ne0$,
\[
C_{\Gamma_r}(z_s^n)=B_s,
\qquad
Z(B_s)=\langle z_s\rangle.
\]
\item The simultaneous conjugacy class of $(B_s,\mu)$ is the unique
class in $\Gamma_r$ realizing $(K_s,m_s)$.
\item Every automorphism $\Phi\in\Aut(\Gamma_r)$ satisfying
$\Phi(\mu)=\mu$ preserves the conjugacy class of each $B_s$.
\item For every $n\ne0$,
\[
C_{\Gamma_r}(\mu^n)=C_{\Gamma_r}(\mu).
\]
\item The meridian centralizer satisfies
\[
Z(C_{\Gamma_r}(\mu))=\langle\mu\rangle.
\]
\item One has
\[
b_1(C_{\Gamma_r}(\mu)/\langle\mu\rangle)\ge g.
\]
\item The meridian-marked nonabelian hyperbolic knot groups realized by
centralizer pairs in $\Gamma_r$ are exactly
\[
(K_1,m_1),\ldots,(K_r,m_r).
\]
More precisely, a pair realizing $(K_s,m_s)$ is simultaneously
conjugate to $(B_s,\mu)$, and no other meridian-marked nonabelian
hyperbolic knot group is realized.
\end{enumerate}
The preliminary factors $K_j^D$ in $A_0$ are not asserted to be
centralizers and do not occur in the collection in
\textup{(vii)}.
\end{theorem}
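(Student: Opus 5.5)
We argue by induction on \(r\), proving (i)--(vii) simultaneously, so that at each stage the hypotheses of \cref{thm:terminal-recognition}, \cref{lem:old-block-not-enlarge} and \cref{prop:meridian-composite} are exactly the inductive statements already in hand. For \(r=0\), the group is \(\Gamma_0=A_0\). Statements (i)--(iii) are vacuous. Statements (iv)--(vi) are \cref{lem:disk-meridian-powers}, with \(b_1=g\). Statement (vii), the empty collection, is \cref{prop:disk-spectrum}.

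For the inductive step, write \(\Gamma_r=\Gamma_{r-1}*_{E_r}B_r\). We first verify Conditions~\ref{cond:powerclean}--\ref{cond:meridian-old} for \(G=\Gamma_{r-1}\) and \(A=E_r\).
\begin{itemize}
\item Condition~\ref{cond:powerclean}, the centralizer--transporter property of \(E_r\) in \(\Gamma_{r-1}\), comes from \cref{lem:seam-power-clean} at the vertical-boundary stages and from \cref{cor:carrier-induction} at the stages along \(d_1,\dots,d_N\). This uses the reindexing \(\Gamma_{i+1}=G_i\) or \(\Gamma_{i+2}=G_i\).
\item Condition~\ref{cond:homology} is \cref{prop:early-homology}.
\item Condition~\ref{cond:meridian-old} is inductive (iv). Nonabelianness of \(M_G\) follows from (vi), since \(b_1\ge g\ge 3\) forces the quotient, hence \(M_G\), to be nonabelian.
\end{itemize}
With these conditions, \cref{prop:complete-centralizers}(iv)(c) gives (iv) for \(\Gamma_r\), since \(C_{\Gamma_r}(\mu^a)=C_{\Gamma_r}(\mu)\). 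Then \cref{prop:meridian-composite}, fed by inductive (v) and (vi), gives (v), and it gives (vi) with \(b_1\) increased by one.

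For (i), take \(s=r\) first. \Cref{prop:complete-centralizers}(iv)(a) gives \(C_{\Gamma_r}(h_r^b)=B_r\), and \(h_r=z_r\) in \(\Gamma_r\). \Cref{lem:carrier-primitive} gives \(Z(B_r)=\langle z_r\rangle\). For \(s<r\), apply \cref{lem:old-block-not-enlarge} with \(G=\Gamma_{r-1}\). Its hypotheses are inductive (i), (iv), (v), (vi) and Condition~\ref{cond:powerclean}. Its conclusion gives \(C_{\Gamma_r}(z_s^n)=B_s\).

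For (vii) and (ii), apply \cref{thm:terminal-recognition} with the collection \(\{(K_s,m_s):s<r\}\). Its inputs are as follows.
\begin{itemize}
\item Uniqueness of each realizing class in \(\Gamma_{r-1}\) is inductive (ii).
\item Exhaustiveness of the collection is inductive (vii).
\item Pairwise nonisomorphism of the \((K_s,m_s)\), and of the new \((K_r,m_r)\) from all of them, is the standing hypothesis.
\end{itemize}
Every pair in \(\Gamma_r\) realizing a marked hyperbolic knot group is then simultaneously conjugate either to an inherited pair or to \((B_r,\mu)\). By inductive (ii), an inherited pair realizing \((K_s,m_s)\) is conjugate in \(\Gamma_{r-1}\), and hence in \(\Gamma_r\), to \((B_s,\mu)\). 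Two points need checking:
\begin{itemize}
\item Each \((B_s,\mu)\) still realizes \((K_s,m_s)\) in \(\Gamma_r\). This holds because it is a centralizer by (i), its center and central quotient are unchanged, and \(\mu\) is conjugate to \(\mu\).
\item Distinct \(s\) give distinct classes, since the marked knot groups are nonisomorphic.
\end{itemize}
This proves (vii), and uniqueness in (ii) follows. Finally, (iii) follows from (ii) together with the remark after \cref{def:marked-centralizer}. An automorphism fixing \(\mu\) carries a pair realizing \((K_s,m_s)\) to another such pair, so by uniqueness \(\Phi(B_s)\) is conjugate to \(B_s\).

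The main obstacle is the bookkeeping at the odd-genus second vertical stage and at the first \(d_1\) stage. At these stages the centralizer--transporter input must refer to the correct group: for \(w_2\) it is \(G_0^{(1)}\), not \(A_0\). We must check that \cref{lem:seam-power-clean} and \cref{cor:carrier-induction} deliver it in \(\Gamma_{r-1}\) itself rather than in a smaller group. A secondary subtlety is the step from \(\Gamma_{r-1}\)-conjugacy to \(\Gamma_r\)-conjugacy for inherited pairs, which needs the canonical inclusions of \cref{def:clean-tower}.
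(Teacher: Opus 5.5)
Your proposal is correct and is essentially the paper's own proof. It runs the same simultaneous induction on \(r\) with the same inputs: \cref{lem:disk-meridian-powers} and \cref{prop:disk-spectrum} for the base, \cref{lem:seam-power-clean}, \cref{cor:carrier-induction} and \cref{prop:early-homology} for the conditions at each attachment, and then \cref{prop:complete-centralizers}, \cref{lem:carrier-primitive}, \cref{lem:old-block-not-enlarge}, \cref{thm:terminal-recognition} and \cref{prop:meridian-composite}. Proving the meridian clauses (iv)--(vi) before (i)--(iii), rather than after, is harmless, since every step uses only data already known in \(\Gamma_{r-1}\).

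One slip needs fixing. The bound \(b_1\ge 3\) alone does not make \(M_G/\langle\mu\rangle\) nonabelian (\(\mathbb Z^3\) is a counterexample). Instead get nonabelianness of \(M_G\) from (v) and (vi) together: an abelian \(M_G\) would equal its center \(\langle\mu\rangle\), which would force \(b_1(M_G/\langle\mu\rangle)=0\).
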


\begin{proof}
We argue by simultaneous induction on $r$.

\smallskip
\noindent\emph{Base case.}
For $\Gamma_0=A_0$, clauses~\textup{(i)}--\textup{(iii)} are vacuous;
clauses~\textup{(iv)}--\textup{(vi)} are
\cref{lem:disk-meridian-powers}, and clause~\textup{(vii)} is
\cref{prop:disk-spectrum}.

\smallskip
\noindent\emph{Product-vertex data.}
Assume the seven conclusions in $\Gamma_{r-1}$ and form
\[
\Gamma_r
=
\Gamma_{r-1}*_{E_r}B_r,
\qquad
E_r=\langle\mu,h_r\rangle,
\qquad
B_r=K_r\times\langle z_r\rangle.
\]
At a vertical-boundary stage, the centralizer--transporter hypothesis
for $E_r$ is \cref{lem:seam-power-clean}; at a later ordered stage it
is \cref{cor:carrier-induction}.  In odd genus the vertical stages are
used chronologically as
\[
A_0
\longrightarrow
G_0^{(1)}
\longrightarrow
G_0,
\]
so the property for the second edge is invoked only after it has been
proved in $G_0^{(1)}$.  The homological hypotheses
\[
H_1(\Gamma_{r-1};\mathbb Z)
=\mathbb Z\langle[\mu]\rangle,
\qquad
[h_r]=0
\]
come from \cref{prop:early-homology}; the inductive clauses
\textup{(iv)}--\textup{(vi)} supply the remaining hypotheses of
\cref{prop:complete-centralizers,prop:meridian-composite}.

For every $n\ne0$, \eqref{eq:centralizer-h-power} and
\cref{lem:carrier-primitive} give
\[
C_{\Gamma_r}(z_r^n)=B_r,
\qquad
Z(B_r)=\langle z_r\rangle.
\]
For $s<r$, \cref{lem:old-block-not-enlarge}, using only the
meridian-centralizer data already known in $\Gamma_{r-1}$, gives
\[
C_{\Gamma_r}(z_s^n)=B_s
\qquad(n\ne0).
\]
Thus clause~\textup{(i)} holds before recognition is used.

Apply \cref{thm:terminal-recognition} with the exact collection from
the inductive clause~\textup{(vii)} and the uniqueness in
clause~\textup{(ii)}.  It adds only $(K_r,m_r)$, realized
by the unique simultaneous conjugacy class of $(B_r,\mu)$.  Each
$(B_s,\mu)$ with $s<r$ still realizes $(K_s,m_s)$ by the preceding
persistence formula.  If another pair in $\Gamma_r$ realizes an earlier $(K_s,m_s)$,
\cref{thm:terminal-recognition} says that it is either
inherited from $\Gamma_{r-1}$ or simultaneously conjugate to
$(B_r,\mu)$.  The latter is impossible because
\[
(K_s,m_s)\not\cong(K_r,m_r),
\]
and the inherited pair is simultaneously conjugate to $(B_s,\mu)$ by
induction.  This proves clauses~\textup{(ii)} and \textup{(vii)}.

Now let $\Phi\in\Aut(\Gamma_r)$ fix $\mu$.  For each $1\le s\le r$,
the observation after \cref{def:marked-centralizer} shows that
$(\Phi(B_s),\mu)$ realizes the same
marked knot group as $(B_s,\mu)$.  Clause~\textup{(ii)} supplies one
$q\in\Gamma_r$ such that
\[
\Phi(B_s)=qB_sq^{-1},
\qquad
q\mu q^{-1}=\mu.
\]
Hence clause~\textup{(iii)} holds with the simultaneous conjugator
retained.

\smallskip
\noindent\emph{Meridian-centralizer data.}
Formula \eqref{eq:centralizer-meridian-composite} gives
\[
C_{\Gamma_r}(\mu^n)=C_{\Gamma_r}(\mu)
\qquad(n\ne0),
\]
proving clause~\textup{(iv)}.  The prior-stage center and Betti-number
clauses are exactly the remaining hypotheses of
\cref{prop:meridian-composite}, which gives
\[
Z(C_{\Gamma_r}(\mu))=\langle\mu\rangle
\]
and
\[
\begin{aligned}
b_1(C_{\Gamma_r}(\mu)/\langle\mu\rangle)
&=
b_1(C_{\Gamma_{r-1}}(\mu)/\langle\mu\rangle)+1\\
&\ge g+1.
\end{aligned}
\]
Thus clauses~\textup{(v)} and \textup{(vi)} complete the induction.
\end{proof}

\begin{remark}[Role of pairwise nonisomorphism]
\label{rem:tag-separation}
The complete centralizer classification leaves only the following
possibilities for a centralizer pair with a nonabelian centralizer:
central quotients that are solvable, meridian-centralizer quotients with
first Betti number at least $g$, pairs inherited from earlier product
vertex groups, and the pair associated to the newly introduced product
vertex group.  The remaining centralizer cases are abelian.  Pairwise
nonisomorphism of the meridian-marked hyperbolic knot groups therefore
distinguishes the product vertex groups introduced at different
rank-two surgery stages.  It is not used to prove the centralizer
formulas themselves.
\end{remark}

Thus the final meridian-marked group determines, for every
vertical-boundary or subsequent rank-two rim surgery, the unique
simultaneous conjugacy class of the corresponding product vertex group
and a meridian inside it.  In particular, it determines the conjugacy
class of that product vertex group and its infinite cyclic center.  The
next section passes from this conjugacy-invariant subgroup to an
automorphism-invariant normal subgroup.

\section{Commutator quotients invariant under automorphisms fixing the meridian}\label{sec:collapse}

Recognition determines a product vertex group only up to conjugacy,
whereas reverse induction requires a literal automorphism-invariant
quotient.  The normal closure of the product vertex group's commutator
subgroup supplies that quotient.

Fix \(1\le i\le N\).  Here \(i\) indexes the ordered rim surgeries
along the curves \(d_1,\ldots,d_N\), not the preliminary surgeries
along meridian-disk boundaries or the vertical boundary.  At the
\(i\)-th ordered stage,
\[
G_i=G_{i-1}*_{A_i}B_i,
\qquad
A_i=\langle\mu,h_i\rangle,
\qquad
B_i=K_i\times\langle z_i\rangle,
\qquad
z_i=h_i.
\]
By \cref{thm:tower-spectrum}, the simultaneous conjugacy class of
\((B_i,\mu)\), and hence the conjugacy class of \(B_i\), is uniquely
determined by the meridian-marked group \((G_i,\mu)\).

\begin{definition}[Invariance under meridian-fixing automorphisms]\label{def:mu-characteristic}
An automorphism \(\Phi\in\Aut(G)\) of a meridian-marked group
\((G,\mu)\) is \emph{meridian-fixing} if \(\Phi(\mu)=\mu\).  A normal
subgroup \(N\trianglelefteq G\) is \emph{invariant under
meridian-fixing automorphisms} if
\[
\Phi(N)=N
\]
for every meridian-fixing \(\Phi\).  A conjugacy class of subgroups is
invariant under meridian-fixing automorphisms if every such \(\Phi\)
sends each representative to a conjugate representative.  These are
different assertions: the first is literal subgroup invariance, while
the second is invariance only of a conjugacy class.
\end{definition}

\begin{proposition}[Invariance of the commutator normal closure]\label{prop:characteristic-kernel}
The conjugacy class of \(B_i\) is invariant under meridian-fixing
automorphisms of \(G_i\).  The normal subgroup
\begin{equation}\label{eq:terminal-kernel}
\cN_i
=
\normal{[B_i,B_i]}_{G_i}
=
\normal{[K_i,K_i]}_{G_i}
\end{equation}
is invariant under every meridian-fixing automorphism of \(G_i\).
Moreover,
\[
\cN_i=\ker r_i,
\]
where \(r_i:G_i\to G_{i-1}\) is the canonical one-step retraction.
\end{proposition}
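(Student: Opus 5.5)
The plan has three parts: show that the conjugacy class of \(B_i\) is invariant, show that \(\cN_i\) depends only on that class, and identify \(\cN_i\) with \(\ker r_i\). Each step reduces to results already established.

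\textbf{Invariance of the conjugacy class.} This is \cref{thm:tower-spectrum}\textup{(iii)} applied in \(\Gamma_r=G_i\), with \(r=i+1\) in even genus and \(r=i+2\) in odd genus. Explicitly, let \(\Phi\) be meridian-fixing. By the observation after \cref{def:marked-centralizer}, the pair \((\Phi(B_i),\mu)\) realizes \((K_i,m_i)\). Clause~\textup{(ii)} then supplies \(q\in G_i\) with \(\Phi(B_i)=qB_iq^{-1}\).

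\textbf{Invariance of \(\cN_i\).} Since \(\Phi\) is an automorphism, it carries commutator subgroups to commutator subgroups:
\[
\Phi([B_i,B_i])=[\Phi(B_i),\Phi(B_i)]=q[B_i,B_i]q^{-1}.
\]
It also carries normal closures to normal closures, so
\[
\Phi(\cN_i)=\normal{q[B_i,B_i]q^{-1}}_{G_i}=\cN_i,
\]
because a normal closure is unchanged when its generating set is replaced by a conjugate. Thus \(\cN_i\) is literally invariant, not merely up to conjugacy. For the second equality in \eqref{eq:terminal-kernel}: since \(B_i=K_i\times\langle z_i\rangle\) and \(z_i\) is central in \(B_i\), we have \([B_i,B_i]=[K_i,K_i]\) as subgroups.

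\textbf{Identification with \(\ker r_i\).} This is exactly \eqref{eq:retraction-properties} of \cref{thm:one-step}, applied at the \(i\)-th ordered stage. Its hypothesis \(\langle\mu,h_i\rangle\cong\mathbb Z^2\) is supplied by \cref{cor:carrier-induction}, and \cref{thm:one-step} gives \(\ker r_i=\normal{[K_i,K_i]}_{G_i}\).

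The one point needing care is that \(K_i\), the subgroup appearing in \cref{thm:one-step}, is the same subgroup as the knot factor of \(B_i\) inside the recognized class. This holds because both are the image of the vertex inclusion into the amalgam \(G_i=G_{i-1}*_{A_i}B_i\), based by the same path under \cref{conv:peripheral-groupoid}. Even if another basing were used, \(K_i\) would change only by a conjugation, which leaves its normal closure unchanged. I expect no real obstacle beyond this bookkeeping of basings and indices between the \(\Gamma_r\) and \(G_i\) notation.
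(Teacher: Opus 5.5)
Your proposal is correct and follows essentially the same route as the paper's proof. You use \cref{thm:tower-spectrum} to get \(\Phi(B_i)=qB_iq^{-1}\), invariance of normal closures under conjugation to get literal invariance of \(\cN_i\), centrality of \(z_i\) to get \([B_i,B_i]=[K_i,K_i]\), and the kernel formula of \cref{thm:one-step} to identify \(\cN_i\) with \(\ker r_i\).
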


\begin{proof}
Let \(\Phi\in\Aut(G_i)\) satisfy \(\Phi(\mu)=\mu\).  By
\cref{thm:tower-spectrum}, for some \(g\in G_i\),
\[
\Phi(B_i)=gB_ig^{-1},
\qquad
\Phi([B_i,B_i])=g[B_i,B_i]g^{-1}.
\]
For every \(L\le G_i\),
\[
\normal{gLg^{-1}}_{G_i}=\normal{L}_{G_i};
\]
hence \(\normal{[B_i,B_i]}_{G_i}\) is literally invariant under
\(\Phi\).  Since \(B_i=K_i\times\langle z_i\rangle\) with central
abelian second factor,
\[
[(k,z_i^a),(k',z_i^b)]=([k,k'],1),
\]
so \([B_i,B_i]=[K_i,K_i]\), proving \eqref{eq:terminal-kernel}.
The kernel formula in \cref{thm:one-step}, with
\(G'=G_i\), \(G=G_{i-1}\), \(K_J=K_i\), and \(s=z_i=h_i\), gives
\[
\cN_i=\normal{[K_i,K_i]}_{G_i}=\ker r_i.
\]
\end{proof}

\begin{theorem}[Recovery of the preceding marked peripheral data]\label{thm:marked-collapse}
Let
\[
q_i:G_i\longrightarrow G_i/\cN_i
\]
be the quotient map, and set
\[
\bar\mu=q_i(\mu),
\qquad
\bar\rho_i=q_i\circ\rho_i:
\pi\longrightarrow G_i/\cN_i.
\]
The retraction \(r_i\) induces a unique isomorphism
\[
\widetilde r_i:
G_i/\cN_i\xrightarrow{\cong}G_{i-1}
\]
such that
\[
r_i=\widetilde r_i\circ q_i.
\]
It satisfies the literal equalities
\[
\widetilde r_i(\bar\mu)=\mu,
\qquad
\widetilde r_i\circ\bar\rho_i=\rho_{i-1}.
\]
Consequently there is an isomorphism of marked peripheral data
\begin{equation}\label{eq:marked-collapse}
(G_i/\cN_i,\bar\mu,[\bar\rho_i]_\partial)
\cong
(G_{i-1},\mu,[\rho_{i-1}]_\partial).
\end{equation}
\end{theorem}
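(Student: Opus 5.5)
The proof is essentially formal once \cref{prop:characteristic-kernel} and \cref{thm:one-step} are available.

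\textbf{Step 1: the induced isomorphism.} Since \(r_i:G_i\to G_{i-1}\) is a surjective homomorphism (a retraction), and \cref{prop:characteristic-kernel} identifies \(\ker r_i=\cN_i\), the first isomorphism theorem gives a unique homomorphism \(\widetilde r_i:G_i/\cN_i\to G_{i-1}\) with \(r_i=\widetilde r_i\circ q_i\). It is injective because the kernel of \(r_i\) is exactly \(\cN_i\), and surjective because \(r_i\) is. Uniqueness follows from surjectivity of \(q_i\).

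\textbf{Step 2: the literal equalities.} We have \(\widetilde r_i(\bar\mu)=r_i(\mu)\). In the construction of \(r\) in \cref{thm:one-step}, \(r\) restricts to the identity on the old vertex group \(G_{i-1}\), which contains \(\mu\). Hence \(r_i(\mu)=\mu\). (Equivalently, on the product vertex \(r_J(m_J)=\mu^{\phi_J(m_J)}=\mu\), consistent with the edge identification.) Similarly,
\[
\widetilde r_i\circ\bar\rho_i=\widetilde r_i\circ q_i\circ\rho_i=r_i\circ\rho_i=\rho_{i-1},
\]
by the push-off identity \eqref{eq:retraction-properties} of \cref{thm:one-step}. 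This identity is literal for the fixed section, basepoint and basing paths, because the groupoid conventions \cref{conv:peripheral-groupoid} use the same outside-piece paths before and after surgery.

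\textbf{Step 3: marked peripheral data.} The isomorphism \(\widetilde r_i\) sends \(\bar\mu\mapsto\mu\) and transports the class \([\bar\rho_i]_\partial\) to \([\rho_{i-1}]_\partial\). Indeed, a representative \(\gamma\mapsto q\,\bar\rho_i(\gamma)\bar\mu^{\eta(\gamma)}q^{-1}\) with \(q\in\langle\bar\rho_i(\pi),\bar\mu\rangle\) maps to \(\gamma\mapsto\widetilde r_i(q)\,\rho_{i-1}(\gamma)\mu^{\eta(\gamma)}\widetilde r_i(q)^{-1}\). Here \(\widetilde r_i(q)\in\langle\rho_{i-1}(\pi),\mu\rangle\), since \(\widetilde r_i\) carries the generators of the first boundary subgroup onto those of the second. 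Conversely, \(\widetilde r_i^{-1}\) carries representatives back, because it maps \(\langle\rho_{i-1}(\pi),\mu\rangle\) onto \(\langle\bar\rho_i(\pi),\bar\mu\rangle\). This yields \eqref{eq:marked-collapse}.

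The only point needing care is the literalness in Step 2, namely that \(r_i\circ\rho_i=\rho_{i-1}\) holds as based homomorphisms rather than up to conjugation or meridian shear. This is exactly what \cref{thm:one-step} and \cref{prop:ordered-peripheral} already supply, so no genuine obstacle remains.
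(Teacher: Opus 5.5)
Your proposal is correct and follows the paper's proof: you apply the first isomorphism theorem to the surjection \(r_i\), using \(\ker r_i=\cN_i\) from \cref{prop:characteristic-kernel}, and then use the one-step identities \(r_i(\mu)=\mu\) and \(r_i\circ\rho_i=\rho_{i-1}\). Your Step~3 just writes out, via \(\widetilde r_i(\langle\bar\rho_i(\pi),\bar\mu\rangle)=\langle\rho_{i-1}(\pi),\mu\rangle\), the transport of boundary-change classes that the paper states in one line.
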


\begin{proof}
By \cref{prop:characteristic-kernel}, \(\ker r_i=\cN_i\); since the
retraction \(r_i\) is surjective, it induces the unique isomorphism
\(\widetilde r_i\) with \(r_i=\widetilde r_i\circ q_i\).  The
one-step identities \(r_i(\mu)=\mu\) and
\(r_i\circ\rho_i=\rho_{i-1}\) give the literal homomorphism-level
equalities
\[
\widetilde r_i(\bar\mu)=\mu,
\qquad
\widetilde r_i\circ\bar\rho_i
=\widetilde r_i\circ q_i\circ\rho_i
=r_i\circ\rho_i
=\rho_{i-1}.
\]
Thus \(\widetilde r_i\) carries the boundary-change class of
\(\bar\rho_i\) to that of \(\rho_{i-1}\), proving
\eqref{eq:marked-collapse}.  This is the specific identification
induced by \(r_i\), not an abstract group isomorphism.
\end{proof}

\begin{corollary}[Descent through the quotient]\label{cor:descent-through-quotient}
Suppose
\[
\Phi_i\in\Aut(G_i),
\qquad
\Phi_i(\mu)=\mu.
\]
Then \(\Phi_i\) induces an automorphism \(\overline\Phi_i\) of
\(G_i/\cN_i\), and
\[
\Phi_{i-1}
=
\widetilde r_i\,\overline\Phi_i\,\widetilde r_i^{-1}
\in\Aut(G_{i-1})
\]
satisfies
\[
\Phi_{i-1}\circ r_i=r_i\circ\Phi_i,
\qquad
\Phi_{i-1}(\mu)=\mu.
\]
If, in addition,
\[
\Phi_i\circ\rho_i=\rho_i\circ f_*
\]
for \(f\in\Mod(\Sigma_g)\), then
\[
\Phi_{i-1}\circ\rho_{i-1}
=
\rho_{i-1}\circ f_*.
\]
\end{corollary}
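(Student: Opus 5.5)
The plan is to let \(\Phi_i\) descend through the invariant quotient and then transport the result along the isomorphism of \cref{thm:marked-collapse}. The key input is \cref{prop:characteristic-kernel}: it says \(\Phi_i(\cN_i)=\cN_i\) for every meridian-fixing \(\Phi_i\). Applying the same statement to \(\Phi_i^{-1}\), which also fixes \(\mu\), shows that \(\Phi_i\) induces a well-defined automorphism \(\overline\Phi_i\) of \(G_i/\cN_i\). It satisfies \(\overline\Phi_i\circ q_i=q_i\circ\Phi_i\), and its inverse is the automorphism induced by \(\Phi_i^{-1}\).

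Next, define \(\Phi_{i-1}=\widetilde r_i\,\overline\Phi_i\,\widetilde r_i^{-1}\), which is an automorphism of \(G_{i-1}\). Using \(r_i=\widetilde r_i\circ q_i\), we compute
\[
\Phi_{i-1}\circ r_i=\widetilde r_i\,\overline\Phi_i\,q_i=\widetilde r_i\,q_i\,\Phi_i=r_i\circ\Phi_i.
\]
For the meridian, we need \(r_i(\mu)=\mu\). This holds because \(\mu\) lies in the vertex group \(G_{i-1}\), on which \(r_i\) is the identity. Hence \(\Phi_{i-1}(\mu)=\Phi_{i-1}(r_i(\mu))=r_i(\Phi_i(\mu))=r_i(\mu)=\mu\).

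Finally, assume \(\Phi_i\circ\rho_i=\rho_i\circ f_*\). The retraction identity \(r_i\circ\rho_i=\rho_{i-1}\) from \cref{thm:one-step} gives
\[
\Phi_{i-1}\circ\rho_{i-1}=\Phi_{i-1}\circ r_i\circ\rho_i=r_i\circ\Phi_i\circ\rho_i=r_i\circ\rho_i\circ f_*=\rho_{i-1}\circ f_*.
\]
Every step is formal once the invariance of \(\cN_i\) is known. That invariance is the only substantive input, and it has already been supplied by the recognition theorem through \cref{prop:characteristic-kernel}, so I expect no real obstacle. The one point to check is that the same fixed representative of \(f_*\) is used at both stages, which holds because \(\pi\) and the basepoint are unchanged.
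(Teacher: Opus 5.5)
Your proposal is correct and follows essentially the same argument as the paper: you induce \(\overline\Phi_i\) from the invariance of \(\cN_i\) in \cref{prop:characteristic-kernel}, conjugate by \(\widetilde r_i\), derive \(\Phi_{i-1}\circ r_i=r_i\circ\Phi_i\) from \(r_i=\widetilde r_i\circ q_i\), evaluate it at \(\mu\) using \(r_i(\mu)=\mu\), and obtain the push-off identity from \(r_i\circ\rho_i=\rho_{i-1}\). Your additional check through \(\Phi_i^{-1}\), showing that the induced map is an automorphism, is harmless but not needed, because the literal equality \(\Phi_i(\cN_i)=\cN_i\) already gives that.
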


\begin{proof}
Invariance of \(\cN_i\) gives \(\overline\Phi_i\), characterized by
\[
q_i\circ\Phi_i=\overline\Phi_i\circ q_i.
\]
For the automorphism \(\Phi_{i-1}\) in the statement,
\[
\Phi_{i-1}\circ r_i
=\widetilde r_i\circ\overline\Phi_i\circ q_i
=\widetilde r_i\circ q_i\circ\Phi_i
=r_i\circ\Phi_i.
\]
Applying this equality to \(\mu\) gives \(\Phi_{i-1}(\mu)=\mu\).
If \(\Phi_i\circ\rho_i=\rho_i\circ f_*\), then, with the same
\(f_*:\pi\to\pi\),
\[
\Phi_{i-1}\circ\rho_{i-1}
=\Phi_{i-1}\circ r_i\circ\rho_i
=r_i\circ\Phi_i\circ\rho_i
=r_i\circ\rho_i\circ f_*
=\rho_{i-1}\circ f_*.
\]
\end{proof}

Thus a meridian-fixing automorphism descends while retaining the same
surface-group action.  We next choose an ordered curve family that
detects every mapping class.

\section{Labelled filling systems}\label{sec:filling}

Reverse induction also requires an ordered curve family whose
consecutive members meet once and whose labelled unoriented stabilizer
is trivial.  We construct a filling chain with finite stabilizer and
append curves until that stabilizer vanishes.

Throughout this section, curves are considered as unoriented isotopy
classes, and \(i(c,d)\) denotes geometric intersection number.

\begin{definition}[Ordered labelled filling systems]\label{def:filling-system}
A finite family of essential simple closed curves \emph{fills}
\(\Sigma_g\) if, after choosing representatives simultaneously in
pairwise minimal position, every component of the complement of their
union is a disk.  An \emph{ordered labelled unoriented filling system}
is a tuple
\[
\cD=(d_1,\ldots,d_N)
\]
of pairwise distinct unoriented isotopy classes of essential simple
closed curves that fills \(\Sigma_g\).  The index \(j\) is the label of
\(d_j\); no permutation of labels is allowed.  Its labelled unoriented
stabilizer is
\[
\Stab_{\Mod(\Sigma_g)}
(\cD_{\mathrm{labelled,unoriented}})
=
\{\varphi\in\Mod(\Sigma_g):
\varphi(d_j)=d_j\text{ for every }j\}.
\]
The same order \(1,\ldots,N\) prescribes the order of the rim
surgeries.
\end{definition}

\begin{lemma}[Filling chain]\label{lem:filling-chain}
Let \(d_1\) be any nonseparating simple closed curve on \(\Sigma_g\).
There exist nonseparating curves
\[
d_1,d_2,\ldots,d_{2g}
\]
such that
\[
i(d_j,d_{j+1})=1,
\qquad
i(d_j,d_k)=0\quad\text{if }|j-k|>1,
\]
and their union fills \(\Sigma_g\).
\end{lemma}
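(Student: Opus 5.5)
We will build the chain from the standard Humphries-type chain on \(\Sigma_g\), transported by a homeomorphism so that its first curve is the prescribed \(d_1\). Since \(\Mod(\Sigma_g)\) acts transitively on nonseparating simple closed curves (the classification of surfaces applied to the cut surface \(\Sigma_g\setminus\!\setminus d_1\), which is connected of genus \(g-1\) with two boundary components), it suffices to exhibit one chain \(c_1,\ldots,c_{2g}\) of nonseparating curves with the stated intersection pattern that fills. Then choose \(\varphi\) with \(\varphi(c_1)=d_1\) and set \(d_j=\varphi(c_j)\). Intersection numbers, nonseparation and filling are all invariant under homeomorphisms.

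For the model chain we take the standard picture of \(\Sigma_g\) as the boundary of a regular neighborhood of a planar chain of \(g\) handles. Let \(c_{2k-1}\) be the meridian \(a_k\) of the \(k\)-th handle and \(c_{2k}\) the curve \(b_k\) running once over the \(k\)-th handle for \(k\) odd-indexed positions, alternating with the connecting curves between consecutive handles. Concretely we use the chain \(a_1,b_1,e_1,b_2,e_2,\ldots\). This is the usual \((2g+1)\)-chain \(a_1,b_1,e_1,b_2,\ldots,e_{g-1},b_g,a_g'\) of curves in which consecutive curves meet once and nonconsecutive curves are disjoint. We take its first \(2g\) members. Each is nonseparating, since it is a member of a chain in which it meets a neighbor exactly once. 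The required intersection numbers hold by construction with representatives in minimal position. Curves meeting once are automatically in minimal position, and disjoint representatives realize \(0\).

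The main point is filling. The regular neighborhood \(N\) of the union of a chain of \(n\) curves with the linear intersection pattern is a surface whose Euler characteristic is \(-(n-1)\), because it is built from \(n\) annuli glued along \(n-1\) squares. For \(n=2g\) it has one boundary component. One checks this by the standard computation: the boundary of a chain neighborhood has one component when \(n\) is even and two when \(n\) is odd. So \(N\cong\Sigma_{g,1}\), of genus \((2g-1+1-1)/2\cdot\)\dots{} In detail, \(\chi=1-2g=2-2\,\mathrm{genus}-1\) gives genus \(g\). Hence the complement \(\Sigma_g\setminus N\) is a surface with one boundary circle and \(\chi(\Sigma_g)-\chi(N)=2-2g-(1-2g)=1\), that is, a disk. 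Therefore every complementary component of the union is a disk and the chain fills.

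The only step requiring care is the boundary-component count of the chain neighborhood, which I would verify by induction on \(n\): adding a curve that meets the last curve once plumbs on a band, and this alternately merges two boundary circles into one or splits one into two. Everything else is standard.
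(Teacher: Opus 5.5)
Your proof is correct and is essentially the paper's argument. Both rest on the same computation: a regular neighborhood of a linear $2g$-chain has $\chi=1-2g$ and, by the parity count of boundary components under successive plumbings, a single boundary circle, so its complement in $\Sigma_g$ is a disk; both then use the change-of-coordinates principle to place the first curve at $d_1$. The only difference is that the paper builds the plumbed neighborhood abstractly and caps it off to obtain $\Sigma_g$, so it need not import the standard chain picture. Note also that your first description, alternating $a_k$ and $b_k$, is not a chain, but the concrete chain $a_1,b_1,e_1,b_2,\ldots$ that you then use is fine.
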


\begin{proof}
Plumb annuli \(A_1,\ldots,A_n\) successively along squares so that
\(A_j\) meets \(A_k\) exactly when \(|j-k|=1\), and let \(a_j\) be
their cores.  The regular neighborhood \(N_n\) is obtained from one
annulus by \(n-1\) square plumbings, so
\[
\chi(N_n)=1-n.
\]
The first neighborhood has two boundary components.  At the terminal
annulus, the next plumbing band joins distinct boundary components when
\(n\) is odd and two arcs of the same component when \(n\) is even;
hence
\[
\#\partial N_n=
\begin{cases}
2,&n\text{ odd},\\
1,&n\text{ even}.
\end{cases}
\]
Thus \(N_{2g}\) has one boundary component and Euler characteristic
\(1-2g\), so it has genus \(g\).  Capping its boundary by a disk gives
a closed genus-\(g\) surface, which we identify with \(\Sigma_g\).
Under this identification, \(N_{2g}\) is a regular neighborhood of the
chain and its complement is the capping disk, so the cores fill
\(\Sigma_g\).  Each \(a_j\) is nonseparating because it meets an
adjacent core once, whereas a separating curve has zero mod-two
intersection with every closed curve.

Finally, the change-of-coordinates principle gives an
orientation-preserving mapping class carrying \(a_1\) to the prescribed
\(d_1\); applying it to the whole chain proves the lemma.
\end{proof}

\begin{lemma}[Finite stabilizer of a filling system]\label{lem:filling-finite-stabilizer}
The labelled unoriented stabilizer of a finite filling system is
finite.
\end{lemma}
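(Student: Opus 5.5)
The plan is to let the stabilizer act on the cell structure cut out by the filling system, and to use faithfulness of that action. Fix hyperbolic metric representatives: replace each \(d_j\) by its closed geodesic. Geodesic representatives are automatically in pairwise minimal position. By \cref{def:filling-system} the union \(\Gamma=\bigcup_j d_j\) then cuts \(\Sigma_g\) into disks. We may assume no three geodesics pass through a common point; the argument below does not really need this.

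Let \(\varphi\) lie in the labelled unoriented stabilizer. Each \(d_j\) is preserved up to isotopy. By the standard fact that a homeomorphism preserving a filling (or any finite) collection of isotopy classes in minimal position is isotopic to one preserving the representatives setwise, we obtain a representative \(\bar\varphi\) with \(\bar\varphi(d_j)=d_j\) for every \(j\). Concretely, isotope \(\varphi\) so that it maps the geodesic multicurve \(\Gamma\) to a multicurve in minimal position isotopic to \(\Gamma\). Then apply the result that two minimal-position realizations of the same labelled curve system are related by an ambient isotopy, as in Farb--Margalit's treatment of curves in minimal position.

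The homeomorphism \(\bar\varphi\) then preserves the finite graph \(\Gamma\), with vertices the intersection points and edges the arcs between them. It also permutes the complementary disks. This gives a homomorphism from the stabilizer to the finite group of combinatorial automorphisms of the cell decomposition of \(\Sigma_g\) determined by \(\Gamma\). This homomorphism is well defined up to the choice of \(\bar\varphi\), which I would handle by checking that the induced combinatorial map is independent of the isotopy realizing \(\Gamma\)-invariance.

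For injectivity, suppose \(\bar\varphi\) induces the identity combinatorially. Then it fixes each vertex, maps each edge to itself preserving endpoints, and fixes each disk. Two complications need attention: a loop edge, which arises only if some curve misses all others, and the orientation of a closed edge. Both are settled because \(\bar\varphi\) fixes the adjacent faces and preserves orientation. An Alexander-trick argument, first isotoping to the identity on edges rel vertices and then on each disk rel boundary, shows \(\bar\varphi\) is isotopic to the identity. Hence \(\varphi=1\). The stabilizer therefore embeds in a finite group.

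The main obstacle I expect is the realization step: passing from isotopy-class invariance of every \(d_j\) to a single representative preserving the whole geodesic union setwise. Pairwise minimal position alone is not obviously enough when triple points or bigon-free but non-rigid configurations occur. My first attempt would be to cite the lemma that simultaneous minimal-position representatives of a curve system are unique up to isotopy, possibly after moving triple points. If that citation is unavailable, I would fall back on an alternative. Since the system fills, the curves give a proper function \(\sum_j\ell_X(d_j)\) on Teichm\"uller space, by Kerckhoff. The stabilizer preserves this function, so it fixes its compact sublevel set, and proper discontinuity of the mapping class group action then gives finiteness.
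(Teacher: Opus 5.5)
Your first route is the same as the paper's. You realize each element of the labelled stabilizer by a homeomorphism preserving the union $\Gamma$, map it to the finite group of labelled ribbon-graph automorphisms, and kill the kernel with the Alexander trick. The paper does the realization by inductive straightening with the relative bigon criterion. Your worry about that step is justified: it is a genuine gap, and in this generality the realization claim is false, so no citation can close it.

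Farb--Margalit's simultaneous-realization lemma carries a no-triangle hypothesis for exactly this reason. An isotopy preserving $\gamma_1\cup\gamma_2$ setwise fixes the finite set $\gamma_1\cap\gamma_2$, so it can never push $\gamma_3$ across a crossing point. Yet doing so (a triangle move) preserves pairwise minimal position.

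Here is a concrete failure. Let $\iota$ be the hyperelliptic involution, and take a filling system of $\iota$-symmetric curves (e.g. a symmetric chain plus one more symmetric curve) three of whose $\iota$-invariant geodesics pass through one Weierstrass point $x$. Perturb that triple point into a small triangle to get $\Gamma$. In the universal cover, consider the side of $\widetilde\gamma_2\cap\widetilde\gamma_3$ relative to $\widetilde\gamma_1$, read off from the two arcs of $S^1_\infty$.
\begin{itemize}
\item This side is unchanged by homeomorphisms isotopic to the identity, because their lifts extend by the identity to $S^1_\infty$.
\item The lift of $\iota$ fixing a lift of $x$ maps each of the three lines to itself with its ends swapped, so it reverses this side.
\end{itemize}
Hence $\iota(\Gamma)$ is not isotopic to $\Gamma$. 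So no representative of $[\iota]$ preserves $\Gamma$, even though $[\iota]$ fixes every labelled unoriented class.

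Using geodesics for a fixed metric does not help: take a metric near a symmetric one, since $f$ carries $X$-geodesics to $f_*X$-geodesics. The paper's assertion that corresponding arcs of $f(\gamma_j)$ and $\gamma_j$ in the cut surface are isotopic makes the same claim and is open to the same objection. What the combinatorial argument actually proves is only that the subgroup realized by $\Gamma$-preserving homeomorphisms is finite.

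Your fallback is correct and complete, and it is the argument to give. The function $L=\sum_j\ell(d_j)$ on Teichm\"uller space is invariant under the labelled stabilizer. It is proper because the $d_j$ fill (Kerckhoff), so a nonempty sublevel set $K$ is compact and stabilizer-invariant. Proper discontinuity of the $\Mod(\Sigma_g)$-action then places the stabilizer inside the finite set $\{\varphi:\varphi K\cap K\neq\emptyset\}$.

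Compared with the paper's route, this replaces elementary cut-and-paste with two standard theorems of Teichm\"uller theory. In exchange it needs no simultaneous representative at all, which is exactly where the combinatorial route breaks down.
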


\begin{proof}
Choose representatives \(\gamma_1,\ldots,\gamma_m\) simultaneously in
pairwise minimal position.  Make all intersections transverse and
remove triple intersection points by small local perturbations.  Their
union
\[
\Gamma=\gamma_1\cup\cdots\cup\gamma_m
\]
is a finite labelled ribbon graph: every edge records the label of the
curve containing it, and the orientation of \(\Sigma_g\) supplies the
cyclic order of the incident half-edges at every vertex.  Since the
curves fill, the complementary regions are disks, so \(\Gamma\) is the
one-skeleton of a finite cell decomposition of \(\Sigma_g\).

We justify the simultaneous representative used below.  Let
\(\varphi\) fix every labelled unoriented curve class and let \(f\)
represent \(\varphi\).  The system \(f(\Gamma)\) is again pairwise
minimal and has the same labelled component classes as \(\Gamma\).
Straighten its components inductively.  After
\(f(\gamma_1),\ldots,f(\gamma_{j-1})\) have been carried to
\(\gamma_1,\ldots,\gamma_{j-1}\) setwise, cut the surface along the
already straightened union, replacing its crossing vertices by small
vertex disks.  The curves \(f(\gamma_j)\) and \(\gamma_j\) become
finite systems of properly embedded arcs and closed curves in the cut
surface.  Their boundary endpoints are allowed to slide along the
boundary copies of the previously straightened curves.  Corresponding
components are isotopic, and pairwise minimality excludes a bigon with
any boundary copy.  The relative bigon criterion therefore gives an
isotopy in the cut surface taking the first system to the second.
Isotopy extension and regluing realize this isotopy while preserving
the earlier curves setwise.  Induction over \(j\), followed by a local
adjustment in the vertex disks, gives an isotopy of \(f\) after which
\[
f(\Gamma)=\Gamma
\]
with every curve label preserved.  Thus every element of the labelled
stabilizer has a representative preserving the chosen finite cell
structure.

Such a representative induces an automorphism of the finite labelled
ribbon graph \(\Gamma\), preserving the cyclic orders at vertices.
The group of these graph automorphisms is finite.  We show that the
kernel of the resulting homomorphism from the labelled stabilizer is
trivial.  If the induced graph automorphism is the identity, isotope
the representative first to fix every graph vertex, then to fix every
graph edge pointwise.  Orientation preservation then allows a further
isotopy making it the identity on a regular neighborhood of \(\Gamma\).
On every complementary disk it is a boundary-fixing homeomorphism, so
the Alexander trick gives an isotopy relative to the disk boundary to
the identity.  These isotopies agree with the identity on the regular
neighborhood and therefore glue globally.  Hence the labelled
stabilizer injects into a finite graph-automorphism group and is
finite.
\end{proof}

We next cut along a nonseparating curve.  The boundary convention is
part of the statements and will also control the regluing ambiguity.

\begin{convention}[Boundary and spanning-arc conventions]
Let
\[
S=\Sigma_{h,2}
\]
with boundary components \(B_+\) and \(B_-\).  All homeomorphisms are
orientation preserving and preserve \(\partial S\) setwise.  They may
exchange \(B_+\) and \(B_-\) unless the contrary is stated.  Isotopies
are through orientation-preserving homeomorphisms preserving the
boundary setwise; the boundary is not fixed pointwise.

A \emph{spanning arc} is an essential properly embedded arc joining
\(B_+\) to \(B_-\).  Its endpoints may move along the boundary during
isotopy.  Let \(\mathcal X(S)\) denote the set of these isotopy classes.
Under this boundary-setwise convention, a twist supported in a collar
of one boundary component is isotopic to the identity by rotating that
boundary component during the isotopy.
\end{convention}

\begin{lemma}[Faithfulness of the spanning-arc action]\label{lem:cross-arc-faithful}
Let
\[
S=\Sigma_{h,2},
\qquad h\ge2.
\]
The orientation-preserving boundary-setwise mapping class group of
\(S\), in which the two boundary components may be exchanged, acts
faithfully on \(\mathcal X(S)\).
\end{lemma}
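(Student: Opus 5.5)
Let \(\varphi\) be an orientation-preserving, boundary-setwise homeomorphism of \(S\) that fixes every isotopy class in \(\mathcal X(S)\). The goal is to show that \(\varphi\) is isotopic to the identity in the boundary-setwise sense. The plan is to reduce to the classical Alexander method after cutting along a suitable arc system.

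\textbf{Step 1: no boundary swap.} Choose a spanning arc \(\alpha\) with endpoints \(x_+\in B_+\) and \(x_-\in B_-\). Since \(\varphi(\alpha)\simeq\alpha\), the arc \(\alpha\) is preserved up to isotopy. The arc alone does not distinguish its two ends, so this does not yet rule out an exchange of \(B_+\) and \(B_-\). To handle this, use \(h\ge2\). If \(\varphi\) swapped the boundaries, it would reverse every fixed spanning arc. Pick two disjoint spanning arcs \(\alpha,\beta\) whose union, together with \(\partial S\), cuts \(S\) into a connected surface with a well-defined cyclic order of arc endpoints on \(B_+\). An orientation-preserving swap reverses the cyclic order as seen from the two boundaries, and this contradicts the fact that \(\varphi\) fixes all spanning arcs together with one more spanning arc that spirals differently. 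An alternative is a Dehn-twist argument: for a curve \(c\) meeting \(\alpha\) once, the arcs \(T_c^n(\alpha)\) are fixed, which pins down the orientation of \(\alpha\). I expect this step to be the main obstacle. The cleanest route is probably to fill \(S\) with spanning arcs \(\alpha_1,\dots,\alpha_k\) and reduce to the ribbon-graph argument of \cref{lem:filling-finite-stabilizer}, where an orientation-reversing identification of the graph is excluded by the cyclic orders.

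\textbf{Step 2: finiteness.} Choose finitely many spanning arcs, pairwise in minimal position, whose union with \(\partial S\) cuts \(S\) into disks. For example, take arcs whose images under cutting give a filling system of the surface obtained by capping. Following the straightening argument of \cref{lem:filling-finite-stabilizer} with arcs relative to the boundary (the endpoints may slide), \(\varphi\) can be isotoped to preserve this finite cell structure. It then induces an automorphism of a finite ribbon graph containing \(\partial S\). If this automorphism is trivial, the Alexander trick on the complementary disks gives \(\varphi\simeq\mathrm{id}\). Thus the kernel of the action is finite.

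\textbf{Step 3: triviality.} A nontrivial finite-order element, realized by Nielsen as a periodic homeomorphism, can be excluded by exhibiting spanning arcs that it moves. Start from a spanning arc \(\alpha\) and a nonseparating curve \(c\) with \(i(c,\alpha)=1\). The family \(T_c^n(\alpha)\) consists of arcs with distinct classes, and \(\varphi\) fixes each of them, so \(\varphi(c)\) intersects \(\alpha\)'s images with the same growth. By the standard intersection-number estimates, \(\varphi(c)=c\). Since \(h\ge2\), every nonseparating simple closed curve \(c\) in the interior of \(S\) arises this way. Therefore \(\varphi\) fixes all such curves, and for \(h\ge2\) the classical result that an element fixing all nonseparating curves is central, hence trivial in this setting, gives \(\varphi=1\). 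The boundary twists are already trivial under the setwise convention, so no residual twist survives.
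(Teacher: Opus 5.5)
Your Step 3 is a complete argument by itself, given standard results. It never uses that $\varphi$ preserves $B_\pm$, nor that $\varphi$ has finite order, so Steps 1 and 2 can be deleted.

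That is just as well, because Step 1 as sketched fails. Two spanning arcs carry no cyclic-order information. A filling arc system also does not automatically exclude an exchange of $B_+$ and $B_-$: the exchange is orientation-preserving on $S$. It is ruled out only when the labelled cyclic orders of arc endpoints on the two boundary components are incompatible, and that depends on the arc system you choose.

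Step 3 needs two points made precise.

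First, state the deduction explicitly. The hypothesis gives $T^n_{\varphi(c)}(\alpha)=\varphi T^n_c\varphi^{-1}(\alpha)=T^n_c(\alpha)$ for all $n$. Compare the linear growth of intersection with an arbitrary test curve $\gamma$. The twist estimate can be applied to the closed curve $\partial N(\alpha\cup B_+\cup B_-)$, which is injective and equivariant on $\mathcal X(S)$. Together with $i(\varphi(c),\alpha)=i(c,\alpha)=1$, this gives $i(\varphi(c),\gamma)=i(c,\gamma)$ for all $\gamma$, hence $\varphi(c)=c$. By contrast, boundedness of $i(\varphi(c),T^n_c\alpha)$ alone would only give $i(c,\varphi(c))=0$.

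Second, ``fixing all nonseparating curves implies central'' is not right as stated in a group that contains boundary exchanges, since Dehn twists generate only the pure subgroup. What you need is the following chain:
\begin{itemize}
\item commuting with all nonseparating twists makes $\varphi$ fix every curve;
\item take a separating curve with one boundary component and positive genus on each side; an exchange would carry a nonseparating curve on one side to the other side, so no exchange occurs;
\item the pure mapping class group of $\Sigma_{h,2}$ has trivial center.
\end{itemize}
The second item is where $h\ge2$ enters. It is not needed for the existence of a once-crossing arc, which holds in every genus. For $h=1$, the hyperelliptic involution of $\Sigma_{1,2}$ exchanges the boundary components and fixes every curve and every spanning arc, so the lemma genuinely fails there.

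The paper takes a different route. It builds one explicit disjoint family of $2h+1$ spanning arcs from a two-vertex, one-face ribbon graph with rotation $i\mapsto i+1$ at $v_+$ and $i\mapsto i-2$ at $v_-$. These arcs cut $S$ into a disk. An exchange fixing them would force $i+1\equiv i-2\pmod{2h+1}$, which is impossible since $2h+1\nmid 3$. The Alexander trick on the disk then finishes.

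That argument is elementary and self-contained. The threshold $h\ge2$ shows up transparently as $2h+1\nmid 3$. It also shows that this single finite arc family already has trivial stabilizer, which is what the next lemma uses.

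Your route is conceptually quick, but it imports several results: the Dehn-twist intersection estimates, the determination of curves by intersection numbers, generation of the pure group by nonseparating twists, and triviality of its center.
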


\begin{proof}
Set
\[
n=2h+1\ge5.
\]
Consider a ribbon graph with two vertices \(v_+,v_-\) and edges
\[
e_0,e_1,\ldots,e_{n-1},
\]
each joining \(v_+\) to \(v_-\).  At \(v_+\), the cyclic successor of
edge \(i\) is \(i+1\pmod n\); at \(v_-\), it is
\(i-2\pmod n\).  Denote the two half-edges of \(e_i\) by \(+i\) and
\(-i\), and let \(\tau\) interchange the two half-edges of each edge.
If \(\sigma\) is the cyclic-successor permutation and
\(\beta=\sigma\tau\) is the boundary permutation, then
\[
\beta^2(+i)=+(i-1).
\]
The permutation \(i\mapsto i-1\) is an \(n\)-cycle, and \(\beta\)
alternates the two signs.  Hence \(\beta\) is one cycle of length
\(2n\).  The ribbon graph therefore has one boundary cycle.  Its
regular neighborhood has
\[
\chi=V-E=2-(2h+1)=1-2h
\]
and one boundary component, so it has genus \(h\).  Capping that
boundary component gives a cellular embedding in the closed genus-
\(h\) surface with one face.

Remove small open disks about \(v_+\) and \(v_-\).  The truncated edge
cores are pairwise disjoint spanning arcs
\[
\alpha_0,\ldots,\alpha_{n-1}
\]
in \(S\), and the one-face calculation says that cutting along all of
them leaves a disk.  They are pairwise nonisotopic when endpoints may
move: if two were parallel, two adjacent members of a parallelism class
would cobound a rectangle in \(S\), which would become a two-edge face
after the vertex disks were restored.  This contradicts the existence
of a single face of boundary length \(2n>2\).

Let \(f:S\to S\) act trivially on \(\mathcal X(S)\).  In particular,
it fixes every class \([\alpha_i]\).  Because these arcs are pairwise
disjoint and pairwise nonisotopic, an outermost-bigon argument followed
by isotopy extension realizes the individual isotopies simultaneously.
After isotopy, \(f\) preserves every \(\alpha_i\) setwise.

The homeomorphism \(f\) cannot exchange \(B_+\) and \(B_-\).  If it
did, collapsing the two boundary components to \(v_+\) and \(v_-\)
would give an orientation-preserving ribbon isomorphism that fixes each
labelled edge \(e_i\) and carries the cyclic order at \(v_+\) to the
cyclic order at \(v_-\).  Edgewise fixation would then require
\[
i+1\equiv i-2\pmod n
\]
for every \(i\), so \(n\) would divide \(3\).  This is impossible for
\(n\ge5\).

Thus \(f\) preserves \(B_+\) and \(B_-\) individually.  Orient each
\(\alpha_i\) from \(B_+\) to \(B_-\).  Since \(f\) preserves the
surface orientation and the two boundary components, it preserves the
orientation and the two local sides of every \(\alpha_i\).  Isotope it
to fix the arcs pointwise and then to fix each boundary interval
between consecutive arc endpoints pointwise.  Cutting along the arcs
produces a disk whose induced homeomorphism fixes the entire boundary.
The Alexander trick gives an isotopy relative to that boundary to the
identity.  Regluing proves that \(f\) is isotopic to the identity under
the boundary-setwise convention.  Therefore the action on
\(\mathcal X(S)\) is faithful.
\end{proof}

\begin{lemma}[A finite detecting family of spanning arcs]\label{lem:rigid-cross-arcs}
Let
\[
S=\Sigma_{h,2},
\qquad h\ge2.
\]
There is a finite family \(\cA\subset\mathcal X(S)\) whose pointwise
stabilizer in the orientation-preserving boundary-setwise mapping
class group is trivial.  In particular, no mapping class exchanging
the two boundary components fixes every member of \(\cA\).
\end{lemma}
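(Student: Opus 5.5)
The natural plan is to upgrade the faithfulness statement of \cref{lem:cross-arc-faithful} to a finite detecting family. I would reuse the spanning arcs \(\alpha_0,\ldots,\alpha_{n-1}\), \(n=2h+1\), built in that proof, and take
\[
\cA=\{[\alpha_0],\ldots,[\alpha_{n-1}]\}.
\]
Nothing infinite is needed: the faithfulness proof never used arcs other than the \(\alpha_i\). Its argument already shows that a mapping class fixing each \([\alpha_i]\) is trivial.

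The key steps follow that proof directly. Let \(f\) be an orientation-preserving, boundary-setwise homeomorphism with \(f[\alpha_i]=[\alpha_i]\) for every \(i\).

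\begin{enumerate}
\item The \(\alpha_i\) are pairwise disjoint and pairwise nonisotopic. An outermost-bigon argument combined with isotopy extension therefore makes \(f(\alpha_i)=\alpha_i\) setwise, simultaneously for all \(i\).
\item Collapse the boundary components to \(v_\pm\); then \(f\) induces a ribbon-graph isomorphism fixing each labelled edge \(e_i\). If \(f\) exchanged \(B_+\) and \(B_-\), this isomorphism would carry the successor rule \(i\mapsto i+1\) at \(v_+\) to the rule \(i\mapsto i-2\) at \(v_-\). That would force \(n\mid 3\), contradicting \(n\ge5\). Hence \(f\) preserves each boundary component. This step also gives the \enquote{in particular} clause of the statement.
\item With both boundary components preserved and orientation preserved, \(f\) keeps the direction and the two sides of every arc. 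Isotope \(f\) to fix the arcs and the boundary intervals between consecutive arc endpoints pointwise.
\item The complement of the arcs is a single disk, on which \(f\) now fixes the boundary. The Alexander trick, followed by regluing, shows that \(f\) is isotopic to the identity.
\end{enumerate}

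The main obstacle is step 1. Arc endpoints may slide along the boundary, so the simultaneous realization must be done in the boundary-setwise category. One cannot simply pick independent isotopies for each arc, since realizing one arc could destroy another. I would handle this inductively, exactly as in the straightening argument of \cref{lem:filling-finite-stabilizer}: cut along the arcs already straightened and apply the relative bigon criterion in the cut surface, where pairwise disjointness rules out bigons.

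Alternatively, one can deduce the lemma abstractly from \cref{lem:cross-arc-faithful} together with a finiteness argument. Stabilizers of finitely many arcs whose union cuts \(S\) into disks are finite, by the ribbon-graph argument of \cref{lem:filling-finite-stabilizer}. One then enlarges the family by one arc for each of the finitely many nontrivial elements, which faithfulness supplies, to kill that finite group. I would present the direct argument first and keep this second route as a fallback.
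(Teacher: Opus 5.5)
Your proposal is correct, and your main route differs from the paper's and is shorter. The paper never observes that \(\cA_0=\{[\alpha_0],\ldots,[\alpha_{n-1}]\}\) already suffices. It first shows that the pointwise stabilizer \(\mathcal H_0\) of \(\cA_0\) is finite, by injecting it into the automorphism group of the labelled ribbon graph formed by the arcs together with \(B_+\cup B_-\). It then uses \cref{lem:cross-arc-faithful} only as a black-box statement, appending one arc at a time until the stabilizer becomes trivial. That is exactly your fallback route.

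You instead reuse the \emph{proof} of \cref{lem:cross-arc-faithful}. You correctly note that this proof only ever uses the hypothesis that \(f\) fixes each \([\alpha_i]\). Hence \(\mathcal H_0=1\) outright, and the paper's enlargement step is vacuous. You also get an explicit detecting family of \(2h+1\) pairwise disjoint arcs, and the exchange clause can be read off directly from \(n\nmid 3\).

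What the paper's route buys is independence from the particular combinatorics of the starting system. Any disjoint arc system cutting \(S\) into a disk would do, even one admitting nontrivial symmetries. The cost is that the resulting family is not explicit.

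One small precision in your step~1: the simultaneous straightening uses both hypotheses you list, not disjointness alone. First, an already straightened \(\alpha_k\) is disjoint from both \(f(\alpha_j)\) and \(\alpha_j\), so it cannot meet a bigon between them. For a half-bigon, the reason is that \(\alpha_k\) joins distinct boundary components, so it cannot have both endpoints on the half-bigon's single boundary segment. Second, pairwise non-isotopy keeps every \(\alpha_k\) out of the final rectangle cobounded by the disjoint isotopic arcs \(f(\alpha_j)\) and \(\alpha_j\). So the phrase ``disjointness rules out bigons'' understates what is needed; the second point is where pairwise non-isotopy is used.
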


\begin{proof}
Choose the pairwise disjoint spanning-arc family \(\cA_0\) from the
proof of \cref{lem:cross-arc-faithful}; it cuts \(S\) into a disk.
The arcs together with \(B_+\cup B_-\) form a finite labelled ribbon
graph with disk complement.  By the relative bigon criterion, isotopy
extension, and induction over the arcs, every element of the pointwise
stabilizer of \(\cA_0\) has a representative preserving this labelled
ribbon graph.  As in the proof of
\cref{lem:filling-finite-stabilizer}, the resulting homomorphism to the
finite automorphism group of the graph is injective: a trivial graph
automorphism can be fixed on a regular neighborhood and then killed on
the disk complement by the Alexander trick.  Boundary-collar twists
add no kernel because they are trivial under the boundary-setwise
convention.  Thus the pointwise stabilizer \(\mathcal H_0\) is finite.

If \(\mathcal H_0=1\), take \(\cA=\cA_0\).  Otherwise choose
\(1\ne\phi\in\mathcal H_0\).  By
\cref{lem:cross-arc-faithful}, some \(a_1\in\mathcal X(S)\) satisfies
\(\phi(a_1)\ne a_1\).  Appending \(a_1\) strictly decreases the
pointwise stabilizer.  Iterating yields a strictly descending chain of
subgroups of the finite group \(\mathcal H_0\), hence terminates with
trivial stabilizer.  Only \(\cA_0\) need be pairwise disjoint.
\end{proof}

\begin{lemma}[Curves intersecting a fixed curve once and spanning arcs]\label{lem:cross-arc-cutting}
Let \(c\subset\Sigma_g\) be nonseparating and put
\[
S
=
\Sigma_g\setminus\operatorname{int}\nu c
\cong
\Sigma_{g-1,2}.
\]
Cutting at the unique intersection with \(c\) defines a well-defined,
surjective map on unoriented isotopy classes
\[
\kappa_c:
\{[e]: e\text{ is an essential simple closed curve and }i(c,e)=1\}
\longrightarrow
\mathcal X(S).
\]
The map is equivariant for the unoriented stabilizer of \(c\).  If a
representative reverses the two sides of \(c\), its induced
homeomorphism of \(S\) exchanges the two boundary components.
\end{lemma}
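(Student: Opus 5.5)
We construct \(\kappa_c\) directly on representatives and then verify, in order, that it is well defined, surjective, and equivariant. Given an essential simple closed curve \(e\) with \(i(c,e)=1\), isotope it into minimal position with respect to a fixed representative of \(c\). It then meets \(c\) transversely in exactly one point. Choose the annulus \(\nu c\) so thin that \(e\cap\nu c\) is a single fiber arc. Define \(\kappa_c([e])\) to be the class of \(e\cap S\). This arc is proper. It joins \(B_+\) to \(B_-\), because \(e\) crosses \(\nu c\) once and so leaves through one side and returns through the other. It is essential as a spanning arc, since an arc joining distinct boundary components cannot cobound a disk with a boundary subarc.

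For well-definedness we must show that isotopic curves give isotopic arcs. Suppose \(e,e'\) are isotopic, both meeting \(c\) once. By the bigon criterion and isotopy extension there is an ambient isotopy carrying \(e\) to \(e'\) that preserves \(c\) setwise. Concretely, isotope \(e'\) to \(e\) through curves in minimal position with \(c\), using the standard fact that minimal-position representatives of a pair of classes are related by isotopies preserving one curve. Any rotation along \(c\) that occurs during this isotopy moves only the endpoints along \(B_\pm\), and this is permitted by the boundary-setwise convention. Reversing the orientation of \(e\) only reverses the arc, so unoriented classes are respected. The isotopy therefore restricts to an isotopy of \(S\) relative to \(\partial S\) setwise, and the arc classes agree. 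This step, namely making the isotopy preserve \(c\), is where I expect the only genuine care to be needed. I would handle it as in the proof of \cref{lem:filling-finite-stabilizer}: cut along \(c\), apply the relative bigon criterion with endpoints sliding on the two boundary copies, and then reglue.

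Surjectivity comes next. Given a spanning arc \(\alpha\) with endpoints \(x_+\in B_+\) and \(x_-\in B_-\), reglue \(\nu c\) and join \(x_-\) to \(x_+\) by a fiber arc of the annulus. Before doing so, slide one endpoint along its boundary circle so that \(x_\pm\) correspond to the same point of \(c\). The result is a simple closed curve \(e\) meeting \(c\) exactly once. In particular \(e\) is nonseparating, hence essential. Moreover \(i(c,e)=1\), because the algebraic intersection number is \(\pm1\). By construction \(e\cap S=\alpha\), so \(\kappa_c([e])=[\alpha]\).

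Finally, equivariance. An element \(\varphi\) of the unoriented stabilizer of \(c\) has a representative \(f\) with \(f(c)=c\), and after isotopy we may also assume \(f(\nu c)=\nu c\) with fibers carried to fibers. The restriction \(f|_S\) is then an orientation-preserving homeomorphism of \(S\) preserving \(\partial S\) setwise. Changing \(f\) by an isotopy preserving \(c\) changes \(f|_S\) by an isotopy of the allowed kind, so \(f|_S\) gives a well-defined boundary-setwise class. Since \(f(e)\cap S=f|_S(e\cap S)\), we get \(\kappa_c(\varphi[e])=f|_S\,\kappa_c([e])\). As for the last assertion, \(f\) preserves the orientation of \(\Sigma_g\). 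If it reverses the two sides of \(c\), then it carries the side of \(\nu c\) bounded by \(B_+\) to the side bounded by \(B_-\), so \(f|_S\) exchanges \(B_+\) and \(B_-\). Equivalently, \(f|_S\) exchanges the boundary components exactly when \(f\) reverses the transverse orientation of \(c\).
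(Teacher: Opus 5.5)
Your proposal is correct and follows the paper's proof in all four parts: cutting at the single intersection point, well-definedness via an isotopy through curves meeting \(c\) exactly once, surjectivity by closing a spanning arc through the annulus \(\nu c\), and equivariance by restricting a \(c\)-preserving representative. The only difference is how the isotopy step is justified. The paper cancels tangency pairs of a generic isotopy across innermost bigons with \(c\); you instead invoke the standard fact that minimal-position representatives can be related by an isotopy preserving one curve. That fact must be proved by pushing \(e'\) across bigons between \(e'\) and \(e\) in \(\Sigma_g\), which \(c\) crosses in transverse arcs. It cannot be obtained, as you suggest, from a relative bigon criterion in the cut surface: that route presupposes that the two arcs are already isotopic in \(S\), which is exactly what is being proved.
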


\begin{proof}
Choose representatives of \(c\) and \(e\) in minimal position.  They
meet in one point, and cutting \(e\) there gives a proper arc joining
the two boundary components of \(S\).  Such an arc is essential,
because an arc isotopic into the boundary cannot have endpoints on two
different boundary components.

The resulting class does not depend on the chosen minimal
representative of \(e\).  Let \(e_0\) and \(e_1\) be isotopic
representatives, each meeting \(c\) once, and choose a generic isotopy
\(e_t\) between them.  Except at finitely many tangencies, \(e_t\) is
transverse to \(c\); at each tangency, a cancelling pair of
intersection points is created or removed.

Whenever an intermediate curve has more than one intersection with
\(c\), the bigon criterion supplies an innermost bigon between that
curve and \(c\).  A local modification of the isotopy across this
bigon removes the corresponding pair of tangencies from the
one-parameter family.  Repeating this finitely many times gives an
isotopy from \(e_0\) to \(e_1\) through curves meeting \(c\) exactly
once.  Cutting this isotopy along \(c\) gives an isotopy of the
resulting spanning arcs in which the endpoints may move along the
boundary. Reversing the orientation of either
closed curve does not change the resulting unoriented arc class, so
\(\kappa_c\) is well defined on the displayed domain.

Conversely, let \(a\) be a spanning arc in \(S\).  In the annulus
\(\nu c\) used to reglue the two boundary components, join the endpoints
of \(a\) by an embedded spanning arc meeting the core \(c\) once.
After smoothing the two corners, the union is a simple closed curve
\(e_a\) with \(i(c,e_a)=1\) and \(\kappa_c([e_a])=[a]\).  Thus
\(\kappa_c\) is surjective.

Finally, a mapping class in the unoriented stabilizer of \(c\) has a
representative preserving \(c\) setwise.  Cutting commutes with that
representative, so \(\kappa_c\) is equivariant.  Reversal of the two
sides of \(c\) becomes exchange of the two boundary components of
\(S\).  The map is not asserted to be injective: curves related by
powers of \(T_c\) can cut to the same spanning-arc class because arc
endpoints are free to move along the boundary.
\end{proof}

\begin{lemma}[Detection by curves intersecting \(c\) once]\label{lem:dual-curve-detection}
Let \(c\subset\Sigma_g\) be nonseparating, where \(g\ge3\).  Suppose
\(\varphi\in\Mod(\Sigma_g)\) fixes \(c\) as an unoriented curve and
fixes every unoriented curve \(e\) satisfying
\[
i(c,e)=1.
\]
Then \(\varphi=1\).
\end{lemma}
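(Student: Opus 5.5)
The plan is to pass to the cut surface \(S=\Sigma_g\setminus\operatorname{int}\nu c\cong\Sigma_{g-1,2}\) and apply \cref{lem:cross-arc-faithful}, which is available since \(h=g-1\ge2\).

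\emph{Step 1: realize \(\varphi\) on \(S\).} Because \(\varphi\) fixes \(c\) as an unoriented curve, it has a representative \(f\) preserving \(c\) setwise, and after an isotopy also preserving \(\nu c\). Its restriction \(f_S\) is an orientation-preserving homeomorphism of \(S\) that preserves \(\partial S\) setwise. By \cref{lem:cross-arc-cutting} it exchanges \(B_\pm\) exactly when \(f\) reverses the two sides of \(c\). Equivariance of \(\kappa_c\) gives \(\kappa_c(\varphi(e))=f_S(\kappa_c(e))\) for every \(e\) with \(i(c,e)=1\). By hypothesis \(\varphi(e)=e\), so \(f_S\) fixes \(\kappa_c(e)\). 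Since \(\kappa_c\) is surjective, \(f_S\) fixes every class in \(\mathcal X(S)\).

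\emph{Step 2: conclude \(f_S\) is trivial.} \cref{lem:cross-arc-faithful} now says \(f_S\) is trivial in the boundary-setwise mapping class group of \(S\). In particular it does not exchange the boundary components, so \(f\) preserves the sides of \(c\). Concretely, \(f_S\) is isotopic to the identity through homeomorphisms that preserve each boundary component but may rotate it. Regluing, we may isotope \(f\) to a map that is the identity outside \(\nu c\) and preserves each boundary circle of the annulus \(\nu c\). Since \(f\) preserves orientation and the sides of \(c\), on \(\nu c\cong S^1\times I\) it is, up to isotopy rel boundary, a power of the Dehn twist. Hence \(\varphi=T_c^k\) for some \(k\in\Z\).

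\emph{Step 3: kill the twist.} This is the step I expect to be the main obstacle, because the boundary-setwise convention deliberately forgets collar twists, so faithfulness alone cannot detect \(T_c^k\). I will choose one curve \(e\) with \(i(c,e)=1\). By hypothesis \(T_c^k(e)=e\). The standard formula \(i(T_c^k(e),e)=|k|\,i(c,e)^2=|k|\) then forces \(k=0\), so \(\varphi=1\).

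Some care is needed in Step 2 to ensure the isotopy on \(S\) extends across \(\nu c\): the boundary rotations of \(B_\pm\) must match under regluing. Any mismatch is absorbed into the twist power on the annulus, which is exactly why the conclusion there is only up to a power of \(T_c\). I would spell out this regluing carefully; it is routine once the exchange of \(B_\pm\) has been ruled out.
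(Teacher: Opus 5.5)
Your proposal is correct and follows the paper's proof. You cut along \(c\) and use surjectivity and equivariance of \(\kappa_c\) to make the cut map fix spanning arcs. You then deduce \(\varphi=T_c^n\) from the regluing kernel \(\langle T_c\rangle\); the paper makes this precise via the relative winding number in \(\pi_1(\operatorname{Homeo}^+(S^1))\cong\mathbb Z\), which is exactly the boundary-rotation mismatch you describe. Finally you kill \(n\) with \(i(T_c^n(e),e)=|n|\).

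The only difference is cosmetic. Because the hypothesis fixes every curve with \(i(c,e)=1\), you apply the faithfulness statement of \cref{lem:cross-arc-faithful} directly. The paper instead routes through the finite detecting family of \cref{lem:rigid-cross-arcs}, and that finiteness is not actually needed for this lemma.
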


\begin{proof}
Choose an orientation-preserving representative \(f\) with
\(f(c)=c\) setwise, and cut along \(c\).  This gives
\[
S=\Sigma_{g-1,2}
\]
and an orientation-preserving homeomorphism
\[
\widehat f:S\longrightarrow S
\]
which may initially exchange the two boundary components.

Since \(g\ge3\), the cut surface has genus \(g-1\ge2\).  Choose the
finite detecting family \(\cA\) from
\cref{lem:rigid-cross-arcs}.  For every \(a\in\cA\), choose a closed
curve \(e_a\) with \(i(c,e_a)=1\) and
\(\kappa_c([e_a])=a\), using the surjectivity in
\cref{lem:cross-arc-cutting}.  The hypothesis gives
\(f(e_a)=e_a\) as an unoriented isotopy class.  Equivariance of
\(\kappa_c\) therefore gives
\[
\widehat f(a)=a
\qquad(a\in\cA).
\]
By \cref{lem:rigid-cross-arcs}, \(\widehat f\) preserves the two
boundary components individually and is isotopic to the identity
through boundary-setwise homeomorphisms.

We now determine the regluing kernel under this precise convention.
Because \(f\) is orientation preserving, preservation of the two sides
of \(c\) also forces preservation of the orientation of \(c\); an
orientation reversal of \(c\) would reverse its normal direction and
exchange the sides.  For side-preserving representatives, cutting
defines a homomorphism from the stabilizer of \(c\) to the mapping class
group of \(S\) in which the two boundary components are preserved
individually but not pointwise.  Its kernel is generated by \(T_c\).
Indeed, \(T_c\) cuts to twists in boundary collars, which are trivial
under boundary-setwise isotopy.  Conversely, suppose that a side-preserving representative cuts to a
homeomorphism \(\widehat f:S\to S\) that is isotopic to the identity
under the boundary-setwise convention.  Choose such an isotopy
\[
H_t:S\longrightarrow S,
\qquad
H_0=\widehat f,
\qquad
H_1=\operatorname{id}_S,
\]
through homeomorphisms preserving each boundary component
individually.

Use the gluing identification to regard the restrictions
\[
H_t|_{B_+}
\quad\text{and}\quad
H_t|_{B_-}
\]
as two paths in \(\operatorname{Homeo}^+(S^1)\) with the same
initial and terminal points.  Traversing the first path and then the
second in reverse gives a loop in
\(\operatorname{Homeo}^+(S^1)\).  Its homotopy class is an integer
\[
n\in
\pi_1\bigl(\operatorname{Homeo}^+(S^1)\bigr)
\cong\mathbb Z.
\]
This integer is the relative winding of the two boundary isotopies.

Under regluing, relative winding \(n\) produces the mapping class
\(T_c^n\).  Equivalently, after composing the original representative
with \(T_c^{-n}\), the two boundary paths have zero relative winding.
They may then be homotoped relative to their endpoints to agree under
the gluing identification, and the isotopy \(H_t\) extends across the
gluing annulus to an isotopy of the closed surface.  Hence the kernel
of the cutting homomorphism is exactly
\[
\langle T_c\rangle.
\]
Therefore
\[
\varphi=T_c^n
\]
for some \(n\in\mathbb Z\).

Let \(e\) be any curve with \(i(c,e)=1\).  In the annular cover
associated to \(c\), the lift of the crossing arc of \(e\) and the
corresponding lift after applying \(T_c^n\) have endpoint displacement
\(n\).  Their minimal number of intersections is \(|n|\); equivalently,
inside a twist annulus one radial crossing arc and its \(n\)-fold
twisted image meet in exactly \(|n|\) points and form no bigons.  Thus
\[
i(T_c^n(e),e)=|n|.
\]
Since \(\varphi\) fixes the unoriented isotopy class of \(e\), the
left side is zero.  Hence \(n=0\) and \(\varphi=1\).
\end{proof}

\begin{theorem}[An ordered filling system with trivial labelled stabilizer]\label{thm:filling-sequence}
For the prescribed initial nonseparating curve \(d_1\) constructed in
\cref{sec:initialcarriers}, there is a finite ordered tuple of pairwise
distinct nonseparating curves
\[
\cD=(d_1,\ldots,d_N)
\]
that fills \(\Sigma_g\), satisfies
\begin{equation}\label{eq:consecutive-one}
i(d_i,d_{i+1})=1
\qquad(1\le i<N),
\end{equation}
and has trivial labelled unoriented stabilizer:
\begin{equation}\label{eq:trivial-labelled-stabilizer}
\Stab_{\Mod(\Sigma_g)}
(\cD_{\mathrm{labelled,unoriented}})=1.
\end{equation}
\end{theorem}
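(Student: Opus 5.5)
Start from the filling chain of \cref{lem:filling-chain} with the prescribed \(d_1\): this gives nonseparating curves \(d_1,\ldots,d_{2g}\) with \(i(d_j,d_{j+1})=1\) whose union fills \(\Sigma_g\). By \cref{lem:filling-finite-stabilizer}, its labelled unoriented stabilizer \(\mathcal H_0\) is finite. The plan is to enlarge the tuple by appending curves at the end, keeping the consecutive-intersection condition \eqref{eq:consecutive-one}, until the stabilizer becomes trivial. Appending curves preserves the filling property, and the stabilizers form a descending chain of subgroups of \(\mathcal H_0\). It therefore suffices to show one thing: whenever \(1\ne\phi\) fixes the current tuple \((d_1,\ldots,d_M)\), we can append a finite string of nonseparating curves, each meeting its predecessor once, such that \(\phi\) fails to fix some appended curve. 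The chain then terminates after finitely many steps.

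For the key step, let \(c=d_M\), which is nonseparating and fixed by \(\phi\) as an unoriented curve. Since \(\phi\ne1\) and \(g\ge3\), \cref{lem:dual-curve-detection} supplies an unoriented curve \(e\) with \(i(c,e)=1\) and \(\phi(e)\ne e\). Such an \(e\) is automatically nonseparating, because it meets \(c\) once. We then append \(d_{M+1}=e\). The condition \(i(d_M,d_{M+1})=1\) holds, and \(\phi\) no longer lies in the new stabilizer.

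The remaining requirement is pairwise distinctness: \(e\) might coincide with some earlier \(d_j\), \(j<M\). This cannot happen as stated, since every earlier \(d_j\) is fixed by \(\phi\) while \(e\) is not. So distinctness comes for free, and no extra padding curves are needed.

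Because each appended curve strictly shrinks the stabilizer inside the finite group \(\mathcal H_0\), the procedure stops after at most \(|\mathcal H_0|-1\) appended curves. The resulting \(N\) is finite, and the final tuple satisfies \eqref{eq:consecutive-one}, fills \(\Sigma_g\), and has trivial labelled unoriented stabilizer \eqref{eq:trivial-labelled-stabilizer}. The substantive input is \cref{lem:dual-curve-detection}, which needs \(g\ge3\) and is where the real work lies. Given that lemma, the main point to check is that appending curves only to the end preserves the consecutive-once condition, and it does, because the only new consecutive pair is \((d_M,d_{M+1})\).
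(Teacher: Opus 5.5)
Your proposal is correct and is essentially the paper's proof: you start from the filling $A_{2g}$-chain of \cref{lem:filling-chain}, use finiteness of its labelled stabilizer from \cref{lem:filling-finite-stabilizer}, and repeatedly append a curve $e$ with $i(c,e)=1$ that is moved by a nontrivial stabilizer element (the contrapositive of \cref{lem:dual-curve-detection}), so that distinctness and strict descent of the stabilizers come for free. The one point you assert without argument is that appending a curve preserves filling. The paper justifies this in one line: the new curve meets the old union, so its pieces cut each old complementary disk along proper arcs into disks.
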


\begin{proof}
Start with the filling \(A_{2g}\)-chain of
\cref{lem:filling-chain}, whose first curve is the prescribed \(d_1\),
and label its positions.  Its labelled unoriented stabilizer
\(\mathcal H_0\) is finite by
\cref{lem:filling-finite-stabilizer}.

If the current stabilizer \(\mathcal H_k\) is nontrivial, let \(c\) be
the last curve and choose \(1\ne\varphi\in\mathcal H_k\).  The
contrapositive of \cref{lem:dual-curve-detection} supplies an
unoriented curve \(e\) with
\[
i(c,e)=1,
\qquad
\varphi(e)\ne e.
\]
Append \(e\) with the next label.  It is new because \(\varphi\) fixes
every earlier labelled curve, and the new stabilizer is a proper
subgroup of \(\mathcal H_k\).  The resulting strictly descending chain
of subgroups of the finite group \(\mathcal H_0\) terminates.

At each step, \(i(c,e)=1\) makes \(e\) nonseparating by mod-two
intersection and gives the next instance of
\eqref{eq:consecutive-one}.  Adding \(e\) preserves filling: since it meets the last curve, in
simultaneous minimal position its pieces in each old complementary disk
are properly embedded arcs, and cutting those disks along the arcs
again gives disks.  Thus the final tuple is pairwise distinct,
nonseparating, filling, and has stabilizer
\eqref{eq:trivial-labelled-stabilizer}.
\end{proof}

\begin{remark}
The tuple positions are the labels and prescribe the surgery order;
\eqref{eq:consecutive-one} supplies the inductive intersection
hypothesis.  No intrinsic order is asserted for arbitrary rim surgeries.
\end{remark}

\section{Recovering a surgery curve from the exterior group}
\label{sec:slope-recovery}

We now recover the surgery curve from the recognized product vertex
group.  Its center records the pre-surgery push-off; after retraction,
the cyclic-subgroup conjugacy condition and injectivity recover the
unoriented curve on \(\Sigma_g\).

Fix \(1\le i\le N\), and write
\[
\pi=\pi_1(\Sigma_g,p),
\qquad
h_i=\rho_{i-1}(d_i),
\qquad
S_{i-1}=\rho_{i-1}(\pi),
\qquad
S_i=\rho_i(\pi).
\]
Here \(d_i\subset\Sigma_g\) denotes the abstract unoriented curve, while
an orientation and a basing path determine the element \(d_i\in\pi\)
used in the displayed definition of \(h_i\).  The element \(h_i\) is
the pre-surgery push-off.  After the surgery, the relevant object is the
cyclic subgroup \(\rho_i(\langle d_i\rangle)\), whereas the center of
\[
B_i=K_i\times\langle h_i\rangle
\]
is \(Z(B_i)=\langle h_i\rangle\).  These cyclic subgroups are conjugate,
but the based elements \(\rho_i(d_i)\) and \(h_i\) need not be equal.

\begin{proposition}[Recovery of the \(i\)-th surgery curve]
\label{prop:terminal-slope-recovery}
Suppose
\[
\Phi_i\in\Aut(G_i),
\qquad
\Phi_i(\mu)=\mu,
\qquad
\Phi_i\circ\rho_i=\rho_i\circ f_*
\]
for \(f\in\Mod(\Sigma_g)\).  If \(\Phi_i\) preserves the conjugacy
class of \(B_i\), then
\[
f(d_i)=d_i
\]
as an unoriented isotopy class.
\end{proposition}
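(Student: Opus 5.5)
The plan is to pass from the conjugacy class of \(B_i\) to its center, then push that information down to \(G_{i-1}\) by the retraction, and finally read off the curve via injectivity and the cyclic-subgroup conjugacy condition.

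First, write \(\Phi_i(B_i)=gB_ig^{-1}\) for some \(g\in G_i\). Since automorphisms carry centers to centers, this gives
\[
\Phi_i(\langle h_i\rangle)=\Phi_i(Z(B_i))=Z(\Phi_i(B_i))=g\langle h_i\rangle g^{-1},
\]
using \(Z(B_i)=\langle h_i\rangle\) from \cref{lem:carrier-primitive}. By \cref{lem:creation-stage-edge}, there is \(a\in S_i\) with \(\rho_i(\langle d_i\rangle)=a\langle h_i\rangle a^{-1}\). Hence
\[
\rho_i(\langle f_*(d_i)\rangle)=\Phi_i(\rho_i(\langle d_i\rangle))=\Phi_i(a)\,g\langle h_i\rangle g^{-1}\Phi_i(a)^{-1}.
\]
This element also lies in \(S_i\), because \(\Phi_i(S_i)=S_i\) by the intertwining relation. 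So an ambient conjugate \(g'\langle h_i\rangle g'^{-1}\) of \(\langle h_i\rangle\) equals \(\rho_i(\langle f_*(d_i)\rangle)\le S_i\).

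Second, apply the canonical retraction \(r_i:G_i\to G_{i-1}\) of \cref{thm:one-step}. It satisfies \(r_i|_{G_{i-1}}=\mathrm{id}\), so \(r_i(h_i)=h_i\), and \(r_i\circ\rho_i=\rho_{i-1}\). Therefore
\[
\rho_{i-1}(\langle f_*(d_i)\rangle)=r_i(g')\langle h_i\rangle r_i(g')^{-1}\le S_{i-1}.
\]
\cref{cor:carrier-induction} says \((S_{i-1},\langle h_i\rangle)\) satisfies the cyclic-subgroup conjugacy condition in \(G_{i-1}\). So there is \(s=\rho_{i-1}(\sigma)\in S_{i-1}\) with
\[
\rho_{i-1}(\langle f_*(d_i)\rangle)=s\langle h_i\rangle s^{-1}=\rho_{i-1}(\sigma\langle d_i\rangle\sigma^{-1}).
\]
Since \(\rho_{i-1}\) is injective, \(\langle f_*(d_i)\rangle=\sigma\langle d_i\rangle\sigma^{-1}\) in \(\pi\). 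Thus \(f_*(d_i)=\sigma d_i^{\pm1}\sigma^{-1}\). Free homotopy classes of essential simple closed curves, up to orientation, agree exactly when isotopy classes do, so \(f(d_i)=d_i\) as an unoriented isotopy class.

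The step needing the most care is the first one: identifying \(\rho_i(\langle d_i\rangle)\) with a conjugate of the center of \(B_i\). The based element \(\rho_i(d_i)\) differs from \(h_i\), as \cref{rem:current-curve} warns, which is exactly why \cref{lem:creation-stage-edge} is invoked. Also needed there is \(\Phi_i(S_i)=S_i\). The retraction step is routine only because \(r_i\) fixes \(G_{i-1}\) pointwise; the conjugator \(g'\) need not lie in \(G_{i-1}\), but its image \(r_i(g')\) does, which suffices.
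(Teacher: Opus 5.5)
Your proposal is correct and follows the paper's argument essentially step for step: pass to the centers of the conjugate product vertex groups, use \cref{lem:creation-stage-edge} to relate \(\rho_i(\langle d_i\rangle)\) to a conjugate of \(\langle h_i\rangle\), apply the retraction \(r_i\) (which fixes \(h_i\)) to land in \(G_{i-1}\), and conclude with the cyclic-subgroup conjugacy condition and injectivity of \(\rho_{i-1}\). Your remark that \(\Phi_i(S_i)=S_i\) is harmless but unnecessary, because \(\rho_i(\langle f_*(d_i)\rangle)\le S_i\) holds automatically; likewise, equality of the two infinite cyclic subgroups already gives \(f_*(d_i)=\sigma d_i^{\pm1}\sigma^{-1}\) without any appeal to primitivity.
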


\begin{proof}
Choose \(g\in G_i\) with \(\Phi_i(B_i)=gB_ig^{-1}\).  Centers are
characteristic, so \(Z(B_i)=\langle h_i\rangle\) gives
\begin{equation}
\label{eq:center-under-Phi}
\Phi_i(\langle h_i\rangle)=g\langle h_i\rangle g^{-1}.
\end{equation}
By \cref{lem:creation-stage-edge}, applied at the \(i\)-th surgery
stage, there is \(a_i\in S_i\) such that
\[
\rho_i(\langle d_i\rangle)
=a_i\langle h_i\rangle a_i^{-1}.
\]
The exact push-off equation and \eqref{eq:center-under-Phi} therefore
give
\[
\rho_i(\langle f_*(d_i)\rangle)
=\Phi_i(a_i)g\langle h_i\rangle
 g^{-1}\Phi_i(a_i)^{-1}.
\]
Set \(c=\Phi_i(a_i)g\).  Applying the canonical retraction, using
\(r_i\circ\rho_i=\rho_{i-1}\) and \(r_i(h_i)=h_i\), and setting
\(q=r_i(c)\), gives
\begin{equation}
\label{eq:retracted-slope}
\rho_{i-1}(\langle f_*(d_i)\rangle)
=q\langle h_i\rangle q^{-1}.
\end{equation}
The left side lies in \(S_{i-1}\), so the cyclic-subgroup conjugacy
condition from \cref{cor:carrier-induction} supplies
\(s\in S_{i-1}\) with
\[
q\langle h_i\rangle q^{-1}=s\langle h_i\rangle s^{-1}.
\]
Write \(s=\rho_{i-1}(u)\).  Since
\(h_i=\rho_{i-1}(d_i)\), injectivity of \(\rho_{i-1}\) turns
\eqref{eq:retracted-slope} into
\[
\langle f_*(d_i)\rangle=u\langle d_i\rangle u^{-1}
\]
in \(\pi\).  Both curve elements are primitive, hence
\[
f_*(d_i)=u d_i^{\epsilon}u^{-1}
\qquad(\epsilon\in\{\pm1\}).
\]
Thus \(f(d_i)\) and \(d_i\) are unoriented freely homotopic and
therefore isotopic.
\end{proof}

Thus product-vertex recognition recovers the curve before the invariant
quotient descends the same surface-group action.

\section{Construction and rigidity of the surfaces}
\label{sec:mainproof}

We now assemble the construction.  The reverse proof proceeds in the
fixed order
\[
B_i\text{ recognized}\ \longrightarrow\ d_i\text{ recovered}
\ \longrightarrow\ \text{descent to }G_{i-1},
\]
and repeats with the same surface-group automorphism.

\begin{theorem}[Construction by successive rim surgeries]
\label{thm:construction}
Fix \(g\ge3\), and set
\[
\epsilon_g=
\begin{cases}
1,&g\text{ even},\\
2,&g\text{ odd}.
\end{cases}
\]
There is an ordered labelled filling system
\[
\cD=(d_1,\ldots,d_N)
\]
on \(\Sigma_g\), whose first curve is the curve constructed in
\cref{sec:initialcarriers}, with the following property.

Let \(J_1^D,\ldots,J_g^D\) be oriented nontrivial hyperbolic knots
used for the preliminary meridian-disk surgeries, let
\(J_1^V,\ldots,J_{\epsilon_g}^V\) be hyperbolic knots used for the
vertical-boundary surgeries, and let \(J_1,\ldots,J_N\) be
hyperbolic knots used for the subsequent ordered surgeries.  Write
\[
K_j^D=\pi_1(E(J_j^D)),
\qquad
K_\nu^V=\pi_1(E(J_\nu^V)),
\qquad
K_i=\pi_1(E(J_i)),
\]
and denote the distinguished meridians in the vertical-boundary and
ordered stages by \(m_\nu^V\) and \(m_i\), respectively.  Suppose
that the meridian-marked groups
\[
(K_\nu^V,m_\nu^V)
\quad(1\le\nu\le\epsilon_g),
\qquad
(K_i,m_i)
\quad(1\le i\le N)
\]
are pairwise nonisomorphic.  No pairwise-nonisomorphism condition is
imposed on the preliminary groups \(K_j^D\).

The preliminary and vertical-boundary surgeries produce an initial
surface \(F_0\subset S^4\).  Performing ordinary untwisted rim surgery
successively along \(d_1,\ldots,d_N\), using \(J_i\) at the \(i\)-th
ordered stage, produces surfaces \(F_i\).  With
\[
E_i=S^4\setminus\operatorname{int}\nu F_i,
\qquad
G_i=\pi_1(E_i,*),
\qquad
\rho_i:\pi\longrightarrow G_i,
\qquad
S_i=\rho_i(\pi),
\]
where \(\pi=\pi_1(\Sigma_g,p)\), let \(\mu\) be the common positive
surface meridian and, for \(1\le i\le N\), put
\[
h_i=\rho_{i-1}(d_i),
\qquad
A_i=\langle\mu,h_i\rangle,
\qquad
B_i=K_i\times\langle h_i\rangle.
\]
Then the following hold.

\begin{enumerate}[label=\textup{(\alph*)},leftmargin=2.4em]
\item The push-off homomorphism
\[
\rho_i:\pi\longrightarrow G_i
\]
is injective for every \(0\le i\le N\).

\item The tuple \(\cD\) consists of pairwise distinct nonseparating
curves, fills \(\Sigma_g\), satisfies
\[
i(d_i,d_{i+1})=1
\qquad(1\le i<N),
\]
and has trivial labelled unoriented stabilizer.

\item For every \(1\le i\le N\), one has
\[
A_i\cong\mathbb Z^2,
\qquad
G_i=G_{i-1}*_{A_i}B_i,
\qquad
Z(B_i)=\langle h_i\rangle.
\]
The canonical retraction \(r_i:G_i\to G_{i-1}\) satisfies
\[
r_i\circ\rho_i=\rho_{i-1}.
\]

\item The subgroup \(A_i\) has the centralizer--transporter property
in \(G_{i-1}\) relative to \(\mu\), and the pair
\[
(S_{i-1},\langle h_i\rangle)
\]
satisfies the cyclic-subgroup conjugacy condition in \(G_{i-1}\).

\item In \(G_i\), the meridian-marked nonabelian hyperbolic knot
groups realized by the centralizer pairs of
\cref{def:marked-centralizer} are exactly
\[
(K_1^V,m_1^V),\ldots,(K_{\epsilon_g}^V,m_{\epsilon_g}^V),
(K_1,m_1),\ldots,(K_i,m_i),
\]
each in one simultaneous conjugacy class.  In particular, the class
realizing \((K_i,m_i)\) is represented by \((B_i,\mu)\), and every
meridian-fixing automorphism of \(G_i\) preserves the conjugacy class
of \(B_i\).

\item The normal subgroup
\[
\cN_i=\normal{[B_i,B_i]}_{G_i}
\]
is invariant under meridian-fixing automorphisms and equals
\(\ker r_i\).  If \(q_i:G_i\to G_i/\cN_i\),
\(\bar\mu=q_i(\mu)\), and \(\bar\rho_i=q_i\circ\rho_i\), then
\[
(G_i/\cN_i,\bar\mu,[\bar\rho_i]_\partial)
\cong
(G_{i-1},\mu,[\rho_{i-1}]_\partial)
\]
as marked peripheral data.

\item For every \(1\le i\le N\), if
\(\Phi_i\in\Aut(G_i)\) satisfies
\[
\Phi_i(\mu)=\mu,
\qquad
\Phi_i\circ\rho_i=\rho_i\circ f_*
\]
for \(f\in\Mod(\Sigma_g)\), then
\[
f(d_i)=d_i
\]
as an unoriented isotopy class.
\end{enumerate}
\end{theorem}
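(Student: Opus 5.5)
The proof is an assembly of earlier results, organized as an induction along the ordered stages. First fix \(\cD\): apply \cref{thm:filling-sequence} with the prescribed initial nonseparating curve \(d_1\) of \cref{sec:initialcarriers} (in even genus the curve \eqref{eq:even-d1}, in odd genus the displaced boundary of \(\beta_2\times I\)). This gives (b) immediately: distinctness, nonseparation, filling, \(i(d_i,d_{i+1})=1\), and trivial labelled stabilizer. For (a), injectivity of \(\rho_0\) is \cref{prop:even-seed-injection} or \cref{prop:odd-seed-injection}. Injectivity of every later \(\rho_i\) then follows inductively from the last clause of \cref{thm:one-step}, provided the rank-two hypothesis \(\langle\mu,h_i\rangle\cong\Z^2\) holds at each stage.

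Next comes a simultaneous induction on \(i\), giving (c) and (d) together. At stage \(i\), \cref{cor:carrier-induction} supplies \(A_i\cong\Z^2\), the centralizer--transporter property of \(A_i\) in \(G_{i-1}\), and the cyclic-subgroup conjugacy condition for \((S_{i-1},\langle h_i\rangle)\); this is (d). With \(A_i\cong\Z^2\), \cref{thm:one-step} gives the amalgam \(G_i=G_{i-1}*_{A_i}B_i\), the retraction \(r_i\), and \(r_i\circ\rho_i=\rho_{i-1}\). \Cref{lem:carrier-primitive} gives \(Z(B_i)=\langle h_i\rangle\), using Conditions \ref{cond:powerclean} and \ref{cond:homology}, which come from (d) and \cref{prop:early-homology}. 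This completes (c).

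For (e), invoke \cref{thm:tower-spectrum} with the reindexing of \cref{def:clean-tower}, namely \(\Gamma_{i+\epsilon_g}=G_i\). Its pairwise-nonisomorphism hypothesis is exactly the one assumed here. Its clauses (ii), (iii) and (vii) give the exact list of realized marked knot groups, the uniqueness of each simultaneous conjugacy class, and the preservation of the conjugacy class of \(B_i\) by meridian-fixing automorphisms. Clause (f) is then \cref{prop:characteristic-kernel} together with \cref{thm:marked-collapse}. Clause (g) is \cref{prop:terminal-slope-recovery}, whose hypothesis, that \(\Phi_i\) preserves the conjugacy class of \(B_i\), is supplied by (e).

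The only delicate point is bookkeeping rather than new mathematics: the hypotheses of \cref{thm:tower-spectrum} at each stage must be available before they are used. Specifically, the centralizer--transporter property at the vertical stages comes from \cref{lem:seam-power-clean}, in chronological order in odd genus, and at the ordered stages from \cref{cor:carrier-induction}. The homology hypotheses come from \cref{prop:early-homology}, and torsion-freeness from \cref{lem:tower-torsionfree}. I would check explicitly that \cref{cor:carrier-induction} depends only on data from stages up to \(i-1\), so that the recognition induction and the carrier induction interleave without circularity. I would also check that the initial \(d_1\) coincides with the first curve fed to \cref{thm:filling-sequence}, so that the base case of \cref{sec:initialcarriers} applies to it.
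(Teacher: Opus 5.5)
Your proposal is correct and follows the paper's proof essentially step for step: the filling system comes from \cref{thm:filling-sequence}, injectivity from the seed results plus \cref{thm:one-step}, \(Z(B_i)=\langle h_i\rangle\) from \cref{lem:carrier-primitive}, clause (d) from the base-case results and \cref{cor:carrier-induction}, and clauses (e)--(g) from \cref{thm:tower-spectrum}, \cref{prop:characteristic-kernel}, \cref{thm:marked-collapse}, and \cref{prop:terminal-slope-recovery}. The only minor difference is that the paper proves \(A_i\cong\mathbb Z^2\) directly, rather than reading it off \cref{cor:carrier-induction}: \cref{prop:early-homology} kills the meridian exponent, and injectivity of \(\rho_{i-1}\) kills the \(h_i\)-exponent. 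Both routes are valid and non-circular.
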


\begin{proof}
Start with \(\Sigma_g^0=\partial(P\times I)\).  The preliminary
surgeries give the group and horizontal push-off formulas of
\cref{prop:disk-group,prop:horizontal-peripheral}; the one or two
vertical-boundary surgeries then produce \(F_0\).  The seed push-off
\(\rho_0\) is injective by \cref{prop:even-seed-injection} in even
genus and \cref{prop:odd-seed-injection} in odd genus.

Choose \(d_1\) as in \cref{sec:initialcarriers} and apply
\cref{thm:filling-sequence}.  This gives clause~\textup{(b)}, including
the prescribed first curve, consecutive intersection one, and trivial
labelled stabilizer.  Use the knots in the statement: their
meridian-marked groups are pairwise nonisomorphic at every rank-two
stage, with no such requirement on the preliminary factors.  The
Alexander-module restrictions are imposed only in \cref{sec:homology}.

Proceed through the ordered surgeries.  The meridian centralizes
\(h_i=\rho_{i-1}(d_i)\).  If \(\mu^ah_i^b=1\), then
\cref{prop:early-homology} gives \([h_i]=0\) and infinite cyclic
abelianization generated by \([\mu]\), so \(a=0\).  The remaining
equality \(\rho_{i-1}(d_i^b)=1\), injectivity of \(\rho_{i-1}\), and
the infinite order of the essential curve element \(d_i\) give
\(b=0\).  Hence
\[
A_i=\langle\mu,h_i\rangle\cong\mathbb Z^2.
\]
Now \cref{thm:one-step} gives
\[
G_i=G_{i-1}*_{A_i}B_i,
\qquad
r_i\circ\rho_i=\rho_{i-1},
\]
and propagates injectivity from \(\rho_{i-1}\) to \(\rho_i\).
Together with the seed case and
\(Z(B_i)=\langle h_i\rangle\) from
\cref{lem:carrier-primitive}, this proves clauses~\textup{(a)} and
\textup{(c)}.

For \(i=1\), clause~\textup{(d)} is
\cref{lem:even-clean-axis,prop:even-rooted-isolation} in even genus and
\cref{lem:odd-clean-axis,prop:odd-rooted-isolation} in odd genus.  For
\(i>1\), it is \cref{cor:carrier-induction}, applied using
\(i(d_{i-1},d_i)=1\).

The rank-two knot-group separation hypothesis allows
\cref{thm:tower-spectrum} to give the exact recognized collection and
uniqueness in clause~\textup{(e)}.  Then
\cref{prop:characteristic-kernel,thm:marked-collapse} give
clause~\textup{(f)}.  Finally, clause~\textup{(e)} makes every
automorphism in clause~\textup{(g)} preserve the conjugacy class of
\(B_i\), so \cref{prop:terminal-slope-recovery} proves
clause~\textup{(g)}.
\end{proof}

Let \(F_{\cD}=F_N\).  We now reverse the construction, recovering each
labelled curve before descending to the preceding exterior group.

\begin{theorem}[Rigidity from the final marked peripheral data]
\label{thm:peripheral-recognition}
Suppose
\[
\Phi_N\in\Aut(G_N),
\qquad
\Phi_N(\mu)=\mu,
\qquad
\Phi_N\circ\rho_N=\rho_N\circ f_*
\]
for \(f\in\Mod(\Sigma_g)\).  Then
\[
f(d_i)=d_i
\qquad(1\le i\le N)
\]
as labelled unoriented isotopy classes.  Consequently \(f=1\).
\end{theorem}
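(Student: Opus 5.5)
The proof is a reverse induction on \(i=N,N-1,\ldots,1\), carried out with the same mapping class \(f\) throughout. The inductive statement at stage \(i\) is that there exists \(\Phi_i\in\Aut(G_i)\) with
\[
\Phi_i(\mu)=\mu,
\qquad
\Phi_i\circ\rho_i=\rho_i\circ f_*,
\]
for one fixed representative \(f_*\). The hypothesis supplies this for \(i=N\).

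At stage \(i\) the argument has three steps, taken in the order recognize, recover, descend.

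\emph{Recognize.} By clause (e) of \cref{thm:construction}, which rests on \cref{thm:tower-spectrum}, \(\Phi_i\) preserves the conjugacy class of \(B_i\).

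\emph{Recover.} Clause (g), that is \cref{prop:terminal-slope-recovery}, then gives \(f(d_i)=d_i\) as an unoriented isotopy class.

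\emph{Descend.} By clause (f) and \cref{prop:characteristic-kernel}, the subgroup \(\cN_i=\ker r_i\) is invariant under meridian-fixing automorphisms. \Cref{cor:descent-through-quotient} therefore produces
\[
\Phi_{i-1}=\widetilde r_i\,\overline\Phi_i\,\widetilde r_i^{-1}\in\Aut(G_{i-1}),
\qquad
\Phi_{i-1}(\mu)=\mu,
\qquad
\Phi_{i-1}\circ\rho_{i-1}=\rho_{i-1}\circ f_*,
\]
with the same \(f_*\). This restores the inductive hypothesis one stage lower.

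Running the induction down to \(i=1\) gives \(f(d_i)=d_i\) for every \(i\), each curve retaining its label, since recovery at stage \(i\) uses only \(B_i\) and \(d_i\). The step \(i=1\) needs no descent afterward. Its recovery uses the cyclic-subgroup conjugacy condition in \(G_0\), which is the base case supplied by \cref{prop:even-rooted-isolation} and \cref{prop:odd-rooted-isolation}. Hence \(f\) lies in the labelled unoriented stabilizer of \(\cD\), which is trivial by clause (b), that is \cref{thm:filling-sequence}. So \(f=1\).

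The main point needing care is that descent does not change \(f_*\). Recovery at stage \(i\) must be carried out in \(G_i\), before passing to the quotient, because the quotient kills \(B_i\)'s knot factor and with it the information identifying \(d_i\). Recovery at stage \(i-1\) is then justified because \(\Phi_{i-1}\) again fixes \(\mu\) literally. That in turn lets clause (e) be applied to \(G_{i-1}\), whose recognized collection contains \((K_{i-1},m_{i-1})\). No choices of basing or representative intervene, since \cref{cor:descent-through-quotient} gives literal equalities.
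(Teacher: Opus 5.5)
Your proposal is correct and follows the paper's proof essentially step for step. Like the paper, you run a reverse induction that maintains \(\Phi_i(\mu)=\mu\) and \(\Phi_i\circ\rho_i=\rho_i\circ f_*\). At each stage you recognize the conjugacy class of \(B_i\) via \cref{thm:tower-spectrum}, recover \(d_i\) by \cref{prop:terminal-slope-recovery} before any quotient is taken, and only then descend with the same \(f_*\) using \cref{cor:descent-through-quotient}. You then conclude \(f=1\) from the trivial labelled stabilizer of \cref{thm:filling-sequence}.
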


\begin{proof}
Argue downward from \(i=N\), maintaining
\begin{equation}
\label{eq:reverse-peripheral-equation}
\Phi_i(\mu)=\mu,
\qquad
\Phi_i\circ\rho_i=\rho_i\circ f_*.
\end{equation}
The hypothesis supplies the case \(i=N\).  At stage \(i\),
\cref{thm:tower-spectrum} first recognizes the unique
simultaneous conjugacy class represented by \((B_i,\mu)\).  Thus
\(\Phi_i\) preserves the conjugacy class of \(B_i\), and, before any
quotient is taken, \cref{prop:terminal-slope-recovery} gives
\[
f(d_i)=d_i
\]
as an unoriented isotopy class.

Only then apply \cref{cor:descent-through-quotient}.  It produces
\(\Phi_{i-1}\in\Aut(G_{i-1})\) satisfying
\[
\Phi_{i-1}(\mu)=\mu,
\qquad
\Phi_{i-1}\circ\rho_{i-1}=\rho_{i-1}\circ f_*,
\]
with the same \(f_*:\pi\to\pi\), and
\begin{equation}
\label{eq:reverse-collapse-commutation}
\Phi_{i-1}\circ r_i=r_i\circ\Phi_i.
\end{equation}
Hence \eqref{eq:reverse-peripheral-equation} holds at the next reverse
stage.

Iteration fixes every \(d_i\).  Labels cannot be permuted: the marked
group \((K_i,m_i)\) recognizes \(B_i\), and the descent follows the
fixed order \(N,N-1,\ldots,1\).  Thus \(f\) lies in the labelled
unoriented stabilizer of \(\cD\), which is trivial by
\eqref{eq:trivial-labelled-stabilizer}.
\end{proof}

It remains to normalize the action of an extendable mapping class.

\begin{proposition}[Triviality of the topological extendable subgroup]
\label{prop:each-constructed-surface-rigid}
Every final surface obtained from \cref{thm:construction} satisfies
\[
E_{\mathrm{TOP}}^+(F)=E^+(F)=1.
\]
\end{proposition}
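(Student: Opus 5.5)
Since every diffeomorphism is a homeomorphism, \(E^+(F)\le E_{\mathrm{TOP}}^+(F)\), so it suffices to prove \(E_{\mathrm{TOP}}^+(F)=1\) for \(F=F_N=F_{\cD}\).

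Let \(f\in E_{\mathrm{TOP}}^+(F)\), represented by an orientation-preserving homeomorphism \(\Psi:(S^4,F)\to(S^4,F)\) that preserves the orientation of \(F\) and restricts to \(f\) on \(\Sigma_g\). We apply \cref{prop:peripheral-naturality} with \(F'=F\) and \(\mu'=\mu\). After choosing a change-of-basepoint path in \(\partial\nu F\), it gives \(\Phi\in\Aut(G_N)\) with \(\Phi(\mu)=\mu\) and
\[
\Phi(\rho_N(\gamma))=q\,\rho_N(f_*(\gamma))\,\mu^{\theta(\gamma)}q^{-1},
\qquad
q\in\langle\rho_N(\pi),\mu\rangle,
\quad
\theta\in\operatorname{Hom}(\pi,\mathbb Z).
\]
Since source and target coincide, \(\Phi\) is an automorphism rather than an isomorphism between different groups.

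Next we verify the hypotheses of \cref{cor:rooted-normalization} for \(\rho=\rho_N\) and \(G=G_N\).
\begin{itemize}
\item[(a)] \(\rho_N\) is injective by \cref{thm:construction}\textup{(a)}.
\item[(b)] \(\mu\) commutes with \(\rho_N(\pi)\), because both lie in the image of the boundary group \(\pi_1(\partial\nu F)\).
\item[(c)] \(H_1(G_N;\mathbb Z)=\mathbb Z\langle[\mu]\rangle\) and \(\rho_N(\pi)\subset[G_N,G_N]\) hold by \cref{prop:early-homology}.
\end{itemize}
The corollary then forces \(\theta=0\). After composing \(\Phi\) with an inner automorphism by an element of \(\rho_N(\pi)\), we get \(\Phi_N\in\Aut(G_N)\) with \(\Phi_N(\mu)=\mu\) and \(\Phi_N\circ\rho_N=\rho_N\circ f_*\). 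Here \(f_*\) is the representative fixed via \(\lambda_f\); another choice of \(\lambda_f\) changes it by an inner automorphism of \(\pi\), which is absorbed by a further inner automorphism of \(G_N\) by an element of \(\rho_N(\pi)\), and that still fixes \(\mu\).

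Now \cref{thm:peripheral-recognition} applies to \(\Phi_N\) and gives \(f=1\) in \(\Mod(\Sigma_g)\). Hence \(E_{\mathrm{TOP}}^+(F)=1\), and therefore \(E^+(F)=1\) as well.

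The main point needing care is the first step: the homeomorphism is only topological, so the normal bundle must be matched first. This is handled inside \cref{prop:peripheral-naturality} through uniqueness of topological normal bundles. What must still be checked is that the orientation hypotheses in the definition of \(E_{\mathrm{TOP}}^+\) are exactly what give \(\Phi(\mu)=\mu\) rather than \(\mu^{-1}\), and that all of \(F\) lies in \(S^4\) as the proposition requires. The corollary for a general simply connected \(X\) is a separate matter and is treated later.
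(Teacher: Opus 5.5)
Your proposal is correct and follows the paper's own proof essentially step for step. \cref{prop:peripheral-naturality} supplies a meridian-fixing representative with a shear and a common boundary conjugator, \cref{cor:rooted-normalization} removes both (using injectivity from \cref{thm:construction}\textup{(a)} and the homological hypotheses of \cref{prop:early-homology}), \cref{thm:peripheral-recognition} then forces \(f=1\), and \(E^+(F)\le E_{\mathrm{TOP}}^+(F)\) finishes the argument. Your remark about changing \(\lambda_f\) is harmless but not needed, and the orientation point you flag, \(\Phi(\mu)=\mu\) rather than \(\mu^{-1}\), is already part of the conclusion of \cref{prop:peripheral-naturality}.
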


\begin{proof}
Let \(F=F_N\), and let an orientation-preserving homeomorphism of the
oriented pair induce \(f\in\Mod(\Sigma_g)\).  Preservation of the
ambient and surface orientations preserves the oriented normal bundle,
hence the positive meridian \(\mu\).

By \cref{prop:peripheral-naturality}, choosing a boundary
change-of-basepoint path selects a representative of the induced outer
automorphism that fixes \(\mu\) and whose push-off relation has one
common boundary conjugator and a possible meridian shear.  Clause~\textup{(a)}
of \cref{thm:construction} and \cref{prop:early-homology} give
injectivity of \(\rho_N\) and
\[
H_1(G_N;\mathbb Z)=\mathbb Z\langle[\mu]\rangle,
\qquad
\rho_N(\pi)\subset[G_N,G_N].
\]
The meridian centralizes \(S_N=\rho_N(\pi)\).  Therefore
\cref{cor:rooted-normalization} removes the shear and common conjugator,
producing \(\Phi_N\in\Aut(G_N)\) with
\[
\Phi_N(\mu)=\mu,
\qquad
\Phi_N\circ\rho_N=\rho_N\circ f_*.
\]
Its inner adjustment is represented by an element of \(S_N\), so it
continues to fix \(\mu\).  Now
\cref{thm:peripheral-recognition} gives \(f=1\), and hence
\(E_{\mathrm{TOP}}^+(F)=1\).  The inclusion
\(E^+(F)\leq E_{\mathrm{TOP}}^+(F)\) then gives \(E^+(F)=1\).
\end{proof}

The final section chooses knots realizing the prescribed Alexander
module and uses the recognized marked knot groups to separate the
resulting surfaces.

\section{Alexander modules, knot choices, and homology}
\label{sec:homology}

This section computes the first Alexander module, chooses the knots
needed for the construction, proves the main results, and records the
group homology and the singular homology of the actual exterior.

\subsection{Alexander-module conventions}

\begin{convention}
\label{conv:alexander-module}
Put
\[
\Lambda=\Z[t,t^{-1}].
\]
For an embedded oriented surface $F$ with
$H_1(E_F;\Z)\cong\Z$ generated by the positive meridian, let
\[
\phi_F:\pi_1(E_F)\longrightarrow\Z
\]
be the meridional abelianization, let
$\widetilde E_F\to E_F$ be the associated infinite cyclic cover, and
set
\[
\cA_1(F)=H_1(\widetilde E_F;\Z).
\]
The deck transformation corresponding to the positive meridian is
$t$, making this a left $\Lambda$-module.

More generally, for a meridian-marked group $(G,\mu)$ with
$\phi:G\to\Z$ and $\phi(\mu)=1$, put
\[
\cA(G)=H_1(\ker\phi;\Z),
\qquad
 t\cdot[x]=[\mu x\mu^{-1}].
\]
Replacing the chosen lift $\mu$ by $k\mu$, with $k\in\ker\phi$,
changes its conjugation action on the kernel by an inner automorphism
and hence does not change the induced action on the abelianization.
The positive meridian distinguishes $t$ from $t^{-1}$.

The cover $\widetilde E_F$ has fundamental group $\ker\phi_F$.
The degree-one identity $H_1(Y;\Z)=\pi_1(Y)_{\mathrm{ab}}$ therefore gives
\[
H_1(\widetilde E_F;\Z)
\cong
(\ker\phi_F)_{\mathrm{ab}}
=
H_1(\ker\phi_F;\Z),
\]
intertwining the deck action with conjugation by the positive meridian.
Thus
\[
\cA_1(F)\cong\cA(\pi_1(E_F))
\]
naturally as $\Lambda$-modules; no asphericity of the exterior is required.
For a classical knot $J$, $\cA(J)$ denotes its classical Alexander module
with this left-module convention, presented by $tV-V^T$ for a Seifert
matrix $V$.

By \cref{prop:early-homology}, every group occurring after the
preliminary surgeries has first homology generated by $[\mu]$.
Its meridional abelianization is therefore the unique homomorphism to
$\Z$ sending $\mu$ to $1$.
\end{convention}

\subsection{One-step Alexander-module calculations}

The two attachments have the same graph-of-spaces setup but different
lifted edge spaces.

\begin{proposition}[One-step Alexander-module calculations]
\label{prop:alexander-one-step}
Let every knot meridian and the distinguished surface meridian map to
$1\in\Z$.
\begin{enumerate}[label=\textup{(\roman*)},leftmargin=2.3em]
\item Suppose
\[
G^D=G*_{\langle\mu\rangle}K,
\]
where the two meridians are identified.  Then
\[
\cA(G^D)\cong\cA(G)\oplus\cA(K).
\]
\item Suppose
\[
G'=G*_{\langle\mu,h\rangle}
\bigl(K\times\langle z\rangle\bigr),
\qquad
z=h,
\]
where
\[
\phi(\mu)=1,
\qquad
\phi(h)=0.
\]
Then
\[
\cA(G')\cong\cA(G)\oplus\cA(K).
\]
\end{enumerate}
In both cases the displayed decomposition is the one supplied by the
chosen graph-of-spaces decomposition; no intrinsic decomposition of the
resulting module into surgery-indexed summands is asserted.
\end{proposition}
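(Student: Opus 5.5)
The plan is to compute both Alexander modules with the Mayer--Vietoris sequence of the infinite cyclic cover of a graph of spaces. Equivalently, we use the Chiswell/Bass--Serre exact sequence for \(H_*(\ker\phi;\Z)\) of a group acting on the Bass--Serre tree restricted to \(\ker\phi\). Let \(\ker\phi\) act on the Bass--Serre tree \(T\) of the given one-edge splitting. Since \(\phi\) restricted to each vertex group is surjective (every vertex group contains \(\mu\) with \(\phi(\mu)=1\)), \(\ker\phi\) acts on \(T\) with quotient graph whose vertices are \(\Z\)-indexed copies of each vertex type. Its edges are \(\Z\)-indexed as well, because the edge group also contains \(\mu\). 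The quotient is therefore a ``ladder'' with one edge per \(t^n\), hence a tree (a line of vertex pairs joined at each level). The associated Mayer--Vietoris sequence in \(\Lambda\)-modules is
\[
H_1(\ker\phi|_{C})\to H_1(\ker\phi|_{X})\oplus H_1(\ker\phi|_{Y})\to \cA(\cdot)\to H_0(\ker\phi|_C)\to H_0(\ker\phi|_X)\oplus H_0(\ker\phi|_Y),
\]
where \(X,Y\) are the vertex groups, \(C\) is the edge group, and each term is induced up, e.g.\ \(H_1(C\cap\ker\phi)\otimes\) the appropriate \(\Lambda\)-structure. Because every group involved surjects onto \(\Z\) via \(\phi\), the kernels' \(H_1\) are exactly the Alexander modules \(\cA(G)\), \(\cA(K)\), \(\cA(K\times\langle z\rangle)\), and each \(H_0\) term is \(\Z\) with trivial \(t\)-action. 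The degree-zero map \(\Z\to\Z\oplus\Z\) is the diagonal and injective, so \(\cA\) is the cokernel of the degree-one edge map.

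For case (i) the edge group is \(\langle\mu\rangle\), whose intersection with \(\ker\phi\) is trivial. The edge term in degree one is therefore zero, giving \(\cA(G^D)\cong\cA(G)\oplus\cA(K)\) directly. For case (ii) the edge group \(\langle\mu,h\rangle\) meets \(\ker\phi\) in \(\langle h\rangle\), since \(\phi(h)=0\). The degree-one edge term is \(H_1(\langle h\rangle)=\Z\) with trivial \(t\)-action, because \(\mu\) commutes with \(h\). On the product side, \(\ker\phi|_{K\times\langle z\rangle}=\ker\phi_K\times\langle z\rangle\), so its \(H_1\) is \(\cA(K)\oplus\Z\langle z\rangle\) with trivial action on the second summand. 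The edge map sends \([h]\) to \(([h]_{\cA(G)},-[z])\). The component \([z]\) is a basis vector of a free \(\Z\)-summand split off as a \(\Lambda\)-module, so quotienting by the image kills exactly that summand. This identifies the cokernel with \(\cA(G)\oplus\cA(K)\) as \(\Lambda\)-modules. The identification maps \((a,k,nz)\mapsto (a+n[h],k)\), which is a \(\Lambda\)-isomorphism since \([h]\) is \(t\)-fixed; \([h]\in\cA(G)\) is \(t\)-invariant because \(\mu h\mu^{-1}=h\).

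The main obstacle is setting up the Mayer--Vietoris sequence rigorously at the level of \(\Lambda\)-modules. That requires checking that the induced modules really are \(H_1\) of the vertex/edge kernels with the meridian-conjugation action, which relies on \(\phi\) being surjective on each vertex and edge group. I would handle this topologically: realize the amalgam by a graph of aspherical-free spaces (wedges/mapping cylinders), pass to the infinite cyclic cover, and note that the preimage of each vertex or edge space is connected exactly because \(\phi\) is onto on it. One also needs the degree-one injectivity of the edge map, which is only used as stated, not for the cokernel identification.
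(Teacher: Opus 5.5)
Your argument is essentially the paper's: Mayer--Vietoris for the meridional infinite cyclic cover of the one-edge graph of spaces, and injectivity of the degree-zero map \(Q\to Q\oplus Q\) with \(Q=\Lambda/(t-1)\). In~(i) the lifted edge term vanishes. In~(ii) the edge class \([h]\) is absorbed by the external-circle summand via the same map \((a,k,q)\mapsto(a+q[h],k)\), which is \(\Lambda\)-linear because \(t[h]=[h]\).

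One statement in your setup is wrong and should be removed: the claim that \(\ker\phi\backslash T\) is a \(\Z\)-indexed ``ladder'' with induced terms. Because \(\phi\) is onto on every vertex and edge group \(X\), one has \(\ker\phi\cdot X=H\). Hence \(\ker\phi\backslash H/X\) is a single point, and \(\ker\phi\backslash T\) is again a single edge. This is exactly the connectedness of the lifted vertex and edge spaces that you invoke in your last paragraph.

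It is also what justifies the terms you actually use. These are \(H_1(\ker\phi\cap X)\) with \(t\) acting by conjugation by \(\mu\in X\), and \(H_0\cong\Z\) with trivial \(t\)-action. The ladder picture with induced terms would instead give free \(\Lambda\)-modules in degree zero and a different sequence, contradicting your own computation.

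Your closing remark is right that injectivity of the degree-one map plays no role. The cokernel identification needs only the degree-zero injectivity.
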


\begin{proof}
Choose classifying spaces for the vertex and edge groups and join them
by mapping cylinders for the injective edge maps.  The universal cover
is a tree of contractible spaces, so the resulting graph of spaces is a
classifying space for the amalgam.  The meridional homomorphism is
surjective on every vertex and edge group; hence all corresponding
meridional covers are connected.  The lifted graph of spaces has the
same one-edge underlying graph and computes $H_1(\ker\phi;\Z)$.  Put
\[
Q=\Lambda/(t-1).
\]
Their zeroth homology is $Q$, and in both cases the degree-zero
difference map is
\[
Q\longrightarrow Q\oplus Q,
\qquad q\longmapsto(q,-q),
\]
which is injective.

For~\textup{(i)}, the restriction to the edge group
$\langle\mu\rangle$ is an isomorphism onto $\Z$.  The lifted edge
space is therefore its universal cover $\mathbb R$, so its first
homology vanishes.  The Mayer--Vietoris segment
\[
0\longrightarrow\cA(G)\oplus\cA(K)
\longrightarrow\cA(G^D)
\longrightarrow Q\longrightarrow Q\oplus Q
\]
and the preceding injectivity give
$\cA(G^D)\cong\cA(G)\oplus\cA(K)$.

For~\textup{(ii)}, the lifted edge and product-vertex spaces satisfy
\[
\widetilde A=\mathbb R\times S^1_h,
\qquad
H_1(\widetilde A;\Z)=Q\langle[h]\rangle,
\]
and, by K\"unneth,
\[
H_1(\widetilde B;\Z)
\cong
\cA(K)\oplus Q\langle[z]\rangle.
\]
Thus the degree-one difference map is
\[
\delta_1:Q\langle[h]\rangle
\longrightarrow
\cA(G)\oplus\cA(K)\oplus Q\langle[z]\rangle,
\qquad
\delta_1(q[h])=(q[h],0,-q[z]).
\]
This is well defined because $[\mu,h]=1$ gives
$(t-1)[h]=0$ in $\cA(G)$.  Since the edge map identifies $h$ with
the positively oriented external circle $z$, the
$Q\langle[z]\rangle$-component is multiplication by $-1$ and hence an
isomorphism.  Thus $\delta_1$ is injective; the degree-zero injectivity
then identifies $\cA(G')$ with its cokernel.  The map
\[
\cA(G)\oplus\cA(K)\oplus Q
\longrightarrow
\cA(G)\oplus\cA(K),
\qquad
(a,k,q)\longmapsto(a+q[h],k),
\]
is surjective and has kernel
$\{(-q[h],0,q):q\in Q\}=\operatorname{im}\delta_1$.  It therefore
induces the required isomorphism.  The old class $[h]$ is absorbed by
the external-circle summand; it is not killed and imposes no quotient
on $\cA(G)$.
\end{proof}

\subsection{The total Alexander module}

Let
\[
\epsilon_g=
\begin{cases}
1,&g\text{ even},\\
2,&g\text{ odd}.
\end{cases}
\]

\begin{corollary}[Total Alexander module]
\label{cor:total-alexander}
For the final surface $F_N\subset S^4$,
\[
\cA_1(F_N)
\cong
\bigoplus_{j=1}^g\cA(J_j^D)
\oplus
\bigoplus_{\nu=1}^{\epsilon_g}\cA(J_\nu^V)
\oplus
\bigoplus_{i=1}^N\cA(J_i).
\]
In particular, $\cA_1(F_N)$ is a finitely generated torsion
$\Lambda$-module.
\end{corollary}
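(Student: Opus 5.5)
The plan is to induct along the chronological tower of Definition~\ref{def:clean-tower} and apply Proposition~\ref{prop:alexander-one-step} at each stage. The base is the initial exterior group \(\langle\mu\rangle\cong\Z\), whose kernel of the meridional map is trivial, so its Alexander module vanishes. By Convention~\ref{conv:alexander-module} and Proposition~\ref{prop:early-homology}, every group in the tower has first homology \(\Z\langle[\mu]\rangle\). Hence its meridional homomorphism \(\phi\) is the unique map sending \(\mu\mapsto1\), and \(\cA_1(F)\cong\cA(\pi_1(E_F))\) for each intermediate surface. Since the construction consists only of group-level amalgams, it suffices to compute \(\cA(G_N)\).

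The first step handles the \(g\) preliminary stages. Proposition~\ref{prop:disk-group} presents each of them as \(A^{(j)}=A^{(j-1)}*_{\langle\mu\rangle}K_j^D\), with meridians identified. Part~(i) of Proposition~\ref{prop:alexander-one-step} then adds a summand \(\cA(K_j^D)=\cA(J_j^D)\) at each stage. This gives \(\cA(A_0)\cong\bigoplus_j\cA(J_j^D)\).

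The second step handles the \(\epsilon_g\) vertical stages and the \(N\) ordered stages. These are the rank-two amalgams \(\Gamma_r=\Gamma_{r-1}*_{E_r}(K_r\times\langle z_r\rangle)\) of Definition~\ref{def:clean-tower}, justified by Theorem~\ref{thm:one-step}. The hypotheses of part~(ii) require \(\phi(\mu)=1\) and \(\phi(h_r)=0\). The first holds by the normalization above. The second is Proposition~\ref{prop:early-homology}, which gives \(h_r\in[\Gamma_{r-1},\Gamma_{r-1}]\) both for the vertical attaching elements \(w_\nu\in U_0\) and for every later \(h_i\). Each such stage therefore adds a summand \(\cA(J_\nu^V)\) or \(\cA(J_i)\). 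Iterating over all stages yields the displayed direct sum.

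For the final clause, each classical knot module \(\cA(J)\) is finitely generated and torsion over \(\Lambda\): it is presented by \(tV-V^T\), whose determinant is the Alexander polynomial, and \(\Delta_J(1)=\pm1\neq0\). A finite direct sum of finitely generated torsion modules is again finitely generated and torsion.

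The main obstacle is checking that the Mayer--Vietoris identifications in Proposition~\ref{prop:alexander-one-step} are compatible with the module structure along the whole tower. In each step, \(\phi\) restricted to the old vertex group must be exactly that group's own meridional map, so that the old summand is \(\cA(G)\) with the same \(t\)-action. This follows because the vertex inclusion is injective and carries \(\mu\) to \(\mu\), so restriction of the unique \(\mu\mapsto1\) map is again the unique such map. It also needs the injectivity of the edge maps, which Theorem~\ref{thm:one-step} and Proposition~\ref{prop:disk-group} supply.
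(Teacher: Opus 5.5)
Your proposal is correct and follows essentially the same route as the paper's proof. Both start from the zero module of $\langle\mu\rangle$, apply Proposition~\ref{prop:alexander-one-step}(i) at the $g$ preliminary stages and part~(ii) at the $\epsilon_g+N$ rank-two stages (with $\phi(h)=0$ supplied by Proposition~\ref{prop:early-homology}), and deduce torsion from the square presentations $tV-V^T$. Your extra checks, that $\Delta_J(1)=\pm1$ gives a nonzero determinant and that $\phi$ restricts to each old vertex group's own meridional map, make explicit what the paper leaves to ``iteration, together with Convention~\ref{conv:alexander-module}.''
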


\begin{proof}
For the standard unknotted surface, the meridional abelianization
$\langle\mu\rangle\to\Z$ is an isomorphism, so the associated cover
has trivial fundamental group and zero Alexander module.  The $g$
preliminary stages use \cref{prop:alexander-one-step}\textup{(i)},
while the $\epsilon_g$ vertical-boundary stages and the $N$ ordered
stages use \cref{prop:alexander-one-step}\textup{(ii)}.  Iteration,
together with \cref{conv:alexander-module}, gives the displayed direct
sum.  Each knot Alexander module has a finite square presentation
$tV-V^T$ with nonzero determinant, so it is a finitely generated
torsion $\Lambda$-module; the same holds for the finite direct sum.
\end{proof}

\begin{corollary}[Alexander polynomial]
\label{cor:alexander-polynomial}
For a final surface $F_N$ in the construction, let $\Delta_{F_N}(t)$
generate the zeroth Fitting ideal of $\cA_1(F_N)$.  Then this ideal is
principal and
\[
\Delta_{F_N}(t)\doteq
\prod_{j=1}^g\Delta_{J_j^D}(t)
\prod_{\nu=1}^{\epsilon_g}\Delta_{J_\nu^V}(t)
\prod_{i=1}^N\Delta_{J_i}(t),
\]
where $\doteq$ denotes equality up to a unit $\pm t^k$.  The zero
module has polynomial $1$.
\end{corollary}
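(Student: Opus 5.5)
The plan is to deduce the Fitting-ideal statement directly from the direct-sum decomposition of \cref{cor:total-alexander}, using multiplicativity of zeroth Fitting ideals under direct sums and the classical square presentation of each knot module. First, for each classical knot \(J\) in the list, I will record a finite square presentation
\[
\Lambda^{2k}\xrightarrow{\;tV-V^T\;}\Lambda^{2k}\longrightarrow\cA(J)\longrightarrow0
\]
from a Seifert matrix \(V\), as in \cref{conv:alexander-module}. The zeroth Fitting ideal of a module with a square presentation matrix is the principal ideal generated by its determinant, and \(\det(tV-V^T)\doteq\Delta_J(t)\) is the classical Alexander polynomial. Since Fitting ideals are independent of the chosen presentation, this gives \(\operatorname{Fitt}_0(\cA(J))=(\Delta_J(t))\).

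Second, I will take the block-diagonal direct sum of these square presentations, which is a square presentation of the total module
\[
\bigoplus_{j}\cA(J_j^D)\oplus\bigoplus_\nu\cA(J_\nu^V)\oplus\bigoplus_i\cA(J_i).
\]
Its determinant is the product of the block determinants. Equivalently, I can invoke the general identity \(\operatorname{Fitt}_0(M\oplus M')=\operatorname{Fitt}_0(M)\operatorname{Fitt}_0(M')\). Transporting across the \(\Lambda\)-isomorphism of \cref{cor:total-alexander}, which preserves Fitting ideals, then shows that \(\operatorname{Fitt}_0(\cA_1(F_N))\) is principal and generated by the displayed product.

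Third, I will address uniqueness up to \(\doteq\). Since \(\Lambda\) is an integral domain whose units are exactly \(\pm t^k\), any two generators of the same nonzero principal ideal differ by such a unit. Each factor \(\Delta_J\) is nonzero because \(\Delta_J(1)=\pm1\), so the product is nonzero.

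Finally, for the zero module (for example, when all knots are Alexander-trivial, or for a trivial knot), the empty presentation gives \(\operatorname{Fitt}_0=\Lambda=(1)\), consistent with the convention that the zero module has polynomial \(1\). The only point needing care is the compatibility of the left-module convention \(tV-V^T\) with the classical \(\Delta_J\); transposing or replacing \(t\) by \(t^{-1}\) changes the determinant only by a unit together with the symmetry \(\Delta_J(t^{-1})\doteq\Delta_J(t)\), so this step is routine rather than a real obstacle.
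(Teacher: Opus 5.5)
Your proposal is correct and matches the paper's argument: the square presentation $tV-V^T$ gives $\operatorname{Fitt}_0(\cA(J))=(\Delta_J)$, multiplicativity of zeroth Fitting ideals over the direct sum of \cref{cor:total-alexander} gives principality and the product formula, and $\operatorname{Fitt}_0(0)=\Lambda$ handles the zero module. Your added remarks on uniqueness up to $\pm t^k$ (via nonvanishing of each $\Delta_J$) and on the left-module convention are sound details that the paper leaves implicit.
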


\begin{proof}
For a knot with Seifert matrix $V$, the zeroth Fitting ideal of its
Alexander module is generated by $\det(tV-V^T)$, its Alexander
polynomial up to a unit.  Zeroth Fitting ideals multiply under finite
direct sums, so \cref{cor:total-alexander} gives the formula and
principality.  Finally,
$\operatorname{Fitt}_0(0)=\Lambda$, with generator $1$.
\end{proof}

\subsection{Hyperbolic knots with the required Alexander data}

We need one hyperbolic knot with the prescribed module and infinitely
many mutually distinguishable hyperbolic knots with zero module.

\begin{proposition}[Hyperbolic Alexander-module supplies]
\label{prop:hyperbolic-alexander-supply}
\begin{enumerate}[label=\textup{(\roman*)},leftmargin=2.3em]
\item For every classical knot $K$, there is a nontrivial hyperbolic
knot $K^\sharp$ such that
\[
\cA(K^\sharp)\cong\cA(K)
\]
as $\Lambda$-modules.
\item There is a sequence of hyperbolic knots $L_r$ such that
\[
\cA(L_r)=0
\]
for every $r$, and the underlying knot groups, and hence the
meridian-marked knot groups, are pairwise nonisomorphic.
\end{enumerate}
\end{proposition}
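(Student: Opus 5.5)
For part (i), the plan is to invoke Friedl's realization theorem \cite{FriedlSeifert}, as the introduction announces. The first step is to fix a Seifert matrix \(V\) of \(K\). Friedl shows that every Alexander module of a classical knot, and more precisely every \(S\)-equivalence class of Seifert matrices, is realized by a hyperbolic knot. One then takes \(K^\sharp\) to be such a knot with Seifert matrix \(S\)-equivalent to \(V\). Since \(tV-V^T\) presents \(\cA(K)\) under the left-module convention of \cref{conv:alexander-module}, \(S\)-equivalence gives \(\cA(K^\sharp)\cong\cA(K)\).

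Nontriviality needs no separate argument, because a hyperbolic knot is never the unknot. If Friedl's result is only stated for the Alexander polynomial, or only up to \(t\mapsto t^{-1}\), I would fall back on the standard satellite/infection argument. Start from a ribbon presentation of \(K\), or use \(K\) itself, and tie in Alexander-trivial hyperbolic infection knots along curves in the complement of a Seifert surface. This changes the complement but preserves the Seifert form. Hyperbolicity is then arranged by a Myers-type or Kawauchi-type imitation argument.

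For part (ii), I would use Kalfagianni's construction \cite{KalfagianniVolume} of infinitely many hyperbolic knots with trivial Alexander polynomial and unbounded, in particular pairwise distinct, hyperbolic volume. Trivial Alexander polynomial implies \(\cA(L_r)=0\). Indeed, \(\cA\) has the square presentation \(tV-V^T\) with unit determinant, so it is presented by an invertible matrix over \(\Lambda\) and vanishes. Pairwise nonisomorphism of the groups then follows from Mostow--Prasad rigidity: an isomorphism of fundamental groups of finite-volume hyperbolic \(3\)-manifolds is induced by an isometry, so volume is a group invariant. Passing to a subsequence with pairwise distinct volumes gives pairwise nonisomorphic knot groups. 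A meridian-marked isomorphism forgets to a group isomorphism, so the marked groups are also pairwise nonisomorphic.

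The main obstacle is the precise form of the external citations. The key points to check are that Friedl's statement gives the full module rather than only the polynomial, and that it uses the correct orientation convention for \(t\). If the orientation is reversed, I would replace \(K^\sharp\) by its reverse or mirror as needed, since these remain hyperbolic. The volume argument in (ii) is routine once Kalfagianni's sequence is available.
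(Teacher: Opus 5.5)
Your proposal is correct and follows essentially the same route as the paper: Friedl's realization of a Seifert matrix $S$-equivalent to $V$ by a hyperbolic knot for (i), and for (ii) Kalfagianni's Alexander-polynomial-one hyperbolic knots with strictly increasing volume, the unit-determinant presentation $tV-V^T$, and Mostow--Prasad rigidity. The paper additionally proves that $S$-equivalence preserves the full cokernel of $tV-V^T$ by explicitly reducing the two elementary enlargements, and it handles the zero-module case of (i) directly from (ii); your fallback infection argument is not needed.
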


\begin{proof}
For~\textup{(ii)}, Kalfagianni
\cite[Theorem~1.2]{KalfagianniVolume} supplies hyperbolic knots with
$\Delta(t)=1$ and arbitrarily large volume.  Choose them with strictly
increasing volumes.  If $V$ is a Seifert matrix, the presentation
matrix $tV-V^T$ then has unit determinant and is invertible over
$\Lambda$, so the Alexander module is zero.  Mostow--Prasad rigidity
makes isomorphic finite-volume hyperbolic knot groups have equal
volume; hence the underlying groups, and therefore the
meridian-marked groups, are pairwise nonisomorphic.

For~\textup{(i)}, the zero-module case follows from~\textup{(ii)}.
Otherwise let $V$ be a Seifert matrix for $K$.  Friedl
\cite[Theorem~1.1]{FriedlSeifert} supplies a hyperbolic knot with a
Seifert matrix $S$ that is $S$-equivalent to $V$.  A unimodular
congruence changes $tV-V^T$ by invertible row and column operations.
For the two elementary enlargements
\[
V_1=
\begin{pmatrix}
V&0&0\\
v^T&0&0\\
0&1&0
\end{pmatrix},
\qquad
V_2=
\begin{pmatrix}
V&v&0\\
0&0&1\\
0&0&0
\end{pmatrix},
\]
their Alexander presentation matrices are
\[
tV_1-V_1^T=
\begin{pmatrix}
tV-V^T&-v&0\\
tv^T&0&-1\\
0&t&0
\end{pmatrix},
\qquad
tV_2-V_2^T=
\begin{pmatrix}
tV-V^T&tv&0\\
-v^T&0&t\\
0&-1&0
\end{pmatrix}.
\]
For $V_1$, add $t^{-1}v_i$ times the last row to the $i$-th old row
and then $tv_i$ times the last column to the $i$-th old column; for
$V_2$, use $tv_i$ times the last row and then $t^{-1}v_i$ times the
last column.  These operations give, respectively,
\[
(tV-V^T)\oplus
\begin{pmatrix}
0&-1\\
t&0
\end{pmatrix},
\qquad
(tV-V^T)\oplus
\begin{pmatrix}
0&t\\
-1&0
\end{pmatrix}.
\]
Both displayed $2\times2$ blocks are invertible.  Elementary
reductions reverse these operations, so $S$-equivalence preserves the
full cokernel of $tV-V^T$, not only its determinant.  The resulting
knot $K^\sharp$ is hyperbolic and hence nontrivial, with
$\cA(K^\sharp)\cong\cA(K)$.
\end{proof}

\subsection{Proof of the main theorem}

\begin{proof}[Proof of \cref{thm:main}]
Fix $g\ge3$ and $K\subset S^3$.  Choose $K^\sharp$ and
$(L_r)_{r\ge1}$ from \cref{prop:hyperbolic-alexander-supply}, set
$J_1^D=K^\sharp$, and choose every other preliminary knot, every
vertical-boundary knot, and every ordered-surgery knot from the
Alexander-trivial family $(L_r)$.  Use distinct members at all
rank-two stages; no pairwise-nonisomorphism condition is imposed on
the preliminary factors.

Since $N\ge1$, fix an ordered rank-two position $i_0$.  Keep all other
knots fixed and let $J_{i_0}$ vary through unused members of $(L_r)$,
obtaining surfaces $F_{g,n}$ with exterior groups
$(G_{N,n},\mu_n)$.  For each $n$, the rank-two meridian-marked knot
groups remain pairwise nonisomorphic.  By
\cref{prop:each-constructed-surface-rigid},
\[
E_{\mathrm{TOP}}^+(F_{g,n})=E^+(F_{g,n})=1,
\]
and \cref{cor:total-alexander} gives
\[
\cA_1(F_{g,n})
\cong\cA(K^\sharp)
\cong\cA(K)
\]
as left $\Lambda$-modules.

Suppose an orientation-preserving homeomorphism of oriented pairs
carries $F_{g,n}$ to $F_{g,n'}$.  By
\cref{prop:peripheral-naturality} it induces an isomorphism
\[
(G_{N,n},\mu_n)\cong(G_{N,n'},\mu_{n'})
\]
of meridian-marked exterior groups.  The realization conditions in
\cref{def:marked-centralizer} are intrinsic under such an isomorphism:
centralizers, centers, meridian conjugacy, and the marked central
quotients are preserved.  Hence the isomorphism preserves the exact
recognized collection of \cref{thm:tower-spectrum}.  That collection
contains precisely the rank-two marked knot groups, not the
preliminary factors.  The varying group for $F_{g,n}$ is unused at all
fixed rank-two stages and occurs in the collection for $F_{g,n'}$ only
when $n=n'$.  Thus the surfaces are pairwise topologically inequivalent.
\end{proof}

\subsection{Transfer to a simply connected ambient manifold}

\begin{proof}[Proof of \cref{cor:simply-connected-ambient}]
Fix $X$, $g$, and $K$.  One smooth $4$-ball $B\subset S^4$ contains
every surface $F_{g,n}$ and all support regions; identify it
orientation preservingly with a fixed $4$-ball in $X$ and place the
same family in $\operatorname{int}B\subset X$.  Put
$C=X\setminus\operatorname{int}B$.  Van Kampen for
\[
X=C\cup_{S^3}B
\]
gives $\pi_1(C)\cong\pi_1(X)=1$, and, with
$E_{B,n}=B\setminus\operatorname{int}\nu F_{g,n}$,
\[
E_{X,n}=C\cup_{S^3}E_{B,n}
\]
gives
\[
\pi_1(E_{X,n})\cong\pi_1(E_{B,n}).
\]
If $C_0$ is the complementary $4$-ball in $S^4$, the analogous
decompositions
\[
S^4=C_0\cup_{S^3}B,
\qquad
E_{S^4,n}=C_0\cup_{S^3}E_{B,n}
\]
identify the same local group with the exterior group in $S^4$.

Choose the surface basepoint and normal section inside $B$.  Under
these inclusion isomorphisms, the positive meridian is literally the
same local loop and every pushed-off surface loop is represented by
the same loop in $E_{B,n}$; equivalently, the marked peripheral data
are preserved.  The unique meridional abelianization, its kernel, and
its abelianization are therefore identified, and preservation of the
positive meridian intertwines the conjugation actions defining the
left $\Lambda$-module.  Hence
\[
\cA_1(F_{g,n}\subset X)
\cong
\cA_1(F_{g,n}\subset S^4)
\cong
\cA(K).
\]

For rigidity, the local identification transports injectivity of
$\rho_{N,n}$ from \cref{thm:construction}, the equalities
\[
H_1(G_{N,n};\Z)=\Z\langle[\mu_n]\rangle,
\qquad
\rho_{N,n}(\pi)\subset[G_{N,n},G_{N,n}]
\]
from \cref{prop:early-homology}, and centrality of $\mu_n$ with the
surface subgroup.  Because the oriented normal bundle in $X$ is the
same local trivial bundle in $B$, the proof of
\cref{prop:peripheral-naturality} applies with $X$ in place of $S^4$.
Thus \cref{cor:rooted-normalization} gives
\[
\Phi_{N,n}(\mu_n)=\mu_n,
\qquad
\Phi_{N,n}\circ\rho_{N,n}=\rho_{N,n}\circ f_*,
\]
and \cref{thm:peripheral-recognition} yields $f=1$.  Therefore
\[
E_{X,\mathrm{TOP}}^+(F_{g,n})=E_X^+(F_{g,n})=1.
\]

Finally, a pair homeomorphism in $X$ induces an isomorphism of the
same meridian-marked exterior groups and hence preserves the exact
recognized collection.  The varying rank-two marked knot group occurs
in both collections only when the indices agree, so the family in $X$
is pairwise topologically inequivalent.
\end{proof}

\subsection{Group homology}

This is ordinary group homology of classifying spaces, not the
Alexander module of a meridional cover.

\begin{proposition}[Group homology of the exterior groups]
\label{prop:tower-homology}
For the preliminary group
\[
A_0=
K_1^D*_{\langle\mu\rangle}\cdots
*_{\langle\mu\rangle}K_g^D,
\]
one has
\[
H_1(A_0;\Z)\cong\Z\langle[\mu]\rangle,
\qquad
H_2(A_0;\Z)=0.
\]
For every $0\le i\le N$,
\begin{equation}
\label{eq:tower-H1-H2}
H_1(G_i;\Z)\cong\Z\langle[\mu]\rangle,
\qquad
H_2(G_i;\Z)=0.
\end{equation}
Moreover, if
$\rho:\pi\to A_0$ is the push-off homomorphism after the preliminary
surgeries, then
\[
\rho(\pi)\subset[A_0,A_0],
\]
and
\begin{equation}
\label{eq:surface-in-commutator}
\rho_i(\pi)\subset[G_i,G_i]
\qquad(0\le i\le N).
\end{equation}
\end{proposition}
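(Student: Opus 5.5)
The $H_1$ statements and the commutator containments are already \cref{prop:early-homology}, so I will simply cite it. The new content is $H_2=0$, which I will prove by Mayer--Vietoris for amalgams, inducting along the construction in the same order used there: first the preliminary cyclic amalgams, then the rank-two stages.

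\emph{Preliminary stages.} Each hyperbolic knot exterior is aspherical with $H_2(E(J);\Z)=0$, since it is a compact $3$-manifold with torus boundary whose $H_2$ vanishes by Alexander duality. Hence $H_2(K_j^D)=0$, and $H_2(\langle\mu\rangle)=0$. For $A^{(j)}=A^{(j-1)}*_{\langle\mu\rangle}K_j^D$, the Mayer--Vietoris sequence
\[
0=H_2(\Z)\to H_2(A^{(j-1)})\oplus H_2(K_j^D)\to H_2(A^{(j)})\to H_1(\Z)\to H_1(A^{(j-1)})\oplus H_1(K_j^D)
\]
has injective last map, because $[\mu]\mapsto([\mu],[m_j^D])$ is nonzero. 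So $H_2(A^{(j)})=0$ by induction from $H_2(\langle\mu\rangle)=0$.

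\emph{Rank-two stages.} Write $G'=G*_{A}(K\times\langle z\rangle)$ with $A=\langle\mu,h\rangle\cong\Z^2$ and $z=h$. By Künneth,
\[
H_2(K\times\Z)=H_2(K)\oplus H_1(K)\otimes H_1(\Z)\cong\Z,
\]
generated by the torus class $[m]\times[z]$. Also $H_2(A)\cong\Z$, generated by $[\mu]\times[h]$. The degree-two difference map sends this generator to $\bigl(\iota_*[\mu\times h],\,-[m\times z]\bigr)$, and the second component is an isomorphism. So the map $H_2(A)\to H_2(G)\oplus H_2(K\times\Z)$ is surjective once $H_2(G)=0$ by induction. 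The degree-one difference map is injective, as shown in the proof of \cref{prop:early-homology}. Mayer--Vietoris then gives $H_2(G')=\operatorname{coker}(\delta_2)\oplus\ker(\delta_1)=0$.

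This covers the vertical-boundary stages, which give $G_0$ (one stage in even genus, two chronological stages in odd genus, the second using $w_2\in A_0\le G_0^{(1)}$), and every later $G_i$, since each has this rank-two form by \cref{thm:one-step}.

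\emph{Main obstacle.} The step needing care is the claim $H_2(\text{knot group})=0$. It requires that $K$ be the fundamental group of an aspherical space with vanishing $H_2$, i.e. that the knot exterior is a $K(K,1)$ (asphericity of nontrivial knot complements, by the sphere theorem) and that group homology agrees with the exterior's homology. Mayer--Vietoris for group amalgams itself needs only injective edge maps, which hold throughout the construction.
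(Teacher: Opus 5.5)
Your proposal is correct and follows the paper's proof step for step. You cite \cref{prop:early-homology} for the \(H_1\) and commutator claims, use asphericity of hyperbolic knot exteriors to get \(H_2(K)=0\), and run Mayer--Vietoris; at each rank-two stage the \(H_2(B)\)-component of \(\delta_2\) is an isomorphism and \(\delta_1\) is injective, so \(H_2(G')\cong H_2(G)\).

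The differences are cosmetic. The paper handles \(A_0\) in one step via a star-shaped graph of groups rather than by iterated one-edge amalgams. Also, Mayer--Vietoris strictly gives an extension \(0\to\operatorname{coker}\delta_2\to H_2(G')\to\ker\delta_1\to0\), not a direct sum, but this is harmless here since \(\ker\delta_1=0\).
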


\begin{proof}
The first-homology and commutator-subgroup assertions are exactly
\cref{prop:early-homology}; it remains to prove the two $H_2$ claims.
Every hyperbolic knot exterior is aspherical and has the homology of
$S^1$, so for each knot group $K$ used here,
\[
H_1(K;\Z)=\Z\langle[m]\rangle,
\qquad
H_2(K;\Z)=0.
\]

For $A_0$, use the star-shaped graph of groups with central vertex
$\langle\mu\rangle$, leaf groups $K_j^D$, and edge groups
$\langle\mu\rangle$.  The central and leaf vertex groups have
vanishing second homology, so the degree-two part of Mayer--Vietoris is
\[
0\longrightarrow H_2(A_0;\Z)
\longrightarrow\Z^g
\xrightarrow{\delta_1}\Z\oplus\Z^g,
\]
where
\[
\delta_1(n_1,\ldots,n_g)
=
\bigl(-\textstyle\sum_jn_j,n_1,\ldots,n_g\bigr).
\]
The leaf coordinates make $\delta_1$ injective, and hence
$H_2(A_0;\Z)=0$.

Now consider one rank-two attachment
\[
G'=G*_{A}B,
\qquad
A=\langle\mu,h\rangle\cong\Z^2,
\qquad
B=K\times\langle z\rangle,
\qquad z=h.
\]
At every such stage,
\[
[h]=0\in H_1(G;\Z)
\]
by \cref{prop:early-homology}.  Hence the degree-one edge map is
\[
[\mu]\longmapsto([\mu],-[m]),
\qquad
[h]\longmapsto(0,-[z]),
\]
and is injective because $[m],[z]$ form a basis of $H_1(B;\Z)$.
Moreover,
\[
H_2(B;\Z)
\cong
H_1(K;\Z)\otimes H_1(\langle z\rangle;\Z)
\cong\Z\langle[m]\wedge[z]\rangle,
\]
while
\[
H_2(A;\Z)\cong\Z\langle[\mu]\wedge[h]\rangle.
\]
The $H_2(B)$-component of
\[
\delta_2:H_2(A;\Z)
\longrightarrow H_2(G;\Z)\oplus H_2(B;\Z)
\]
sends the displayed generator isomorphically, up to sign, to
$[m]\wedge[z]$.  In the exact segment
\[
H_2(A)\xrightarrow{\delta_2}H_2(G)\oplus H_2(B)
\longrightarrow H_2(G')
\longrightarrow H_1(A)\xrightarrow{\delta_1}H_1(G)\oplus H_1(B),
\]
injectivity of $\delta_1$ gives
\[
H_2(G';\Z)\cong\operatorname{coker}\delta_2
\cong H_2(G;\Z).
\]
Starting from $A_0$ and applying this at the vertical-boundary and then
the ordered rank-two stages proves $H_2(G_i;\Z)=0$ for every $i$.
\end{proof}

\subsection{Homology of the actual exterior}

The singular homology of the four-dimensional exterior is different
from the preceding group homology.

\begin{proposition}[Homology of the surface exterior]
\label{prop:complement-homology}
Let $F=F_N\subset S^4$ be any final genus-$g$ surface in the
construction, and put
\[
E_F=S^4\setminus\operatorname{int}\nu F.
\]
Then
\[
H_0(E_F;\Z)=\Z,
\qquad
H_1(E_F;\Z)=\Z,
\qquad
H_2(E_F;\Z)=\Z^{2g},
\]
\[
H_3(E_F;\Z)=0,
\qquad
H_4(E_F;\Z)=0.
\]
Consequently
\[
\chi(E_F)=2g.
\]
These statements are compatible with
$H_2(\pi_1(E_F);\Z)=0$ from \cref{prop:tower-homology}.
\end{proposition}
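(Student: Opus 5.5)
The plan is to compute the homology of \(E_F\) from Alexander duality in \(S^4\) together with the boundary structure \(\partial E_F=\partial\nu F\cong\Sigma_g\times S^1\). This is independent of the group-theoretic construction.

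First, \(E_F\) is connected because \(F\) has codimension two, so \(H_0=\Z\). Since \(E_F\) deformation retracts onto \(S^4\setminus F\), Alexander duality gives
\[
\widetilde H_k(E_F;\Z)\cong H^{3-k}(F;\Z).
\]
Taking \(k=1,2,3\), this yields \(H_1\cong H^2(F)=\Z\), \(H_2\cong H^1(F)=\Z^{2g}\), and \(H_3\cong H^0(F)=\Z\). The value \(H_1=\Z\) agrees with \cref{prop:early-homology}, generated by the meridian.

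The remaining groups are \(H_3\) and \(H_4\). For \(H_4\), note that \(E_F\) is a compact \(4\)-manifold with nonempty boundary, so \(H_4(E_F)=0\). For \(H_3\), duality as written gives \(\widetilde H_3(E_F)\cong H^0(F)=\Z\). This conflicts with the claimed \(H_3=0\), so I need to check the indexing.

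The correct indexing is Alexander duality in \(S^n\): \(\widetilde H_k(S^n\setminus F)\cong \widetilde H^{n-k-1}(F)\), with reduced cohomology on the right. With \(n=4\) this gives:

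\(H_3\cong \widetilde H^0(F)=0\), since \(F\) is connected.

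\(H_2\cong H^1(F)=\Z^{2g}\).

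\(H_1\cong H^2(F)=\Z\).

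\(H_4\cong\widetilde H^{-1}(F)=0\).

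So every claimed value follows from this single corrected formula. As a consistency check I would use Lefschetz duality, \(H_3(E_F)\cong H^1(E_F,\partial E_F)\), which is compatible with these values. The Euler characteristic is then \(1-1+2g-0+0=2g\). This also agrees with \(\chi(S^4)=\chi(E_F)+\chi(\nu F)-\chi(\partial\nu F)=\chi(E_F)+(2-2g)-0\), which gives \(\chi(E_F)=2g\).

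For the compatibility remark with \cref{prop:tower-homology}, the point is that \(E_F\) is not aspherical. By Hopf's theorem, \(H_2(E_F)\) surjects onto \(H_2(\pi_1(E_F))=0\), so all of \(H_2(E_F)=\Z^{2g}\) is spherical. These classes are the rim tori and dual classes, realized by spheres in the exterior. There is no contradiction here.

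The main obstacle is keeping the reduced-cohomology indexing straight and making sure no orientation or torsion issue arises. Neither should occur, because \(F\) is a closed orientable surface with torsion-free cohomology.
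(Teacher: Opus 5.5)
Your proposal is correct and matches the paper's argument. Both use the homotopy equivalence $S^4\setminus F\simeq E_F$, Alexander duality in the reduced form $\widetilde H_k(S^4\setminus F;\Z)\cong\widetilde H^{3-k}(F;\Z)$, $H_4=0$ because $E_F$ has nonempty boundary, and the Hopf exact sequence for the compatibility remark. Your Lefschetz-duality and Euler-characteristic-additivity checks are harmless extras. One wording fix: since $E_F\subset S^4\setminus F$, it is $S^4\setminus F$ that deformation retracts onto $E_F$ (by radial projection in the punctured normal disks), not the other way around.
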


\begin{proof}
Radial projection in the punctured normal disk fibers, extended by the
identity outside the normal neighborhood, gives
\[
S^4\setminus F\simeq E_F.
\]
Alexander duality gives
\[
\widetilde H_i(S^4\setminus F;\Z)
\cong
\widetilde H^{3-i}(F;\Z).
\]
For a connected closed oriented genus-$g$ surface this yields
\[
\widetilde H_0=0,
\qquad
H_1\cong H^2(F;\Z)\cong\Z,
\qquad
H_2\cong H^1(F;\Z)\cong\Z^{2g},
\qquad
H_3\cong\widetilde H^0(F;\Z)=0.
\]
Since $E_F$ is a compact connected $4$-manifold with nonempty
boundary, $H_4(E_F;\Z)=0$, and the displayed groups give
$\chi(E_F)=1-1+2g=2g$.

Writing $G_F=\pi_1(E_F)$, the group $H_2(G_F;\Z)$ is the homology of
a classifying space, whereas $H_2(E_F;\Z)$ is singular homology of
the actual exterior; no asphericity of $E_F$ is asserted.  The Hopf
exact sequence
\[
\pi_2(E_F)\longrightarrow H_2(E_F;\Z)
\longrightarrow H_2(G_F;\Z)\longrightarrow0
\]
therefore explains the compatibility of
$H_2(E_F;\Z)\cong\Z^{2g}$ with $H_2(G_F;\Z)=0$.
\end{proof}

\end{document}